\definecolor{Myblue}{rgb}{0,0,0.6}  
	\pgfplotsset{width=7cm,compat=1.8}
\tikzset{
    string/.style={draw=#1, postaction={decorate}, decoration={markings,mark=at position .51 with {\arrow[color=#1]{>}}}},
    costring/.style={draw=#1, postaction={decorate}, decoration={markings,mark=at position .51 with {\arrow[draw=#1]{<}}}},
    ostring/.style={draw=#1, postaction={decorate}, decoration={markings,mark=at position .47 with {\arrow[draw=#1]{>}}}},
    ustring/.style={draw=#1, postaction={decorate}, decoration={markings,mark=at position .56 with {\arrow[draw=#1]{>}}}},
    oostring/.style={draw=#1, postaction={decorate}, decoration={markings,mark=at position .43 with {\arrow[draw=#1]{>}}}},
    uustring/.style={draw=#1, postaction={decorate}, decoration={markings,mark=at position .59 with {\arrow[draw=#1]{>}}}},
    directed/.style={string=blue!50!black}, 
    odirected/.style={ostring=blue!50!black}, 
    udirected/.style={ustring=blue!50!black}, 
    oodirected/.style={oostring=blue!50!black}, 
    uudirected/.style={uustring=blue!50!black},     
    redirected/.style={costring= blue!50!black},
    redirectedgreen/.style={costring= green!50!black},
    directedgreen/.style={string= green!50!black},
    redirectedlightgreen/.style={costring= green!65!black},
    directedlightgreen/.style={string= green!65!black},
}
\tikzset{-dot-/.style={decoration={
  markings,
  mark=at position 0.5 with {\fill circle (1.875pt);}},postaction={decorate}}}
\tikzset{
	Fdot/.style={circle, draw, fill, inner sep=0pt}, 
	Odot/.style={circle, draw, inner sep=0.1pt, minimum size=0.1cm}
	}
\def\nicedashedcolourscheme{\shadedraw[top color=blue!22, bottom color=blue!22, draw=gray, dashed]}
\def\nicedashedpalecolourscheme{\shadedraw[top color=blue!12, bottom color=blue!12, draw=gray, dashed]}
\def\nicehalfpalecolourscheme{\shadedraw[top color=blue!22, bottom color=blue!22, draw=white]}
\def\nicenotpalecolourscheme{\shadedraw[top color=blue!32, bottom color=blue!32, draw=white]}
\def\nicecolourscheme{\shadedraw[top color=blue!22, bottom color=blue!22, draw=blue!22]}
\def\nicepalecolourscheme{\shadedraw[top color=blue!12, bottom color=blue!12, draw=white]}
\def\nicenocolourscheme{\shadedraw[top color=gray!2, bottom color=gray!25, draw=white]}
\def\nicereallynocolourscheme{\shadedraw[top color=white!2, bottom color=white!25, draw=white]}
\def\boringcolourscheme{\draw[fill=blue!20, dashed]}
\newcommand\tikzzbox[1]
\newcommand{\raisemath}[1]{\mathpalette{\raisem@th{#1}}}
\newcommand{\raisem@th}[3]{\raisebox{#1}{$#2#3$}}
\renewcommand{\H}{\mathcal H}
\newcommand{\A}{\mathcal{A}}
\newcommand{\orb}{\mathcal{A}}
\newcommand{\sgn}{\mathrm{sgn}}
\renewcommand{\leq}{\leqslant}
\renewcommand{\geq}{\geqslant}
\newcommand{\E}{\text{e}} 
\newcommand{\I}{\text{i}}
\newcommand{\B}{\mathcal{B}}
\newcommand{\Borb}{\B_{\mathrm{orb}}}
\newcommand{\Beq}{\B_{\mathrm{eq}}}
\newcommand{\C}{\mathds{C}}
\newcommand{\D}{\mathds{D}}
\newcommand{\Ee}{\mathds{E}}
\newcommand{\K}{\mathds{K}}
\newcommand{\M}{\mathds{M}}
\newcommand{\N}{\mathds{N}}
\newcommand{\Q}{\mathds{Q}}
\newcommand{\R}{\mathds{R}}
\newcommand{\Z}{\mathds{Z}}
\newcommand{\IP}{\mathds{P}}
\def\1{\ifmmode\mathrm{1\!l}\else\mbox{\(\mathrm{1\!l}\)}\fi}
\newcommand{\one}{\mathbbm{1}}
\newcommand{\be}{\begin{equation}}
\newcommand{\ee}{\end{equation}}
\newcommand{\bes}{\begin{equation*}}
\newcommand{\ees}{\end{equation*}}
\newcommand{\cc}[1] {\overline{#1}}
\newcommand{\inv}[0]{{-1}}
\newcommand{\inver}[0]{\times}
\newcommand{\chirel}[0]{\chi_{\mathrm{sym}}}
\newcommand{\Dec}[0]{\mathrm{Dec}}
\newcommand{\interior}[1]{%
  {\kern0pt#1}^{\mathrm{o}}%
}
\newcommand{\Strat}[0]{\mathrm{Strat}}
\newcommand{\Stratdef}[0]{\mathrm{Strat}^{\mathrm{def}}}
\newcommand{\sVir}{\mathsf{sVir}}
\newcommand{\MF}{\operatorname{MF}_{\operatorname{bi}}}
\newcommand{\MFW}{\operatorname{MF}_{\operatorname{bi}}(W)}
\newcommand{\MFR}{\operatorname{MF}^\text{R}_{\operatorname{bi}}}
\newcommand{\DG}{\operatorname{DG}_{\operatorname{bi}}}
\newcommand{\DGW}{\operatorname{DG}_{\text{bi}}(W)}
\newcommand{\DGR}{\operatorname{DG}^\text{R}_{\text{bi}}}
\newcommand{\id}{\text{id}}
\newcommand{\KMF}{K_{0}(\operatorname{MF}_{\text{bi}}}
\newcommand{\Ext}{\operatorname{Ext}}
\newcommand{\Hom}{\operatorname{Hom}}
\newcommand{\End}{\operatorname{End}}
\newcommand{\modu}{\operatorname{mod}}
\def\LG{\mathcal{LG}}
\def\LGgr{\mathcal{LG}^{\mathrm{gr}}}
\def\LGgrs{\mathcal{LG}'^{\mathrm{gr}}}
\def\LGgrso{\mathcal{LG}'^{\mathrm{gr}}_{\mathrm{orb}}}
\def\LGs{\mathcal{LG}'}
\def\LGsorb{\mathcal{LG}'_{\mathrm{orb}}}
\def\LGorb{\mathcal{LG}_{\mathrm{orb}}}
\def\LGeq{\mathcal{LG}_{\mathrm{eq}}}
\newcommand{\hmf}{\operatorname{hmf}}
\newcommand{\HMF}{\operatorname{HMF}}
\newcommand{\ev}{\operatorname{ev}}
\newcommand{\eval}{\operatorname{eval}}
\newcommand{\tev}{\widetilde{\operatorname{ev}}}
\newcommand{\coev}{\operatorname{coev}}
\newcommand{\tcoev}{\widetilde{\operatorname{coev}}}
\def\lra{\longrightarrow}
\def\lmt{\longmapsto}
\DeclareMathOperator{\tr}{tr}
\DeclareMathOperator{\str}{str}
\DeclareMathOperator{\Jac}{Jac}
\def\Re{R^{\operatorname{e}}}
\DeclareMathOperator{\Res}{Res}
\newcommand*{\longhookrightarrow}{\ensuremath{\lhook\joinrel\relbar\joinrel\rightarrow}}
\newcommand*{\twoheadlongrightarrow}{\ensuremath{\relbar\joinrel\twoheadrightarrow}}
\newcommand{\Ga}[1]{\Gamma_{\hspace{-2pt}#1}}
\newcommand{\HIA}{\Hom(I,A)}
\newcommand{\ZA}{Z_A(\Hom(I,A))}
\newcommand{\gZA}{\!Z_A^\gamma(\Hom(I,A))}
\newcommand{\gA}{{}_{\gamma_A}A}
\newcommand{\Aginv}{A_{\gamma_A^{-1}}}
\newcommand{\picc}{\pi^{(\text{c,c})}_A}
\newcommand{\pirr}{\pi^{\text{RR}}_A}
\newcommand{\tpirr}{{\widetilde\pi}^{\text{RR}}} 
\newcommand{\im}{\operatorname{im}}
\DeclareMathOperator*{\eq}{=}
\DeclareMathOperator*{\congscript}{\cong}
\newcommand{\specflow}{\mathcal U_{-\frac{1}{2},-\frac{1}{2}}}
\newcommand{\Hil}{\mathcal{H}}
\newcommand{\Hpcc}{\mathcal{H}'_{\text{(c,c)}}}
\newcommand{\Hprr}{\mathcal{H}'_{\text{RR}}}
\newcommand{\Hcc}{\mathcal{H}_{\text{(c,c)}}^A}
\newcommand{\Hrr}{\mathcal{H}_{\text{RR}}^A}
\newcommand{\HccnoA}{\mathcal{H}_{\text{(c,c)}}}
\newcommand{\HrrnoA}{\mathcal{H}_{\text{RR}}}
\newcommand{\Hrrbo}{\Hom_A(X,{}_{\gamma_A}X)}
\newcommand{\buboorb}{\beta_X^{\text{orb}}}
\newcommand{\bobuorb}{\beta^X_{\text{orb}}} 
\def\Cong{C_g}
\def\Centg{N_g}
\def\alphaKK{\alpha^{\{K\}}}
\def\alphagKg{\alpha_g^{K(g)}}
\newcommand{\AGC}{A_G^c}
\newcommand{\Bord}{\operatorname{Bord}}
\newcommand{\Bordor}{\operatorname{Bord}_{n}^{\mathrm{or}}}
\newcommand{\Borddef}{\operatorname{Bord}^{\mathrm{def}}}
\newcommand{\Borddefn}[1] {\operatorname{Bord}^{\mathrm{def}}_{#1}}
\newcommand{\Bordoc}[1] {\operatorname{Bord}^{\mathrm{oc}}_{#1}}
\newcommand{\Bords}{\operatorname{Bord}_{3}^{\APLstar}}
\newcommand{\Sphere}{\operatorname{Sphere}^{\mathrm{def}}}
\newcommand{\Nbh}{\operatorname{\mathcal{N}}}
\newcommand{\Cube}{\operatorname{Cube}}
\newcommand{\Cubed}{\operatorname{Cube}_{3}^{\mathrm{def}}(\mathds{D})}
\newcommand{\Fradj}{\operatorname{\mathcal C_{\mathds D}^{adj}}}
\newcommand{\Bordstrat}{\operatorname{Bord}^{\mathrm{strat}}}
\newcommand{\Bordstratn}[1]  {\operatorname{Bord}^{\mathrm{strat}}_{#1}}
\newcommand{\Borddlong}{\operatorname{Bord}_{3}^{\mathrm{def}}(D_3,D_2,D_1)_{s,t,f}}
\newcommand{\Bordd[1]}{\operatorname{Bord}_{#1}^{\mathrm{def}}(\mathds{D})}
\newcommand{\Strator}{\operatorname{Strat}_{n}^{\mathrm{or}}}
\newcommand{\G}{\mathcal{G}}
\newcommand{\tz}{\mathcal T_\zz}
\newcommand{\dz}{\mathcal D_\zz}
\newcommand{\tzp}{\mathcal T_{\mathcal Z'}}
\newcommand{\Data}{\mathds{D}}
\newcommand{\Obj}{\mathrm{Obj}}
\newcommand{\zz}{\mathcal{Z}}
\newcommand{\Fdp}{\operatorname{\mathcal F}_{\textrm{d}}^{\textrm{p}}}
\newcommand{\zzd}{\mathcal{Z}^{\mathrm{def}}}
\newcommand{\zztriv}{\mathcal{Z}^{\mathrm{triv}}} 
\newcommand{\zzAtriv}{\mathcal{Z}_{\mathrm{triv}}^{\Cat{A}}} 
\newcommand{\euc}{\odot}
\newcommand{\euctwo}{\odot_{\geq 2}}
\newcommand{\ieuc}{\iota^\euc}
\newcommand{\peuc}{\pi^\euc}
\newcommand{\zzTVA}{\mathcal{Z}^{TV}_{\Cat{A}}} 
\newcommand{\zzRTC}{\mathcal{Z}^{RT}_{\Cat{C}}}
\newcommand{\tzztriv}{\mathcal{T}_{{\mathcal{Z}^{triv}}}}
\newcommand{\tzGamma}{\mathcal{T}_{{\mathcal{Z}^{\Gamma}}}}
\newcommand{\unit}{\operatorname{\mathbf{1}}}  
\newcommand{\bigslant}[2]{{\raisebox{.0em}{$#1$}\left/\raisebox{-.2em}{$#2$}\right.}}
\newcommand{\Cubedp}{\operatorname{Cube}_{3}^{\mathrm{def}}(\mathds{D}')}
\newcommand{\Borddp}{\operatorname{Bord}_{3}^{\mathrm{def}}(\mathds{D}')}
\newcommand{\Vect}{\operatorname{Vect}}
\newcommand{\Vectk}{\operatorname{Vect}_\Bbbk}
\newcommand{\eps}{\varepsilon}
\newcommand{\al}{\alpha}
\newcommand{\alb}{\overline{\alpha}}
\newcommand{\T}{\mathcal{T}}
\newcommand{\Ss}{\mathcal{S}}
\newcommand{\X}{\mathcal{X}}
\newcommand{\Y}{\mathcal{Y}}
\newcommand{\sta}{\boxempty}
\newcommand{\fus}{\otimes}
\newcommand{\sd}{^{\star}}
\newcommand{\dagg}{^{\dagger}}
\newcommand{\hash}{^{\#}}
\newcommand{\Set}{\mathrm{Set}}
\newcommand{\Ball}{\mathrm{Ball}}
\newcommand{\Sph}{\mathsf{Sph}}
\newcommand{\fork}{\pitchfork }
\newcommand{\Cat}[1]         {\operatorname{\mathcal{#1}}}
\newcommand{\Catpre}[1]         {\operatorname{\mathcal{#1}^{\mathrm{pre}}}}
\newcommand{\dX}{{}^\dagger\hspace{-1.8pt}X}
\newcommand{\dA}{{}^\dagger\hspace{-1.8pt}A}
\newcommand{\dsX}{{}^\dagger\hspace{-1.8pt}\mathcal{X}}
\newcommand{\deqX}{{}^\star\hspace{-1.8pt}X} 
\newcommand{\dseqX}{{}^\star\hspace{-1.8pt}\mathcal{X}} 
\newcommand{\dY}{{}^\dagger\hspace{-0.3pt}Y}
\newcommand{\dphi}{{}^\dagger\hspace{-0.9pt}\phi}
\newcommand{\dPhi}{{}^\dagger\hspace{-0.9pt}\Phi}
\newcommand{\opp}             {{\mathrm{op}}} 
\newcommand{\Alg}        {\operatorname{\mathsf{Alg}}}
\newcommand{\Frob}        {\operatorname{\mathsf{Frob}}}
\newcommand{\Lincat}        {\operatorname{\mathsf{Cat}^{ses}}}
\newcommand{\Deftqft}{\operatorname{TQFT}^{\mathrm{def}}}
\newcommand{\KVvect}   {\operatorname{KV-2Vect}}
\newcommand{\CYvect}   {\operatorname{CY-2Vect}}
\newcommand{\evx}[1]   {\operatorname{\mathsf{ev}}_{#1}}
\newcommand{\coevx}[1]   {\operatorname{\mathsf{coev}}_{#1}}
\newcommand{\evp}[1]   {\operatorname{\mathsf{ev}}_{#1}^{\prime}}
\newcommand{\coevp}[1]   {\operatorname{\mathsf{coev}}_{#1}^{\prime}}
\newcommand{\evc}[1]   {\operatorname{\mathsf{c-ev}}_{#1}}
\newcommand{\coevc}[1]   {\operatorname{\mathsf{c-coev}}_{#1}}
\newcommand{\evpc}[1]   {\operatorname{\mathsf{c-ev}}_{#1}^{\prime}}
\newcommand{\coevpc}[1]   {\operatorname{\mathsf{c-coev}}_{#1}^{\prime}}
\def\la               {{\rm l.a.}}
\def\ra               {{\rm r.a.}}
\def\rra              {{\rm r.r.a.}}
\def\lla               {{\rm l.l.a.}}
\def\Fun              {{\mathsf{Fun}}}
\def\Funbilin              {{\mathsf{Fun}^{\mathsf{bilin}}}}
\newcommand{\rrr}[1]{{\color{red}{#1}}}
\newcommand{\rrR}[1]{{\color{red3}{#1}}}
\newcommand{\bbb}[1]{{\color{blue}{#1}}}
\definecolor{DarkViolet} {rgb}{0.580392,0.000000,0.827450}
\newcommand{\vio}[1]{{\color{DarkViolet}{#1}}}
\newcommand{\green}[1]{{\color{green}{#1}}}
\newcommand{\ques} [1] {\marginpar\textbf{qu}\textbf{{#1} ?}} 
\newcommand{\chn}[1]{\marginpar\textbf{changed}\textbf{{#1}}}
\newcommand{\note} [1] {\marginpar\textbf{note}\textbf{{#1}}}
\newcommand{\todo} [1] {\marginpar\textbf{todo} {\bbb{#1}  } }
\newcommand{\plan} [1] {\marginpar\textbf{Plan} { \bbb{#1}  } }
\newcommand{\GS} [1] {\marginpar{\small \vio{modified\\ by GS:} {}}{ \vio{#1} }} 
\newcommand{\GSC}[1] {\marginpar{\small \vio{comment \\ by GS:} {}}{ ~\\ \it \vio{#1} \\ }} 
\newcommand{\GSQ}[1] {\marginpar{\small \vio{question\\ by GS:} {}}{ ~\\ \it \vio{#1} \\ }} 
\newcommand{\IR} [1] {\marginpar{\small \rrr{modified\\ by IR:} {}}{ \rrr{#1} }} 
\newcommand{\IRC}[1] {\marginpar{\small \rrr{comment \\ by IR:} {}}{ ~\\ \it \rrr{#1} \\ }} 
\newcommand{\IRQ}[1] {\marginpar{\small \rrr{question\\ by IR:} {}}{ ~\\ \it \rrr{#1} \\ }} 
\newcommand{\NCMod} [1] {\marginpar{\small \vio{modified\\ by NC:} {}}{ \vio{#1} }} 
\newcommand{\NCC}[1] {\marginpar{\small \vio{comment \\ by NC:} {}}{ ~\\ \it \vio{#1} \\ }} 
\newcommand{\NCQ}[1] {\marginpar{\small \vio{question\\ by NC:} {}}{ ~\\ \it \vio{#1} \\ }} 
\newcommand\nxt{\noindent\raisebox{.08em}{\rule{.44em}{.44em}}\hspace{.4em}}
\newcommand\arxiv[2]      {\href{http://arXiv.org/abs/#1}{#2}}
\newcommand\doi[2]        {\href{http://dx.doi.org/#1}{#2}}
\newcommand\httpurl[2]    {\href{http://#1}{#2}}
\renewcommand{\labelenumi}{(\roman{enumi})}
\theoremstyle{definition}
\newtheorem{definition}{Definition}
\newtheorem{proposition}[definition]{Proposition}
\newtheorem{theoremdefinition}[definition]{Theorem and Definition}
\newtheorem{lemma}[definition]{Lemma}
\newtheorem{remark}[definition]{Remark}
\newtheorem{example}[definition]{Example}
\newtheorem{construction}[definition]{Construction}
\numberwithin{equation}{section}
\numberwithin{definition}{section}
\numberwithin{figure}{section}
\newcommand\void[1]{}
\begin{document}

\title{Orbifolds of \\ $\boldsymbol{n}$-dimensional defect TQFTs}

\author{%
\!\!\!\!\!\!\!Nils Carqueville$^*$ \quad
Ingo Runkel$^\#$ \quad
Gregor Schaumann$^*$%
\\[0.5cm]
   \hspace{-1.8cm}  \normalsize{\texttt{\href{mailto:nils.carqueville@univie.ac.at}{nils.carqueville@univie.ac.at}}} \\  %
   \hspace{-1.8cm}  \normalsize{\texttt{\href{mailto:ingo.runkel@uni-hamburg.de}{ingo.runkel@uni-hamburg.de}}} \\
   \hspace{-1.8cm}  \normalsize{\texttt{\href{mailto:gregor.schaumann@univie.ac.at}{gregor.schaumann@univie.ac.at}}}\\[0.1cm]
   \hspace{-1.2cm} {\normalsize\slshape $^*$Fakult\"at f\"ur Mathematik, Universit\"at Wien, Austria}\\[-0.1cm]
   \hspace{-1.2cm} {\normalsize\slshape $^\#$Fachbereich Mathematik, Universit\"{a}t Hamburg, Germany}\\[-0.1cm]
}

\date{}
\maketitle

\begin{abstract}
We introduce the notion of $n$-dimensional topological quantum field theory (TQFT) with defects as a symmetric monoidal functor on decorated stratified bordisms of dimension~$n$. 
The familiar closed or open-closed TQFTs are special cases of defect TQFTs, and for $n=2$ and $n=3$ our general definition recovers what had previously been studied in the literature. 

Our main construction is that of ``generalised orbifolds'' for any $n$-dimensional defect TQFT: 
Given a defect TQFT~$\zz$, one obtains a new TQFT $\zz_\A$ by decorating the Poincar\'{e} duals of triangulated bordisms with certain algebraic data~$\A$ and then evaluating with~$\zz$. 
The orbifold datum~$\A$ is constrained by demanding invariance under $n$-dimensional Pachner moves. 
This procedure generalises both state sum models and gauging of finite symmetry groups, for any~$n$. 
After developing the general theory, we focus on the case $n=3$.
\end{abstract}

\newpage

\tableofcontents

\section{Introduction}

Topological quantum field theory (TQFT) interrelates topology, higher categories and mathematical physics, with prominent ties also to algebra and geometry. 
In recent years the subject has enjoyed further attention via the study of $\infty$-categories and topological phases of matter. 

The simplest ``closed'' case of a TQFT is a symmetric monoidal functor $\Bord_n \to \Vectk$ for some positive integer~$n$, where $\Bord_n$ has closed smooth oriented $(n-1)$-dimensional manifolds as objects and diffeomorphism classes of bordisms as morphisms. 
To work towards classification results, to gain structural insight into their inner workings and interrelations, and to address questions in neighbouring fields such as knot theory or theoretical physics, it is desirable to enrich the basic notion of closed TQFTs. 
Two natural directions to do so are to ``extend'' and to add ``defects''. 

Extended TQFTs in $n$ dimensions are higher functors on higher bordism categories that involve $n$- and $(n-1)$-dimensional manifolds as in $\Bord_n$, but also manifolds with corners of lower dimension. 
Accordingly, for an extended TQFT also a higher symmetric monoidal target category has to be specified. 
On the other hand, the idea behind defect TQFTs $\zz\colon \Bordd[n] \to \Vectk$ is to concentrate all the enriched structure into the source category $\Bordd[n]$ while staying within the realm of ordinary symmetric monoidal 1-categories. 
As will be explained in detail in Section~\ref{sec:defectTQFTs}, objects and morphisms in $\Bordd[n]$ are $(n-1)$-dimensional oriented manifolds and their $n$-dimensional bordism classes, 
respectively, which come with a decomposition (or stratification) into $j$-dimensional submanifolds (or $j$-strata) for all $j\in \{ 0,1,\dots,n \}$. 
Furthermore, all strata are decorated by ``defect data'' from a given collection~$\D$. 
These decorated strata can be thought of as ``extended physical observables'', of which boundary conditions are familiar special cases. 
One may argue that this approach is closer to the original motivation to axiomatise structures from physics, laying emphasis on the combinatorics of defect conditions. 
An $n$-category may then subsequently be extracted from the functor~$\zz$ as an algebraic invariant (this has been worked out in detail for $n\in \{ 1,2,3 \}$, cf.~\cite{dkr1107.0495,CMS}).

\medskip

Defects can be used to describe symmetries of a closed $n$-dimensional TQFT $\zz_{\text{cl}}$: 
Given a representation~$\rho$ of a finite group~$G$ on $\zz_{\text{cl}}$, it may be possible to find a family $\{ \rho(g) \}_{g\in G}$ of $(n-1)$-dimensional defects whose ``fusion'' reproduces the product in $G$. 
The ``superposition'' $\A_G = \bigoplus_{g\in G} \rho(g)$ together with coherently chosen lower-dimensional defects (such a choice may be obstructed and is typically not unique) can be used to construct the $G$-orbifold theory $\zz_{\text{cl}}^G$.  
This is done by ``averaging'' over the symmetry -- a procedure implemented by evaluating~$\zz$ on a network of $\A_G$-defects Poincar\'{e} dual to a triangulation of the bordisms. 
One should think of $\A_G$ and its lower-dimensional defects as an algebraic structure encoding the symmetry $G$ and the necessary extra information needed to orbifold, or, in other words, gauge, that symmetry.

In 2~dimensions this is well-understood, with the algebraic structure of~$\A_G$ turning out to be that of a $\Delta$-separable symmetric Frobenius algebra \cite{frs0403157,cr1210.6363}.
More recently $G$-crossed fusion categories have been studied in connection with $G$-actions on 3-dimensional TQFTs, cf.~\cite{eno0909.3140, CGPW, BBCW1410.4540}. 
A study of orbifolds of $n$-dimensional TQFTs by finite groups~$G$ using the language of $G$-equivariant TQFTs rather than that of defects has been carried out in \cite{sw1705.05171}.

Algebraic structures~$\A$ like those above, describing defects in all dimensions (but which need not necessarily arise from group actions), are also at the centre of state sum constructions: 
special symmetric Frobenius algebras over a field~$\Bbbk$ in 2~dimensions \cite{bp9205031, FHK}, and spherical fusion categories over~$\Bbbk$ in 3~dimensions \cite{TurVir, BarWes}. 
Here again one decorates the Poincar\'{e} dual of a triangulation (or a similar type of decomposition) with the data of~$\A$ and then uses the algebraic structure to evaluate. 
Thus it is natural to generalise the notion of orbifold to encompass any system of defects which can be used to decorate suitable decompositions of bordisms. For the resulting orbifold to be well-defined one must impose the condition that the evaluation with the TQFT functor is invariant under the specific choice of decomposition. 
Then in particular one finds that ``state sum models are orbifolds of the trivial theory''. 

The idea of generalised orbifolds in 2~dimensions was first put forward in \cite{ffrs0909.5013} in the context of rational conformal field theory. 
Later in \cite{cr1210.6363} it was adapted to 2-dimensional defect TQFTs and developed into a Morita-type theory of $\Delta$-separable symmetric Frobenius algebras and their bimodules internal to any pivotal 2-category. 
Out of this emerged a notion of ``orbifold equivalence'' which has since found applications in algebra, geometry and mathematical physics, see e.\,g.\ \cite{CRCR, DanielIlkaNils, BCP2, cqv2015}. 

A good way to think of orbifolds in this generalised sense is the slogan 
\begin{quote}
``Carry out a state sum construction with defects internal to a given $n$-dimensional quantum field theory.'' 
\end{quote}
The present paper provides a way to make this idea precise and productive for topological QFTs of any dimension~$n$. 
It originally grew out of a desire to extend the theory of generalised orbifolds from 2~to 3~dimensions, with applications to topological invariants, tensor categories, quantum computation, and topological phases of matter in mind. 

\medskip
In this paper, our first contribution is to give a detailed definition of oriented defect TQFTs as symmetric monoidal functors\footnote{%
In fact the definition of defect TQFT works for any symmetric monoidal target category, and given the existence of certain limits, also all other constructions in this paper go through, cf.~Remark~\ref{rem:symmontarget}.} 
\be
\zz\colon \Bordd[n] \lra \Vectk
\ee
in any dimension~$n$.
The definition of the defect bordism category $\Bordd[n]$ (Definitions~\ref{def:Bordndef} and~\ref{def:d-dim-defect-data}) is inductive in~$n$ and controls the ways in which we allow defects, i.\,e.~decorated strata in stratified bordisms, to meet in terms of iterated cones and cylinders over basic configurations. 
In particular, we will describe how the ``defect data''~$\D$ include label sets~$D_j$ to decorate $j$-strata for all $j \in \{ 0,1,\dots,n \}$. 
We then prove that $n$-dimension defect TQFTs themselves form a symmetric monoidal category $\Deftqft_n$, see Proposition~\ref{prop:TQFTissymmon}. 

In Sections~\ref{subsec:point-defects-from} and~\ref{subsec:completewrtpointinsertions} we develop the theory further and in particular consider decorations for point defects. 
We show that without loss of generality in a defect TQFT one can identify labels for 0-strata with states associated to the surrounding sphere that are invariant under certain automorphisms of the sphere, and we prove that such states form an algebra.
In Section~\ref{subsec:completewrtpointinsertions} we exponentiate invertible point defects with the Euler characteristic of the surrounding stratum to construct 
a refinement of a given defect TQFT, which we call the ``Euler completion''. 
The details of these subsections are however not required to understand Sections~\ref{sec:orbifolds} and~\ref{sec:highercatfor}.

The other central notion which we introduce (in Definition~\ref{def:orbidatan}) is that of an ``orbifold datum''~$\A$ for a given defect TQFT $\zz\colon \Bordd[n] \to \Vectk$. 
An orbifold datum consists of a collection of labels $\A_j \in D_j$ to decorate the $j$-strata of Poincar\'{e} duals of triangulated bordisms for $j \in \{1,\dots,n\}$, as well as two labels $\orb_0^+, \orb_0^- \in D_0$ for 0-strata. 
The defining constraints on the orbifold datum~$\A$ are precisely that evaluation with~$\zz$ of $\A$-decorated bordisms is invariant under the choice of triangulation, i.\,e.~under oriented versions of Pachner moves (which we recall in Section~\ref{subsec:oritriaPachmo}). 
Our main result (Theorem~\ref{thmdef:orbifoldtheory}) is then the construction of the ``$\A$-orbifold theory'' $\zz_\A$: 

\medskip

\noindent
\textbf{Theorem. }
For every defect TQFT $\zz\colon \Bordd[n] \to \Vectk$ and every orbifold datum~$\A$ for~$\zz$, there is an associated closed TQFT
\be
\zz_\A \colon \Bord_n \lra \Vectk \, . 
\ee

\medskip

After a brief discussion in Section~\ref{subsec:2dimorbis} of how the above-mentioned $\Delta$-separable symmetric Frobenius algebras for $n=2$ fit into the general picture, we finally concentrate on the 3-dimensional case in Section~\ref{subsec:3dimorbis}.
For $n=3$, the number of defining constraints for an orbifold datum (i.\,e.~the number of independent oriented Pachner moves) is already 30. 
We will introduce (Definition~\ref{def:orbidata}) the notion of a ``special orbifold datum'' which involves only ten 
conditions that can be checked more easily in practice. 
Consistent with the general (expected) relation between state sum models and orbifolds, a special orbifold datum may be characterised as ``spherical fusion categories internal to Gray categories with duals'' (which need not have units), as we discuss in Section~\ref{sec:highercatfor}. 

We note that the main constructions in Sections~\ref{sec:defectTQFTs} and~\ref{sec:orbifolds} are exclusively in terms of ordinary symmetric monoidal categories and their functors. 
It is only in Section~\ref{sec:highercatfor} that we discuss higher categorical formulations -- which may prove worthwhile independently of their TQFT origin. 

\medskip

The only examples in the present paper are the invertible ``Euler defect TQFTs'' (Example~\ref{example:Eulerx}).
In the follow-up work \cite{CRS3} we will study examples of orbifolds of 3-dimensional defect TQFTs, namely Turaev-Viro models as orbifolds of the trivial TQFT, and two different types of $\Z_2$-orbifold of the Reshetikhin-Turaev theory for 
$\widehat{\mathfrak{sl}}(2)_k$.
	Reshetikhin-Turaev TQFT with defects is developed in the companion paper~\cite{CRS2}.

\subsubsection*{Acknowledgements} 

The work of N.\,C.~is partially supported by a grant from the Simons Foundation. 
N.\,C.~and G.\,S.~are partially supported by the stand-alone project P\,27513-N27 of the Austrian Science Fund. 
The authors acknowledge support by the Research Training Group 1670 of the German Research Foundation.

\section{Defect TQFTs}
\label{sec:defectTQFTs}

An $n$-dimensional defect TQFT is a symmetric monoidal functor from a category $\Bordd[n]$ of decorated stratified $n$-dimensional bordisms to $\Vectk$. 
To explain the details, we start in Section~\ref{subsec:strabord} by describing our conventions for 
stratifications. 
In Section~\ref{subsec:defectbords} we define $\Bordd[n]$ in two steps: first without decorations (Definition~\ref{def:Bordndef}) and then fully with decorations by ``defect data''~$\D$ (Definition~\ref{def:d-dim-defect-data}). 
Then in Section~\ref{subsec:ndTQFTs} we define $n$-dimensional defect TQFTs, show how they are themselves the objects of a symmetric monoidal category, and discuss the example of ``Euler theories''. 
Finally in Sections~\ref{subsec:point-defects-from} and~\ref{subsec:completewrtpointinsertions} 
we define two completions of a given defect TQFT:
passing to a maximal set of labels for point defects, and internalising the example of Euler theories.

\subsection{Stratified bordisms}
\label{subsec:strabord}

Defects are geometrically realised as a system of submanifolds of a bordism.
To specify the types of allowed neighbourhoods for the submanifolds  in a defect TQFT, we rely on the concept of stratifications, as discussed in \cite{CMS}
	or in \cite{Pflaum} (where the term ``decomposed space'' is used).

%

By an $n$-dimensional \textsl{stratified manifold}\footnote{%
In the literature it is common to denote by a stratified manifold a topological space that is a 
manifold outside of certain singularities that are located on the strata and satisfy certain regularity conditions. 
These conditions  specify the type of allowed singularity and the allowed adjacency conditions for all strata. 
The stratified manifolds we will consider are however such that the total space is still a topological manifold. 

So the regularity conditions are only needed to specify adjacency conditions, and we note that what we call a ``stratified manifold'' in the present paper is only the non-singular subclass of what is often called by that name in the literature. 
}
we mean an $n$-dimensional 
	topological 
manifold~$M$ (with empty boundary), together with a filtration $M = F_n \supset F_{n-1} \supset \dots \supset F_0 \supset F_{-1} = \emptyset$ such that for all $i\in \{ 0,1,\dots,n \}$, $M_i:= F_i\setminus F_{i-1}$ is a smooth $i$-dimensional submanifold of~$M$. 
The connected components $M_i^\alpha$ of $M_i$ are called \textsl{$i$-strata}, and we ask the sets of $i$-strata to be finite. 
Furthermore, for all strata $M_i^\alpha, M_j^\beta$ with $i < j$ and $M_i^\alpha \cap \overline M_j^\beta \neq \emptyset$, we demand $M_i^\alpha \subset \overline M_j^\beta$. 

Every triangulated manifold (cf.~Section~\ref{subsec:oritriaPachmo}) is a stratified manifold. 
Here, $F_{i}$ consist of the union of all (closed) $j$-simplices for $j\leqslant i$, and $M_{i}$ consists of the open $i$-simplices.  
The Poincar\'{e} dual of a triangulated manifold is typically not triangulated, but it continues to be stratified. 

We are interested in \textsl{oriented stratified $n$-manifolds $M$}, where the manifold~$M$ is itself oriented. Furthermore, each $i$-stratum with $i<n$ is equipped with a choice of orientation, while the orientation of each $n$-stratum is taken to be the one induced from $M$.
	\textsl{Morphisms between oriented stratified manifolds} are continuous maps that restrict to smooth orientation preserving maps between the strata, see e.\,g.~\cite[Ch.\,1]{Pflaum}. 

A \textsl{stratified manifold $M$ with boundary $\partial M$} is an $n$-manifold $M$ with boundary together with a filtration $M = F_n \supset F_{n-1} \supset \dots \supset F_0 \supset F_{-1} = \emptyset$ as above, such that the interior of $M$ is a stratified manifold and each $i$-stratum is a submanifold whose boundary is empty or lies in the boundary of $M$, intersecting $\partial M$ transversely. 
We also view $\partial M$ a stratified manifold with the stratification induced from~$M$. 
In case $M$ is oriented we equip $\partial M$ and all its strata with the induced orientations from the strata of $M$.
For more details we refer to \cite[Sect.\,2.1]{CMS}

\medskip

\noindent
\textbf{Convention. }
{}From now on we make the assumption that, unless specified otherwise, all manifolds and all stratified manifolds we consider are compact 
and oriented, possibly with boundary, 
	while all maps between them are continuous and their restrictions to strata are smooth and orientation preserving.
 
\medskip
 
As an example we consider a stratified 3-manifold~$M$ where a small part of the stratification is shown here:
\be
\label{eq:3ballstrat}
\tikzzbox{\begin{tikzpicture}[very thick,scale=1.2,color=gray!60!blue, baseline=-0.1cm]
\fill[ball color=blue!10!white] (0,0) circle (0.95 cm);
\coordinate (v1) at (0.5,0.32);
\coordinate (v2) at (-0.5,0.32);
\coordinate (v3) at (0,-0.375);
\coordinate (v4) at (0,0);
\fill (v4) circle (1.6pt) node[gray!60!blue, opacity=0.6] {};
\fill (v1) circle (1.6pt) node[gray!60!blue, opacity=0.1] {};
\fill (v2) circle (1.6pt) node[gray!60!blue, opacity=0.1] {};
\fill (v3) circle (1.6pt) node[gray!60!blue, opacity=0.1] {};
\draw[color=gray!60!blue, opacity=0.32]	(-0.93,0) .. controls +(0,0.5) and +(0,0.5) .. (0.93,0);
\draw[color=gray!60!blue]	(-0.93,0) .. controls +(0,-0.5) and +(0,-0.5) .. (0.93,0);
\draw[color=gray!60!blue, very thick] (v1) -- (v4);
\draw[color=gray!60!blue, very thick] (v2) -- (v4);
\draw[color=gray!60!blue, very thick] (v3) -- (v4);
\end{tikzpicture}}
\ee
In this case there are two 3-strata (the two half-balls), five 2-strata (the two hemispheres and the three triangle-shaped regions), six 1-strata, and four 0-strata shown. 
Other $2$-strata that are not shown might for  example meet the equator from outside.
Keeping the outer stratification in $M$ we obtain another stratification of $M$ by exchanging the ball~\eqref{eq:3ballstrat} with just one 2-stratum, three 1-strata, and three 0-strata: 
\be
\label{eq:triastrat}
\tikzzbox{\begin{tikzpicture}[very thick,scale=1.2,color=gray!60!blue, baseline=-0.1cm]
\coordinate (v1) at (0.5,0.32);
\coordinate (v2) at (-0.5,0.32);
\coordinate (v3) at (0,-0.375);
\fill [blue!15,opacity=1] (-0.93,0) .. controls +(0,0.5) and +(0,0.5) .. (0.93,0);
\fill [blue!15,opacity=1] (-0.93,0) .. controls +(0,-0.5) and +(0,-0.5) .. (0.93,0);
\fill (v1) circle (1.6pt) node[gray!60!blue, opacity=0.1] {};
\fill (v2) circle (1.6pt) node[gray!60!blue, opacity=0.1] {};
\fill (v3) circle (1.6pt) node[gray!60!blue, opacity=0.1] {};
\draw[color=gray!60!blue]	(-0.93,0) .. controls +(0,0.5) and +(0,0.5) .. (0.93,0);
\draw[color=gray!60!blue]	(-0.93,0) .. controls +(0,-0.5) and +(0,-0.5) .. (0.93,0);
\end{tikzpicture}}
\;\;
\sim 
\;
\tikzzbox{\begin{tikzpicture}[very thick,scale=1.2,color=gray!60!blue, baseline=-0.1cm]
\coordinate (v1) at (0.5,0.32);
\coordinate (v2) at (-0.5,0.32);
\coordinate (v3) at (0,-0.375);
\fill [blue!15,opacity=1] (v1) -- (v2) -- (v3);
\fill (v1) circle (1.6pt) node[gray!60!blue, opacity=0.1] {};
\fill (v2) circle (1.6pt) node[gray!60!blue, opacity=0.1] {};
\fill (v3) circle (1.6pt) node[gray!60!blue, opacity=0.1] {};
\draw[color=gray!60!blue, very thick] (v1) -- (v2);
\draw[color=gray!60!blue, very thick] (v2) -- (v3);
\draw[color=gray!60!blue, very thick] (v1) -- (v3);
\end{tikzpicture}}
\ee
Later in Section~\ref{subsubsec:sodareod}, local changes of stratifications which exchange~\eqref{eq:3ballstrat} and~\eqref{eq:triastrat} (called ``bubble moves'') will be important for us. 

\medskip

For any $n\in \Z_+$ one may now consider the category $\Bordstrat_n$ of \textsl{stratified bordisms}. 
An object in $\Bordstrat_n$ is a closed $(n-1)$-dimensional stratified manifold~$\Sigma$ 
	(oriented and compact
by the above convention).
A morphism $\Sigma \to \Sigma'$ is an equivalence class of $n$-dimensional stratified manifolds~$M$ with 
	parametrised
boundary.
	We describe the parametrisation and the equivalence relation in turn.
	The boundary parametrisation is a germ (in $\eps>0$) of orientation preserving embeddings $\iota \colon (\Sigma \times [0,\eps)) \sqcup (\Sigma' \times (-\eps,0]) \to M$ (which are in particular continuous maps whose restrictions to strata are smooth and orientation preserving by our convention) which map $(\Sigma \times \{0\})^{\text{rev}} \sqcup (\Sigma' \times \{0\})$ onto $\partial M$; we will use $\iota$ to denote both the germ and a representative map. The operation $(-)^{\text{rev}}$ reverses the orientation of all strata. Two such stratified manifolds $(M,\iota)$ and $(\widetilde M, \widetilde\iota)$ are equivalent if there is an isomorphism $f\colon M\to \widetilde M$ such that $f \circ \iota  = \widetilde\iota$ on $(\Sigma \times [0,\delta)) \sqcup (\Sigma' \times (-\delta,0])$ for some small enough $\delta>0$.
Composition 
	in $\Bordstrat_n$ is defined by choosing representatives, gluing along the boundary parametrisation, and then taking the bordism class. 
Thus composition 
is well-defined, associative and unital. 
Further standard arguments show that $\Bordstrat_n$ has a natural structure of a symmetric monoidal category.

\subsection{Defect bordisms}
\label{subsec:defectbords} 
 
\subsubsection{Undecorated defect bordisms}
\label{subsubsec:undecodefbord}

We think of defects in a defect TQFT as ``combinatorial'' in nature, meaning that for a defect confined to a given stratum~$Y$ it is only the distribution of strata in the immediate surroundings of~$Y$ that matters. 
	We 
will impose regularity conditions by requiring the existence of certain local neighbourhoods (detailed below) for all strata. 

Like in the case of a smooth manifold we shall first specify the type of open stratified manifolds that we want to allow as local neighbourhoods and then define a defect bordism via an atlas of charts taking values in these neighbourhoods. 
The definition is inductive on the dimension~$n$, and stratified manifolds whose underlying manifolds are standard $n$-spheres feature prominently in the induction step: 

\begin{definition}
\label{def:Bordndef}
For all $n \in \Z_+$ we define three related structures recursively:
\begin{itemize}
\item 
the sets $\Nbh_n$ of \textsl{local neighbourhoods for $n$-dimensional defect bordisms},
\item 
the symmetric monoidal \textsl{category of $n$-dimensional defect bordisms $\Borddefn{n}$}, 
\item 
the \textsl{set of defect $n$-spheres} $\Sphere_{n}$.
\end{itemize}
For $n=1$ the above data is fixed as follows.
\begin{itemize}
\item 
The set $\Nbh_1$ consists of three open stratified $1$-manifolds: 
the oriented interval $(-1,1)$, oriented from $-1$ to $1$, and the interval $(-1,1)$ with the same orientation and 
an oriented 0-stratum at~$0$ which is either oriented~$+$ or~$-$: 
\be
\label{eq:N1elements}
\Nbh_1 = 
\Big\{
\tikzzbox{\begin{tikzpicture}[thick,scale=2.321,color=blue!50!black, baseline=0cm, >=stealth, rotate=-90
]
\coordinate (v') at (0,0);
\coordinate (v) at (0,1);
\draw[string=gray!60!blue, very thick] (v') -- (v);
\end{tikzpicture}}
\, , \quad
\tikzzbox{\begin{tikzpicture}[thick,scale=2.321,color=blue!50!black, baseline=0cm, >=stealth, rotate=-90
]
\coordinate (v') at (0,0);
\coordinate (d) at (0,0.5);
\coordinate (v) at (0,1);
\draw[string=gray!60!blue, very thick] (v') -- (d);
\draw[string=gray!60!blue, very thick] (d) -- (v);
\fill[color=red!90!black] (d) circle (0.9pt) node[above] (0up) {{\footnotesize $+$}};
\end{tikzpicture}}
\, , \quad
\tikzzbox{\begin{tikzpicture}[thick,scale=2.321,color=blue!50!black, baseline=0cm, >=stealth, rotate=-90
]
\coordinate (v') at (0,0);
\coordinate (d) at (0,0.5);
\coordinate (v) at (0,1);
\draw[string=gray!60!blue, very thick] (v') -- (d);
\draw[string=gray!60!blue, very thick] (d) -- (v);
\fill[color=red!90!black] (d) circle (0.9pt) node[above] (0up) {{\footnotesize $-$}};
\end{tikzpicture}}
\Big\}
\, .
\ee
\item 
$\Borddefn{1} := \Bordstrat_1$.
\item
$
\Sphere_{1} := 
\Big\{
	S  \,\Big|\, 
	\text{$S$ is a stratified $1$-manifold with underlying manifold $S^1$}
\Big\}
$.
\end{itemize}
Now assume that $\Nbh_{n}$, $\Borddefn{n}$ and $\Sphere_n$ are defined for a given $n\geqslant 1$.
\begin{itemize}
\item 
The set  $\Nbh_{n+1}$ consists of all open stratified $(n+1)$-manifolds of two types. 

One type is $X \times (-1,1)$ for $X \in \Nbh_n$ with orientations induced from the orientation of $X$ taken together with the standard orientation of $(-1,1)$. 
Each $j$-stratum in $X$ produces a $(j+1)$-stratum in  $X \times (-1,1)$.\footnote{%
The convention for the orientation is chosen such that the orientation on $X$ is the induced boundary orientation of $X \times [-1,1)$.} 
For $p \in X$ a $0$-stratum with orientation~$-$, the orientation of the 1-stratum $\{p\} \times (-1,1)$ is obtained by reversing the orientation of the interval.

The other type of element in $\Nbh_{n+1}$ is an open cone 
\be
\label{eq:ConeSigma}
C(\Sigma)=(\Sigma \times [0,1))/(\Sigma\times\{0\}) \, , 
\ee
where~$\Sigma$ is an element in $\Sphere_{n}$. 
We identify $C(\Sigma)$ with the open $(n+1)$-ball $B^{n+1} \subset \R^{n+1}$, with $0 \in \R^{n+1}$ as cone point.
It has a natural structure of a stratified manifold with the cone point as $0$-stratum and with $j$-strata~$Y$ on $\Sigma$ inducing $(j+1)$-strata $Y \times (0,1)$ of $C(\Sigma)$.
Each $\Sigma \in \Sphere_{n}$ yields two elements in $\Nbh_{n+1}$, one for either orientation of the $0$-stratum at the cone point of~\eqref{eq:ConeSigma}.
\item 
The symmetric monoidal category $\Borddefn{n+1}$ has as objects closed stratified $n$-manifolds equipped with a compatible system of charts mapping an open neighbourhood of each point 
	isomorphically 
(as oriented stratified manifolds) to an element in $\Nbh_{n}$.
The morphisms of $\Borddefn{n+1}$ are those morphisms of $\Bordstratn{n+1}$ whose representatives have open neighbourhoods around interior points that are isomorphic 
	to 
an element of $\Nbh_{n+1}$.
\item 
The set $\Sphere_{n+1}$ consists of all those stratified $(n+1)$-manifolds~$S$ whose underlying manifold is the standard $(n+1)$-sphere $S^{n+1}$, 
	and such that the isomorphism 
class of~$S$ defines a morphism $[S] \colon \emptyset \to \emptyset$ in $\Borddefn{n+1}$. 
\end{itemize}
\end{definition}

Note that as a consequence of the recursive definition we have the following equivalent ways to think about defect $n$-spheres: 
\begin{align}
\Sphere_{n} &= 
\Big\{
	S  \,\Big|\, 
	\text{$S$ is a stratified $n$-manifold with	underlying}
\nonumber \\[-.8em]
& \hspace{3em}
	 \text{ manifold $S^n$ whose class is a morphism $[S] \colon \emptyset \to \emptyset$ in $\Borddefn{n}$}
\Big\}
\nonumber \\
&=
\Big\{
	S \in \Borddefn{n+1} \,\Big|\, \text{ the underlying manifold of $S$ is } S^n 
\Big\}
\, . 
\label{eq:two-descri-of-sphere_n}
\end{align}

In the definition of the bordisms~$M$ in $\Borddefn{n+1}$ above it is enough to consider open neighbourhoods only of interior points. 
Points in $\partial M$ then have compatible neighbourhoods coming from the parametrisation of $\partial M$ with respect to source or target objects. 

\begin{remark}
Why is this the correct definition of local neighbourhoods for our purposes? 
Ultimately we are motivated by examples and the attitude that topological defects should be combinatorial objects. 
Still, there might be situations where one is led to considering different types of local neighbourhoods.
At least our definition is closed under taking iterated cones: 
for $X \in \Nbh_{n-k}$ we have that the interior of $C^k X$ is 
	isomorphic 
to an element of $\Nbh_n$, as the cone of an $i$-ball is topologically an $(i+1)$-ball.
The resulting $n$-ball can also be realised as a neighbourhood of~$0$ in $CX \times (-1,1)^{k-1}$.
\end{remark}

To illustrate Definition~\ref{def:Bordndef} we now go through the iteration for $n \in \{1,2,3\}$:

\begin{example}
\begin{enumerate}
\item
The sets 
 $\Nbh_{1}$, $\Sphere_{1}$ and the category $\Borddefn{1}$ are directly given in the definition. 
An example of a morphism from $\emptyset$ to $\emptyset$ (which is thus also an element in $\Sphere_1$) is
\be\label{eq:starlikecircle}
\tikzzbox{\begin{tikzpicture}[very thick,scale=0.7,color=blue!50!black, baseline=-0.1cm]
\draw[string=gray!60!blue, very thick, >=stealth] (0,0) circle (1.75);
\draw[color=gray!60!blue, postaction={decorate}, decoration={markings,mark=at position .66 with {\arrow[draw=gray!60!blue]{>}}}, very thick, >=stealth] (1.75,0) arc [start angle=0, delta angle=90, radius=1.75];
\draw[color=gray!60!blue, postaction={decorate}, decoration={markings,mark=at position .9 with {\arrow[draw=gray!60!blue]{>}}}, very thick, >=stealth] (1.75,0) arc [start angle=0, delta angle=340, radius=1.75];
%
\fill[color=red!90!black] (0:1.75) circle (3.5pt) node[right] {{\footnotesize $-$}};
\fill[color=red!90!black] (120:1.75) circle (3.5pt) node[above] {{\footnotesize $-$}};
\fill[color=red!90!black] (240:1.75) circle (3.5pt) node[below] {{\footnotesize $+$}};
\end{tikzpicture}}
\, .
\ee
\item 
The set $\Nbh_{2}$ consists of open neighbourhoods of two types. 
The first type are cylinders over the elements~\eqref{eq:N1elements} in $\Nbh_{1}$, resulting in the three elements of $\Nbh_2$:
\be\label{eq:2dNbh}
\tikzzbox{\begin{tikzpicture}[thick,scale=2.5,color=black, baseline=1.2cm, >=stealth
]
\fill [blue!60,opacity=0.4] (0,0) -- (0,1) -- (1,1) -- (1,0);
%
\draw[line width=1] (0.5,0.5) node[line width=0pt] (alpha) {{\footnotesize $\circlearrowleft$}};
\end{tikzpicture}}
\, , \quad
\tikzzbox{\begin{tikzpicture}[thick,scale=2.5,color=black, baseline=1.2cm, >=stealth]
\fill [blue!60,opacity=0.4] (0,0) -- (0,1) -- (1,1) -- (1,0);
%
%
\draw[line width=1] (0.75,0.25) node[line width=0pt] (alpha) {{\footnotesize $\circlearrowleft$}};
\draw[line width=1] (0.25,0.75) node[line width=0pt] (alpha) {{\footnotesize $\circlearrowleft$}};
%
\draw[
	color=red!90!black, 
	very thick,
	 >=stealth, 
	postaction={decorate}, decoration={markings,mark=at position .5 with {\arrow[draw]{>}}}
	] 
 (0.5,0) --  (0.5,1);
\end{tikzpicture}}
\, , \quad
\tikzzbox{\begin{tikzpicture}[thick,scale=2.5,color=black, baseline=1.2cm, >=stealth]
\fill [blue!60,opacity=0.4] (0,0) -- (0,1) -- (1,1) -- (1,0);
%
%
\draw[line width=1] (0.75,0.25) node[line width=0pt] (alpha) {{\footnotesize $\circlearrowleft$}};
\draw[line width=1] (0.25,0.75) node[line width=0pt] (alpha) {{\footnotesize $\circlearrowleft$}};
%
\draw[
	color=red!90!black, 
	very thick,
	 >=stealth, 
	postaction={decorate}, decoration={markings,mark=at position .5 with {\arrow[draw]{<}}}
	] 
 (0.5,0) --  (0.5,1);
\end{tikzpicture}}
\ee

The second type of allowed neighbourhood is a cone over a circle in $\Sphere_{1}$. 
Hence there are infinitely many elements of $\Nbh_2$ of this second type. 
Taking for example the circle \eqref{eq:starlikecircle} and the two possible orientations of the cone point we arrive at:
\be\label{eq:starlikedisk}
\tikzzbox{\begin{tikzpicture}[very thick,scale=0.7,color=blue!50!black, baseline=-0.1cm]
\fill [blue!60,opacity=0.4] (0,0) circle (1.75);
\draw[
	color=red!70!black, 
	>=stealth,
	decoration={markings, mark=at position 0.5 with {\arrow{>}},
					}, postaction={decorate}
	] 
 (0,0) -- (0:1.75);
 \draw[
	color=red!70!black, 
	>=stealth,
	decoration={markings, mark=at position 0.5 with {\arrow{>}},
					}, postaction={decorate} 
	] 
 (0,0) -- (120:1.75);
  \draw[
	color=red!70!black, 
	>=stealth,
	decoration={markings, mark=at position 0.5 with {\arrow{<}},
					}, postaction={decorate}
	] 
 (0,0) -- (240:1.75); 
%
\fill[color=green!50!black] (0,0) circle (3.5pt) node[above] {{\footnotesize $+$}};
%
\foreach \x in {1,...,3}
	\draw[line width=1] (120*\x - 60:1.2) node[line width=0pt] (Xbottom) {{\footnotesize $\circlearrowleft$}};
\end{tikzpicture}}
\, , \qquad
\tikzzbox{\begin{tikzpicture}[very thick,scale=0.7,color=blue!50!black, baseline=-0.1cm]
\fill [blue!60,opacity=0.4] (0,0) circle (1.75);
\draw[
	color=red!70!black, 
	>=stealth,
	decoration={markings, mark=at position 0.5 with {\arrow{>}},
					}, postaction={decorate}
	] 
 (0,0) -- (0:1.75);
 \draw[
	color=red!70!black, 
	>=stealth,
	decoration={markings, mark=at position 0.5 with {\arrow{>}},
					}, postaction={decorate} 
	]
 (0,0) -- (120:1.75);
  \draw[
	color=red!70!black, 
	>=stealth,
	decoration={markings, mark=at position 0.5 with {\arrow{<}},
					}, postaction={decorate}
	] 
 (0,0) -- (240:1.75); 
%
\fill[color=green!50!black] (0,0) circle (3.5pt) node[above] {{\footnotesize $-$}};
%
\foreach \x in {1,...,3}
	\draw[line width=1] (120*\x - 60:1.2) node[line width=0pt] (Xbottom) {{\footnotesize $\circlearrowleft$}};
\end{tikzpicture}}
\, .
\ee

The category $\Borddefn{2}$ has stratified circles like \eqref{eq:starlikecircle} as objects. 
Morphisms of $\Borddefn{2}$ are those morphisms of $\Bordstratn{2}$ where each point has a local neighbourhood in $\Nbh_{2}$.%
\footnote{This situation illustrates our choice of the definition of stratified manifolds and their morphisms as given in Section~\ref{subsec:strabord}, rather than requiring a smooth structure on the total space and smooth maps that respects the filtration: 
A smooth map has a differential at the 0-stratum in \eqref{eq:starlikedisk}, and since the differential is linear, such smooth maps cannot relate local situations with arbitrary angles between the 1-strata meeting at the 0-stratum.}
An example of an element of $\Sphere_2$ is 
\vspace{-0.7cm}
\be
\label{eq:defsphere}
\tikzzbox{\begin{tikzpicture}[very thick,scale=1.2,color=green!60!black=-0.1cm, >=stealth, baseline=0]
\fill[ball color=white!95!blue] (0,0) circle (0.95 cm);
\coordinate (v1) at (-0.4,-0.6);
\coordinate (v2) at (0.4,-0.6);
\coordinate (v3) at (0.4,0.6);
\coordinate (v4) at (-0.4,0.6);
\draw[color=red!80!black, very thick, rounded corners=0.5mm, postaction={decorate}, decoration={markings,mark=at position .5 with {\arrow[draw=red!80!black]{>}}}] 
	(v2) .. controls +(0,-0.25) and +(0,-0.25) .. (v1);
\draw[color=red!80!black, very thick, rounded corners=0.5mm, postaction={decorate}, decoration={markings,mark=at position .62 with {\arrow[draw=red!80!black]{>}}}] 
	(v4) .. controls +(0,0.15) and +(0,0.15) .. (v3);
\draw[color=red!80!black, very thick, rounded corners=0.5mm, postaction={decorate}, decoration={markings,mark=at position .5 with {\arrow[draw=red!80!black]{>}}}] 
	(v4) .. controls +(0.25,-0.1) and +(-0.05,0.5) .. (v2);
\draw[color=red!80!black, very thick, rounded corners=0.5mm, postaction={decorate}, decoration={markings,mark=at position .58 with {\arrow[draw=red!80!black]{>}}}] 
	(v3) .. controls +(-0.9,0.99) and +(-0.75,0.4) .. (v1);
\draw[color=red!80!black, very thick, rounded corners=0.5mm, postaction={decorate}, decoration={markings,mark=at position .5 with {\arrow[draw=red!80!black]{>}}}] 
	(v1) .. controls +(-0.15,0.5) and +(-0.15,-0.5) .. (v4);
\draw[color=red!80!black, very thick, rounded corners=0.5mm, postaction={decorate}, decoration={markings,mark=at position .5 with {\arrow[draw=red!80!black]{>}}}] 
	(v3) .. controls +(0.25,-0.5) and +(0.25,0.5) .. (v2);
\fill (v1) circle (1.6pt) node[black, opacity=0.6, right, font=\tiny] { $+$ };
\fill (v2) circle (1.6pt) node[black, opacity=0.6, right, font=\tiny] { $+$ };
\fill (v3) circle (1.6pt) node[black, opacity=0.6, right, font=\tiny] { $-$ };
\fill (v4) circle (1.6pt) node[black, opacity=0.6, right, font=\tiny] { $-$ };
\end{tikzpicture}}
\ee
where the orientation of the 2-strata is counterclockwise on the front side of the sphere. 
\item 
Next we describe the set $\Nbh_{3}$. 
By taking a cylinder over the first cylinder in~\eqref{eq:2dNbh} we obtain the empty 3-ball (drawn as a cube)
\be
\tikzzbox{\begin{tikzpicture}[thick,scale=2.1,color=blue!50!black, baseline=0.0cm, >=stealth, 
				style={x={(-0.9cm,-0.4cm)},y={(0.8cm,-0.4cm)},z={(0cm,0.9cm)}}]
\draw[
	 color=gray,            
	 opacity=0.3, 
	 semithick,
	 dashed
	 ] 
	 (1,0,0) -- (0,0,0) -- (0,1,0)
	 (0,0,0) -- (0,0,1);
\coordinate (left0) at (0, 0.45, 0);
\coordinate (left1) at (0, 0.45, 1);
\coordinate (right0) at (1, 0.45, 0);
\coordinate (right1) at (1, 0.45, 1);
%
\fill [blue!20,opacity=0.2] (1,0,0) -- (1,1,0) -- (0,1,0) -- (0,1,1) -- (0,0,1) -- (1,0,1);
%
\draw[
	 color=gray, 
	 opacity=0.4, 
	 semithick
	 ] 
	 (0,1,1) -- (0,1,0) -- (1,1,0) -- (1,1,1) -- (0,1,1) -- (0,0,1) -- (1,0,1) -- (1,0,0) -- (1,1,0)
	 (1,0,1) -- (1,1,1);
\end{tikzpicture}}
\, . 
\ee
This ball has the induced standard orientation from~$\R^3$, as do all 3-strata of all other elements in~$\mathcal N_3$. 
For the other two discs in~\eqref{eq:2dNbh} we get (the shading emphasises the opposite orientation)
\be
\tikzzbox{\begin{tikzpicture}[thick,scale=2.1,color=blue!50!black, baseline=0.0cm, >=stealth, 
				style={x={(-0.9cm,-0.4cm)},y={(0.8cm,-0.4cm)},z={(0cm,0.9cm)}}]
\draw[
	 color=gray,            
	 opacity=0.3, 
	 semithick,
	 dashed
	 ] 
	 (1,0,0) -- (0,0,0) -- (0,1,0)
	 (0,0,0) -- (0,0,1);
\coordinate (left0) at (0, 0.45, 0);
\coordinate (left1) at (0, 0.45, 1);
\coordinate (right0) at (1, 0.45, 0);
\coordinate (right1) at (1, 0.45, 1);
%
\fill [blue!20,opacity=0.2] (1,0,0) -- (1,1,0) -- (0,1,0) -- (0,1,1) -- (0,0,1) -- (1,0,1);
\fill [red!60,opacity=0.4](left0) -- (left1) -- (right1) -- (right0);
\draw[line width=1] (0, 0.2, 0.5) node[line width=0pt] (beta) {{\footnotesize  $\circlearrowleft$}};
%
\draw[
	 color=gray, 
	 opacity=0.4, 
	 semithick
	 ] 
	 (0,1,1) -- (0,1,0) -- (1,1,0) -- (1,1,1) -- (0,1,1) -- (0,0,1) -- (1,0,1) -- (1,0,0) -- (1,1,0)
	 (1,0,1) -- (1,1,1);
\end{tikzpicture}}
\, , \quad
\tikzzbox{\begin{tikzpicture}[thick,scale=2.1,color=blue!50!black, baseline=0.0cm, >=stealth, 
				style={x={(-0.9cm,-0.4cm)},y={(0.8cm,-0.4cm)},z={(0cm,0.9cm)}}]
\draw[
	 color=gray, 
	 opacity=0.3, 
	 semithick,
	 dashed
	 ] 
	 (1,0,0) -- (0,0,0) -- (0,1,0)
	 (0,0,0) -- (0,0,1);
\coordinate (left0) at (0, 0.45, 0);
\coordinate (left1) at (0, 0.45, 1);
\coordinate (right0) at (1, 0.45, 0);
\coordinate (right1) at (1, 0.45, 1);
%
\fill [blue!20,opacity=0.2] (1,0,0) -- (1,1,0) -- (0,1,0) -- (0,1,1) -- (0,0,1) -- (1,0,1);
\fill [pattern=north west lines, opacity=0.3] (left0) -- (left1) -- (right1) -- (right0);
\fill [red!60,opacity=0.4](left0) -- (left1) -- (right1) -- (right0);
\draw[line width=1] (0, 0.2, 0.5) node[line width=0pt] (beta) {{\footnotesize  $\circlearrowright$}}; 
%
\draw[
	 color=gray, 
	 opacity=0.4, 
	 semithick
	 ] 
	 (0,1,1) -- (0,1,0) -- (1,1,0) -- (1,1,1) -- (0,1,1) -- (0,0,1) -- (1,0,1) -- (1,0,0) -- (1,1,0)
	 (1,0,1) -- (1,1,1);
%
\end{tikzpicture}}
\, .
\ee

Taking cylinders over the cones  in $\Nbh_{2}$ we obtain 
\be\label{eq:starlikecylinder}
\tikzzbox{\begin{tikzpicture}[very thick,scale=1.0,color=blue!50!black, baseline=-1.9cm]
\fill [blue!15,
      opacity=0.5, 
      left color=blue!15, 
      right color=white] 
      (-1.25,0) -- (-1.25,-3) arc (180:360:1.25 and 0.5) -- (1.25,0) arc (0:180:1.25 and -0.5);
\fill [blue!35,opacity=0.1] (-1.25,-3) arc (180:360:1.25 and 0.5) -- (1.25,-3) arc (0:180:1.25 and 0.5);
\fill [blue!25,opacity=0.5] (-1.25,0) arc (180:360:1.25 and 0.5) -- (1.25,0) arc (0:180:1.25 and 0.5);
%
\fill [red,opacity=0.4] (0,0) -- (0,-3) -- (1.25,-3) -- (1.25,0);
\fill [pattern=north west lines, opacity=0.3] (0,0) -- ($(0,0)+(120:1.25 and 0.5)$) -- ($(0,-3)+(120:1.25 and 0.5)$) -- (0,-3) -- (0,0);
\fill [red,opacity=0.4] (0,0) -- ($(0,0)+(120:1.25 and 0.5)$) -- ($(0,-3)+(120:1.25 and 0.5)$) -- (0,-3) -- (0,0);
\fill [red,opacity=0.4] (0,0) -- ($(0,0)+(245:1.25 and 0.5)$) -- ($(0,-3)+(245:1.25 and 0.5)$) -- (0,-3) -- (0,0);
 \draw[
	color=green!50!black, 
	>=stealth,
	decoration={markings, mark=at position 0.5 with {\arrow{>}},
					}, postaction={decorate}
	] 
(0,-3) -- (0,0);
\draw[line width=1] (0.6, -1.5) node[line width=0pt] (beta) {{\footnotesize  $\circlearrowleft$}};
\draw[line width=1]  ($(0,-1.8)+(245:0.6 and 0)$)  node[line width=0pt] (beta) {{\footnotesize  $\circlearrowleft$}};
\draw[line width=1]  ($(0,0)+(120:0.8 and 0)$)  node[line width=0pt] (beta) {{\footnotesize  $\circlearrowright$}};
\end{tikzpicture}}
\, , \quad 
\tikzzbox{\begin{tikzpicture}[very thick,scale=1.0,color=blue!50!black, baseline=-1.9cm]
\fill [blue!15,
      opacity=0.5, 
      left color=blue!15, 
      right color=white] 
      (-1.25,0) -- (-1.25,-3) arc (180:360:1.25 and 0.5) -- (1.25,0) arc (0:180:1.25 and -0.5);
\fill [blue!35,opacity=0.1] (-1.25,-3) arc (180:360:1.25 and 0.5) -- (1.25,-3) arc (0:180:1.25 and 0.5);
\fill [blue!25,opacity=0.5] (-1.25,0) arc (180:360:1.25 and 0.5) -- (1.25,0) arc (0:180:1.25 and 0.5);
%
\fill [red,opacity=0.4] (0,0) -- (0,-3) -- (1.25,-3) -- (1.25,0);
\fill [pattern=north west lines, opacity=0.3] (0,0) -- ($(0,0)+(120:1.25 and 0.5)$) -- ($(0,-3)+(120:1.25 and 0.5)$) -- (0,-3) -- (0,0);
\fill [red,opacity=0.4] (0,0) -- ($(0,0)+(120:1.25 and 0.5)$) -- ($(0,-3)+(120:1.25 and 0.5)$) -- (0,-3) -- (0,0);
\fill [red,opacity=0.4] (0,0) -- ($(0,0)+(245:1.25 and 0.5)$) -- ($(0,-3)+(245:1.25 and 0.5)$) -- (0,-3) -- (0,0);
 \draw[
	color=green!50!black, 
	>=stealth,
	decoration={markings, mark=at position 0.5 with {\arrow{<}},
					}, postaction={decorate}
	] 
(0,-3) -- (0,0);
\draw[line width=1] (0.6, -1.5) node[line width=0pt] (beta) {{\footnotesize  $\circlearrowleft$}};
\draw[line width=1]  ($(0,-1.8)+(245:0.6 and 0)$)  node[line width=0pt] (beta) {{\footnotesize  $\circlearrowleft$}};
\draw[line width=1]  ($(0,0)+(120:0.8 and 0)$)  node[line width=0pt] (beta) {{\footnotesize  $\circlearrowright$}};
\end{tikzpicture}}
\ee
for the two discs in \eqref{eq:starlikedisk}, respectively. 

Finally, we have to consider cones over spheres in $\Sphere_2$. 
For example, the cones over \eqref{eq:defsphere} (with the two possible orientations for the interior cone point) are the open stratified 3-balls
\vspace{-0.7cm}
\be
\tikzzbox{\begin{tikzpicture}[very thick,scale=1.2,color=green!60!black=-0.1cm, >=stealth, baseline=0]
\fill[ball color=white!95!blue] (0,0) circle (0.95 cm);
\coordinate (v1) at (-0.4,-0.6);
\coordinate (v2) at (0.4,-0.6);
\coordinate (v3) at (0.4,0.6);
\coordinate (v4) at (-0.4,0.6);
%
\fill [red!80!black,opacity=0.3] (0,0) -- (v2) .. controls +(0,-0.25) and +(0,-0.25) .. (v1);
\fill [red!80!black,opacity=0.3] (0,0) -- (v4) .. controls +(0,0.15) and +(0,0.15) .. (v3);
\fill [red!80!black,opacity=0.3] (0,0) -- (v4) .. controls +(0.25,-0.1) and +(-0.05,0.5) .. (v2);
\fill [red!80!black,opacity=0.3] (0,0) -- (v3) .. controls +(-0.9,0.99) and +(-0.75,0.4) .. (v1);
\fill [red!80!black,opacity=0.3] (0,0) -- (v1) .. controls +(-0.15,0.5) and +(-0.15,-0.5) .. (v4);
\fill [red!80!black,opacity=0.3] (0,0) -- (v3) .. controls +(0.25,-0.5) and +(0.25,0.5) .. (v2);
%
\draw[thick, opacity=0.6, postaction={decorate}, decoration={markings,mark=at position .6 with {\arrow[draw=green!60!black]{<}}}] (0,0) -- (v1);
\draw[thick, opacity=0.6, postaction={decorate}, decoration={markings,mark=at position .6 with {\arrow[draw=green!60!black]{<}}}] (0,0) -- (v2);
\draw[thick, opacity=0.6, postaction={decorate}, decoration={markings,mark=at position .7 with {\arrow[draw=green!60!black]{>}}}] (0,0) -- (v3);
\draw[thick, opacity=0.6, postaction={decorate}, decoration={markings,mark=at position .5 with {\arrow[draw=green!60!black]{>}}}] (0,0) -- (v4);
\draw[color=red!80!black, very thin, rounded corners=0.5mm, postaction={decorate}, decoration={markings,mark=at position .5 with {\arrow[draw=red!80!black]{>}}}] 
	(v2) .. controls +(0,-0.25) and +(0,-0.25) .. (v1);
\draw[color=red!80!black, very thin, rounded corners=0.5mm, postaction={decorate}, decoration={markings,mark=at position .62 with {\arrow[draw=red!80!black]{>}}}] 
	(v4) .. controls +(0,0.15) and +(0,0.15) .. (v3);
\draw[color=red!80!black, very thin, rounded corners=0.5mm, postaction={decorate}, decoration={markings,mark=at position .5 with {\arrow[draw=red!80!black]{>}}}] 
	(v4) .. controls +(0.25,-0.1) and +(-0.05,0.5) .. (v2);
\draw[color=red!80!black, very thin, rounded corners=0.5mm, postaction={decorate}, decoration={markings,mark=at position .58 with {\arrow[draw=red!80!black]{>}}}] 
	(v3) .. controls +(-0.9,0.99) and +(-0.75,0.4) .. (v1);
\draw[color=red!80!black, very thin, rounded corners=0.5mm, postaction={decorate}, decoration={markings,mark=at position .5 with {\arrow[draw=red!80!black]{>}}}] 
	(v1) .. controls +(-0.15,0.5) and +(-0.15,-0.5) .. (v4);
\draw[color=red!80!black, very thin, rounded corners=0.5mm, postaction={decorate}, decoration={markings,mark=at position .5 with {\arrow[draw=red!80!black]{>}}}] 
	(v3) .. controls +(0.25,-0.5) and +(0.25,0.5) .. (v2);
\fill[magenta!10!black] (0,0) circle (1.6pt) node[black, opacity=0.6, right, font=\tiny, left] {$+$};
\end{tikzpicture}}
\, , \quad 
\tikzzbox{\begin{tikzpicture}[very thick,scale=1.2,color=green!60!black=-0.1cm, >=stealth, baseline=0]
\fill[ball color=white!95!blue] (0,0) circle (0.95 cm);
\coordinate (v1) at (-0.4,-0.6);
\coordinate (v2) at (0.4,-0.6);
\coordinate (v3) at (0.4,0.6);
\coordinate (v4) at (-0.4,0.6);
%
\fill [red!80!black,opacity=0.3] (0,0) -- (v2) .. controls +(0,-0.25) and +(0,-0.25) .. (v1);
\fill [red!80!black,opacity=0.3] (0,0) -- (v4) .. controls +(0,0.15) and +(0,0.15) .. (v3);
\fill [red!80!black,opacity=0.3] (0,0) -- (v4) .. controls +(0.25,-0.1) and +(-0.05,0.5) .. (v2);
\fill [red!80!black,opacity=0.3] (0,0) -- (v3) .. controls +(-0.9,0.99) and +(-0.75,0.4) .. (v1);
\fill [red!80!black,opacity=0.3] (0,0) -- (v1) .. controls +(-0.15,0.5) and +(-0.15,-0.5) .. (v4);
\fill [red!80!black,opacity=0.3] (0,0) -- (v3) .. controls +(0.25,-0.5) and +(0.25,0.5) .. (v2);
%
\draw[thick, opacity=0.6, postaction={decorate}, decoration={markings,mark=at position .6 with {\arrow[draw=green!60!black]{<}}}] (0,0) -- (v1);
\draw[thick, opacity=0.6, postaction={decorate}, decoration={markings,mark=at position .6 with {\arrow[draw=green!60!black]{<}}}] (0,0) -- (v2);
\draw[thick, opacity=0.6, postaction={decorate}, decoration={markings,mark=at position .7 with {\arrow[draw=green!60!black]{>}}}] (0,0) -- (v3);
\draw[thick, opacity=0.6, postaction={decorate}, decoration={markings,mark=at position .5 with {\arrow[draw=green!60!black]{>}}}] (0,0) -- (v4);
\draw[color=red!80!black, very thin, rounded corners=0.5mm, postaction={decorate}, decoration={markings,mark=at position .5 with {\arrow[draw=red!80!black]{>}}}] 
	(v2) .. controls +(0,-0.25) and +(0,-0.25) .. (v1);
\draw[color=red!80!black, very thin, rounded corners=0.5mm, postaction={decorate}, decoration={markings,mark=at position .62 with {\arrow[draw=red!80!black]{>}}}] 
	(v4) .. controls +(0,0.15) and +(0,0.15) .. (v3);
\draw[color=red!80!black, very thin, rounded corners=0.5mm, postaction={decorate}, decoration={markings,mark=at position .5 with {\arrow[draw=red!80!black]{>}}}] 
	(v4) .. controls +(0.25,-0.1) and +(-0.05,0.5) .. (v2);
\draw[color=red!80!black, very thin, rounded corners=0.5mm, postaction={decorate}, decoration={markings,mark=at position .58 with {\arrow[draw=red!80!black]{>}}}] 
	(v3) .. controls +(-0.9,0.99) and +(-0.75,0.4) .. (v1);
\draw[color=red!80!black, very thin, rounded corners=0.5mm, postaction={decorate}, decoration={markings,mark=at position .5 with {\arrow[draw=red!80!black]{>}}}] 
	(v1) .. controls +(-0.15,0.5) and +(-0.15,-0.5) .. (v4);
\draw[color=red!80!black, very thin, rounded corners=0.5mm, postaction={decorate}, decoration={markings,mark=at position .5 with {\arrow[draw=red!80!black]{>}}}] 
	(v3) .. controls +(0.25,-0.5) and +(0.25,0.5) .. (v2);
\fill[magenta!10!black] (0,0) circle (1.6pt) node[black, opacity=0.6, right, font=\tiny, left] {$-$};
\end{tikzpicture}}
\, . 
\ee
\end{enumerate}
\end{example}

\subsubsection{Decorated defect bordisms}

Having specified the allowed geometric configurations for defects, we next turn to the sets of labels that  defects carry. 
Our definition of ``defect data''~$\D$ in~$n$ dimensions and of the decorated bordism category $\Borddefn{n}(\D)$ is again inductive.

Roughly, these definitions amount to the following. 
The defect labels for $i$-strata~$Y$ consist of a set $D_{i}$ together with a map $f_i$ that specifies the allowed labels for the adjacent strata. 
A local neighbourhood of~$Y$ has the form 
$B^{n-i} \times (-1,1)^{i}$, and the relevant information about the adjacent strata is encoded in a decorated stratification of the sphere $S^{n-i-1}=\partial B^{n-i}$. 
More precisely, we must consider the stratified sphere only up to isomorphism. 
This leads to an equivalence class (with respect to isomorphisms of stratified manifolds) of decorated spheres, the set of which we will denote $[\Sphere_{n-i-1}(\partial^{i+1}\D)]$, and the map that specifies the allowed configuration for labels $D_{i}$  is a map $f_{i}\colon D_{i} \rightarrow [\Sphere_{n-i-1}(\partial^{i+1}\D)]$. 

The precise definition is as follows:

\begin{definition}
\label{def:d-dim-defect-data}
For all $n \in \N$ a set of \textsl{$n$-dimensional defect data} 
\be\label{eq:d-dim-defect-datum_def}
\D^{n} 
	\equiv 
\big( D^{n}_{n}, D^{n}_{n-1},\ldots, D^{n}_{0}; \, f^{n}_{n-1},f^{n}_{n-2},\ldots,f^{n}_{0} \big)
\ee
consists of sets $D^{n}_k$ of \textsl{$k$-dimensional defect labels} together with the \textsl{$k$-dimensional adjacency maps} $f^n_k$ out of $D^{n}_k  \times \{\pm\}$, whose targets will be defined inductively below.
Namely, for all $n \in \N$ we define two related structures recursively:
\begin{itemize}
\item 
$\mathcal D^n$, the \textsl{class  
of all $n$-dimensional defect data}, together with a \textsl{boundary map} $\partial \colon \mathcal D^{n} \to \mathcal D^{n-1}$ (for $n \geqslant 1$), and
\item 
for each $\D^{n} \in \mathcal D^n$ and $n\geqslant 1$ the symmetric monoidal category $\Borddefn{n}(\D^{n})$ of 
\textsl{$n$-dimensional decorated defect bordisms} (while for $n=0$ we define a set $\Borddefn{0}(\D^{0})$).
\end{itemize}
We start the induction with $n=0$:
\begin{itemize}
\item 
$\mathcal{D}^{0} := \{ (D^{0}_{0}) \,|\, \text{$D^{0}_{0}$ is a non-empty set} \}$,
\item
for $\D^{0} \in \mathcal{D}^{0}$,
$\Borddefn{0}(\D^{0})$ is the set whose elements are isomorphism classes of finite sets of oriented points labelled with elements in  $D^{0}_{0}$.
We take 
\be
[\Sphere_{0}(\D^{0})] \subset \Borddefn{0}(\D^{0})
\ee 
to be the subset consisting of classes whose underlying point-set is an oriented $0$-sphere (i.\,e.~two points with opposite orientation).
\end{itemize}
Now assume that $\mathcal D^{n}$, $\partial \colon \mathcal D^{n} \to \mathcal D^{n-1}$ (for $n \geqslant 1$) and $\Borddefn{n}(\D^{n})$ 
have been defined up to and including a given $n\geqslant 0$.
For $n \geqslant 1$ we set (compare to \eqref{eq:two-descri-of-sphere_n})
\be
[\Sphere_{n}(\D^{n})] :=
\Big\{ [M] \in \Hom_{\Borddefn{n}(\D^{n})}(\emptyset,\emptyset)
\, \Big| \, \text{$M$ has underlying manifold $S^n$} \Big\} \, .
\ee
Then:
\begin{itemize}
\item
The set $\mathcal{D}^{n+1}$ consists of sets
\be\label{eq:n+1-dim-defect-datum_def}
\D^{n+1} 
	\equiv 
\big( D^{n+1}_{n+1}, D^{n+1}_{n},\ldots, D^{n+1}_{0}; \, f^{n+1}_{n},f^{n+1}_{n-1},\ldots,f^{n+1}_{0} \big)
\ee
subject to two conditions:
\begin{itemize}
\item
$\partial( \D^{n+1} ) := \big( D^{n+1}_{n+1}, D^{n+1}_{n},\ldots, D^{n+1}_{1}; \, f^{n+1}_{n},f^{n+1}_{n-1},\ldots,f^{n+1}_{1} \big)$ is an element of $\mathcal{D}^n$, i.\,e.~omitting $D^{n+1}_0$ and $f^{n+1}_0$ from $\D^{n+1}$ gives a set of $n$-dimensional defect data. 
We thus obtain a map $\partial \colon \mathcal D^{n+1} \to \mathcal D^{n}$.
\item
$f^{n+1}_{0}$ is a map $D^{n+1}_{0} \times \{\pm\} \rightarrow [\Sphere_{n}(\partial\D^{n+1})]$ such that
\be\label{eq:def-bord-def_fn+1_0-dual}
f^{n+1}_{0}(\phi,-) 
= 
\big(f^{n+1}_{0}(\phi,+) \big)^{\text{rev}}
\ee
where for any $S \in [\Sphere_{n}(\D^{n})]$, $S^{\text{rev}}$ denotes the bordism class with reversed orientation for all strata.
\end{itemize}
Iterating the boundary map we have that, for $0 \leqslant j \leqslant m$, 
\be
\partial^{j}\D^{m} := \big( D^{m}_{m},\ldots, D^{m}_{j}; \, f^{m}_{m-1},\ldots,f^{m}_{j} \big)
~\in~ \mathcal{D}^{m-j} \, ,
\ee
and that source and target of the adjacency map $f^{m}_j$ are, for $j<m$,
\be
\label{eq:adjacency-map-fmj}
f^{m}_{j}\colon D^{m}_{j} \times \{\pm\} \lra [\Sphere_{m-j-1}(\partial^{j+1}\D^{m})] \, ,
\ee
which satisfies the duality condition 
\be\label{eq:adjacency-map-duality}
f^{m}_{j}(\phi,-) 
= 
\big( f^{m}_{j}(\phi_,+) \big)^{\text{rev}}
\ee
for all $\phi \in D_j^{m}$. 
\item
For $\D^{n+1} \in \mathcal{D}^{n+1}$, the category $\Borddefn{n+1}(\D^{n+1})$ is defined as follows:
\begin{itemize}
\item
Objects are those of $\Borddefn{n+1}$ together with a decoration by $\partial \D^{n+1}$
in such a way that their classes define bordisms $\emptyset \to \emptyset$ in $\Borddefn{n}(\partial \D^{n+1})$.
\item                           
Morphisms are the morphisms $Y$ of $\Borddefn{n+1}$ where each $i$-stratum $Y_i^\alpha$ is labelled by an element of $D^{n+1}_i$, subject to two requirements: 
(i)~If the stratum meets the boundary of the bordism, it restricts to the decorations on the boundary.
(ii)~Let the stratum $Y_i^\alpha$ be labelled by $\phi \in D^{n+1}_i$ and let $\varepsilon = +$ for $i>0$\footnote{One may equally well choose $\varepsilon=-$ for $i>0$ as the open sets $U$ below resulting from $\varepsilon=+$ and $\varepsilon=-$ will be related by 
	an isomorphism 
which preserves the orientation of all strata.}
and let $\varepsilon \in\{ \pm\}$ 
be the orientation of $Y_i^\alpha$ for $i=0$.
Let $X$ be a representative $(n-i)$-sphere of the class $f^{n+1}_i(\phi,\varepsilon) \in [\Sphere_{n-i}(\partial^{i+1}\D^{n+1})]$. 
Let $U$ be the stratified manifold given by the interior of $C(X) \times [-1,1]^i$, equipped with the induced labelling by $\D^{n+1}$ (and $\{0\} \times (-1,1)^i$ labelled by $\phi$).
Then each interior point of $Y_i^\alpha$ has a local neighbourhood 
	isomorphic 
to~$U$ in a way compatible with the labelling.
\end{itemize}
\end{itemize}
\end{definition}

Usually the dimension~$n$ of a set of defect data $\D^n$ will be clear from the context, in which case we will simply write $\D, D_i, f_i$ for $\D^n, D_i^n, f_i^n$, respectively. 
Also, according to~\eqref{eq:adjacency-map-duality} it suffices to know the values $f^{m}_{j}(\phi_,+)$ of the 
adjacency maps, so we will sometimes abbreviate
\be
	 f^{m}_{j}(\phi) := f^{m}_{j}(\phi_,+) \, .
\ee
The signs in~\eqref{eq:def-bord-def_fn+1_0-dual} and~\eqref{eq:adjacency-map-fmj} in particular ensure that a label $x \in D_{i}$ can occur both on the in- and outgoing boundary of a bordism. For $i=1$, if a 1-stratum meets the boundary with both endpoints, the neighbourhood of one endpoint is determined by $\varepsilon=+$, of the other by $\varepsilon=-$.
\begin{example}
\label{ex:defectdatan12}
Let us spell out what the induction in Definition~\ref{def:d-dim-defect-data} amounts to for $n=1$ and $n=2$:
\begin{enumerate}
\item
A set of $1$-dimensional defect data $\D$ consist of two sets $D_{0}$ and $D_{1}$. 
Elements in the set of 
	sphere 
classes $[\Sphere_{0}(\partial \D)]$ consist of two oppositely oriented points decorated each with $D_{1}$.
Thus $f_{0}$ is a map 
\be
\label{eq:2}
f_{0} \colon D_{0} \times \{ \pm \} \lra D_{1} \times D_{1} \, . 
\ee
Since reversing the orientation of the two points of the 0-sphere lies in the same class as keeping the orientation of the two points but exchanging their labels, condition \eqref{eq:def-bord-def_fn+1_0-dual} states that if $f_1(x,+) = (a,b)$ then $f_0(x,-) = (b,a)$. 
	
It can be helpful to think of $x \in D_0$ as a ``morphism'' with source $a$ and target $b$, that is, we define maps $s,t \colon D_0 \to D_1$ such that $f_1(x,+) = (s(x),t(x))$ (see Section~\ref{sec:highercatfor} for more on this point of view).

$\Borddefn{1}(\D)$ has as objects oriented points decorated with $D_{1}$.
Morphisms are stratified 1-manifolds with 1-strata decorated with $D_{1}$, compatible with the boundary decorations, together with 
$0$-strata $p$ with orientation~$\eps$ that are decorated with elements $\phi \in D_{0}$ such that for $\eps=+$, $s(\phi)$ is the decoration on the incoming 1-stratum at $p$ and $t(\phi)$ the decoration of the outgoing 1-stratum, while for $\eps=-$ this order is reversed: 
\be
\tikzzbox{\begin{tikzpicture}[thick,scale=2.321,color=blue!50!black, baseline=0cm, >=stealth, rotate=-90
]
\coordinate (v') at (0,0);
\coordinate (d) at (0,0.5);
\coordinate (v) at (0,1);
\draw[string=gray!60!blue, very thick] (v') -- (d);
\draw[string=gray!60!blue, very thick] (d) -- (v);
\fill[color=red!90!black] (d) circle (0.9pt) node[above] (0up) {{\footnotesize $+$}};
\fill[color=red!90!black] (d) circle (0.9pt) node[below] (0up) {{\footnotesize $\phi$}};
\fill[color=gray!60!blue] (0,0.1) circle (0pt) node[below] (0up) {{\footnotesize $s(\phi)$}};
\fill[color=gray!60!blue] (0,0.9) circle (0pt) node[below] (0up) {{\footnotesize $t(\phi)$}};
\end{tikzpicture}}
\, , \quad
\tikzzbox{\begin{tikzpicture}[thick,scale=2.321,color=blue!50!black, baseline=0cm, >=stealth, rotate=-90
]
\coordinate (v') at (0,0);
\coordinate (d) at (0,0.5);
\coordinate (v) at (0,1);
\draw[string=gray!60!blue, very thick] (v') -- (d);
\draw[string=gray!60!blue, very thick] (d) -- (v);
\fill[color=red!90!black] (d) circle (0.9pt) node[above] (0up) {{\footnotesize $-$}};
\fill[color=red!90!black] (d) circle (0.9pt) node[below] (0up) {{\footnotesize $\phi$}};
\fill[color=gray!60!blue] (0,0.1) circle (0pt) node[below] (0up) {{\footnotesize $t(\phi)$}};
\fill[color=gray!60!blue] (0,0.9) circle (0pt) node[below] (0up) {{\footnotesize $s(\phi)$}};
\end{tikzpicture}}
\ee
\item
A set of $2$-dimensional defect data $\D$ consists of three sets $D_{0},D_{1},D_{2}$ together with maps $f_{0}, f_1$ as follows.
We demand that $\partial \D=(D_{2},D_{1};f_{1})$ forms a set of 1-dimensional defect data, i.\,e.~the map
\be
\label{eq:f1in2d}
f_{1} \colon D_{1} \times \{\pm\} \lra D_{2} \times D_{2} 
\ee
is defined as in point~(i) above.

The set $[\Sphere_{1}(\partial \D)]$ is already defined by the $1$-dimensional case: 
it consists of (isotopy classes of) oriented stratified decorated circles where each $1$-stratum is decorated 
with elements in $D_{2}$, and each 0-stratum with elements in $D_{1}$, such that source and target of the decorations match with the map $f_{1}$. 
Possibly there are no $0$-strata, in which case the circle is decorated just with an element in $D_{2}$.

By definition, the map 
\be
\label{eq:f0in2d}
f_{0} \colon D_{0} \times \{\pm \} \lra [\Sphere_{1}(\partial \D)] 
\ee
is subject to the duality condition \eqref{eq:def-bord-def_fn+1_0-dual}.
The elements of $[\Sphere_{1}(\partial \D)]$ are spheres considered up to isomorphisms of stratified manifolds, so in particular up to rotations. 
Describing the stratified circles combinatorially and restricting to positively oriented points we recover the junction map~$j$ of \cite[Eq.\,(2.2)]{dkr1107.0495}: $f_0(x,+) = j(x)$, where
\be
j \colon D_0 \lra D_2 \sqcup \bigsqcup_{m\in\Z_{+}} \big( (D_1 \times \{ \pm \}) \times_{D_{2}} \dots \times_{D_{2}} (D_1 \times \{ \pm \}) \big)/C_m 
\ee
determines the neighbourhood of a $D_0$-decorated 0-stratum (with the first factor $D_{2}$ corresponding to the case of no 1-strata 
in the neighbourhood). 
The condition in the product on the right is that source and target of adjacent $D_{1}$-terms must agree, including the first and last term. 
$C_{m}$ denotes the cyclic group with~$m$ elements, which acts naturally on the product of~$m$ elements. 
\end{enumerate}

We do not describe the case $n=3$ in detail, since a combinatorial description of defect 2-spheres 
seems impracticable. 
Note however that the map~\eqref{eq:f0in2d} also appears as the folding map $f^{3}_{1}$ for defect 1-strata in 3-dimensional defect bordisms \cite[Def. 2.6]{CMS}, just as \eqref{eq:f1in2d} is a reincarnation of \eqref{eq:2}.
\end{example}
 
For any $i$-dimensional defect label $x\in D_i$ in~$\D$ we can and do choose a representative $(n-i-1)$-sphere $S_x$ such that the adjacency map~$f_i$ sends~$x$ to the class of $S_x$, that is, $f_i(x) = [S_x]$. 
{}From~$S_x$ we produce a decorated stratified $n$-ball by first taking the cone and then iterated cylinders. 
More precisely, we define the $n$-dimensional open \textsl{$x$-defect ball} as 
\be
\label{eq:Bxball}
	 B_x := { C (S_x) \times (-1,1)^i }  \, \cap \, B^n  \ .
\ee
For later use we note that its closure is
\be
\label{eq:Bxball-closed}
	\overline B_x = \cc{ C (S_x) \times (-1,1)^i }  \, \cap \, \overline B^n \, ,
\ee
and we denote its boundary by
\be
\label{eq:SigmaxisdelBx}
	\Sigma_x = \partial \overline B_x \, .
\ee
The open ball $B_x$ describes what the surroundings of an $x$-labelled $i$-stratum look like in $\Bordd[n]$, and it will be important to us in Sections~\ref{subsec:point-defects-from} and~\ref{subsec:completewrtpointinsertions}. 

\subsubsection{Symmetric monoidal category of defect data}

We now turn the sets of defect data $\mathcal{D}^n$ into symmetric monoidal categories. 
We start by adding maps of defect data as morphisms:

\begin{definition}
\label{definition:map-def-data}
A \textsl{morphism of $n$-dimensional defect data} $h=(h_{n}, \ldots, h_{0})\colon \D \rightarrow \D'$ is a collection of maps $h_{i} \colon D_{i} \rightarrow D_{i}'$ such that for all $i \in \{0, \ldots, n\}$ the squares
\be
\begin{tikzpicture}[
			     baseline=(current bounding box.base), 
			     descr/.style={fill=white,inner sep=3.5pt}, 
			     normal line/.style={->}
			     ] 
\matrix (m) [matrix of math nodes, row sep=3.5em, column sep=4.0em, text height=1.5ex, text depth=0.1ex] {%
D_{i}  &  {[\Sphere_{n-i-1}(\partial^{i+1} \D)]}
\\
D_{i}'  &  {[\Sphere_{n-i-1}(\partial^{i+1}\D')]}
\\
};
\path[font=\footnotesize] (m-1-1) edge[->] node[above] {$f_{i}$} (m-1-2);
\path[font=\footnotesize] (m-1-1) edge[->] node[left] {$h_{i}$} (m-2-1);
\path[font=\footnotesize] (m-1-2) edge[->] node[right] {$h_{*}$} (m-2-2);
\path[font=\footnotesize] (m-2-1) edge[->] node[below] {$f_{i}'$} (m-2-2);
\end{tikzpicture}
\ee
commute, where $h_*\colon [\Sphere_{n-i-1}(\partial^{i+1} \D)] \rightarrow [\Sphere_{n-i-1}(\partial^{i+1} \D')]$ 
is the map of decorated spheres induced by the maps~$h_i$.
\end{definition}

Together with the above notion of morphism, the obvious identity morphisms and composition, the sets of defect data $\mathcal{D}^n$ form a category for all $n \in \Z_+$. 

\medskip

The monoidal structure on $\mathcal D^n$ is slightly less straightforward, as illustrated by the following example. 
Let $\D, \D' \in \mathcal D^n$ and pick defect labels $a \in D_{n-2}$ and $a' \in D_{n-2}'$ of codimension-two defects. 
Suppose that $f^\D_{n-2}(a)$ is a 1-sphere with two 0-strata labelled by $x,y \in D_{n-1}$ for $x \neq y$. 
Similarly, $f^{\D'}_{n-2}(a')$ is a 1-sphere with two 0-strata labelled by $x',y' \in D_{n-1}'$.
How many defect conditions should $a$ and $a'$ give rise to in the to-be-constructed tensor product $\D\otimes\D'$? 
One might guess that there should be one defect condition labelled by the pair $(a,a')$. However, there are now two distinct choices for what the neighbourhood of $(a,a')$ should look like. One is the 1-sphere with 0-strata labelled $(x,x')$ and $(y,y')$, and the other is the 1-sphere with 0-strata labelled $(x,y')$ and $(y,x')$. 
Thus in the tensor product $\D\otimes\D'$, we need two distinct defect labels corresponding to the pair $(a,a')$ which have to be distinguished by the adjacency maps $f^{\D\otimes\D'}_i$.

This example shows that the tensor product of defect data is more complicated than just the Cartesian product of the sets of defect labels. 
Instead, one has to take into account automorphisms of the defect sphere describing the surrounding arrangement of higher-dimensional defect strata. 
We now describe this construction in detail. 

For $\D, \D' \in  \mathcal D^n$ we define their tensor product $\D \otimes \D' \in \mathcal D^n$ to have component sets of the form
\be\label{eq:DDi}
(\D \otimes \D')_i = 
\Big\{ 
\big[ x,x', S \stackrel{\varphi}{\lra} U \stackrel{\;\,\varphi'}{\longleftarrow} S' \big]
\Big\} 
\ee
for all $i \in \{ 0,1,\dots,n \}$. 
Here $x\in D_i$ and $x'\in D'_i$ are defect labels, $S\in f^\D_i(x)$ and $S' \in f^{\D'}_i(x')$ are decorated defect spheres, $U \in \Sphere_{n-i-1}$ is an undecorated defect sphere, and $\varphi \colon S\to U$, $\varphi' \colon S' \to U$ are 
	isomorphisms 
of undecorated defect spheres. 
By definition, two such tuples 
\be
\big(x,x', S \stackrel{\varphi}{\lra} U \stackrel{\;\,\varphi'}{\longleftarrow} S'\big)
\, , \quad 
\big(x,x', \widetilde S \stackrel{\;\,\widetilde \varphi}{\lra} \widetilde U \stackrel{\;\,\widetilde \varphi'}{\longleftarrow} \widetilde S'\big)
\ee
represent the same equivalence class $[x,x', S \stackrel{\varphi}{\to} U \stackrel{\;\;\varphi'}{\leftarrow} S']$ if there are isomorphisms of decorated defect spheres $\sigma \colon S \to \widetilde S$, $\sigma' \colon S' \to \widetilde S'$ and an isomorphism of undecorated defect spheres $\nu \colon U \to \widetilde U$ such that the diagram 
\be
\begin{tikzpicture}[
			     baseline=(current bounding box.base), 
			     descr/.style={fill=white,inner sep=3.5pt}, 
			     normal line/.style={->}
			     ] 
\matrix (m) [matrix of math nodes, row sep=3.5em, column sep=4.0em, text height=0.5ex, text depth=0.1ex] {%
&&& \widetilde S && \widetilde S'
\\
S & & S' & & {\widetilde U} &
\\
& U &&&&
\\
};
\path[font=\footnotesize] (m-2-1) edge[->] node[above] {$\sigma$} (m-1-4);
\path[font=\footnotesize] (m-2-1) edge[->] node[above] {$\varphi$} (m-3-2);
\path[font=\footnotesize] (m-2-3) edge[->] node[above] {$\varphi'$} (m-3-2);
\path[font=\footnotesize] (m-1-4) edge[->] node[above, near start] {$\widetilde \varphi$} (m-2-5);
\path[font=\footnotesize] (m-1-6) edge[->] node[below, near start] {$\widetilde \varphi'$} (m-2-5);
\path[font=\footnotesize] (m-2-3) edge[-, commutative diagrams/crossing over] node[] {} (m-1-6);
\path[font=\footnotesize] (m-2-3) edge[->] node[above, near start] {$\sigma'$} (m-1-6);
\path[font=\footnotesize] (m-3-2) edge[->] node[below] {$\nu$} (m-2-5);
\end{tikzpicture}
\ee
commutes. 
Furthermore, we define the adjacency maps $f^{\D \otimes \D'}_i$ by setting 
\be
f^{\D \otimes \D'}_i
\Big(\big[x,x', S \stackrel{\varphi}{\lra} U \stackrel{\;\,\varphi'}{\longleftarrow} S'\big] \Big)
\,
\stackrel{\text{def}}{=}
\,
[U_{\varphi,\varphi'}] \, \in \, 
\Sphere_{n-i-1}(\partial^{i+1}(\D \otimes \D')) 
\ee
where $U_{\varphi,\varphi'}$ has underlying stratified manifold~$U$, and the decoration of the strata $U_j^\alpha$ is obtained from the pair $\varphi,\varphi'$.

We illustrate the above definition by spelling it out for the motivating example above: 
Let $U \subset \R^2$ be the standard circle with two $0$-strata at $(0,+1)$ and $(0,-1)$. 
For the defect circle~$S$ with $f^\D_{n-2}(a) = [S]$ we take~$U$ with the labels $x,y \in D_{n-1}$ at $(0,+1)$ and $(0,-1)$, respectively, while~$S'$ with $f^{\D'}_{n-2}(a') = [S']$ is~$U$ with $(0,+1)$, $(0,-1)$ decorated by $x', y' \in D'_{n-1}$. 
Then in $\D \otimes \D'$ we have two inequivalent elements $( a,a', S \stackrel{\id}{\to} U \stackrel{\;\,\id}{\leftarrow} S')$ and $( a,a', S \stackrel{\id}{\to} U \stackrel{\;\,\pi}{\leftarrow} S')$, where $\pi$ is rotation by $\pi$.

\medskip

Given two morphisms $h \colon \D \to \Ee$ and $g \colon \D' \to \Ee'$ in $\mathcal D^n$, we define their tensor product $h\otimes g \colon \D \otimes \D' \to \Ee \otimes \Ee'$ by setting for all $i\in \{ 0,1,\dots,n \}$: 
\be\label{eq:htensorgi}
(h\otimes g)_i 
\Big( 
\big[ x,x', S \stackrel{\varphi}{\lra} U \stackrel{\;\,\varphi'}{\longleftarrow} S' \big]
\Big)
\,
\stackrel{\text{def}}{=}
\,
\Big[ h_i(x) , g_i(x'), h_*(S) \stackrel{\varphi}{\lra} U \stackrel{\;\,\varphi'}{\longleftarrow} g_*(S') \Big]
. 
\ee
Here the induced maps $h_*$ and $g_*$ are as in Definition~\ref{definition:map-def-data}, and it follows that $(h\otimes g)_* \circ f^{\D\otimes\D'}_i 
= f^{\Ee\otimes\Ee'}_i \circ (h\otimes g)_i$, so that $h\otimes g$ really is a morphism in~$\mathcal D^n$. 

The monoidal unit in $\mathcal D^n$ is $\D^{\id}$ whose sets
\be
\label{eq:Didi}
D^{\id}_{i} := [\Sphere_{n-i-1}]
\ee
consist of all defect spheres for all $i \in \{0,1,\dots,n \}$, and whose adjacency maps are identities, $f^{\D^{\id}}_i([S]) = [S]$. 

Finally we equip $\mathcal{D}^n$ with the symmetric braiding given by the exchange of factors.

\begin{lemma}\label{lem:Dn-sym-mon-cat}
$\mathcal D^n$ is a symmetric monoidal category. 
\end{lemma}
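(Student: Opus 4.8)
The plan is to verify the axioms of a symmetric monoidal category directly, building on the structures already in place: the tensor product $\otimes$ on objects and morphisms, the monoidal unit $\D^{\id}$, and the symmetric braiding given by exchanging the two tensor factors. The decisive simplification is that every iterated tensor product admits a manifestly symmetric model. For a finite tuple $(\D^{(1)}, \dots, \D^{(k)})$ of $n$-dimensional defect data, I would define a $k$-fold tensor product whose set of $i$-dimensional labels consists of equivalence classes
\be
\big[\, x_1, \dots, x_k; \, \varphi_1 \colon S_1 \lra U, \dots, \varphi_k \colon S_k \lra U \,\big] \, ,
\ee
where $x_j \in D^{(j)}_i$, each $S_j \in f^{\D^{(j)}}_i(x_j)$ is a decorated defect sphere, $U \in \Sphere_{n-i-1}$ is an undecorated defect sphere, and each $\varphi_j$ is an isomorphism of undecorated spheres; two such tuples are identified if there are isomorphisms $\sigma_j \colon S_j \to \widetilde S_j$ of decorated spheres and an isomorphism $\nu \colon U \to \widetilde U$ of undecorated spheres satisfying $\widetilde\varphi_j \circ \sigma_j = \nu \circ \varphi_j$ for all $j$, exactly generalising the binary relation. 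The adjacency maps decorate $U$ by transporting the decorations of the $S_j$ along the isomorphisms $\varphi_j$. The binary tensor product $\D \otimes \D'$ of~\eqref{eq:DDi} is the case $k=2$, and the symmetric group acts on the $k$-fold model by permuting the indexed data.

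First I would check that $\otimes$ is a bifunctor $\mathcal D^n \times \mathcal D^n \to \mathcal D^n$. On objects one verifies that $\D \otimes \D'$ is again a set of defect data: the requirement $\partial(\D \otimes \D') \in \mathcal D^{n-1}$ is inherited from $\D$ and $\D'$ through the inductive construction of $\mathcal D^n$, and the duality~\eqref{eq:adjacency-map-duality} for $f^{\D \otimes \D'}_i$ follows because reversing the orientations of $U$ reverses those of the decorated $S$ and $S'$ simultaneously. On morphisms, the formula~\eqref{eq:htensorgi} for $h \otimes g$ is independent of the chosen representative, since an equivalence $(\sigma, \sigma', \nu)$ is carried to an equivalence by applying $h_*$ and $g_*$; it is a morphism of defect data, as already noted after~\eqref{eq:htensorgi}, and visibly respects identities and composition.

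With the symmetric model in hand, the coherence data become canonical identifications rather than genuine choices. I would define the associator
\be
(\D \otimes \D') \otimes \D'' \lra \D \otimes (\D' \otimes \D'')
\ee
by identifying each side with the $3$-fold tensor product: an element of either side unfolds to a triple of decorated spheres $S, S', S''$ mapping isomorphically to a common undecorated sphere, and the associator is the resulting reassociation of this data. The left and right unitors use that $D^{\id}_i = [\Sphere_{n-i-1}]$ carries the identity adjacency map $f^{\D^{\id}}_i([S]) = [S]$, so that a tensor factor drawn from $\D^{\id}$ contributes only a sphere whose class is the tensor label itself, recovered from $U$ along its structure isomorphism; deleting this redundant factor yields natural isomorphisms $\D^{\id} \otimes \D \cong \D \cong \D \otimes \D^{\id}$. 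Naturality of the associator and of the unitors is immediate from~\eqref{eq:htensorgi}, since morphisms act only on the labels $x_j$ and the decorations of the $S_j$ and leave the reassociation untouched. The pentagon and triangle axioms then hold because all bracketings of three, respectively four, factors are identified with the same symmetric model, and the composites of associators appearing in these axioms are the identity reassociation.

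The braiding is the factor-exchange isomorphism, which in the symmetric model is the transposition of the two indexed data; it is natural by the same argument as for the associator. The two hexagon identities reduce to the assertion that the reorderings of three factors they prescribe agree as permutations of the $3$-fold model, and symmetry holds because the transposition squares to the identity. The main obstacle throughout is not any computation but the bookkeeping of equivalence classes of spans: one must check repeatedly that each map constructed above is well defined independently of representatives and that the chosen isomorphisms of underlying undecorated spheres compose coherently. Once the symmetric $k$-fold model is shown to be well defined and functorial, all coherence axioms collapse to the corresponding identities among permutations and reassociations of tuples, which hold trivially.
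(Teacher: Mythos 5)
Your proposal is correct, but it is organised quite differently from the paper's proof, and the two are worth contrasting. The paper spends essentially all of its effort on the unitors: it defines $\rho_\D \colon \D \otimes \D^{\id} \to \D$ as the projection onto the label $x$, defines $\rho_\D^{-1}$ by choosing a representative $\widetilde S \in f_i(x)$ and forming the canonical span $\widetilde S \stackrel{\id}{\lra} \widetilde S_\circ \stackrel{\id}{\longleftarrow} \widetilde S_\circ$, and then verifies \emph{both} composites, the nontrivial one ($\rho_\D^{-1} \circ \rho_\D = 1$) via an explicit commuting diagram with $\nu = \varphi^{-1}$ and $\sigma' = \varphi^{-1}\circ\varphi'$; the associator, pentagon and hexagon are dismissed in one clause as ``straightforward'' and ``clear.'' You invert this emphasis: your $k$-fold symmetric model (equivalence classes of tuples of decorated spheres all mapping to one undecorated $U$) is a genuinely useful strictification device that the paper does not introduce, and once the identifications of iterated binary products with this model are established, associativity, the pentagon, the hexagons and the symmetry axiom really do collapse to permutation bookkeeping --- this is cleaner and more uniform than the paper's treatment. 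The trade-off is that your treatment of the unitors, ``deleting the redundant factor,'' is exactly the step the paper takes pains over, and it is the one place where something must actually be checked: that any span $(x, [\Sigma], S \stackrel{\varphi}{\to} U \stackrel{\varphi'}{\longleftarrow} \Sigma)$ with $\D^{\id}$-factor is equivalent to the canonical reconstructed one, which uses that every undecorated sphere carries a unique $\D^{\id}$-decoration so that $\varphi'$ is automatically decoration-preserving (your phrase ``recovered from $U$ along its structure isomorphism'' is this fact, but you should spell out the witnessing isomorphisms as the paper does). Two further small points: the identification of $(\D \otimes \D')\otimes \D''$ with the $3$-fold model requires choosing a decorated isomorphism from $U_{\varphi,\varphi'}$ to the middle sphere $T$, and independence of that choice uses that decorated automorphisms of $U_{\varphi,\varphi'}$ preserve the separate $\D$- and $\D'$-labels, hence transport to decorated automorphisms of $S$ and $S'$ --- worth making explicit since it is where the equivalence relation earns its keep; and your sentence pairing ``pentagon and triangle'' with ``three, respectively four, factors'' has the counts reversed, and the triangle axiom in any case also needs compatibility of the unit-deletion with the model, not only reassociation.
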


\begin{proof}
We show that for the tensor product given in~\eqref{eq:DDi}, \eqref{eq:htensorgi} and~\eqref{eq:Didi} there are natural unitors $\D \otimes \D^{\id} \cong \D \cong \D^{\id} \otimes \D$; the associator is straightforward to determine, and the pentagon and hexagon are then clear. 

For any $\D \in \mathcal D^n$, the isomorphism $\rho_\D \colon \D \otimes \D^{\id} \to \D$ has components which project $[x,[\Sigma], S \stackrel{\varphi}{\to} U \stackrel{\varphi'}{\leftarrow} \Sigma] \in (\D \otimes \D^{\id})_i$ to $x \in D_i$. 
To give the action of $\rho^{-1}_\D$, we choose a representative $\widetilde S \in f_i(x)$, and we write $\widetilde S_\circ \in \Sphere_{n-i-1}$ for~$\widetilde S$ with its $\D$-decoration discarded. 
Then
\be
(\rho^{-1}_\D)_i(x) 
\;
\stackrel{\text{def}}{=} 
\;
\Big[ x ,[\widetilde S_\circ], \widetilde S \stackrel{\id}{\lra} \widetilde S_\circ \stackrel{\;\,\id}{\longleftarrow} \widetilde S_\circ \Big]
.
\ee
Note that the right-hand side is independent of the choice of representative $\widetilde S$.

Clearly we have $\rho_\D \circ \rho^{-1}_\D = 1_\D$. 
To see that also $\rho^{-1}_\D \circ \rho_\D$ equals $1_{\D\otimes\D^{\id}}$, simply act with the former on $(x,[\Sigma], S \stackrel{\varphi}{\to} U \stackrel{\;\,\varphi'}{\leftarrow} \Sigma)$ and note that the following diagram commutes: 
\be
\begin{tikzpicture}[
			     baseline=(current bounding box.base), 
			     descr/.style={fill=white,inner sep=3.5pt}, 
			     normal line/.style={->}
			     ] 
\matrix (m) [matrix of math nodes, row sep=3.5em, column sep=4.0em, text height=0.5ex, text depth=0.1ex] {%
&&& S && S_\circ
\\
S & & \Sigma & & {S_\circ} &
\\
& U &&&&
\\
};
\path[font=\footnotesize] (m-2-1) edge[->] node[above] {$\id$} (m-1-4);
\path[font=\footnotesize] (m-2-1) edge[->] node[above] {$\varphi$} (m-3-2);
\path[font=\footnotesize] (m-2-3) edge[->] node[above] {$\varphi'$} (m-3-2);
\path[font=\footnotesize] (m-1-4) edge[->] node[above, near start] {$\id$} (m-2-5);
\path[font=\footnotesize] (m-1-6) edge[->] node[below, near start] {$\id$} (m-2-5);
\path[font=\footnotesize] (m-2-3) edge[-, commutative diagrams/crossing over] node[] {} (m-1-6);
\path[font=\footnotesize] (m-2-3) edge[->] node[above, near start] {$\varphi^{-1} \circ \varphi'$} (m-1-6);
\path[font=\footnotesize] (m-3-2) edge[->] node[below] {$\varphi^{-1}$} (m-2-5);
\end{tikzpicture}
\ee
\end{proof}

\subsection[$n$-dimensional defect TQFTs]{$\boldsymbol{n}$-dimensional defect TQFTs}
\label{subsec:ndTQFTs}

We will now define defect TQFTs in arbitrary dimension, describe maps between them, and discuss their tensor products. 
Unless specified otherwise, in this section~$n$ is any positive integer, and~$\D$ is a
set of $n$-dimensional defect data.

\subsubsection{Defect TQFTs and their morphisms}

\begin{definition}
\label{def:defeTQFT}
 An \textsl{$n$-dimensional defect TQFT} with defect data~$\D$ is a symmetric monoidal functor 
\be
  \label{eq:def-TQFT}
  \zz\colon \Borddefn{n}(\D) \longrightarrow \Vectk. 
\ee
\end{definition}

The most basic example of a defect TQFT for fixed defect data~$\D$ is the \textsl{identity TQFT} 
\be
\label{eq:idTQFT}
\unit_{\D} \colon \Bordd[n] \lra \Vectk
\ee
which by definition maps all objects to $\Bbbk$, and all morphisms to $\id_\Bbbk$.
Closed $n$-dimensional TQFTs, i.\,e.~symmetric monoidal functors $\Bord_{n} \rightarrow \Vectk$, provide another class 
	of 
examples by viewing them as functors on $\Bord_n^{\text{def}}(\D^*)$, where the set of defect data $\D^*$ consisting of the singleton set $D^*_{n}=\{ \ast \}$ and $D^*_{j}=\emptyset$ for all $j \in \{0,1, \ldots, n-1\}$.

Another class of examples of defect TQFTs comes about by ``compactifying'' higher-dimensional defect TQFTs. 
Indeed, thanks to the prominent role of the boundary map~$\partial$ in Definition~\ref{def:d-dim-defect-data} of defect data, the following observation is immediate: 

\begin{lemma}
\label{lemma:compactification}
Let $\zz$ be an $n$-dimensional defect TQFT with defect data $\D$, and let~$M$ 
	be 
a closed $k$-manifold with $k < n$. 
Then the \textsl{compactification of $\zz$ along $M$} is an $(n-k)$-dimensional defect TQFT $\zz_{M}\colon \Borddefn{n-k}(\partial^{k}\D)\rightarrow \Vectk$ with 
\be
  \label{eq:compactification}
  \zz_{M}(N)=\zz(N \times M)
\ee
on both objects and morphisms~$N$ in $\Borddefn{n-k}(\partial^{k}\D)$; here the decorated stratification of~$N$ naturally induces the decorated stratification of $N \times M$.
\end{lemma}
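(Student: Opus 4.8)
The plan is to realise $\zz_M$ as a composite $\zz \circ P_M$ of symmetric monoidal functors, where
\be
P_M \colon \Borddefn{n-k}(\partial^k\D) \lra \Borddefn{n}(\D) \, , \quad N \longmapsto N \times M \, .
\ee
Since $\zz$ is symmetric monoidal by assumption, it then suffices to show that $P_M$ is a well-defined symmetric monoidal functor; the formula $\zz_M(N) = \zz(N\times M)$ and the claim that $\zz_M$ is an $(n-k)$-dimensional defect TQFT with defect data $\partial^k\D$ follow immediately. Throughout I use the standing orientation convention of the paper, so in particular $M$ carries a fixed orientation and $N\times M$ is given the product orientation.

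First I would pin down the induced stratification and decoration on $N\times M$ and check that it is a legal object/morphism of $\Borddefn{n}(\D)$. Writing $N_i^\alpha$ for the $i$-strata of $N$, the products $N_i^\alpha \times M$ are smooth submanifolds of dimension $i+k$, which I take as the $(i+k)$-strata of $N\times M$; the underlying filtration is $F_{i+k}(N\times M) = F_i(N)\times M$, so $N\times M$ has strata only in dimensions $k,\dots,n$ and is an $n$-dimensional stratified bordism. Under the index shift $(\partial^k\D)_i = D^n_{i+k}$ implemented by the boundary map~$\partial$, a label $\phi\in(\partial^k\D)_i$ decorating $N_i^\alpha$ is precisely a label in $D^n_{i+k}$, which I assign to the $(i+k)$-stratum $N_i^\alpha\times M$. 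The crucial step — and the one I expect to be the main obstacle — is the local neighbourhood condition of Definitions~\ref{def:Bordndef} and~\ref{def:d-dim-defect-data}. Here I would argue that if an interior point $p\in N_i^\alpha$ has a neighbourhood isomorphic to the interior of $C(X)\times[-1,1]^i$ with $X$ a representative of $f^{\partial^k\D}_i(\phi,\eps)$, then, choosing an oriented chart $\R^k\cong(-1,1)^k$ of $M$ about any point $m\in M$, the point $(p,m)$ acquires a neighbourhood isomorphic to the interior of $C(X)\times[-1,1]^{i+k}$. Because the cylinder construction of Definition~\ref{def:Bordndef} sends $\Nbh_{m}$ into $\Nbh_{m+1}$ while incrementing each stratum dimension by one and preserving labels and orientations, iterating it $k$ times realises this latter set as an element of $\Nbh_n$, and it is exactly the neighbourhood demanded of an $(i+k)$-stratum labelled $\phi$ in $\Borddefn{n}(\D)$. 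The bookkeeping matches on the nose: $\partial^{i+1}(\partial^k\D)=\partial^{i+k+1}\D$ and $(n-k)-i-1 = n-(i+k)-1$, so the adjacency map $f^{\partial^k\D}_i$ literally coincides with $f^n_{i+k}$ and the sphere $X$ is unchanged under $P_M$. Boundary points are treated identically through the boundary parametrisation, which is simply crossed with $\id_M$.

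Next I would verify functoriality and the symmetric monoidal structure, both of which are formal. The operation $(-)\times M$ commutes with gluing along a common boundary: for composable bordisms $N\colon\Sigma\to\Sigma'$ and $N'\colon\Sigma'\to\Sigma''$ one has $(N'\circ N)\times M = (N'\times M)\circ(N\times M)$, since the two boundary parametrisations agree after crossing with $\id_M$, and the identity bordism $\Sigma\times[0,1]$ is sent to $(\Sigma\times M)\times[0,1]$, the identity on $\Sigma\times M$. Well-definedness on bordism classes follows because any stratified isomorphism $f\colon N\to\widetilde N$ intertwining boundary parametrisations yields $f\times\id_M$ with the same property. For the monoidal structure I would note $(N_1\sqcup N_2)\times M = (N_1\times M)\sqcup(N_2\times M)$ and $\emptyset\times M=\emptyset$, compatibly with the disjoint-union braiding, so the coherence isomorphisms of $P_M$ are the evident identities and the pentagon and hexagon follow from those of $\Bordstrat_n$.

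Combining these steps, $P_M$ is a symmetric monoidal functor, hence so is $\zz_M = \zz\circ P_M \colon \Borddefn{n-k}(\partial^k\D)\to\Vectk$, which is the asserted $(n-k)$-dimensional defect TQFT. I expect essentially all of the genuine content to sit in the local neighbourhood verification of the second paragraph — confirming that the $k$-fold product with $M$ respects the recursive $\Nbh$-structure under the index shift encoded by $\partial^k$ — with the remaining functoriality and monoidality checks being routine.
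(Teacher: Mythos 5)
Your proof is correct and follows exactly the route the paper has in mind: the paper states this lemma without proof, calling it ``immediate'' thanks to the role of the boundary map $\partial$ in Definition~\ref{def:d-dim-defect-data}, and your argument simply writes out that verification -- the index shift $(\partial^k\D)_i = D_{i+k}$ with coinciding adjacency maps $f^{\partial^k\D}_i = f^n_{i+k}$, the local neighbourhood check via $k$-fold application of the cylinder construction, and the routine functoriality and monoidality of $N \mapsto N \times M$. Nothing is missing; you have merely made explicit what the paper leaves implicit.
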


\medskip

Defect TQFTs in dimension~$n$ form the objects of a symmetric monoidal category $\Deftqft_{n}$ to which we turn next. 
We start by defining the morphisms between two defect TQFTs $\zz \colon \Bordd[n] \to \Vectk$ and $\zz' \colon \Bord_n^{\text{def}}(\D')\to \Vectk$. If $\D=\D'$ one can in particular consider monoidal natural transformations $\zz \Rightarrow \zz'$.
In general, one has to take into account maps of defect data $h\colon \D \to \D'$ as in Definition~\ref{definition:map-def-data}, which induce symmetric monoidal functors 
\be
  h_{*} \colon \Borddefn{n}(\D) \lra \Borddefn{n}(\D')
\ee
by applying the component maps $h_{i}$ to the decoration of the strata of the objects and morphisms in $\Borddefn{n}(\D)$.
Hence we define a \textsl{morphism} from $\zz$ to $\zz'$ to be a pair $(h, \varphi)$ where $h\colon \D \rightarrow \D'$ is a map of defect data and $\varphi\colon \zz \Rightarrow \zz' \circ h_{*}$ is a monoidal natural transformation: 
\be
\begin{tikzpicture}[
			     baseline=(current bounding box.base), 
			     descr/.style={fill=white,inner sep=3.5pt}, 
			     normal line/.style={->}
			     ] 
\matrix (m) [matrix of math nodes, row sep=4.5em, column sep=4.0em, text height=1.5ex, text depth=0.1ex] {%
\Borddefn{n}(\D) & 
\\
\Borddefn{n}(\D') & \Vectk 
\\
};
\path[font=\footnotesize] (m-1-1) edge[->] node[above] {$\zz$} (m-2-2);
\path[font=\footnotesize] (m-2-1) edge[->] node[below] {$\zz'$} (m-2-2);
\path[font=\footnotesize] (m-1-1) edge[->] node[left] {$h_{*}$} (m-2-1);
\fill[color=black, font=\footnotesize] (-0.52,-0.1) circle (0pt) node (0up) {  $ \varphi  $ };
\fill[color=black] (-0.35,-0.3) circle (0pt) node (0up) {  $ \rotatebox[origin=c]{230}{$\implies$}  $ };
\end{tikzpicture}
\ee 
In the same way as for closed TQFTs without defects one proves that~$\varphi$ in the above definition is necessarily invertible (see e.\,g.~\cite{CRlecturenotes}); note however that this does not imply $\Deftqft_{n}$ is a groupoid as $h_*$ need not have an inverse. 

The composition of morphisms $(h, \varphi) \colon \zz \to \zz'$ and $(g, \psi) \colon \zz' \to \zz''$ is defined by composing~$h_*$ with~$g_*$ and pasting~$\varphi$ and~$\psi$ together, 
\be
\begin{tikzpicture}[
			     baseline=(current bounding box.base), 
			     descr/.style={fill=white,inner sep=3.5pt}, 
			     normal line/.style={->}
			     ] 
\matrix (m) [matrix of math nodes, row sep=4.5em, column sep=4.0em, text height=1.5ex, text depth=0.1ex] {%
\Borddefn{n}(\D) & & 
\\
\Borddefn{n}(\D') & \Vectk & 
\\
\Borddefn{n}(\D'') & & \Vectk \, .
\\
};
\path[font=\footnotesize] (m-1-1) edge[->] node[above] {$\zz$} (m-2-2);
\path[font=\footnotesize] (m-2-1) edge[->] node[below] {$\zz'$} (m-2-2);
\path[font=\footnotesize] (m-3-1) edge[->] node[below] {$\zz''$} (m-3-3);
\path[font=\footnotesize] (m-1-1) edge[->] node[left] {$h_{*}$} (m-2-1);
\path[font=\footnotesize] (m-2-1) edge[->] node[left] {$g_{*}$} (m-3-1);
\path[font=\footnotesize] (m-2-2) edge[commutative diagrams/equal] node[left] {} (m-3-3);
\fill[color=black, font=\footnotesize] (-1.85,0.95) circle (0pt) node (0up) {  $ \varphi  $ };
\fill[color=black] (-1.65,0.75) circle (0pt) node (0up) {  $ \rotatebox[origin=c]{230}{$\implies$}  $ };
\fill[color=black, font=\footnotesize] (-0.27,-1.35) circle (0pt) node (0up) {  $ \psi  $ };
\fill[color=black] (-0.65,-1.25) circle (0pt) node (0up) {  $ \rotatebox[origin=c]{230}{$\implies$}  $ };
\end{tikzpicture}
\ee
The identity morphism on $\zz \colon \Bordd[n] \to \Vectk$ is $(\id_\D, \id_\zz)$. Altogether, we have a category $\Deftqft_{n}$.

\subsubsection{Symmetric monoidal category of defect TQFTs}

To describe the monoidal structure on $\Deftqft_{n}$, we have to produce symmetric monoidal functors on $\Bord_n^{\text{def}}(\D \otimes \D')$ from defect TQFTs $\zz \colon \Bordd[n] \to \Vectk$ and $\zz' \colon \Bord_n^{\text{def}}(\D') \to \Vectk$, where $\D\otimes\D'$ is the tensor product in~$\mathcal D^n$ defined in Section~\ref{subsec:defectbords}. 
For an object or morphism~$M$ in $\Bord_n^{\text{def}}(\D \otimes \D')$
 we write $p_1(M)$ for the object or morphism in $\Bordd[n]$ which is~$M$ but with the decorations from~$\D'$ forgotten. 
Similarly, we write $p_2(M)$ for the object or morphism in $\Bord_n^{\text{def}}(\D')$ which is obtained by discarding the $\D$-decoration. 
This gives us a symmetric monoidal functor 
\be
P \colon \Bord_n^{\text{def}}(\D \otimes \D') \lra \Bordd[n] \times \Bord_n^{\text{def}}(\D') 
\, , \quad 
M \lmt \big( p_1(M), p_2(M) \big) 
\, . 
\ee
With this notation we have a natural notion of tensor product of defect TQFTs: 

\begin{definition}
\label{definition:tensor-prod-defect-TQFT}
Let $\zz$ and $\zz'$ be two $n$-dimensional defect TQFTs with defect data $\D$ and $\D'$, respectively. 
Their \textsl{tensor product} is the defect TQFT 
\begin{align}
\zz \otimes \zz' \colon \Bord_n^{\text{def}}(\D \otimes \D') 
& \stackrel{P}{\lra} \Bordd[n] \times \Bord_n^{\text{def}}(\D')  
\nonumber \\
& \stackrel{\zz \times \zz'}{\lra} \Vectk \times \Vectk 
\nonumber \\
& \stackrel{\otimes_\Bbbk}{\lra} \Vectk \, . 
\end{align}
\end{definition}

It follows that we have 
\be
 ( \zz  \otimes \zz') (M)= \zz(p_{1}(M)) \otimes_\Bbbk \zz'(p_{2}(M))
\ee
for objects and morphisms $M$ in $\Borddefn{n}(\D \otimes \D')$. 
The unit for the tensor product is the identity defect TQFT 
\be\label{eq:unit-defect-TQFT}
\unit := \unit_{\D^{\id}}
\ee 
which is the special case of~\eqref{eq:idTQFT} where $\D = \D^{\id}$ is the monoidal unit of~$\mathcal D^n$ with $D^{\id}_{i} = [\Sphere_{n-i-1}]$, cf.~\eqref{eq:Didi}. 
This means that for each local neighbourhood there exists precisely one defect label 
in $\D^{\id}$, that is, the functor
\be\label{eq:identity-datum=unlabelled}
	\Bord_n^{\text{def}}(\D^{\id}) \xrightarrow{\text{forget}} \Bord_n^{\text{def}}
\ee
is an equivalence.

To describe the tensor product also on morphisms in $\Deftqft_{n}$, let us consider defect TQFTs 
\begin{align}
\zz \colon \Bordd[n] 
& \lra \Vectk 
\, , 
& \zz' \colon \Bord_n^{\text{def}}(\D') \lra \Vectk 
\, , 
\nonumber \\
\mathcal Y \colon \Bord_n^{\text{def}}(\Ee) 
& \lra \Vectk 
\, ,  
& \mathcal Y' \colon \Bord_n^{\text{def}}(\Ee') \lra \Vectk
\, . 
\end{align}
For two morphisms $(g,\psi) \colon \mathcal Y \to \mathcal Y'$ and $(h,\varphi) \colon \zz \to \zz'$, their tensor product $(g,\psi) \otimes (h,\varphi) \colon \mathcal Y \otimes \zz \to \mathcal Y' \otimes \zz'$ is defined to be the commutative diagram 
\be
\label{eq:morphtensor}
\begin{tikzpicture}[
			     baseline=(current bounding box.base), 
			     descr/.style={fill=white,inner sep=3.5pt}, 
			     normal line/.style={->}
			     ] 
\matrix (m) [matrix of math nodes, row sep=4.5em, column sep=2.5em, text height=1.5ex, text depth=0.1ex] {%
\!\!\! \Borddefn{n}(\Ee \otimes \D) &&&
\\
\!\!\! & \!\!\!\!\!\!\Borddefn{n}(\Ee) \times \Borddefn{n}(\D) && 
\\
\!\!\! & \!\!\!\!\!\!\Borddefn{n}(\Ee') \times \Borddefn{n}(\D') & \Vectk \times \Vectk & 
\\
\!\!\! \Borddefn{n}(\Ee' \otimes \D') &&& \Vectk . 
\\
};
\path[font=\footnotesize] (m-1-1) edge[->] node[left] {$(g\otimes h)_*$} (m-4-1);
\path[font=\footnotesize] (m-1-1) edge[->] node[above] {$P$} (m-2-2);
\path[font=\footnotesize] (m-4-1) edge[->] node[below] {$P$} (m-3-2);
\path[font=\footnotesize] (m-2-2) edge[->] node[left] {$g_* \times h_*$} (m-3-2);
\path[font=\footnotesize] (m-4-1) edge[->] node[below] {$\mathcal Y' \otimes \zz'$} (m-4-4);
\path[font=\footnotesize] (m-2-2) edge[->] node[above] {$\;\;\;\;\;\;\; \mathcal Y \times \zz$} (m-3-3);
\path[font=\footnotesize] (m-3-2) edge[->] node[below] {$\mathcal Y' \times \zz'$} (m-3-3);
\path[font=\footnotesize] (m-1-1) edge[->, out=0, in=90] node[above, sloped] {$\mathcal Y \otimes \zz$} (m-4-4);
\path[font=\footnotesize] (m-3-3) edge[->] node[above] {$\otimes_\Bbbk$} (m-4-4);
\fill[color=black, font=\footnotesize] (0.3,-0.1) circle (0pt) node (0up) {  $ \psi \times \varphi $ };
\fill[color=black] (0.8,-0.3) circle (0pt) node (0up) {  $ \rotatebox[origin=c]{235}{$\implies$}  $ };
\end{tikzpicture}
\ee
It is straightforward to verify that the identity defect TQFT~$\unit$ is the monoidal unit, and again unitors, associator and symmetric braiding are given by the obvious choice. 
To summarise: 

\begin{proposition}
\label{prop:TQFTissymmon}
The category $\Deftqft_{n}$ of $n$-dimensional defect TQFTs is symmetric monoidal. 
\end{proposition}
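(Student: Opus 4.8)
The plan is to build every piece of the symmetric monoidal structure on $\Deftqft_n$ out of the two structures we already control: the symmetric monoidal category $\mathcal D^n$ of defect data (Lemma~\ref{lem:Dn-sym-mon-cat}) and the symmetric monoidal category $\Vectk$. The guiding observation is that, by Definition~\ref{definition:tensor-prod-defect-TQFT}, a tensor product $\zz \otimes \zz'$ factors as $\otimes_\Bbbk \circ (\zz \times \zz') \circ P$, and that a morphism in $\Deftqft_n$ is a pair $(h,\varphi)$ whose first component $h$ lives in $\mathcal D^n$ and whose second component $\varphi$ is a monoidal natural transformation into $\Vectk$. Since composition in $\Deftqft_n$ is componentwise --- one composes the $h$'s in $\mathcal D^n$ and pastes the $\varphi$'s using the monoidal structure of $\Vectk$ --- each coherence datum and each coherence axiom should separate into a $\mathcal D^n$-part and a $\Vectk$-part, both of which are already available.

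First I would check bifunctoriality of $\otimes$. That $\otimes$ is well-defined on morphisms was essentially recorded already: $h \otimes g$ is a morphism in $\mathcal D^n$, and the pasted $2$-cell $\psi \times \varphi$ in~\eqref{eq:morphtensor} is again a monoidal natural transformation. What remains is preservation of identities and of composition, and the crux here is the interaction of the decoration-forgetting functors $P$ with the induced functors $h_*$: one needs the square built from $P_{\D\otimes\D'}$, $(h\otimes g)_*$, $P_{\Ee\otimes\Ee'}$ and $h_*\times g_*$ to commute on the nose. This holds because forgetting one set of decorations and relabelling the other commute, so that the pasting in~\eqref{eq:morphtensor} is unambiguous; together with functoriality of $(-)_*$ (that is, $(h'\circ h)_*=h'_*\circ h_*$) and of $\otimes_\Bbbk$, bifunctoriality follows.

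Next I would supply the coherence isomorphisms as pairs. For the associator I take $\big(a^{\mathcal D^n}_{\D,\D',\D''},\,a^{\Vectk}\big)$, with $a^{\mathcal D^n}$ the associator of $\mathcal D^n$ from Lemma~\ref{lem:Dn-sym-mon-cat}; the two triple products $(\zz\otimes\zz')\otimes\zz''$ and $\zz\otimes(\zz'\otimes\zz'')$ both equal $\otimes_\Bbbk\circ(\zz\times\zz'\times\zz'')\circ(\text{threefold }P)$ up to the source equivalence induced by $a^{\mathcal D^n}$, so the $\Vectk$-associator furnishes the remaining comparison. For the unitors I combine the unitor $\rho_\D\colon \D\otimes\D^{\id}\to\D$ of $\mathcal D^n$, the forgetful equivalence~\eqref{eq:identity-datum=unlabelled}, and the unitors of $\Vectk$ (using $\unit(M)=\Bbbk$). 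The braiding is $\big(b^{\mathcal D^n},\,b^{\Vectk}\big)$, with $b^{\mathcal D^n}$ the exchange-of-factors braiding of $\mathcal D^n$. In each case invertibility is automatic: the $h$-component is invertible because $\mathcal D^n$ is symmetric monoidal, and the $\varphi$-component is invertible because any monoidal natural transformation of the relevant form is, as noted in the discussion following Definition~\ref{def:defeTQFT}.

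Finally, the coherence axioms --- pentagon, triangle, the two hexagons, and the symmetry $b\circ b=\id$ --- reduce to commuting diagrams in the two components. On the $h$-component each is exactly the corresponding axiom in $\mathcal D^n$, true by Lemma~\ref{lem:Dn-sym-mon-cat}; on the $\varphi$-component each is the corresponding axiom in $\Vectk$. I expect the main obstacle to be organisational rather than conceptual: one must track, with some care, how the coherence isomorphisms of $\mathcal D^n$ are transported to the bordism categories through $(-)_*$ and how they interact with the projections $P$. This bookkeeping is delicate precisely because the tensor product on $\mathcal D^n$ is \emph{not} the Cartesian product of label sets --- the automorphisms of the surrounding defect sphere genuinely enter, as in the motivating example preceding Lemma~\ref{lem:Dn-sym-mon-cat} --- so one must confirm that $P$ assembles into an oplax symmetric monoidal structure on the assignment $\D\mapsto\Borddefn{n}(\D)$ compatibly with all the chosen isomorphisms. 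Once these naturality squares are verified, the axioms follow formally.
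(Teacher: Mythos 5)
Your proposal is correct and follows essentially the same route as the paper: the paper constructs the tensor product via $P$ and $\otimes_\Bbbk$ (Definition~\ref{definition:tensor-prod-defect-TQFT}), takes $\unit = \unit_{\D^{\id}}$ as unit and~\eqref{eq:morphtensor} for morphisms, and then declares the unitors, associator and braiding to be ``the obvious choice'' inherited from $\mathcal D^n$ (Lemma~\ref{lem:Dn-sym-mon-cat}) and $\Vectk$, leaving the verification as straightforward. What you have written is precisely the fleshed-out version of that implicit argument, including the key compatibility $P \circ (h\otimes g)_* = (h_*\times g_*)\circ P$ and the observation that invertibility of the coherence $2$-cells is automatic.
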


\medskip

There is an equivalence relation on the set of objects in $\Deftqft_{n}$ which will be relevant for us in Sections~\ref{subsec:point-defects-from} and~\ref{subsec:completewrtpointinsertions}: 

\begin{definition}
\label{def:ZZsequiv}
Two TQFTs $\zz \colon \Bordd[n] \to \Vectk$ and $\zz' \colon \Bord_n^{\text{def}}(\D') \to \Vectk$ are \textsl{equivalent}, $\zz \sim \zz'$, if there are morphisms $(h,\varphi) \colon \zz \rightleftarrows \zz' :\!\!(g,\psi)$ between them. 
\end{definition}

This equivalence relation is compatible with the monoidal structure on $\Deftqft_{n}$ in the sense that if $\zz \sim \zz'$ and $\mathcal Y \sim \mathcal Y'$, then also $\mathcal Y \otimes \zz \sim \mathcal Y' \otimes \zz'$. 
The morphisms witnessing the latter equivalence are constructed from those of the former equivalences and the tensor product defined in~\eqref{eq:morphtensor}. 

\begin{remark}
\begin{enumerate}
\item
If $\zz, \zz'$ are equivalent as in Definition~\ref{def:ZZsequiv} then there are isomorphisms of symmetric monoidal functors $\zz \cong \zz' \circ h_*$ and $\zz' \cong \zz \circ g_*$ and thus also  $\zz \cong \zz \circ g_* \circ h_*$ and $\zz' \cong \zz' \circ h_* \circ g_*$. 
In this sense equivalent TQFTs determine one another and thus deserve to be called equivalent. 
\item
If $\zz$ and $\zz'$ are isomorphic in $\Deftqft_{n}$ then they are clearly also equivalent in the sense of Definition~\ref{def:ZZsequiv}. The converse is not true: a morphism $(h,\phi) \colon \zz \to \zz'$ is an isomorphism if and only if $h$ is invertible and in Sections~\ref{subsec:point-defects-from} and~\ref{subsec:completewrtpointinsertions} we will encounter examples of equivalent defect TQFTs with non-isomorphic sets of defect labels.
\item
A natural mechanism to produce additional equivalences in a 1-category is to add a layer of 2-morphisms, thereby turning it into a 2-category, and then passing to its homotopy 1-category. 
In $\Deftqft_{n}$, for example, one could add a single 2-(iso)morphism between any two 1-morphisms. 
In the homotopy category, all 1-morphisms then 
lie in one equivalence class, and this produces Definition~\ref{def:ZZsequiv}. 
Finding a more natural 2-categorical structure on defect TQFTs is a problem for future research.
\end{enumerate}
\end{remark}

An \textsl{invertible $n$-dimensional defect TQFT} is an invertible object~$\zz$ in the monoidal category $\Deftqft_{n}$. 
This means that $\zz$ is invertible if and only if there exists an $n$-dimensional defect TQFT $\zz'$ and an isomorphism of defect TQFTs $\zz \otimes \zz' \cong \unit$.
A particular class of invertible defect TQFTs in any dimension~$n$ are ``Euler defect TQFTs'': 

\begin{example}
\label{example:Eulerx}
Let us for the moment restrict the discussion to the field $\Bbbk=\C$ and later return to general fields.
Recall that for a triangulated topological manifold~$M$ its Euler characteristic $\chi(M)$ can be computed as the alternating sum of the number of $k$-simplices over all dimensions $k \leqslant \dim M$.
{}From this characterisation it is immediate that if $M \circ_{\Sigma} N$ is the result of gluing two triangulated manifolds~$M$ and~$N$ along a common boundary $\Sigma$, then
\begin{equation}
  \chi(M \circ_{\Sigma}N)=\chi(M)+ \chi(N) - \chi(\Sigma) \, .
\end{equation}
Following~\cite{Quinnlectures}, for any $n\in \Z_+$, $\kappa,\lambda \in \C$, 
one thus obtains a closed TQFT $\zz_{(\kappa,\lambda)} \colon \Bord_n \to \Vect_\C$ which maps all objects to~$\C$, and on a bordism~$M$ with boundary $(\partial_{\text{in}} M)^{\text{rev}} \cup \partial_{\text{out}}M$ we set 
\be
\zz_{(\kappa,\lambda)}(M) =  \exp\Big\{ \kappa \big(\chi(M)-\lambda \cdot \chi(\partial_{\text{in}}M) - (1-\lambda) \cdot \chi( \partial_{\text{out}}M) 
\big)\Big\} 
\, .
\ee 
These exponents ensure that $\zz_{(\kappa,\lambda)}(M \circ_{\Sigma} N)=\zz_{(\kappa,\lambda)}(M) \cdot \zz_{(\kappa,\lambda)}(N)$, and one can verify that $\zz_{(\kappa,\lambda)} \cong \zz_{(\kappa,\lambda')}$ for all $\lambda, \lambda' \in \C$.
Clearly, $\zz_{(\kappa,\lambda)}$ is an invertible closed TQFT with inverse $\zz_{(-\kappa,\lambda)}$.

For later convenience we prefer the symmetric choice $\lambda=\frac{1}{2}$, as this attaches the same weight to in- and outgoing boundaries and reduces the risk of confusion. To shorten notation and avoid lots of factors of two, we will use the following rescaled version of the Euler character:
\be
\label{eq:chireldef}
\chirel(M) := 2\chi(M) - \chi(\partial M)  \, ,
\ee 
such that $\zz_{(\kappa, \frac{1}{2})}(M)= \text{e}^{\kappa \cdot \chirel(M)/2}$. 
We will also return to the case of general fields $\Bbbk$ by replacing $\E^\kappa$ in the above expression by $\Psi^2$ for some $\Psi \in \Bbbk^\times$. Altogether, we define the closed \textsl{Euler theory} to be the TQFT
\be
\label{eq:closedEuler}
\zz^{\text{eu}}_\Psi \colon \Bord_n \lra \Vectk 
\, , \quad
\zz^{\text{eu}}_\Psi (M) = \Psi^{\chirel(M)} \, . 
\ee

We can lift this example to an invertible $n$-dimensional defect TQFT 
\be\label{eq:Euler-defect-TQFT}
\zz^{\text{eu}}_{\Psi} \colon 
	\Bord_n^{\text{def}}(\D^{\text{eu}}) 
\lra \Vectk
\ee 
for any tuple $\Psi=(\psi_{1}, \ldots,\psi_{n}) \in (\Bbbk^{\times} )^{n}$ as follows. 
The defect data 
	$\D^{\text{eu}}$ 
for $\zz^{\text{eu}}_{\Psi}$ is the one for the monoidal unit~$\unit$  as in~\eqref{eq:Didi} (and hence by \eqref{eq:identity-datum=unlabelled} we can also think of $\zz^{\text{eu}}_{\Psi}$ as an unlabelled defect TQFT, i.\,e.\ a symmetric monoidal functor $\Bord_n^{\text{def}} \to \Vectk$).

For all objects~$\Sigma$ we set $\zz^{\text{eu}}_{\Psi}(\Sigma) = \Bbbk$. 
For a morphism~$M$ in $\Bord_n^{\text{def}}(\D^{\text{eu}})$ as before we write $M_{j}^{\alpha_{j}}$ for its $j$-strata, and we recall that $\partial M_{j}^{\alpha_{j}} = \partial M \cap M_{j}^{\alpha_{j}}$. 
Then we define the \textsl{Euler defect TQFT} on~$M$ as the natural generalisation of~\eqref{eq:closedEuler}, by assigning a \textsl{weight} $\psi_j^{\chirel({M}_{j}^{\alpha_{j}})}$ to every stratum $M_{j}^{\alpha_{j}}$ of dimension $j \geqslant 1$: 
\be
\label{eq:def-n-Euler}
\zz^{\text{eu}}_{\Psi}(M) = \prod_{j=1}^n \prod_{\alpha_{j}}  \psi_j^{\chirel({M}_{j}^{\alpha_{j}})} \, .
\ee
\end{example}

\subsection{Point defects from states on spheres}
\label{subsec:point-defects-from}

Suppose we are given a defect TQFT $\zz \colon \Bordd[n] \to \Vectk$ with some set $D_0$ of labels for point defects. If one ``regularises'' a point defect by replacing it by a small sphere around that point, one can interpret
certain states in the vector space associated to that sphere as being located on the point. 
To allow for such an interpretation, the states need to satisfy an invariance condition given below. 

In this section we define when a defect TQFT has a ``complete set of point defects'' (which we will call $D_0$-complete) and we will show that every defect TQFT factors through a $D_0$-complete one. 
For a $D_0$-complete theory we describe a multiplication of point defects in which one replaces two neighbouring point defects by a ``fused'' point defect.

\medskip

We first need to define the notion of an ``invariant state'' in the state space of a defect sphere. 
Let $\Sigma \in \Sphere_{n-1}(\partial \D)$ be given. Write $\overline B := C(\Sigma) \subset \R^n$ 
for the closed unit ball given by the cone over $\Sigma$, as a decorated stratified manifold but without a label for its central 0-stratum.
Note that $\partial \overline B = \Sigma$.
Similarly, for $\varepsilon>0$ write $B_{1+\eps}$ for the open ball of radius $1+\eps$ which is  given by the inner of the (now slightly larger) cone over $\Sigma$. By construction, $\overline B \subset B_{1+\eps}$ as decorated stratified manifolds.

Define the set of 
embeddings
\be
	\mathrm{Emb}(\overline B)
\ee
of $\overline B$ into itself as follows: 
	an element of $\mathrm{Emb}(\overline B)$ is a germ (in $\eps>0$) of maps $f\colon B_{1+\eps} \to \overline B$ of decorated stratified manifolds which are isomorphisms onto their images; as for boundary parametrisations we will write $f$ both for the germ and for a representative map. Composition of maps is independent of representatives and turns $\mathrm{Emb}(\overline B)$ into a non-unital semigroup.

Let $B$ be the inner of $\overline B$. Given an element $f \in \mathrm{Emb}(\overline B)$ define a bordism $H_f \colon \Sigma \to \Sigma$ in $\Bordd[n]$ as
\be
	H_f := \overline B \setminus f(B) \, ,
\ee
where the outer boundary is the outgoing boundary and is parametrised by $\Sigma$ via the identity map
	(with its canonical germ on $\Sigma \times (1-\eps,1]$, where the second coordinate is the radius),
while the inner boundary is the ingoing boundary and is parametrised by $\Sigma$ via 
	the germ $f$ restricted to $\Sigma \times [1,1+\eps)$.
For $f,g \in \mathrm{Emb}(\overline B)$, composition of bordisms is compatible with the semigroup structure on $\mathrm{Emb}(\overline B)$,
\be
	H_f \circ H_g = H_{f \circ g} \, 
\ee
in $\Bordd[n]$.
The \textsl{subspace of invariant states in $\zz(\Sigma)$} is defined to be
\be
\label{eq:subsp-inv-sta}
\mathcal{Y}_\Sigma
:=
\big\{ \, \psi \in \zz(\Sigma) \,\big|\,
\zz(H_f)(\psi) = \psi
\text{ for all } f \in \mathrm{Emb}(\overline B) \,\big\} \, .
\ee
One reason to introduce $\mathcal{Y}_\Sigma$ is that point defects give rise to such invariant states, as proved in the next lemma.
Conversely, if one tries to describe point defects by cutting out small balls and assigning states to the resulting boundary spheres, one has to make sure that the result is independent of the chosen boundary parametrisation. 
We will see in the proof of Proposition~\ref{prop:point-defect-completed} how this is ensured by the above invariance condition.

\begin{lemma}\label{lem:points-are-invariant}
Let $x \in D_0$ be such that $f_0(x) = [\Sigma]$ and let $\overline B_x$ be the closed cone $C(\Sigma)$ as above, but with central point labelled $x$. Then $\overline B_x$ is a bordism $\emptyset \to \Sigma$ and we have\footnote{Since $\overline B_x$ is a morphism from $\emptyset$ to $\Sigma$, $\zz(\overline B_x)$ is a linear map from $\Bbbk \to \zz(\Sigma)$. 
We will identify linear maps $\Bbbk \to \zz(\Sigma)$ with $\zz(\Sigma)$ by evaluating on $1 \in \Bbbk$. 
This is to avoid writing lots of ``$(1)$'', e.\,g.\ \eqref{eq:points-are-invariant} would read $\zz(\overline B_x)(1) \in \mathcal{Y}_\Sigma$.}
\be\label{eq:points-are-invariant}
	\zz(\overline B_x) \in \mathcal{Y}_\Sigma \, .
\ee
\end{lemma}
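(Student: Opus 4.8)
The plan is to use functoriality of~$\zz$ to turn the invariance condition into a geometric identity between bordisms. Since $\zz$ is a monoidal functor and $H_f \circ \overline B_x$ is the composite of $\overline B_x \colon \emptyset \to \Sigma$ with $H_f \colon \Sigma \to \Sigma$, we have $\zz(H_f)\big(\zz(\overline B_x)\big) = \zz(H_f \circ \overline B_x)$ under the identification of $\zz(\overline B_x)\colon \Bbbk \to \zz(\Sigma)$ with its value on $1$. Hence to prove $\zz(\overline B_x) \in \mathcal{Y}_\Sigma$ it suffices to show that, for every $f \in \mathrm{Emb}(\overline B)$,
\[ H_f \circ \overline B_x = \overline B_x \]
as morphisms $\emptyset \to \Sigma$ in $\Borddefn{n}(\D)$, and then to apply $\zz$.

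First I would describe the composite explicitly. By definition $H_f \circ \overline B_x$ is obtained by gluing $\overline B_x$ to the shell $H_f = \overline B \setminus f(B)$ along the common boundary $\Sigma$, where the outgoing boundary of $\overline B_x$ is parametrised by the identity while the ingoing boundary of $H_f$ is parametrised by~$f$. I would then exhibit the comparison map $\Psi \colon H_f \circ \overline B_x \to \overline B$ which equals the inclusion on the shell $H_f \subset \overline B$ and equals~$f$ on the $\overline B_x$ piece. These two prescriptions agree on the glued interface (a boundary point $\omega$ of $\overline B_x$ is identified with $f(\omega)$ in $H_f$, and both map to $f(\omega)$), so $\Psi$ is well defined; since $f(\overline B) \cup (\overline B \setminus f(B)) = \overline B$ with overlap exactly $f(\Sigma)$, the map $\Psi$ is a homeomorphism. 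Transporting the stratification and decoration along $\Psi$, the composite becomes the ball $\overline B$ carrying the cone structure over $\Sigma$, but with the distinguished $x$-labelled $0$-stratum moved to $f(0)$ and the radial strata bent by~$f$ in the interior region $f(\overline B)$, while on the outer shell the stratification is literally the standard cone structure of $\overline B_x$. In particular $\Psi$ is the identity on a collar $\Sigma \times (1-\eps,1]$ of the outgoing boundary, so it is compatible with the boundary parametrisations.

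It then remains to straighten this distorted cone back to $\overline B_x$, i.e.\ to produce an orientation-preserving isomorphism of decorated stratified manifolds from the transported composite to $\overline B_x$ that is the identity near the outgoing boundary. Both manifolds are cones $C(\Sigma)$ over the \emph{same} decorated stratified sphere $\Sigma$, differing only in the position of the cone point and in the isomorphism~$f$ used to identify the interior with a cone; since $f$ is itself an isomorphism of decorated stratified manifolds, the desired straightening exists by the essential uniqueness of the cone over a fixed $\Sigma$ and can be arranged to fix the outer collar. Composing with $\Psi$ yields the equivalence witnessing $H_f \circ \overline B_x = \overline B_x$ in $\Borddefn{n}(\D)$ in the sense of Section~\ref{subsec:strabord}.

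I expect this last step to be the main obstacle: a priori $f$ can bend the radial strata where they cross the interface $f(\Sigma)$, so the two cone structures are not related by any ambient \emph{radial} rescaling. The point is that we work with topological rather than smooth stratified manifolds — precisely the choice emphasised in the footnote after~\eqref{eq:starlikedisk}, which permits cone isomorphisms realising arbitrary such configurations — so the required isomorphism exists, can be taken to be the identity on the boundary collar, and is orientation preserving because~$f$ is. Some care is also needed to phrase everything at the level of germs of the boundary parametrisation, but since the equivalence relation on bordisms only constrains a small collar, this causes no real difficulty.
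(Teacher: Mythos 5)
Your reduction via functoriality and your comparison map $\Psi$ (inclusion on the shell, $f$ on the inner ball) are exactly the paper's proof: the paper defines $\varphi\colon H_f\circ\overline B_x\to\overline B_x$ by $\varphi|_{H_f}=\mathrm{id}$ and $\varphi|_{B_x}=f$ and stops there, because $\varphi$ is already an isomorphism of decorated stratified bordisms. The soft spot in your write-up is the final paragraph: you treat $\Psi$ as a mere homeomorphism, conclude that the transported structure is a ``distorted cone'' (cone point at $f(0)$, bent radial strata), and then repair this with an appeal to ``essential uniqueness of the cone over a fixed $\Sigma$'' rel the boundary collar. That uniqueness claim is never proved, and it is not a triviality; if it were genuinely needed, your proof would be incomplete.

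It is not needed, because you are under-using the definition of $\mathrm{Emb}(\overline B)$. An element $f$ is a map of \emph{decorated stratified} manifolds $B_{1+\eps}\to\overline B$ which is an isomorphism onto its image, where the image necessarily carries the stratification and decoration induced from $\overline B$ --- this reading is forced, since otherwise $H_f=\overline B\setminus f(B)$, whose stratification is the one inherited from $\overline B$, would not have an ingoing boundary parametrised by $f$ as a decorated stratified sphere, and $H_f$ would not be a morphism in $\Borddefn{n}(\D)$ at all. Consequently $f$ maps each stratum of the inner copy of $\overline B_x$ onto a connected component of (ambient stratum of $\overline B$) $\cap\, f(\overline B)$. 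In particular $f(0)=0$: if $f(0)$ lay in a stratum $S$ of dimension $d>0$, then $f^{-1}$, being a map of stratified manifolds, would have to send the $d$-dimensional induced stratum of the image containing $f(0)$ into the single source stratum containing $0$, namely the point $\{0\}$, contradicting injectivity. So nothing is bent and the $x$-labelled point lands at the centre: the pushforward of the composite's structure along $\Psi$ is \emph{literally} the standard cone structure of $\overline B_x$, and $\Psi$ itself is the required isomorphism (it is the identity on the outer collar, hence compatible with the boundary germs). With this observation your argument closes and coincides with the paper's; without it, the last step rests on an unproven topological claim.
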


\begin{proof}
We have $\zz(H_f) \circ \zz(\overline B_x) = \zz(H_f \circ \overline B_x)$. It is therefore enough to show that $H_f \circ \overline B_x$ is 
	isomorphic 
to $\overline B_x$ as a decorated stratified manifold with parametrised boundary. But this is clear from the construction: 
the 
	isomorphism 
$\varphi \colon H_f \circ \overline B_x \to \overline B_x$ is given by $\varphi|_{H_f} = \id \colon H_f \to H_f \subset \overline B_x$ and $\varphi|_{B_x} = f \colon B_x \to f(B_x) \subset \overline B_x$.
\end{proof}

After these preparations we can introduce the notion of $D_0$-completeness. For $\Sigma \in \Sphere_{n-1}(\partial \D)$ define the subset
\be
	D_0(\Sigma) := \big\{ x \in D_0 \,\big|\, f_0(x) = [\Sigma] \,\big\} \, ,
\ee
that is, $D_0(\Sigma)$ contains 
all point defect labels that can be assigned to the 0-stratum in the cone $C(\Sigma)$. By Lemma~\ref{lem:points-are-invariant} we obtain a map
\be\label{eq:point-def-stat-corr}
	Y_\Sigma \colon D_0(\Sigma) \lra \mathcal{Y}_\Sigma
	\, , \quad
	x \longmapsto \zz(\overline B_x) \, ,
\ee
which assigns to a point defect label 
the corresponding state on the surrounding sphere. 
This map can be thought of as a variant of the state-field correspondence one has in conformal field theories.

\begin{definition}
\label{def:D0complete}
A defect TQFT $\zz \colon \Bordd[n] \to \Vectk$ is called \textsl{$D_0$-complete} if
for all $\Sigma \in \Sphere_{n-1}(\partial \D)$,
the map \eqref{eq:point-def-stat-corr} is a bijection of sets.
\end{definition}

For a $D_0$-complete TQFT, the spaces $D_0(\Sigma)$ inherit the structure of a $\Bbbk$-vector space via the bijection \eqref{eq:point-def-stat-corr}, and we will use this vector space structure for $D_0(\Sigma)$ below.

The next proposition shows that working with $D_0$-complete theories is not a restriction, as every defect TQFT factors through such a theory.

\begin{proposition}\label{prop:point-defect-completed}
For a given defect TQFT $\zz \colon \Bordd[n] \to \Vectk$
there exist defect data $\D^\bullet$, a map of defect data $h \colon \D \to \D^\bullet$, and a defect TQFT $\zz^\bullet \colon \Borddefn{n}(\D^\bullet) \to \Vectk$ such that $\zz^\bullet$ is $D_0$-complete and
\be
	\zz = \zz^\bullet \circ h_* \ .
\ee
\end{proposition}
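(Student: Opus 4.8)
The plan is to enlarge only the point-defect labels to the full set of invariant states, and to define $\zz^\bullet$ by excising the $0$-strata and feeding the corresponding invariant states into the resulting boundary spheres. First I would leave the higher data untouched, setting $\partial\D^\bullet := \partial\D$, and take as new $0$-labels the disjoint union of all spaces of invariant states,
\[
D_0^\bullet := \bigsqcup_{[\Sigma]\in[\Sphere_{n-1}(\partial\D)]} \mathcal{Y}_\Sigma ,
\]
with adjacency map $f_0^\bullet$ sending $\psi\in\mathcal{Y}_\Sigma$ to $[\Sigma]$ for orientation $+$ (and to $[\Sigma]^{\text{rev}}$ for $-$). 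This is meaningful because $\mathcal{Y}_\Sigma\subset\zz(\Sigma)$ depends only on the class $[\Sigma]$: invariance under $\mathrm{Emb}(\overline B)$ is exactly what renders the canonical identifications $\zz(\Sigma)\cong\zz(\widetilde\Sigma)$ for isomorphic spheres unambiguous on invariant states. The map $h\colon\D\to\D^\bullet$ is the identity in degrees $i\geqslant 1$ and in degree $0$ is $h_0:=Y_\Sigma$, $x\mapsto\zz(\overline B_x)$, which lands in $\mathcal{Y}_\Sigma$ by Lemma~\ref{lem:points-are-invariant}; the square of Definition~\ref{definition:map-def-data} commutes since $f_0^\bullet(h_0(x))=[\Sigma]=f_0(x)$.

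Next I would define $\zz^\bullet$. On objects, which carry no $D_0$-decoration, put $\zz^\bullet:=\zz$. For a morphism $M$ of $\Borddefn{n}(\D^\bullet)$ with interior $0$-strata $p_1,\dots,p_m$ labelled by $\psi_k\in\mathcal{Y}_{\Sigma_k}$, I excise a small open ball around each $p_k$; requirement~(ii) of Definition~\ref{def:d-dim-defect-data} guarantees that the new incoming boundary sphere is isomorphic to $\Sigma_k$. This yields a bordism $\widehat M$ of $\Borddefn{n}(\D)$ with $m$ extra incoming spheres and no remaining $0$-strata, and I set
\[
\zz^\bullet(M) := \zz(\widehat M)\circ\big(\id\otimes\psi_1\otimes\cdots\otimes\psi_m\big),
\]
inserting the invariant states into the excised boundaries.

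The hard part will be to show that $\zz^\bullet(M)$ is well-defined, namely independent of the radii of the excised balls and of the identifications of the cut spheres with the representatives $\Sigma_k$. This is precisely where the invariance condition is used: two such choices differ by elements $f_k\in\mathrm{Emb}(\overline B)$ (shrinkings and automorphisms of $\Sigma_k$), so the corresponding bordisms differ by gluing the collars $H_{f_k}$ onto the excised spheres, and $\zz(H_{f_k})(\psi_k)=\psi_k$ leaves the inserted states unchanged; the same argument shows $\zz^\bullet$ is constant on isomorphism classes of bordisms. Functoriality and symmetric monoidality are then routine, since the $D_0$-strata are interior: for $N\circ M$ one excises the union of the balls for $M$ and $N$, whence $\widehat{N\circ M}=\widehat N\circ\widehat M$ and the states on disjoint spheres factor through the composition, and disjoint unions are handled in the same way.

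Finally, $\zz=\zz^\bullet\circ h_*$ holds because, by functoriality of $\zz$, inserting $h_0(x)=\zz(\overline B_x)$ into the sphere left by excising an $x$-labelled $0$-stratum is the same as gluing the ball $\overline B_x$ back in, which reconstructs $M$ and hence gives $\zz(M)$. For $D_0$-completeness, note $D_0^\bullet(\Sigma)=\mathcal{Y}_\Sigma$, and since $\zz^\bullet$ agrees with $\zz$ on objects and on the collars $H_f$ (which have no $0$-strata) the invariant-state spaces coincide, $\mathcal{Y}_\Sigma^\bullet=\mathcal{Y}_\Sigma$; the completion map then satisfies $Y_\Sigma^\bullet(\psi)=\zz^\bullet(\overline B_\psi)=\zz(H_f)(\psi)=\psi$, so it is the identity on $\mathcal{Y}_\Sigma$ and in particular a bijection.
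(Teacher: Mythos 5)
Your proposal is correct and follows essentially the same route as the paper's proof: keep all higher-dimensional labels, enlarge $D_0$ to the invariant-state spaces $\mathcal{Y}_\Sigma$, define $\zz^\bullet$ by excising balls around interior $0$-strata and inserting the states, and use invariance under $\mathrm{Emb}(\overline B)$ via the collar bordisms $H_f$ to show the result is independent of the chosen ball parametrisations (the paper makes this last step precise by shrinking two parametrisations $f_p,g_p$ to a common region through $h, h'\in\mathrm{Emb}(\overline B_p)$ with $f_p\circ h = g_p\circ h'$ and applying invariance on both sides). The only cosmetic difference is that the paper fixes a set $\mathcal{S}$ of representative spheres instead of arguing, as you do, that $\mathcal{Y}_\Sigma$ is canonically attached to the class $[\Sigma]$; your explicit verifications of functoriality, of $\zz = \zz^\bullet\circ h_*$, and of $D_0$-completeness are steps the paper leaves implicit.
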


\begin{proof}
We start by introducing the point-completed defect data $\D^\bullet$. 
It will differ from $\D$ only in the label set and adjacency map for 0-strata. 
Choose a subset 
\be
\label{eq:schickS}
\mathcal{S} \subset \Sphere_{n-1}(\partial \D)
\ee
of representatives of the classes in $[\Sphere_{n-1}(\partial \D)]$, that is, for each $S \in [\Sphere_{n-1}(\partial \D)]$ there exists a unique $\Sigma \in \mathcal{S}$ such that $S = [\Sigma]$.
An element of $D_0^\bullet$ is a pair consisting of a defect sphere $\Sigma \in \mathcal{S}$ 
and a state $\psi \in \mathcal{Y}_\Sigma$, while $f_0^\bullet$ simply forgets the state:
\be
D_0^\bullet
:=
\big\{ \, (\Sigma,\psi) \,\big|\, \Sigma \in \mathcal{S} , \, \psi \in \mathcal{Y}_\Sigma \,\big\} \, ,
\ee
and
\be
f^\bullet_0 \colon 
	D^\bullet_0 \times \{+\}
\lra [\Sphere_{n-1}(\partial \D)]
\,  , \quad
(\Sigma,\psi) \lmt [\Sigma] \, ,
\ee
while $f^\bullet_0$ is determined on $D^\bullet_0 \times \{-\}$ by \eqref{eq:adjacency-map-duality}.
Finally, $\D^\bullet$ is given by
\be
	\D^\bullet :=
	(D_n, \dots ,D_1,D^\bullet_0 ;
	f_{n-1},\dots,f_1,f_0^\bullet) \, .
\ee
By Lemma~\ref{lem:points-are-invariant}, there is a canonical map of defect data $h \colon \D \to \D^\bullet$ (recall Definition~\ref{definition:map-def-data}) which is given by the identity on $D_i$ for $i \in \{1,\dots,n \}$ and by 
\be\label{eq:D-bullet-compare-h0}
	h_0 \colon D_0 \lra D^\bullet_0
\, , \quad
	x \lmt Y_\Sigma(x) \, ,
\ee
where $\Sigma \in \mathcal{S}$ is such that $f_0(x,+)=[\Sigma]$, i.\,e.\ $x \in D_0(\Sigma)$, and $Y_\Sigma$ was defined in \eqref{eq:point-def-stat-corr}.

We can now describe the $D_0$-complete defect TQFT
\be
\zz^\bullet \colon \Borddefn{n}(\D^\bullet) \lra \Vectk
\ee 
obtained from $\zz$ by extending from $\D$ to $\D^\bullet$. 
Note that the objects of $\Borddefn{n}(\D^\bullet)$ are the same as those of $\Borddefn{n}(\D)$, and so on objects~$X$ we set
\be\label{eq:Z-bullet-obj}
	\zz^\bullet(X) := \zz(X) \, .
\ee

Let 
\be
M \colon X \lra Y
\ee
be a bordism in $\Borddefn{n}(\D^\bullet)$. 
Let $p \in M_0$ be a $0$-stratum of~$M$ and let $(\Sigma_p,\psi_p) \in D_0^\bullet$ be its label. 
Write $\overline B_p$ for the closed cone $C(\Sigma_p)$ with central point labelled $(\Sigma_p,\psi_p)$.
Fix a 
	an isomorphism-onto-its-image 
$f_p \colon \overline B_p \to M$ of decorated stratified manifolds, such that $f_p(0)=p$. 
Such a local neighbourhood exists by the definition of defect bordisms (embed a slightly larger open ball $B_{p,1+\varepsilon}$ as above and restrict to $\overline B_p$).
Repeat this procedure for all $p \in M_0$. 
By restricting the maps~$f_p$ to balls of smaller radii if necessary, we may assume that all images are disjoint. 

We can now define a new bordism
\be\label{eq:M0-fp-def}
	M\big( (f_p)_{p \in M_0} \big) \colon~
	X \sqcup \bigsqcup_{p \in M_0} \Sigma_{p} \longrightarrow Y
\ee
as follows. 
As manifold, $M\big( (f_p)_{p \in M_0} \big) = M \setminus \bigsqcup_{p \in M_0} f_p(B_{p})$. The new boundary component arising from cutting out the open ball around $p$ is parametrised by $f_p$, restricted to $\Sigma_{p} = \partial \overline B_{p}$. 
We make the ansatz
\be\label{eq:Z-bullet-morph}
	\zz^\bullet(M) \colon \zz(X) \lra \zz(Y)
	\, , \quad
	u \lmt
	\zz\big( M\big( (f_p)_{p \in M_0} \big) \big)\Big(u \otimes
	\bigotimes_{p \in M_0} \psi_p \Big) \, .
\ee
The proof of the proposition is complete once we show the following

\medskip

\noindent
\textsl{Claim:}
$\zz^\bullet(M)$ in \eqref{eq:Z-bullet-morph} is independent of the choice of the $f_p$.

\medskip

It is enough to consider the case that in \eqref{eq:M0-fp-def}, $M_0$ consists of a single point, the general case follows from gluing. Let thus $M_0 = \{p\}$ and let $f_p, g_p \colon \overline B_p \to M$ be two choices of local neighbourhood. 

Choose open subsets $U,V \subset \overline B_p$ such that $\varphi := g_p^{-1} \circ f_p|_{U} \colon U \to V$ is 
	an isomorphism. 
Pick
$h \in \mathrm{Emb}(\overline  B_p
)$ such that $\mathrm{im}(h) \subset U$. Then $h' := \varphi \circ h$ is equally an element of $\mathrm{Emb}(\overline B_p)$. 
With the thus-constructed maps we have the following identities of bordisms:
\be
M(f_p) \circ (1_X \sqcup H_h) = M(f_p \circ h) = M(g_p \circ h') =
M(g_p) \circ (1_X \sqcup H_{h'}) \, .
\ee
Let now $\psi \in \mathcal{Y}_{\Sigma_p}$. 
Then
\begin{align}
&\zz\big( M(f_p) \big)\big(u \otimes \psi \big)
=
\zz\big( M(f_p) \circ (1_X \sqcup H_h \big))\big(u \otimes \psi \big)
\nonumber \\ &
=
\zz\big( M(g_p) \circ (1_X \sqcup H_{h'} ) \big) \big(u \otimes \psi \big)
=
\zz\big( M(g_p) \big)\big(u \otimes \psi \big) \, ,
\end{align}
where in the first and last step 
we used that $\psi \in \mathcal{Y}_{\Sigma_p}$, so that it is left invariant by $\zz(H_h)$ and $\zz(H_{h'})$.
This proves the claim.
\end{proof}

\begin{remark}
\begin{enumerate}
\item
To justify the name ``completion'', we note that
if $\zz$ is already $D_0$-complete, then $\zz^\bullet$ is isomorphic to $\zz$ in $\Deftqft_{n}$, and so in particular $\zz^\bullet \sim \zz$. 

To see this, recall that by construction $D^\bullet_0(\Sigma) = \mathcal{Y}_\Sigma$ and that \eqref{eq:D-bullet-compare-h0} states that $h_0|_{D_0(\Sigma)} = Y_\Sigma$. If $\zz$ is $D_0$-complete, by definition the latter is an isomorphism, so that $h \colon \D \to \D^\bullet$ is an isomorphism.
\item
Note that the construction of $\D^\bullet$ and $\zz^\bullet$ does not depend on the zero component $D_0$ of $\D$ at all. 
Furthermore, by Proposition~\ref{prop:point-defect-completed} a defect TQFT for $\D$ factors through one for $\D^\bullet$, that is, every defect TQFT factors through a $D_0$-complete one. 
In this sense the label set $D_0$ is superfluous in the description of defect TQFTs.
\end{enumerate}
\end{remark}

\medskip

Let $\zz \colon \Bordd[n] \to \Vectk$ be a $D_0$-complete  defect TQFT. We will now describe how 
 point defects on strata of dimension 
$\geqslant 1$ carry an algebra structure. 
This algebra structure 
describes the fusion of point defects and 
will be instrumental in the next section.

Fix an $i \in \{1,\dots,n\}$ and a defect label $x \in D_i$. 
Recall the open and closed $x$-defect balls $B_x$ and $\overline B_x$ from \eqref{eq:Bxball} and \eqref{eq:Bxball-closed}, and that 
by definition of the defect bordisms, every point in an $i$-stratum labelled by $x$ has a neighbourhood 
	isomorphic 
to $B_x$. 
As before
we abbreviate $\Sigma_x = \partial \overline B_x$ and
\be
\label{eq:Ax-defect-alg-def}
	A_x := D_0(\Sigma_x) \, .
\ee
Recall that for $D_0$-complete theories, as we assume here, $A_x$ is a $\Bbbk$-vector space.

For $u,v \in A_x$ let 
\be
M_x(u,v) \colon \emptyset \lra \Sigma_x
\ee 
be the bordism in $\Borddefn{n}(\D)$ obtained from $\overline B_x$ by adding two point defects $p,q$ on the $i$-stratum of $\overline B_x$, that is, on the central fibre $(\{0\} \times [-1,1]^i) \cap \overline B_x$. The 0-strata of $M_x(u,v)$ are then $p,q$, and we label $p$ by $u$ and $q$ by $v$. 

Suppose $\overline B_x$ contains a $j$-stratum $(\overline B_x)_j^\alpha$ with $j\neq i$. 
Note that necessarily $j > i$. 
Let $y \in D_j$ be the defect label of $(\overline B_x)_j^\alpha$. For $w \in A_y$ let 
\be
N^\alpha_{y,x}(w) \colon  \emptyset \lra \Sigma_x
\ee
be the bordism in $\Borddefn{n}(\D)$ obtained from $\overline B_x$ by adding one point defect on $(\overline B_x)_j^\alpha$ which is labelled $w$.

\begin{proposition}
\label{prop:point-defect-algebras}
Let $x \in D_i$ for $i \geqslant 1$.
\begin{enumerate}
\item
$A_x$ with multiplication
\be\label{eq:Ax-product-def}
	m_x \colon A_x \otimes A_x \lra A_x
	\, , \quad
	(u,v) \longmapsto Y_{\Sigma_x}^{-1}\big( \zz(M_x(u,v))\big) \, ,
\ee
and unit $1_{A_x} := Y_{\Sigma_x}^{-1}\big(\zz(\overline B_x)\big)$ 
is a unital associative algebra. 
$A_x$ is commutative for $i>1$. 
\item
The map 
\be
	b_{y,x}^\alpha \colon A_y \lra A_x 
	\, , \quad
	w \lmt Y_{\Sigma_x}^{-1}\big(\zz(N^\alpha_{x,y}(w))\big) \, ,
\ee
is an algebra homomorphism.
\end{enumerate}
\end{proposition}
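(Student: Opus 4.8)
The plan is to realise every structure map as the evaluation under $\zz$ of the defect ball $\overline B_x$ decorated with finitely many point insertions — on its $i$-stratum for the algebra structure on $A_x$, and on the higher $j$-stratum $(\overline B_x)^\alpha_j$ for $b^\alpha_{y,x}$ — and to verify each axiom by exhibiting an isomorphism of the relevant decorated stratified bordisms with parametrised boundary, so that functoriality of $\zz$ and the bordism equivalence relation of Section~\ref{subsec:strabord} turn geometric identities into algebraic ones. Two facts are used throughout. First, every state $\zz(M)\in\zz(\Sigma_x)$ arising from a bordism $M\colon\emptyset\to\Sigma_x$ of this type lies in the invariant subspace $\mathcal{Y}_{\Sigma_x}$ — this is checked exactly as in Lemma~\ref{lem:points-are-invariant}, since $H_f\circ M$ is isomorphic to $M$ — so that $Y_{\Sigma_x}^{-1}$ may be applied and the position and radius of any cut are immaterial. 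Second, the defining property $Y_\Sigma(z)=\zz(\overline B_z)$ of the state–point correspondence \eqref{eq:point-def-stat-corr}, which says that the state surrounding a fused point defect equals $\zz$ of the sub-ball that produced it.

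The core device is a cut-and-glue principle. Writing $M_x(a_1,\dots,a_k)\colon\emptyset\to\Sigma_x$ for $\overline B_x$ with $k$ collinear points on the $i$-stratum labelled $a_1,\dots,a_k$, cutting out a sub-ball enclosing a consecutive block $a_r,\dots,a_s$ produces an isomorphic copy of $M_x(a_r,\dots,a_s)$ together with a complementary bordism $W\colon\Sigma_x\to\Sigma_x$ carrying the remaining insertions, whence $\zz(M_x(\dots))=\zz(W)\big(\zz(M_x(a_r,\dots,a_s))\big)$; since $\zz(\overline B_{m_x(a_r,\dots,a_s)})=\zz(M_x(a_r,\dots,a_s))$, replacing the block by its fused insertion does not change $\zz$. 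Associativity is then immediate: grouping the first two, respectively the last two, of three points gives $\zz(M_x(m_x(u,v),w))=\zz(M_x(u,v,w))=\zz(M_x(u,m_x(v,w)))$, and applying $Y_{\Sigma_x}^{-1}$ yields $m_x(m_x(u,v),w)=m_x(u,m_x(v,w))$. For the unit one glues the empty ball into the hole left by cutting out the $1_{A_x}$-labelled point (using $\zz(\overline B_{1_{A_x}})=\zz(\overline B_x)$) to get $\zz(M_x(1_{A_x},v))=\zz(M_x(v))$, and identifies the single-insertion ball $M_x(v)$ with $\overline B_v$: both are $n$-balls bounded by $\Sigma_x$ with one central $0$-stratum $v$, concretely $C(S_x)\times C(S^{i-1})\cong C(S_x\ast S^{i-1})=C(\Sigma_x)=\overline B_v$. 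Commutativity for $i>1$ follows because the $i$-stratum is at least $2$-dimensional: the configuration space of two interior points is connected, so a stratified isotopy of $\overline B_x$ compactly supported in the interior (hence fixing $\Sigma_x$) exchanges the two insertions, giving $M_x(u,v)\cong M_x(v,u)$. For $i=1$ the points are ordered along an interval and no such isotopy exists, consistent with the absence of a commutativity claim.

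For part (ii) I would argue in the same spirit. Preservation of the unit uses that cutting the $1_{A_y}$-point off the $j$-stratum and filling the hole with the empty ball $\overline B_y$ (via $\zz(\overline B_{1_{A_y}})=\zz(\overline B_y)$) reconstructs $\overline B_x$, so $\zz(N^\alpha_{y,x}(1_{A_y}))=\zz(\overline B_x)=Y_{\Sigma_x}(1_{A_x})$. Multiplicativity, $b^\alpha_{y,x}(m_y(w,w'))=m_x\big(b^\alpha_{y,x}(w),b^\alpha_{y,x}(w')\big)$, I would prove by showing both sides equal $Y_{\Sigma_x}^{-1}\big(\zz(N^\alpha_{y,x}(w,w'))\big)$, where $N^\alpha_{y,x}(w,w')$ denotes $\overline B_x$ with two points $w,w'$ on the $j$-stratum $(\overline B_x)^\alpha_j$. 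The left-hand side reduces by fusing $w,w'$ inside a sub-ball of the $j$-stratum isomorphic to $M_y(w,w')$, exactly as in the principle above. The right-hand side is obtained by two nested cut-and-glues: resolving each $i$-stratum point $b^\alpha_{y,x}(w)$ of $M_x(b^\alpha_{y,x}(w),b^\alpha_{y,x}(w'))$ by filling its surrounding ball with $N^\alpha_{y,x}(w)$ (using $\zz(\overline B_{b^\alpha_{y,x}(w)})=Y_{\Sigma_x}(b^\alpha_{y,x}(w))=\zz(N^\alpha_{y,x}(w))$) threads the inserted $w$ onto the ambient stratum $(\overline B_x)^\alpha_j$ across the matching $\Sigma_x$-boundary, and likewise for $w'$.

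The main obstacle I expect is precisely this last geometric bookkeeping: verifying, with the correct orientations and boundary parametrisations, that the nested gluing in the multiplicativity step genuinely reproduces $N^\alpha_{y,x}(w,w')$ — i.e.\ that filling a sub-ball centred on the $i$-stratum with the configuration $N^\alpha_{y,x}(w)$ connects the local $j$-stratum carrying $w$ to the single global stratum $(\overline B_x)^\alpha_j$ — together with the cone/join identification $M_x(v)\cong\overline B_v$ used for the unit. The algebraic content of the proposition is formal once these isomorphisms of decorated stratified bordisms are in hand; the real work lies in the stratified topology of the defect balls and in checking that $\mathcal{Y}_\Sigma$-invariance renders all choices of cut irrelevant.
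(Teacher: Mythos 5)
Your proof is correct and follows essentially the same route as the paper's: there, too, everything rests on the cut-and-glue identity $\zz(M_x(u,v)) = \zz(W)\big(Y_{\Sigma_x}(u)\big)$ combined with the freedom to move point insertions within their strata (Lemma~\ref{lem:move-point-insertions}), reducing unitality/associativity to the balls $M_x(v)$, $M_x(u,v,w)$ and part (ii) to the two-point ball $N^\alpha_{y,x}(u,v)$. The only difference is one of detail: where the paper cites Lemma~\ref{lem:move-point-insertions} and omits the bookkeeping, you re-derive the needed instances explicitly (configuration-space connectivity for commutativity, the cone/join identification $C(S_x)\times C(S^{i-1})\cong C(\Sigma_x)$ for the unit, and the invariance check needed to apply $Y_{\Sigma_x}^{-1}$), which fills in exactly the steps the paper skips.
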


We call $A_x$ the \textsl{algebra of point insertions on~$x$}.
For $i=n-1$ the above map $A_y \to A_x$ can be thought of as a variant of the familiar bulk-boundary map in open-closed TQFTs, to which it reduces if the defect label $x\in D_{n-1}$ is in fact a ``boundary condition''. 

\medskip

The proof of Proposition~\ref{prop:point-defect-algebras} relies on the next lemma, which allows us to move point defects on connected components without changing the value of the functor. We need to prepare a bit of notation to state it.

Let $M \colon X \to Y$ be a bordism in $\Borddefn{n}(\D)$ and let $p_1,\dots,p_m \in M$ be distinct points such that $p_k \in M^{\alpha_k}_{i_k}$ for some $i_k$-stratum $M^{\alpha_k}_{i_k}$ of dimension $i_k >0$ with defect label $x_k \in D_{i_k}$. Let $u_1,\dots,u_m \in D_0$ be point defect labels such that $u_k \in A_{x_k}$.
Write
\be\label{eq:M-points-to-0strata}
	M(p_1,\dots,p_m;u_1,\dots,u_m) \colon X \lra Y
\ee
for the bordism obtained from $M$ by 
declaring $p_1,\dots,p_m$ to be new 0-strata, 
such that $p_k$ is labelled by $u_k \in D_0$.
We have the following lemma, which holds also for non-$D_0$-complete defect TQFTs.

\begin{lemma}\label{lem:move-point-insertions}
Let $p_1,\dots,p_m \in M$ and $q_1,\dots,q_m \in M$ be two choices of $m$ distinct points such that $p_k,q_k$ lie in the same $i_k$-stratum $M^{\alpha_k}_{i_k}$. 
For a 1-dimensional stratum we demand in addition that all $q_k$ on that stratum occur in the same order along that stratum as the $p_k$.
Then
\be
	\zz\big(M(p_1,\dots,p_m;u_1,\dots,u_m)\big)
	=
	\zz\big(M(q_1,\dots,q_m;u_1,\dots,u_m)\big) \, .
\ee
\end{lemma}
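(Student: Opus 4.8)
The plan is to show that the two decorated bordisms $M(p_1,\dots,p_m;u_1,\dots,u_m)$ and $M(q_1,\dots,q_m;u_1,\dots,u_m)$ represent the \emph{same} morphism in $\Borddefn{n}(\D)$; since $\zz$ is a functor, their images then coincide. Recall that a morphism in $\Borddefn{n}(\D)$ is an equivalence class of decorated stratified bordisms with parametrised boundary, two representatives being identified whenever there is an isomorphism of decorated stratified manifolds intertwining the boundary parametrisations. Both bordisms here have the same underlying manifold $M$, the same boundary parametrisation $\iota$, and the same labelled strata, except that $p_k$ (resp.\ $q_k$) is declared a $0$-stratum with label $u_k$. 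It therefore suffices to construct a self-homeomorphism $\Phi$ of $M$ which preserves the whole decorated stratification of $M$ (strata setwise, with orientations and labels), is the identity on a neighbourhood of $\partial M$, and satisfies $\Phi(p_k)=q_k$ for all $k$: then $\Phi$ is an isomorphism $M(p_1,\dots,p_m;u_1,\dots,u_m)\to M(q_1,\dots,q_m;u_1,\dots,u_m)$ with $\Phi\circ\iota=\iota$, so the two classes agree.

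First I would build $\Phi$ as the time-one map of an ambient isotopy, constructed one stratum at a time. Grouping the marked points by the stratum containing them, fix a stratum $M^\alpha_i$ and let $p_{k_1},\dots,p_{k_r}$ and $q_{k_1},\dots,q_{k_r}$ be the marked points in it; these are interior points of the open connected $i$-manifold $M^\alpha_i$. I want a compactly supported isotopy of $M^\alpha_i$ carrying the ordered tuple $(p_{k_1},\dots,p_{k_r})$ to $(q_{k_1},\dots,q_{k_r})$. For $i\geqslant 2$ the ordered configuration space of a connected manifold of dimension $\geqslant 2$ is path-connected, so such an isotopy exists unconditionally; for $i=1$ the ordered configuration space has several components, distinguished precisely by the (cyclic) order of the points along the $1$-manifold, and the hypothesis that the $q_k$ occur in the same order as the $p_k$ guarantees that the two tuples lie in the same component and are again joined by an isotopy. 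In either case the isotopy can be taken compactly supported in $M^\alpha_i$, hence stationary near the frontier $\overline{M^\alpha_i}\setminus M^\alpha_i$, which lies in the lower strata and in $\partial M$.

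Next I would extend each stratum isotopy to an ambient isotopy of $M$ supported near $M^\alpha_i$, using the local model for defect neighbourhoods. By the definition of $\Borddefn{n}(\D)$, every interior point of an $x$-labelled $i$-stratum has a neighbourhood isomorphic to the $x$-defect ball $B_x$ of~\eqref{eq:Bxball}, a neighbourhood of the origin in $C(S_x)\times(-1,1)^i$, in which the stratum is the central factor $\{0\}\times(-1,1)^i$ and the transverse cone directions $C(S_x)$ carry the surrounding decorated substrata. A self-map of the $(-1,1)^i$-factor therefore extends by the product structure to a label- and stratum-preserving self-map of $B_x$ acting trivially in the cone directions. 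Patching these local extensions over a tubular neighbourhood of $M^\alpha_i$ by means of a collar and a partition of unity --- equivalently, invoking the isotopy extension theorem for stratified spaces of the type considered here, cf.~\cite{Pflaum} --- yields an ambient isotopy $\Phi^\alpha_t$ of $M$ that realises the stratum isotopy, preserves every decorated stratum, and is the identity outside a small neighbourhood of $M^\alpha_i$. Since distinct strata admit mutually disjoint such neighbourhoods, the maps $\Phi^\alpha_t$ for the various strata commute and compose to a single ambient isotopy $\Phi_t$; its endpoint $\Phi:=\Phi_1$ is the desired isomorphism, and applying $\zz$ proves the lemma. Nothing here uses $D_0$-completeness: one only needs that the $u_k$ are admissible $0$-stratum labels on the respective strata, which holds because $u_k\in A_{x_k}=D_0(\Sigma_{x_k})$.

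The main obstacle is the stratified isotopy extension step: upgrading a compactly supported isotopy of a single stratum to an ambient isotopy that respects the \emph{entire} decorated stratification and is stationary near $\partial M$. In the smooth category this is classical, but here one works with topological manifolds stratified and mapped as in Section~\ref{subsec:strabord}, so the extension must be read off explicitly from the local product-of-cone model $C(S_x)\times(-1,1)^i$ and then glued; verifying that the transverse cone directions, together with all their sublabels and orientations, ride along unchanged is where the care lies. The only other point requiring attention is the combinatorial input for $i=1$, namely that the ordering hypothesis is exactly the condition needed to place the two configurations in the same component of the ordered configuration space of the $1$-stratum.
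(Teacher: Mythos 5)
Your overall strategy coincides with the paper's: both proofs reduce the claim to exhibiting a self-isomorphism of $M$ as a decorated stratified bordism (identity near $\partial M$, preserving all strata, labels and orientations) that carries the $p_k$ to the $q_k$, and both generate the motion of points from the local product model $C(S_x)\times(-1,1)^i$ of a defect neighbourhood, in which one moves points in the $(-1,1)^i$-factor while the cone directions ride along trivially. The difference lies in how this local motion is globalised, and that is where your argument has a genuine gap. You move all marked points of a stratum at once, via path-connectedness of the ordered configuration space, and then upgrade the resulting stratum isotopy to an ambient stratification-preserving isotopy of $M$ by ``patching local extensions with a partition of unity, equivalently invoking the isotopy extension theorem for stratified spaces, cf.~\cite{Pflaum}''. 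Neither tool is available here: partitions of unity can blend vector fields but not homeomorphisms, and there is no ambient smooth structure on $M$ (the total space is only a topological manifold, and the transverse factor $C(S_x)$ is a topological cone) in which generating vector fields of the chart-wise extensions could be added. Moreover, the chart-wise extensions themselves need not agree on overlaps, because two overlapping defect-ball charts induce genuinely different product decompositions, so ``acting trivially in the cone directions'' is a chart-dependent notion. Likewise, stratified isotopy extension theorems (Thom--Mather theory) require Whitney conditions or control data with which the paper's chart-defined stratified manifolds are never equipped, so the citation does not close the gap that you yourself flag as the main obstacle.

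The paper's proof is engineered to avoid exactly this globalisation problem: it moves one point at a time along a path $\gamma$ in its stratum avoiding the other marked points (this is where the ordering hypothesis on 1-strata enters, just as it enters your configuration-space step), covers $\gamma$ by finitely many defect-ball charts $g_t\colon \overline B_{x_1}\to M$, and in each elementary step applies a self-isomorphism of the single ball $\overline B_{x_1}$ that is the identity near $\partial\overline B_{x_1}$, conjugated by the chart and extended by the identity outside its image. Since every elementary move is supported in one chart, no compatibility between overlapping charts --- and hence no patching or isotopy-extension machinery --- is ever needed. Your argument can be repaired in the same spirit: fragment your stratum isotopy into finitely many pieces, each supported in a single chart domain, and extend each piece separately. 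As written, however, the key globalisation step is not justified.
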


\begin{proof}
It is enough to show that we can replace $p_1$ by $q_1$, i.\,e.\ that 
\be
\zz\big(M(p_1,p_2,\dots,p_m;u_1,\dots,u_m)\big) = \zz\big(M(q_1,p_2,\dots,p_m;u_1,\dots,u_m)\big) \, .
\ee
Pick a smooth path $\gamma \colon [0,1] \to M$ such that $\gamma(0)=p_1$, $\gamma(1)=q_1$ and such that $\gamma$ lies entirely in the stratum $M^{\alpha_1}_{i_1}$ and does not intersect any of the other points $p_2,\dots,p_m$. For each point $\gamma(t)$ of the path there is 
	an isomorphism-onto-its-image 
$g_t \colon \overline B_{x_1} \to M$ such that $g_t(0) = \gamma(t)$, and such that $\mathrm{im}(g_t)$ does not contain any of $p_2,\dots,p_m$. 
Pick a finite collection $0=t_0,t_1,\dots,t_N=1$ such that the images $\mathrm{im}(g_{t_j})$ cover $\gamma$.

It is now enough to show that $p_1$ can be moved to a point $r$ in the intersection of 
$\mathrm{im}(g_{t_0})$ and $\mathrm{im}(g_{t_1})$. 
Repeating this procedures allows one to move $p_1$ to $q_1$.

Both $p_1$ and $r$ lie in the image of $g_{t_0}$. 
Let $\tilde p_1$ and $\tilde r$ be their pre-images in $\overline B_{x_1}$. 
There is 
	an isormophism~$\phi$ 
of the decorated stratified manifold $\overline B_{x_1}$ which is the identity in some neighbourhood of the boundary of $\overline B_{x_1}$ and which maps $\tilde p_1$ and $\tilde r$. 
Using~$\phi$ we obtain 
	an isomorphism 
of decorated stratified manifolds $M\to M$ which is the identity outside $\mathrm{im}(g_{t_0})$ and which equals $g_{t_0} \circ \phi \circ g_{t_0}^{-1}$ on the image. 
This 
	isomorphism 
maps $p_1$ to $r$, showing that $\zz\big(M(p_1,p_2,\dots,p_m;u_1,\dots,u_m)\big) = \zz\big(M(r,p_2,\dots,p_m;u_1,\dots,u_m)\big)$.
\end{proof}

A different way of stating the above lemma is that $\zz\big(M(p_1,\dots,p_m;u_1,\dots,u_m)\big)$ depends on $p_1,\dots,p_m$ only up to homotopy in the configuration space of~$m$ ordered distinct points, where during homotopies all points must remain in their respective strata.

\begin{proof}[Proof of Proposition \ref{prop:point-defect-algebras}]
Part (i): Commutativity for $i>1$ is immediate from Lemma~\ref{lem:move-point-insertions}. 
For unitality, pick a chart $g \colon \overline B_x \to B_x$ around $p$ such that $g(0) = p$ and $q \notin \mathrm{im}(g)$. Let $W$ be the bordism obtained from $M_x(u,v)$ by cutting out $g(B_x)$ and parametrising the boundary by $g|_{\partial \overline B_x}$. Recall that $\partial\overline B_x = \Sigma_x$. By definition of $Y_{\Sigma_x}$ 
(see \eqref{eq:point-def-stat-corr}) 
we have
\be
	\zz(M_x(u,v)) = \zz(W)(Y_{\Sigma_x}(u)) ~\in~ \zz(\Sigma_x) \, .
\ee
Denote by $M_x(v)$ the bordism obtained from $\overline B_x$ by adding a $v$-labelled 0-stratum at the point $q$.
For $u = 1_{A_x}$ one obtains
\begin{align}
	\zz(M_x(1_{A_x},v)) 
		&= \zz(W)(Y_{\Sigma_x}(1_{A_x})) 
	= \zz(W) \circ \zz(\overline B_x)
\nonumber \\ &
	= \zz(M_x(v)) 
	= Y_{\Sigma_x}(v) \, ,
\end{align}
where in the last equality we used once more Lemma~\ref{lem:move-point-insertions}.

The verification of associativity works along the same lines. One shows that $m_x(m_x(u,v),w)$ and $m_x(u,m_x(v,w))$ are equal to $\zz$ evaluated on the bordism given by $\overline B_x$ with three additional 0-strata $p,q,r$ labelled $u,v,w$ respectively. We omit the details.

\medskip

\noindent
Part (ii): We need to show $b^\alpha_{y,x}(m_y(u,v)) = m_x( b^\alpha_{y,x}(u), b^\alpha_{y,x}(v) )$.
Writing out both sides as a single bordism gives $\overline B_x$ with additional 0-strata $p,q$ inserted on $M_j^\alpha$ and labelled $u,v$. Applying Lemma~\ref{lem:move-point-insertions} gives the result, we again skip the details.
\end{proof}

Considering the algebras of point defects provides another justification for our notion of equivalence of defect TQFTs from Definition \ref{def:ZZsequiv}.

\begin{lemma}
\label{lemma:equiv-iso-Alg}
Let $\zz \colon \Borddefn{n}(\D) \to \Vectk$ and $\zz' \colon \Borddefn{n}(\D') \to \Vectk$ be $D_0$-complete defect TQFTs. Suppose $\zz \sim \zz'$ via $(h,\varphi) \colon \zz \rightleftarrows \zz' :\!\!(g,\psi)$. 
Then for every $x \in D_{i}$ the map $h_0|_{A_x} \colon A_x \to A_{h_i(x)}$ is an isomorphism of algebras.
\end{lemma}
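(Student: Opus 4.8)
The plan is to push the entire algebra structure through the natural isomorphism~$\varphi$, exploiting that the induced functor $h_*$ merely relabels strata and hence intertwines all the geometric building blocks used to define $A_x$. First I would record the relevant compatibilities of $h_*$. Since $h$ is a morphism of defect data, the square in Definition~\ref{definition:map-def-data} commutes, so choosing representatives with $S_{h_i(x)} = h_*(S_x)$ we get $\Sigma_{h_i(x)} = h_*(\Sigma_x)$ and, applied to the cone-plus-cylinder constructions, $h_*(\overline B_z) = \overline B_{h_0(z)}$ for $z \in D_0(\Sigma_x)$, $h_*(\overline B_x) = \overline B_{h_i(x)}$, $h_*(M_x(u,v)) = M_{h_i(x)}(h_0(u),h_0(v))$, and $h_*(H_f) = H'_{f'}$ for the corresponding invariance bordism of $\zz'$. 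The commuting square also shows $f'_0(h_0(z)) = [\Sigma_{h_i(x)}]$, so $h_0$ maps $A_x = D_0(\Sigma_x)$ into $A_{h_i(x)} = D_0'(\Sigma_{h_i(x)})$.

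Next I would establish the central identity. Applying naturality of $\varphi$ to $\overline B_z \colon \emptyset \to \Sigma_x$, and using $\varphi_\emptyset = \id_\Bbbk$ (as $\varphi$ is monoidal) together with $h_*(\overline B_z)=\overline B_{h_0(z)}$, gives $\varphi_{\Sigma_x}(\zz(\overline B_z)) = \zz'(\overline B_{h_0(z)})$, that is
\[
\varphi_{\Sigma_x}\circ Y_{\Sigma_x} \;=\; Y'_{\Sigma_{h_i(x)}}\circ h_0|_{A_x}
\]
as maps $A_x \to \zz'(\Sigma_{h_i(x)})$, where $Y'$ is the state--field map for~$\zz'$. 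Because $\zz$ and $\zz'$ are $D_0$-complete, the maps $Y_{\Sigma_x}$ and $Y'_{\Sigma_{h_i(x)}}$ are bijections onto the invariant subspaces $\mathcal{Y}_{\Sigma_x}$ and $\mathcal{Y}_{\Sigma_{h_i(x)}}$, so this square already identifies $h_0|_{A_x}$ with the restriction of the linear map $\varphi_{\Sigma_x}$ to $\mathcal{Y}_{\Sigma_x}$.

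Then I would show that $\varphi_{\Sigma_x}$ restricts to a \emph{bijection} $\mathcal{Y}_{\Sigma_x} \to \mathcal{Y}_{\Sigma_{h_i(x)}}$. Recall that $\varphi$ is necessarily invertible as a morphism in $\Deftqft_n$, so $\varphi_{\Sigma_x}$ is a linear isomorphism and injectivity of the restriction is immediate. For surjectivity, given $\psi' \in \mathcal{Y}_{\Sigma_{h_i(x)}}$ set $\eta := \varphi_{\Sigma_x}^{-1}(\psi')$ and apply naturality to each $H_f$: using $h_*(H_f)=H'_{f'}$ and invariance of $\psi'$ one finds $\varphi_{\Sigma_x}(\zz(H_f)(\eta)) = \zz'(H'_{f'})(\psi') = \psi' = \varphi_{\Sigma_x}(\eta)$, and injectivity of $\varphi_{\Sigma_x}$ forces $\zz(H_f)(\eta)=\eta$ for all $f$, i.e.\ $\eta \in \mathcal{Y}_{\Sigma_x}$. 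Combined with the square, $h_0|_{A_x}\colon A_x \to A_{h_i(x)}$ is a linear isomorphism. (It is worth noting that only the single morphism $(h,\varphi)$ and the automatic invertibility of $\varphi$ enter here; the backward morphism $(g,\psi)$ of the equivalence plays no essential role.)

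Finally I would verify multiplicativity and unitality. Applying $\varphi$-naturality to $M_x(u,v)\colon \emptyset \to \Sigma_x$ and to $\overline B_x$ exactly as above yields $\varphi_{\Sigma_x}(\zz(M_x(u,v))) = \zz'(M_{h_i(x)}(h_0(u),h_0(v)))$ and $\varphi_{\Sigma_x}(\zz(\overline B_x)) = \zz'(\overline B_{h_i(x)})$; composing with $(Y'_{\Sigma_{h_i(x)}})^{-1}$ and invoking the square gives $h_0(m_x(u,v)) = m_{h_i(x)}(h_0(u),h_0(v))$ and $h_0(1_{A_x}) = 1_{A_{h_i(x)}}$ directly from the definitions in Proposition~\ref{prop:point-defect-algebras}. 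I expect the main obstacle to be the careful bookkeeping in the first two paragraphs---checking that $h_*$ genuinely intertwines the geometric constructions ($\overline B_z$, $M_x(u,v)$, $H_f$) and that $\varphi_{\Sigma_x}$ restricts to the invariant subspaces---after which the algebraic identities follow formally.
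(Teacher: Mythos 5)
Your proof is correct and follows essentially the same route as the paper's: naturality of $\varphi$ applied to the cone bordisms $\overline{B}_z$ identifies $h_0|_{A_x}$ with the restriction of $\varphi_{\Sigma_x}$ to invariant states, and naturality applied to $M_x(u,v)$ and $\overline{B}_x$ then gives multiplicativity and unitality, exactly as in the paper. Your third paragraph (the injectivity/surjectivity argument via naturality on the $H_f$, showing that $\varphi_{\Sigma_x}$ restricts to a bijection $\mathcal{Y}_{\Sigma_x}\to\mathcal{Y}_{\Sigma_{h_i(x)}}$) even makes explicit a point the paper leaves implicit when it asserts that the bottom path of its square consists of three linear isomorphisms.
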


\begin{proof}
Recall the definitions~\eqref{eq:Bxball-closed} and~\eqref{eq:SigmaxisdelBx}. 
Evaluating the naturality square of $\varphi \colon \zz \to \zz' \circ h_{*}$ for the bordism $\overline{B}_x \colon \emptyset \to \Sigma_x$ shows that the diagram 
\begin{equation}
\begin{tikzpicture}[
			     baseline=(current bounding box.base), 
			     descr/.style={fill=white,inner sep=3.5pt}, 
			     normal line/.style={->}
			     ] 
\matrix (m) [matrix of math nodes, row sep=3.5em, column sep=3.5em, text height=1.5ex, text depth=0.1ex] {%
D_{0}(\Sigma_{x})  &D_{0}'(\Sigma_{h_i(x)})  \\
\zz(\Sigma_{x}) & \zz'(\Sigma_{h_i(x)}) 
\\
};
\path[font=\footnotesize] (m-1-1) edge[->] node[above] {$h_{0}$} (m-1-2);
\path[font=\footnotesize] (m-1-1) edge[->] node[left] {$\zz(\overline{B}_{x})$} (m-2-1);
\path[font=\footnotesize] (m-1-2) edge[->] node[right]  {$\zz'(\overline{B}_{h_i(x)})$} (m-2-2);
\path[font=\footnotesize] (m-2-1) edge[->] node[below] {$\varphi_{\Sigma_{x}}$} (m-2-2);
\end{tikzpicture}
\end{equation}
commutes. 
Since $\zz$  is $D_{0}$-complete, 
the image of $\zz(\overline{B}_{x})$ equals the subspace $\mathcal{Y}_{\Sigma_{x}}$ of invariant states. By Lemma~\ref{lem:points-are-invariant}, $\varphi_{\Sigma_{x}}$ therefore
induces a map 
$\varphi_{\mathcal{Y}} \colon \mathcal{Y}_{\Sigma_{x}} \to \mathcal{Y}_{\Sigma_{h_i(x)}}$ such that the diagram 
\begin{equation}
\begin{tikzpicture}[
			     baseline=(current bounding box.base), 
			     descr/.style={fill=white,inner sep=3.5pt}, 
			     normal line/.style={->}
			     ] 
\matrix (m) [matrix of math nodes, row sep=3.5em, column sep=3.5em, text height=1.5ex, text depth=0.1ex] {%
D_{0}(\Sigma_{x})  &D_{0}'(\Sigma_{h_i(x)})  \\
\mathcal{Y}_{\Sigma_{x}} & \mathcal{Y}_{\Sigma_{h_i(x)}} 
\\
};
\path[font=\footnotesize] (m-1-1) edge[->] node[above] {$h_{0}$} (m-1-2);
\path[font=\footnotesize] (m-1-1) edge[->] node[left] {$Y_{\Sigma_{x}}$} (m-2-1);
\path[font=\footnotesize] (m-1-2) edge[->] node[right] {$Y_{\Sigma_{h_i(x)}}$} (m-2-2);
\path[font=\footnotesize] (m-2-1) edge[->] node[below] {$\varphi_{\mathcal{Y}}$} (m-2-2);
\end{tikzpicture}
\end{equation}
commutes.
Since the bottom path of the square consists of three (linear) isomorphisms, also $h_0$ restricted to $D_0(\Sigma_x) = A_x$ is a linear isomorphism.
To show that $h_0|_{A_x}$ is an algebra isomorphism, we compute, for $u,v \in A_x$,
\begin{align}
	m'_{h_i(x)}(h_0(u),h_0(v))
	&=
	Y_{\Sigma_{h_i(x)}}^{-1} \circ \zz'(M_{h_i(x)}(h_0(u),h_0(v)))
\nonumber \\
	&=
	\big( \varphi_{\mathcal{Y}} \circ Y_{\Sigma_{x}} \circ h_0^{-1}\big)^{-1}
	\circ \varphi_{\mathcal{Y}} \circ \zz(M_{x}(u,v))
\nonumber \\
	&=
	h_0 \circ m_x(u,v) \, ,
\end{align}
where in the second step we used that $\varphi$ is natural.
\end{proof}

\subsection{Euler-completing defect TQFTs}
\label{subsec:completewrtpointinsertions}

In the Euler defect TQFTs discussed in Example~\ref{example:Eulerx}, each stratum contributed a weight calculated from its Euler character. 
Tensoring an arbitrary defect TQFT~$\zz$ with an Euler theory allows one to attach such weights to defect strata for~$\zz$ as well. 
One may ask if one can enlarge the set of defect labels of $\zz$ in such a way that assigning different weights just amounts to choosing a different defect label.
This is indeed possible by ``internalising'' the construction of Example~\ref{example:Eulerx}. 
As an additional bonus, this internal version allows for weights that are not just scalars, but arbitrary invertible point defects. 

Given a $D_0$-complete defect TQFT $\zz \colon \Borddefn{n}(\D) \to \Vectk$ 
(recall Definition~\ref{def:D0complete}) we will 
define Euler-completed defect data $\D^{\euc}$ together with an injection $\ieuc \colon \D \to \D^{\euc}$, 
as well as the Euler-completed TQFT 
\be
\zz^{\euc} \colon \Borddefn{n}(\D^{\euc}) \lra \Vectk \, . 
\ee 
The Euler completion has the following properties, which justify its name:
\begin{enumerate}
\item $\zz$ factors through $\zz^{\euc}$ as $\zz = \zz^{\euc} \circ \ieuc_*$. 
\item $\zz^\euc$ is equivalent to $(\zz^\euc)^\euc$ in the sense of Definition~\ref{def:ZZsequiv}.
\item $\zz^\euc \otimes \zz^{\text{eu}}_\Psi$ is equivalent to $\zz^\euc$.
\end{enumerate}

\medskip

Let thus $\zz\colon \Borddefn{n}(\D) \to \Vectk$ be a $D_0$-complete defect TQFT. 
Recall from Proposition~\ref{prop:point-defect-algebras} that the set of point defect labels on an $i$-stratum decorated with $x \in D_{i}$ naturally acquires the structure of an algebra, denoted by $A_{x}$ in \eqref{eq:Ax-defect-alg-def}. 
We write $A_{x}^{\times}$ for the set of invertible elements in this algebra.

We would now like to say that the 
new sets $D^{\euc}_i$ of defect labels 
for $i$-strata consist of pairs $(x,\phi)$, where $x \in D_i$ and where $\phi \in A_x^\times$ describes the modified weight given to $x$-labelled $i$-strata. However, this is not quite enough, as one has to keep track of the thus-extended defect labels in the defect sphere $f_i(x)$.
The correct definition below makes use of the notation
\be
	\mathrm{Strat}(M)
	=
	\bigcup_{i=0}^n \bigcup_{\alpha_i} \big\{ M_i^{\alpha_i} \big\} 
\ee
to denote the set of strata 
$M_i^{\alpha_i}$ 
of a bordism $M$.

\begin{definition}
\label{def:EulercompleteD}
Let $\D \in \mathcal D^n$. 
The \textsl{Euler-completed defect data} $\D^{\euc} \in \mathcal D^n$ is given as follows. 
\begin{itemize}
\item
The label sets $D^{\euc}_{i}$, $i \in \{1, \ldots, n\}$, consist of triples
\be
\label{eq:1}
	(x,\phi,\Psi) \, ,
\ee
where $x \in D_i$, $\phi \in A_x^{\times}$ and $\Psi = (\psi_S)_{S \in \mathrm{Strat}(f_i(x))}$. 
Here, $\psi_S \in A_y^\times$ where $y$ is the label of the stratum $S$ of the defect sphere $f_i(x)$.
The set $D^\euc_0$ consists of pairs $(x,\Psi)$ with $x \in D_0$ and $\Psi$ as above.
\item 
The value $f_i^\euc(x,\phi,\Psi)$ of the adjacency map $f_i^\euc$ is given by the defect sphere $f_i(x)$, except that a $j$-stratum $S \in \mathrm{Strat}(f_i(x))$ with label $y \in D_j$ is decorated by $(y,\psi_S,\tilde \Psi)$, where $\tilde\Psi$ is determined from~$\Psi$ by the defect labels and weights adjacent to~$S$. 
For $i=0$, we set $f_{0}^{\euc}(x, \Psi)=f_{0}(x)$ with decorations from $\Psi$.
\end{itemize}
\end{definition}

It is clear that $\D^{\euc}$ is again a set of defect data. 
We can realise~$\D$ as a retract of $\D^\euc$ via the injection $\ieuc$ and surjection $\peuc$ of defect data: 
for $i>0$ we have 
\begin{align}
	\ieuc_i \colon D_i &\lra D^\euc_i
	\, , &  \quad
	\peuc_i \colon D^\euc_i &\lra D_i \, , 
	\nonumber \\
	x &\longmapsto (x,1_{A_x},\Psi_{\unit}) \, , 
	&
	(x,\phi,\Psi)  &\longmapsto  x
	\, ,
	\label{eq:h-compl-g-compl-def}
\end{align}
where $\Psi_{\unit}$ assigns to each stratum of the defect sphere $f_i(x)$ the unit $1_{A_y}$ in the corresponding algebra $A_y$; for $i=0$ we set $\ieuc_0(x) = (x,\Psi_{\unit})$ and $\peuc_0(x,\Psi) = x$. 

We now describe the (non-functorial) map ``insertion of point weights'' 
\be 
W \colon \Borddefn{n}(\D^{\euc}) \lra \Borddefn{n}(\D) 
\ee
on objects and morphisms of $\Borddefn{n}(\D^{\euc})$. 
To indicate the decoration with weights, we use the notation 
$\Sigma^{\euc}$ and $M^{\euc}$ for objects and morphisms of  $\Borddefn{n}(\D^{\euc})$, respectively.
We write $\mathrm{Strat}_{>0}(M)$ for the set of all strata of $M$ of dimension $>0$. 
\begin{itemize}
\item 
On objects, $W = \peuc_*$, that is, $W(\Sigma^\euc)$ agrees with $\Sigma^\euc$ as stratified manifold, but labels $(x,\Psi)$ and 
$(x,\phi,\Psi)$ 
are replaced by just $x$.
\item 
On morphisms, $W$ maps the class $[M^{\euc}]$ to the class $[M\big((p_S),(\phi_S^{\chirel(S)})\big)]$, where the latter is defined as follows.
Let $M$ represent the $\D$-decorated bordism $\peuc_*([M^\euc])$. 
For each $S \in \mathrm{Strat}_{>0}(M)$, $p_S$ is a choice of 
one point on~$S$, $\phi_S$ is the corresponding component of the defect label $(x_S,\phi_S,\Psi_S)$ that~$S$ carries in $M^\euc$, and the product $\phi_S^{\chirel(S)}$ is computed in $A_{x_S}$.
The defect bordism $M\big((p_S),(\phi_S^{\chirel(S)})\big)$ is defined as in \eqref{eq:M-points-to-0strata}: 
each $p_S$ becomes an additional 0-stratum labelled by $\phi_S^{\chirel(S)}$ 
	(with the symmetric Euler characteristic $\chirel(S)$ introduced in~\eqref{eq:chireldef}). 
By Lemma~\ref{lem:move-point-insertions} the class  $[M\big((p_S),(\phi_S^{\chirel(S)})\big)]$ is independent of the choice of the $p_{S}$.
 \end{itemize}

Even though $W$ itself is not a functor (composition of bordisms may result in more than one additional 0-stratum on a given $j$-stratum), we have:

\begin{lemma}
$\zz \circ W \colon \Borddefn{n}(\D^{\euc}) \to \Vectk$ is a defect TQFT.
\end{lemma}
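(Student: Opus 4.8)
The plan is to check that $\zz \circ W$ is a symmetric monoidal functor, the only subtle axiom being compatibility with composition, since $W$ itself is not functorial. I would dispatch the easy properties first. For the identity morphism on an object $X$, the bordism $W(\id_X)$ is the cylinder over $W(X)$ with one point inserted on each positive-dimensional stratum; every such stratum is a cylinder $T\times[0,1]$ over a stratum $T$ of $W(X)$, so $\chi(T\times[0,1])=\chi(T)$ while its bordism boundary is $T\times\{0\}\sqcup T\times\{1\}$, giving $\chirel(T\times[0,1])=2\chi(T)-2\chi(T)=0$ by~\eqref{eq:chireldef}. Hence every inserted weight $\phi_S^{\chirel(S)}$ equals the algebra unit $1_{A_{x_S}}$, and such unit insertions may be deleted without changing the value of $\zz$ (by the definition $1_{A_x}=Y_{\Sigma_x}^{-1}(\zz(\overline B_x))$ together with Lemma~\ref{lem:move-point-insertions}); thus $(\zz\circ W)(\id_X)=\id$. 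Compatibility with disjoint union holds because $W(M^\euc\sqcup N^\euc)=W(M^\euc)\sqcup W(N^\euc)$ on the nose (disjoint union creates no new gluings), and the symmetric braiding is then inherited from that of $\zz$.

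The core of the argument is composition. Given $M^\euc\colon X\to Y$ and $N^\euc\colon Y\to Z$ in $\Borddefn{n}(\D^\euc)$, functoriality of $\zz$ reduces the claim $(\zz\circ W)(N^\euc\circ M^\euc)=(\zz\circ W)(N^\euc)\circ(\zz\circ W)(M^\euc)$ to the equality of vectors
\[
\zz\big(W(N^{\euc}\circ M^{\euc})\big)=\zz\big(W(N^{\euc})\circ W(M^{\euc})\big).
\]
Both sides are $\zz$ evaluated on the same underlying $\D$-decorated bordism $\peuc_*(N\circ M)$, differing only in the distribution of the inserted weight points. Along a stratum $S$ of $N\circ M$, the left-hand bordism carries a single point labelled $\phi_S^{\chirel(S)}$, whereas the right-hand bordism carries the points inherited from each stratum of $M$ and of $N$ that together make up $S$. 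By Lemma~\ref{lem:move-point-insertions} all of these may be slid into a single small ball, and by the definition of the multiplication $m_{x_S}$ in Proposition~\ref{prop:point-defect-algebras} they fuse to one point labelled by their product in $A_{x_S}$. Since gluing forces the $\D^\euc$-labels of the constituent pieces of $S$ to agree along $Y$, they all carry the same weight component $\phi_S$, so this product is $\phi_S$ raised to the sum of the symmetric Euler characteristics of the pieces.

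It therefore remains to prove the additivity identity: if a stratum $S$ of $N\circ M$ is built from strata $S_a$ of $M$ and $R_b$ of $N$ glued along the strata $T_c=S\cap Y$, then
\[
\sum_a \chirel(S_a)+\sum_b \chirel(R_b)=\chirel(S).
\]
I would deduce this from $\chirel=2\chi-\chi(\partial)$ and the gluing rule $\chi(S)=\sum_a\chi(S_a)+\sum_b\chi(R_b)-\sum_c\chi(T_c)$ recalled in Example~\ref{example:Eulerx}. Writing $\partial S_a=(S_a\cap X)\sqcup(S_a\cap Y)$ and $\partial R_b=(R_b\cap Y)\sqcup(R_b\cap Z)$, each gluing stratum $T_c$ is the $Y$-boundary of exactly one piece on each side, so $\sum_a\chi(S_a\cap Y)=\sum_b\chi(R_b\cap Y)=\sum_c\chi(T_c)$; substituting the gluing rule then cancels the two resulting copies of $2\sum_c\chi(T_c)$ and leaves $2\chi(S)-\chi(S\cap X)-\chi(S\cap Z)=\chirel(S)$, as $\partial S=(S\cap X)\sqcup(S\cap Z)$.

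Combining the fusion step with this identity yields the required equality, and the monoidal and unit axioms above complete the proof that $\zz\circ W$ is a defect TQFT. The well-definedness of $W$ on each bordism class (independence of the chosen insertion points $p_S$) is already guaranteed by Lemma~\ref{lem:move-point-insertions}. The main obstacle is the bookkeeping at the interface $Y$: one must match exactly how strata of $M$ and $N$ amalgamate into each stratum of $N\circ M$ with the additivity of $\chirel$, so that the accumulated fused weight $\phi_S^{\sum\chirel(\mathrm{pieces})}$ coincides with the prescribed weight $\phi_S^{\chirel(S)}$ -- this is the precise sense in which the failure of $W$ to be functorial is repaired after applying $\zz$.
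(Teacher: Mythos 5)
Your proof is correct and follows essentially the same route as the paper's: reduce to functoriality of $\zz \circ W$, handle identities via $\chirel = 0$ on cylinder strata (with unit insertions deletable), and handle composition by fusing the inserted weight points via Lemma~\ref{lem:move-point-insertions} and the multiplication of Proposition~\ref{prop:point-defect-algebras}, using additivity of $\chirel$ under gluing. The only real difference is that you prove the additivity identity explicitly for a general amalgamation of several strata on each side of the gluing locus, whereas the paper asserts $\chirel(S)+\chirel(T)=\chirel(U)$ only for a pairwise gluing and leaves that bookkeeping implicit.
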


\begin{proof}
It is easy to see that $W$ is strictly monoidal and symmetric (these conditions can be formulated without $W$ being compatible with composition), so that it is enough to verify that $\zz \circ W$ is functorial. 

Since all strata $S$ in the cylinder $X = \Sigma^\euc \times [0,1]$ over an object $\Sigma^\euc$ have $\chirel(S)=0$, the additional 0-strata 
in $W(X)$
are labelled by $\phi_S^0 = 1$. Thus $\zz(W(X)) = \id$, as required.

Given two composable bordisms $M^\euc$ and $N^\euc$ we need to show $\zz(W(M^\euc)) \circ \zz(W(N^\euc)) = \zz(W(M^\euc \circ N^\euc))$. This will follow directly from additivity of $\chirel$ under gluing.
Indeed, suppose two strata $S \in \mathrm{Strat}_{>0}(M^\euc)$ and $T \in \mathrm{Strat}_{>0}(N^\euc)$ with common label $(x,\phi,\Psi)$ get glued together, resulting in the stratum $U$ of $M^\euc \circ N^\euc$. 
Then $\chirel(S) + \chirel(T) = \chirel(U)$.

Let $p$ (labelled $\phi^{\chirel(S)}$) and $q$ (labelled $\phi^{\chirel(T)}$) be the additional 0-strata in $S$ and $T$, respectively, that are added by $W$. By Lemma~\ref{lem:move-point-insertions} and by the definition of the product of $A_x$ in \eqref{eq:Ax-product-def}, under $\zz$ the two point defects can be replaced by a single point defect anywhere on $U$, which is labelled by the product $\phi^{\chirel(S)} \phi^{\chirel(T)} = \phi^{\chirel(U)}$. But the latter is the 0-stratum added on $U$ in $W(M^\euc \circ N^\euc)$.
\end{proof}

The above lemma allows us to define:

\begin{definition}
\label{def:Eulercompletion}
Let $\zz \colon \Borddefn{n}(\D) \to \Vectk$ 
be a $D_{0}$-complete defect TQFT. 
The defect TQFT 
\be
\zz^{\euc} := \zz \circ W \colon \Borddefn{n}(\D^{\euc}) \lra \Vectk 
\ee
is called the \textsl{Euler completion of $\zz$}.
\end{definition}

It is immediate from the definition that
\be\label{eq:z-factors-zeuc}
	\zz^\euc \circ \ieuc_* = \zz \, .
\ee
The Euler completion is also compatible with our notion of equivalence of defect TQFTs:

\begin{lemma}
\label{lemma:euler-compl-defect}
If $\zz, \zz'$ are equivalent $D_{0}$-complete defect TQFTs, $\zz \sim \zz'$, then also $\zz^{\euc} \sim \zz'^{\euc}$. 
\end{lemma}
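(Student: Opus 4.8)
The plan is to promote the two morphisms $(h,\varphi)\colon \zz \to \zz'$ and $(g,\psi)\colon \zz' \to \zz$ that witness $\zz \sim \zz'$ to morphisms between the Euler completions. The crucial input is Lemma~\ref{lemma:equiv-iso-Alg}: since $\zz,\zz'$ are $D_0$-complete, for every $x \in D_i$ the component $h_0$ restricts to an algebra isomorphism $h_0|_{A_x}\colon A_x \to A_{h_i(x)}$, and similarly for $g$. This is what allows me to transport the multiplicative weight data that enters the definition of $\D^\euc$.

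First I would define a map of defect data $\hat h\colon \D^\euc \to \D'^\euc$. On an $i$-stratum label $(x,\phi,\Psi) \in D^\euc_i$ with $i\geq 1$ I set $\hat h_i(x,\phi,\Psi) := (h_i(x),\, h_0(\phi),\, h_*\Psi)$, where $h_*\Psi := (h_0(\psi_S))_S$ transports each weight $\psi_S \in A_y^\times$ along the algebra isomorphism $h_0|_{A_y}$ (which sends units to units and invertibles to invertibles); on $0$-strata I set $(x,\Psi) \mapsto (h_0(x), h_*\Psi)$. That $\hat h$ respects the adjacency maps $f^\euc_i$ follows because $h$ already respects $f_i$ and, by construction, $f^\euc_i$ only decorates the sphere $f_i(x)$ with the weights recorded in $\Psi$; the algebra isomorphisms make this decoration compatible. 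I also note that the projections satisfy $\peuc' \circ \hat h = h \circ \peuc$, so on objects (where $W=\peuc_*$ and $W'=\peuc'_*$) one has $W' \circ \hat h_* = h_* \circ W$.

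Next I would define the natural transformation $\hat\varphi\colon \zz^\euc \Rightarrow \zz'^\euc \circ \hat h_*$ by whiskering $\varphi$ with $W$ on objects, i.e.\ $\hat\varphi_{\Sigma^\euc} := \varphi_{W(\Sigma^\euc)}$. This is well-typed because $\zz^\euc(\Sigma^\euc) = \zz(W(\Sigma^\euc))$, and using $W'\hat h_* = h_* W$ on objects one gets $\zz'^\euc(\hat h_*(\Sigma^\euc)) = \zz'(h_*(W(\Sigma^\euc)))$, which is exactly the target of $\varphi_{W(\Sigma^\euc)}$. Monoidality of $\hat\varphi$ is inherited from that of $\varphi$ together with the strict monoidality of $W$ on objects. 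Running the symmetric construction on $(g,\psi)$ then produces $(\hat g,\hat\psi)\colon \zz'^\euc \to \zz^\euc$, so that $\zz^\euc \sim \zz'^\euc$ by Definition~\ref{def:ZZsequiv}.

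The main obstacle is the naturality of $\hat\varphi$, because $W$ is \emph{not} a functor and so $\hat\varphi$ is not a genuine whiskering; naturality must be checked by hand. For a bordism $M^\euc\colon \Sigma^\euc \to T^\euc$ I would compare the two bordisms $W'(\hat h_*(M^\euc))$ and $h_*(W(M^\euc))$ in $\Borddefn{n}(\D')$. Both are $h_*(\peuc_*(M^\euc))$ with one extra $0$-stratum inserted on each positive-dimensional stratum $S$; in the former it carries the label $h_0(\phi_S)^{\chirel(S)}$ and in the latter $h_0\big(\phi_S^{\chirel(S)}\big)$, and these agree since $h_0|_{A_{x_S}}$ is an algebra homomorphism. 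The only remaining discrepancy is the location of the inserted points, which by Lemma~\ref{lem:move-point-insertions} does not affect the value of $\zz'$, whence $\zz'^\euc(\hat h_*(M^\euc)) = \zz'(h_*(W(M^\euc)))$. With this identification, the naturality square for $\hat\varphi$ at $M^\euc$ becomes precisely the naturality square of $\varphi$ at the morphism $W(M^\euc)\colon W(\Sigma^\euc)\to W(T^\euc)$ of $\Borddefn{n}(\D)$, which holds because $\varphi$ is natural.
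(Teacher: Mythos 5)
Your proof is correct and takes essentially the same route as the paper: extend $h$ and $g$ to maps of Euler-completed defect data via the algebra isomorphisms of Lemma~\ref{lemma:equiv-iso-Alg}, and let $\varphi$, $\psi$ induce the natural transformations for the completions. The paper compresses the naturality check into the single observation that symmetric Euler characteristics are unchanged under $h^{\euc}_*$; your explicit verification --- identifying $W'(\hat h_*(M^{\euc}))$ with $h_*(W(M^{\euc}))$ by commuting $h_0$ past the powers $\phi_S^{\chirel(S)}$ and invoking Lemma~\ref{lem:move-point-insertions} for the choice of insertion points --- just spells out what that observation rests on.
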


\begin{proof}
Let $(h,\varphi) \colon \zz \rightleftarrows \zz' :\!\!(g,\psi)$  be 
maps witnessing the 
equivalence between $\zz$ and $\zz'$. 
The maps of defect data $h$ and $g$ extend to maps of Euler-completed defect data as follows. We define $h^{\euc}\colon \D^{\euc} \to \D'^{\euc}$ by mapping 
$(x, \phi, \Psi) \in D_{i}^{\euc}$ to $(h(x), h(\phi), h(\Psi))$ where $h$ on $A_{x}$ is defined in Lemma~\ref{lemma:equiv-iso-Alg}, and we do not display indices on~$h$ for convenience. 
Since it is shown there that $h \colon A_{x} \rightarrow A_{h(x)}$ is an algebra isomorphism, it follows that $h(\phi) \in A_{h(x)}^{\times}$. 
Proceeding componentwise defines also $h(\Psi)$. 
As the symmetric Euler characteristics remain the same under $h_{*}^{\euc} \colon
\Borddefn{n}(\D^{\euc}) \to \Borddefn{n}(\D'^{\euc})$, 
it follows that  $\varphi$
induces a natural transformation $\varphi^{\euc} \colon \zz^{\euc} \Rightarrow \zz'^{\euc} \circ h_{*}^{\euc}$. 
The same argument applied to $(g, \psi)$ concludes the proof. 
\end{proof}

In the next two lemmas we investigate the structure of the point defects of $\zz^{\euc}$, working towards the equivalence $(\zz^{\euc})^{\euc} \sim \zz^{\euc}$.

\begin{lemma} 
$\zz^{\euc}$ is $D_{0}$-complete.
\end{lemma}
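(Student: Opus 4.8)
The plan is to verify Definition~\ref{def:D0complete} directly: for each defect sphere $\Sigma^{\euc} \in \Sphere_{n-1}(\partial\D^{\euc})$ I must show that the state-field map $Y_{\Sigma^{\euc}}\colon D^{\euc}_0(\Sigma^{\euc}) \to \mathcal Y_{\Sigma^{\euc}}$, $(x,\Psi) \mapsto \zz^{\euc}(\overline B_{(x,\Psi)})$, is a bijection, where $\mathcal Y_{\Sigma^{\euc}} \subseteq \zz^{\euc}(\Sigma^{\euc})$ is the space of invariant states for $\zz^{\euc}$. Write $\Sigma := \peuc_*(\Sigma^{\euc})$ for the underlying $\D$-decorated sphere. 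Two preliminary identifications set the stage. First, on objects $W = \peuc_*$, so $\zz^{\euc}(\Sigma^{\euc}) = \zz(\Sigma)$. Second, since $f^{\euc}_0(x,\Psi) = f_0(x)$ decorated by $\Psi$, an element $(x,\Psi)$ lies in $D^{\euc}_0(\Sigma^{\euc})$ precisely when $x \in D_0(\Sigma)$ and $\Psi$ equals the tuple of weights read off the strata of $\Sigma^{\euc}$; the latter is fixed by $\Sigma^{\euc}$, so $D^{\euc}_0(\Sigma^{\euc}) \to D_0(\Sigma)$, $(x,\Psi)\mapsto x$, is a bijection.

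The key step is to show $\mathcal Y_{\Sigma^{\euc}} = \mathcal Y_\Sigma$ inside $\zz(\Sigma)$. The invariance condition for $\zz^{\euc}$ is phrased through the bordisms $H_f \colon \Sigma^{\euc} \to \Sigma^{\euc}$ built from embeddings of $\overline B = C(\Sigma^{\euc})$, and $\zz^{\euc}(H_f) = \zz(W(H_f))$. Under $W$ the $\D$-bordism underlying $H_f$ is the collar $\overline B \setminus f(B)$ over $\Sigma$, decorated by the point insertions $\phi_S^{\chirel(S)}$ on each stratum $S$ of dimension $\geq 1$. I claim every such $\chirel(S)$ vanishes: each stratum $S$ is a collar $T \times [0,1]$ over a stratum $T$ of $\Sigma$, with $\chi(S) = \chi(T)$ and with $\partial S = S \cap \partial H_f$ consisting of the two copies $T\times\{0,1\}$, so $\chirel(S) = 2\chi(T) - 2\chi(T) = 0$. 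Hence all inserted weights are $\phi_S^0 = 1$, so $W(H_f) = H_f$ and $\zz^{\euc}(H_f) = \zz(H_f)$; the two invariance conditions coincide and $\mathcal Y_{\Sigma^{\euc}} = \mathcal Y_\Sigma$.

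Finally I analyse $Y_{\Sigma^{\euc}}$ itself. By definition $\zz^{\euc}(\overline B_{(x,\Psi)}) = \zz\big(\overline B_x\text{ with insertions } \phi_T^{\chirel(S_T)}\big)$, where $S_T$ runs over the $\geq 1$-dimensional cone strata of $\overline B_x = C(\Sigma)$ and $\phi_T$ is the weight carried by $T$ in $\Sigma^{\euc}$ (a component of $\Psi$). Using Lemma~\ref{lem:move-point-insertions} I slide all these insertions into a boundary collar $\Sigma \times [0,1]$, which exhibits $\zz^{\euc}(\overline B_{(x,\Psi)}) = R_\Psi(Y_\Sigma(x))$, where $R_\Psi \in \End(\zz(\Sigma))$ is $\zz$ of the collar carrying the insertions $\phi_T^{\chirel(S_T)}$. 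Because each $\phi_T \in A_{x_T}^\times$, the operator $R_\Psi$ is invertible with inverse $R_{\Psi^{-1}}$ (the two collars glue to insertions $\phi_T^{\chirel(S_T)}\phi_T^{-\chirel(S_T)} = 1$, i.e.\ the trivial collar, by Proposition~\ref{prop:point-defect-algebras}). Moreover $R_\Psi$ and $R_{\Psi^{-1}}$ send $\mathcal Y_\Sigma$ into $\mathcal Y_{\Sigma^{\euc}} = \mathcal Y_\Sigma$ by Lemma~\ref{lem:points-are-invariant} applied to $\zz^{\euc}$, so $R_\Psi$ restricts to a bijection of $\mathcal Y_\Sigma$. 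As $Y_\Sigma$ is a bijection $D_0(\Sigma)\to\mathcal Y_\Sigma$ by the assumed $D_0$-completeness of $\zz$, the map $Y_{\Sigma^{\euc}}$ factors as $R_\Psi \circ Y_\Sigma$ precomposed with the bijection $D^{\euc}_0(\Sigma^{\euc})\to D_0(\Sigma)$, and is therefore a bijection.

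The main obstacle is the Euler-characteristic computation in the second paragraph: getting the book-keeping of strata and their boundaries-on-$\partial H_f$ right so that $\chirel$ genuinely vanishes on every collar stratum, which is exactly what makes the invariance conditions for $\zz$ and $\zz^{\euc}$ agree. The sliding argument producing the invertible operator $R_\Psi$ is routine given Lemma~\ref{lem:move-point-insertions} and Proposition~\ref{prop:point-defect-algebras}, but one must take care that the weight powers combine in the algebras $A_{x_T}$ as claimed.
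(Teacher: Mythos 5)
Your third paragraph (factoring $Y_{\Sigma^{\euc}}$ through an invertible weighted collar $R_\Psi$ and using $D_0$-completeness of $\zz$) is essentially the paper's own second step, and that part is fine. The genuine gap is in your second paragraph, which is where the real content of the lemma lies. You assert that every stratum $S$ of $H_f$ ``is a collar $T\times[0,1]$ over a stratum $T$ of $\Sigma$''. Nothing in the definition of $\mathrm{Emb}(\overline B)$ justifies this: an element $f$ is an arbitrary germ of embeddings of decorated stratified manifolds, constrained only to preserve strata, decorations and orientations. It fixes the cone point (the unique $0$-stratum), but it is not radial, and it may map the cone $C(T)$ over a stratum $T$ of $\Sigma$ into the cone $C(T')$ over a \emph{different} stratum $T'$ carrying the same label; even when $T'=T$, the complementary region $S=\overline{C(T')}\setminus f(C(T))$ need not be a product. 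What comes for free, from additivity of $\chirel$ under gluing together with $\chirel(C(T))=-\chi(T)$, is only $\chirel(S)=\chi(T)-\chi(T')$. So the statement you actually need --- and which your collar assumption simply postulates --- is that $\chi(T)=\chi(T')$ for an arbitrary stratified embedding $f$.

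This is exactly the ``Claim'' that occupies the bulk of the paper's proof: it is established by showing that the induced smooth embedding $f_T\colon T\times\R\to T'\times\R$ is a homotopy equivalence, via the Whitehead theorem together with a careful argument that $\pi_k(f_T)$ is surjective and injective (pushing spheres and homotopies into a region $T'\times(r,\infty)$ contained in the image, using compactness and connectedness of $T'$). Without some such argument, your computation $\chirel(S)=2\chi(T)-2\chi(T)=0$ assumes precisely what has to be proven, and the identification $\mathcal{Y}_{\Sigma^{\euc}}=\mathcal{Y}_{\Sigma}$ --- which, as you yourself note, is the crux --- is not established. Everything downstream of that identification in your write-up is sound, so repairing the proof amounts to supplying this Euler-characteristic comparison for non-radial embeddings.
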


\begin{proof} We proceed in two steps. 

\textsl{Step 1:} We start with showing that 
\be
\mathcal{Y}_{\Sigma^{\euc}}
=
\big\{ \, \psi \in \zz^{\euc}(\Sigma^{\euc}) \,\big|\,
\zz^\euc(H_f^{\euc})(\psi) 
= \psi \text{ for all } f \in \mathrm{Emb}(\overline B) \, \big\}
\ee
as in Section \ref{subsec:point-defects-from} for $\Sigma^{\euc} \in \Sphere_{n-1}(\partial \D^{\euc})$, 
is equal to $\mathcal{Y}_{\Sigma}$.

By definition, $\zz^{\euc}(\Sigma^{\euc})=\zz(\Sigma)$ and we are left with showing that 
$\chirel(S)=0$ for all $(j+1)$-strata~$S$ of $H_{f}^{\euc}$, for all~$j$.  
Indeed, then $\zz^{\euc}(H_{f}^{\euc})=\zz(H_{f})$ and thus $\mathcal{Y}_{\Sigma^{\euc}}=\mathcal{Y}_{\Sigma}$. 

Fix a $j$-stratum~$T$ and of~$\Sigma$ and $f \in \mathrm{Emb}(\overline B)$. 
There there is a unique $j$-stratum~$T'$ of~$\Sigma$ such that~$f$ restricted to the cone $C(T)$ is a map $\overline{C(T)} \to C(T')$. 
By construction we have $\overline{B}=H_{f} \circ f(\overline{B})$ as stratified bordisms. 
The symmetric Euler characteristics are related by $\chirel(C(T')) =\chirel(S) + \chirel(C(T))$, where $S := \overline{C(T')} \setminus f(C(T))$.
Since $C(T)$ is contractible we have $\chi(C(T)) = 0$ and thus  $\chirel(CT)= 2\chi(C(T)) - \chi(\partial C(T))=-\chi(T) $ and analogously $\chirel(C(T'))=-\chi(T')$. 
Thus we conclude that $\chirel(S)=\chi(T)-\chi(T')$ and we are done with the first step once we established the following result.

\medskip

\noindent
\textsl{Claim:}
The Euler characteristics of $T$ and $T'$ are equal. 

\medskip

To establish this, we consider the smooth embedding  
\begin{equation}
  \label{eq:4}
f_{T} \colon T \times \R \lra   T' \times \R \, . 
\end{equation}
which is obtained from the above map $\overline{C(T)} \to C(T')$ by removing the cone points and then composing with a map which inflates the interval $(0,1)$ to the real line~$\R$ (with $\infty$ corresponding to the cone point, and $-\infty$ corresponding to the boundary of~$\overline B$). 
We will show that $f_{T}$ is a homotopy equivalence, thus proving the claim.

The map $f_{T}$ induces a map $\pi_{k}(f_{T}) \colon \pi_{k}(T \times \R) \rightarrow \pi_{k}(T' \times \R)$ between homotopy groups for all $k\in \Z_+$. 
Then by the Whitehead theorem, $f_{T}$ is a homotopy equivalence if $\pi_{k}(f_{T})$ is an isomorphism for all~$k$. 

Since we can find an $\eps > 0$ such that a ball $B_{\eps}$ of radius~$\eps$ is contained in $f(B)$, there exists an $r \in \R$ such that $T' \times (r, \infty) \subset f(T \times \R)$. 
Choosing some  $z>r$ guarantees that around $T' \times \{z\}$ also a collar lies in $f(T \times \R)$. 
Pick a basepoint $f(p)$ on $T' \times \{z\}$.

To show surjectivity of $\pi_{k}(f_{T})$, let $b\colon S^{k} \rightarrow T' \times \R$ be a based map. 
It is homotopic to a map $b_{1}\colon S^{k} \rightarrow T' \times \{z\}$ by $b_{t}(x)=(b_{T'}(x), b_{\R}(x)+ t(z-g_{\R}(x))$, for $t \in [0,1]$, where we write $b(x)=(b_{T'}(x), b_{\R}(x)) \in T' \times \R$. 
Clearly, $[b_{1}]$ is in the image of $\pi_{k}(f_{T})$.

To show injectivity, consider now a continuous map $g\colon S^{k} \rightarrow T \times \R$ such that there is a homotopy in $T' \times \R$ from $f_{T}\circ g$ to the constant map. 
Pick such a homotopy~$h$. 
Since~$h$ will in general leave the image $f(T \times \R)$, we will ``shift it above~$z$'', where~$z$ is as in the previous paragraph. 
By continuity of~$f$, there exists an $R \in \R$ such that $f(T \times (R, \infty)) \subset T'  \times (z, \infty)$.
	First we apply the homotopy $g_{t}(x)= (g_{T}(x),g_{\R}(x)+ t(R-g_{\R}(x))$ from $g$ to a map~$g_{1}$. 
Then $f_T(g_{1}(x)) \in T' \times (z, \infty)$ for all $x \in S^k$, and $f_T\circ g_1$ is homotopic to $f_{T} \circ g$, but it does in general not preserve the basepoint. 
However, since~$T'$ is connected, $f_T\circ g_1$ is homotopic in $T' \times (z, \infty)$ to a based map~$g'$ that is also in the image of~$f_{T}$. 
Composing the homotopies we obtain a homotopy $h' \colon S^{k} \times [0,1] \rightarrow T' \times \R $ from~$g'$ to the constant map to $f(p)$. 
By compactness of $S^{k}$ and by using the collar of $T' \times \{z\}$ which is still in $f(T \times \R)$, there exists an interval $[a,b] \subset [0,1]$, such that $h'(x,t) \in T' \times (r,\infty)$ for $t \in [0,1] \setminus [a,b]$. 
Consider a smooth function $\rho \colon [0,1] \rightarrow [0,1]$ with $\rho(0)=0=\rho(1)$ and $\rho|_{[a,b]}=1$, and set 
$\widetilde{h}(x,t)=(h'_{T'}(x,t), h'_{\R}(x,t) + \rho(t)(z-h'_\R(x,t)))$. 
By construction, $\widetilde{h}$ is a homotopy in $f(T \times \R)$ to the constant map, and injectivity of $f_{T}$ follows. 

We conclude that $\zz^{\euc}(H_{f}^{\euc})=\zz(H_{f})$, and thus $\mathcal{Y}_{\Sigma^{\euc}}=\mathcal{Y}_{\Sigma}$. 

\textsl{Step 2:}
Next we construct the inverse to the map $Y_{\Sigma^{\euc}} \colon D_{0}^{\euc}(\Sigma^{\euc}) \to 
\mathcal{Y}_{\Sigma^{\euc}}$.  
Let $(x,\Psi) \in D_{0}^{\euc}$ and consider the ball  $\overline{B}=\overline{C(\Sigma^{\euc})}$ whose
cone point is decorated with $(x,\Psi)$. 
By definition, $Y_{\Sigma^{\euc}}(x,\Psi)=\zz^{\euc}(\overline{B})$. 
After application of~$W$, a stratum $T \in \mathrm{Strat}_{>0}(\overline B)$ contains a point defect with label $\psi_{\partial T}^{\chirel(T)}$, where $\partial T = T \cap \Sigma^{\euc}$ is the boundary of the stratum $T$. 
We define a defect bordism $X_{\Sigma}$ as follows:
First take the cylinder $\Sigma \times I$, then insert on each stratum $S \times I$ a defect point with decoration $\psi_{S}^{-\chirel(T)}$, with $T$ the stratum in $\overline B$ bounded by $S$. This yields 
$X_{\Sigma}\in \Borddefn{n}(\D)$.
Applying $\zz(X_{\Sigma})$  to an invariant vector in $\mathcal{Y}_{\Sigma^{\euc}}$ yields again an invariant vector.
The linear maps $\zz^{\euc}(\overline B)$ and $\zz(X_{\Sigma})$ can be composed and by construction the diagram 
\begin{equation}
\begin{tikzpicture}[
			     baseline=(current bounding box.base), 
			     descr/.style={fill=white,inner sep=3.5pt}, 
			     normal line/.style={->}
			     ] 
\matrix (m) [matrix of math nodes, row sep=3.5em, column sep=3.5em, text height=1.5ex, text depth=0.1ex] {%
D_{0}^{\euc}(\Sigma^{\euc})  &  \mathcal{Y}_{\Sigma^{\euc}}    &   \mathcal{Y}_{\Sigma^{\euc}}\\
D_{0}(\Sigma)  &  & \mathcal{Y}_{\Sigma}
\\
};
\path[font=\footnotesize] (m-1-1) edge[->] node[above] {$Y_{\Sigma^{\euc}}$} (m-1-2);
\path[font=\footnotesize] (m-1-2) edge[->] node[above] {$\zz(X_{\Sigma})$} (m-1-3);
\path[font=\footnotesize] (m-1-1) edge[->] node[left] {$\id$} (m-2-1);
\path[font=\footnotesize] (m-1-3) edge[->] node[right] {$\id$} (m-2-3);
\path[font=\footnotesize] (m-2-1) edge[->] node[below] {$Y_{\Sigma}$} (m-2-3);
\end{tikzpicture}
\end{equation}
commutes.
By assumption, $Y_{\Sigma}$ is an isomorphism and $\zz(X_{\Sigma})$ is an isomorphism by construction, and so we conclude that $Y_{\Sigma^{\euc}}$ is an isomorphism as well.
\end{proof}

We now turn to the algebra of point defects. 
For $(x,\phi,\Psi) \in D_i^\euc$ we have so far two point-set isomorphisms between $A_{(x,\phi,\Psi)}$ and $A_{x}$. However, neither the isomorphism $Y_{\Sigma}^{-1} \circ Y_{\Sigma^{\euc}}$ nor the map $\peuc \colon D_{0}^\euc(\Sigma^{\euc}) \to D_{0}(\Sigma)$ from \eqref{eq:h-compl-g-compl-def}
are algebra isomorphisms. 
Instead we have:

\begin{lemma}\label{lem:sigma-alg-iso}
For $i>0$, let $(x, \phi,\Psi) \in D_{i}^{\euc}$ be an $i$-dimensional defect label. 
Consider the map
\begin{equation}
  \label{eq:3}
 \sigma_{(x,\phi,\Psi)} \colon A_{(x,\phi,\Psi)}\lra   A_{x}
 \, , \quad
 a \longmapsto 
 \phi^{E_{i}} \cdot \peuc(a) 
\cdot \phi^{E_{i}} \, ,
\end{equation}
where $E_{i}:= \chi(S^{i-1})-1 =  (-1)^{i-1}$, and the multiplication on the right-hand side takes place in $A_{x}$.
The map $\sigma_{(x,\phi,\Psi)}$ is an algebra isomorphism.
\end{lemma}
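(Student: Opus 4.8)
The plan is to factor $\sigma_{(x,\phi,\Psi)}$ as a composite of two manifestly bijective maps, and then to verify the homomorphism property by reducing both sides to evaluations of $\zz$ on the defect ball $\overline{B}_x$ with point insertions. For the bijectivity part: by the preceding lemma $\zz^\euc$ is $D_0$-complete, so $Y_{\Sigma^\euc}$ and $Y_{\Sigma_x}$ are bijections and $\peuc$ restricts to a bijection $A_{(x,\phi,\Psi)} = D_0^\euc(\Sigma^\euc) \to D_0(\Sigma_x) = A_x$. Since $\phi \in A_x^\times$, the two-sided multiplication $a \mapsto \phi^{E_i} a\, \phi^{E_i}$ is a linear bijection of $A_x$, with inverse $a \mapsto \phi^{-E_i} a\, \phi^{-E_i}$. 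Hence $\sigma_{(x,\phi,\Psi)}$ is a linear isomorphism, and only the unital-algebra-map property remains.

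For the homomorphism property I would unwind $m_{(x,\phi,\Psi)}(a,b) = Y_{\Sigma^\euc}^{-1}\big(\zz^\euc(M_{(x,\phi,\Psi)}(a,b))\big)$ via $\zz^\euc = \zz \circ W$. Applying $W$ to $\overline{B}_{(x,\phi,\Psi)}$ with the two insertions $a,b$ on the central $i$-stratum strips the labels to $\peuc(a),\peuc(b)$ and deposits one Euler weight $\phi^{\chirel(S)}$ on the central stratum $S$ (and weights on the higher strata). Using $Y_{\Sigma^\euc}^{-1} = \peuc^{-1}\circ Y_{\Sigma_x}^{-1}\circ \zz(X_\Sigma)$ from Step~2 of the $D_0$-completeness lemma, the higher-stratum weights cancel against those inserted by the cylinder $X_\Sigma$, and by Lemma~\ref{lem:move-point-insertions} all remaining insertions on the central stratum may be slid together and read off as an $m_x$-product in $A_x$. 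The outcome is an identity of the shape
\be
\peuc\big(m_{(x,\phi,\Psi)}(a,b)\big) = \peuc(a)\cdot \phi^{\,2E_i}\cdot \peuc(b) \, ,
\ee
respecting the order of the insertions along the 1-stratum when $i=1$ (where $A_x$ is noncommutative) and simplifying by commutativity when $i>1$ (Proposition~\ref{prop:point-defect-algebras}). Granting this, one computes directly
\be
\sigma_{(x,\phi,\Psi)}\big(m_{(x,\phi,\Psi)}(a,b)\big) = \phi^{E_i}\peuc(a)\,\phi^{2E_i}\,\peuc(b)\,\phi^{E_i} = m_x\big(\sigma_{(x,\phi,\Psi)}(a),\sigma_{(x,\phi,\Psi)}(b)\big) \, ,
\ee
and the analogous computation with no insertions gives $\sigma_{(x,\phi,\Psi)}(1_{A_{(x,\phi,\Psi)}}) = 1_{A_x}$, completing the proof.

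The heart of the argument, and the step I expect to be the main obstacle, is establishing the exponent $2E_i$ in the displayed identity. The two point insertions puncture the central $i$-stratum; relative to the unpunctured disk underlying the unit, each puncture has link $S^{i-1}$ and changes the ordinary Euler characteristic by $\chi(S^{i-1})-1 = (-1)^{i-1} = E_i$, while the boundary $\partial S = S \cap \Sigma_x$ on which the residual weight is computed is unchanged. Thus the two punctures change $\chi$ by $2E_i$ and hence $\chirel$ by $4E_i$ between the unit and the product configuration, and this is exactly the discrepancy that the two conjugating factors $\phi^{E_i}$ are calibrated to absorb (the hom condition demands a factor $\phi^{4E_i}$ on the right, of which $\phi^{2E_i}$ comes from the punctures and $\phi^{2E_i}$ from the conjugation). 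The delicate points will be (a) fixing the absolute normalisation, i.e.\ pinning down the constant Euler weight attached to the unit ball, including the contributions of the weight-stripping cylinder $X_\Sigma$, so that $\peuc(1_{A_{(x,\phi,\Psi)}})$ is precisely $\phi^{-2E_i}$ and $\sigma_{(x,\phi,\Psi)}(1)=1_{A_x}$; and (b) tracking the linear order of the insertions in the noncommutative case $i=1$, where the weights deposited on the subintervals must be interleaved with $\peuc(a)$ and $\peuc(b)$ in the correct sequence.
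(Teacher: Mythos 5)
Your proposal is correct and is essentially the paper's own argument: both rest on the puncture formula $\chirel(X\setminus\{p\})=\chirel(X)+2E_i$, the Euler weights deposited by $W$, and Lemma~\ref{lem:move-point-insertions} to merge insertions into the key identity $\peuc\big(m_{(x,\phi,\Psi)}(a,b)\big)=\peuc(a)\cdot\phi^{2E_i}\cdot\peuc(b)$, from which multiplicativity of conjugation by $\phi^{E_i}$ is immediate. Your detour through $\zz(X_{\Sigma})$ and the factorisation $Y_{\Sigma^{\euc}}^{-1}=\peuc^{-1}\circ Y_{\Sigma_x}^{-1}\circ\zz(X_{\Sigma})$ is only a repackaging of the paper's direct comparison of $W$-insertions against $Y_{\Sigma^{\euc}}$ of a candidate product element, and the two delicate points you flag (the unit normalisation $\peuc(1)=\phi^{-2E_i}$ and the insertion ordering for $i=1$) are exactly the ones the paper itself treats only briefly.
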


\begin{proof}
We first compute the change in the symmetric Euler characteristic when removing a point from a manifold. 
{}From
covering an $n$-manifold $X$ by  $X= X \setminus \{p\} \cup B$, $B$ a ball around $p \in X$, one sees that  the conventional Euler characteristic~$\chi$ satisfies $\chi(X)=\chi(X\setminus \{p\})+ \chi(B) - \chi(B\setminus \{p\})$.  Thus, using $\chi(B)=1$ and $\chi(B \setminus\{p\})=\chi(S^{n-1})$, we obtain $\chi(X \setminus \{p\})= \chi(X)-1 +\chi(S^{n-1})$. 
Hence we have 
	(recalling~\eqref{eq:chireldef})
\be
\label{eq:chisymm-cut-pt}
\chirel(X\setminus\{p\})=\chirel(X) + 2\chi(S^{n-1})-2=\chirel(X)+2E_{n} \, .
\ee

For simplicity we assume first that $i \geq 2$.
The multiplication $m_{(x,\phi,\Psi)}$ of $a,b \in A_{(x,\phi,\Psi)}$ is defined by $m_{(x,\phi,\Psi)}(a,b)= Y_{\Sigma^{\euc}}^{-1}(\zz^{\euc}(\overline{B}(p,q;a,b)))$. 
Write $a = (a',\Psi')$ and $b =(b',\Psi')$, so that $\peuc(a)=a'$, etc.
We compute further, omitting the location of the 0-strata from the notation 
	and denoting by $M$ the $i$-stratum on which the point defects $a,b$ are inserted,
\begin{align}
  \zz^{\euc} \big(\overline{B}(a,b) \big) &=\zz\big(\overline{B}(a',b',\phi^{\chirel(M)+4E_{i}}, \dots)\big) 
  \nonumber\\
&=\zz \big(\overline{B}(a'\cdot b', \phi^{\chirel(M)+2E_{i}},\phi^{2E_{i}}, \dots) \big) 
\end{align}
where $c = (a' \cdot  \phi^{2E_{i}} \cdot  b',\Psi') \in D^\euc_0(\Sigma^\euc)$, and where ``$\dots$'' indicates the point defect Euler weights inserted in $\overline B$ by $W$ that are not located on the $i$-stratum $M$.
Thus $m_{(x,\phi,\Psi)}(a;b) = ( a' \cdot \phi^{2E_{i}} \cdot  b' , \Psi')$.
{}From this expression the claimed isomorphism follows straightforwardly. 
In the case $i=1$ it follows along the same lines (but taking more care when inserting Euler weights disconnects a stratum) that the multiplication takes the same form $m_{(x,\phi,\Psi)}(a;b) = ( a' \cdot \phi^{2E_{i}} \cdot  b' , \Psi')$.
\end{proof}

Euler weights on line defects play a special role since inserting them disconnects the line defect. To obtain the equivalence  $\zz^{\euc\euc} \sim  \zz^\euc$ of defect TQFTs, they need to be treated separately. 
The next lemma shows that they do not add anything new to a given defect TQFT, i.\,e. we show that it suffices to consider Euler weights just for defects of dimension $i \geq 2$. 
To this end we consider the truncation $\D^{\euctwo}$ of $\D^{\euc}$ with defect labels $D_{i}^{\euctwo}=D_{i}^{\euc}$ for $i \geq 2$, and with $D_{1}^{\euctwo}= \{ (x, \Psi) \,|\, x \in D_{1}, \Psi = (\psi_S)_{S \in \mathrm{Strat}(f_1(x))}  \}$. 
The set $D^\euc_0$ consists of pairs $(x,\Psi)$ with $x \in D_0$ and~$\Psi$ as before, where adjacent line defects have no Euler weights.
$\D^{\euctwo}$ is a set of defect data, and we have a map of defect data $k \colon \D^{\euctwo} \to \D^{\euc}$ which is given by the identity on $D_{i}^{\euctwo}$ for $i \geq 2$, and by choosing the Euler weight $1_{x} \in A_{x}$ for $i=1$. 
We  thus obtain a defect TQFT $\zz^{\euctwo}= \zz^{\euc} \circ k_{*} \colon \Borddefn{n}(\D^{\euctwo})\to \Vectk$. 
	
\begin{lemma}
\label{lem:euctwo-equ}
Let $\zz\colon\Borddefn{n}(\D)\to \Vectk$ be a $D_{0}$-complete defect TQFT. 
Then the defect TQFTs $\zz^{\euc}$ and $\zz^{\euctwo}$ are equivalent. 
\end{lemma}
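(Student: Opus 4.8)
The strategy is to exhibit morphisms in both directions, as required by Definition~\ref{def:ZZsequiv}. One direction is immediate: since by construction $\zz^{\euctwo} = \zz^{\euc} \circ k_*$, the pair $(k,\id)$ is already a morphism $\zz^{\euctwo} \to \zz^{\euc}$. The whole task therefore reduces to producing a morphism $(g,\psi)\colon \zz^{\euc} \to \zz^{\euctwo}$, i.e.\ a map of defect data $g\colon \D^{\euc} \to \D^{\euctwo}$ together with a monoidal natural isomorphism $\psi\colon \zz^{\euc} \Rightarrow \zz^{\euctwo} \circ g_*$.

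Before defining $g$ I would record a dimension count that isolates where line Euler weights occur. A $d$-stratum of the link sphere $f_i(x) \in \Sphere_{n-i-1}$ carries a label in $D_{d+i+1}$, so every defect adjacent to an $i$-stratum has dimension at least $i+1$. Hence for $i \geq 1$ no $1$-dimensional defect is adjacent to $x$, and the weight family $\Psi$ of a label $(x,\phi,\Psi) \in D_i^{\euc}$ with $i\geq 1$ contains \emph{no} line weights. Consequently line Euler weights appear in $\D^{\euc}$ only (a) as the component $\phi$ of a label $(x,\phi,\Psi) \in D_1^{\euc}$, and (b) as components $\psi_S$ of the family $\Psi$ attached to a label in $D_0^{\euc}$, indexed by the lines $S$ incident to that $0$-stratum. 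I would then set $g$ to be the identity on $D_i^{\euc}$ for $i\geq 2$; on $D_1^{\euc}$ to forget $\phi$, sending $(x,\phi,\Psi) \mapsto (x,\Psi)$; and on $D_0^{\euc}$ to discard the line entries of $\Psi$ while absorbing them into the $0$-defect. Concretely, for $(x,\Psi) \in D_0^{\euc}$ with incident line weights $\psi_S$ I set $g_0(x,\Psi)=(x',\Psi')$, where $\Psi'$ drops the line entries and $x' \in D_0$ is the invariant state on $\Sigma_x$ obtained by evaluating $\zz$ on the cone $C(\Sigma_x)$ with central label $x$ and one insertion of $\psi_S$ placed on each incident radial line; this is well defined because $\zz$ is $D_0$-complete, and $f_0(x')=f_0(x)=[\Sigma_x]$ so that the adjacency squares of Definition~\ref{definition:map-def-data} commute, $g_*$ simply discarding the line decorations on the sphere.

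I then expect $\psi = \id$ to be the required isomorphism, the heart of the matter being the identity $\zz^{\euc}(M^{\euc}) = \zz^{\euctwo}(g_*M^{\euc})$ for every bordism $M^{\euc}$. Both sides insert the same Euler weights on strata of dimension $\geq 2$, so everything reduces to the insertions $\phi_S^{\chirel(S)}$ that $\zz^{\euc}$ places on each $1$-stratum $S$. Here I would compute $\chirel$ on a $1$-stratum from \eqref{eq:chireldef}: writing $\chirel(S)=2\chi(S)-\chi(\partial M \cap \overline S)$ and localising to the two ends of the arc, each end lying at an interior $0$-stratum contributes $+1$, while each end on $\partial M$ (and likewise a closed loop) contributes $0$. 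Splitting $\phi_S^{\chirel(S)}$ into one factor near each end via Lemma~\ref{lem:move-point-insertions}, the boundary and loop contributions are trivial, and the interior contribution near a $0$-stratum $p$ is exactly one factor $\phi_S$ per incident line $S$ — precisely the data that $g_0$ has absorbed into the relabelled $0$-defect $x'$. After fusing these insertions using Proposition~\ref{prop:point-defect-algebras}, the two bordisms evaluate identically; on objects both functors yield $\zz(\Sigma)$ because cylinder strata have $\chirel = 0$. Monoidality and invertibility of $\psi=\id$ are then automatic, and together with $(k,\id)$ this gives $\zz^{\euc} \sim \zz^{\euctwo}$.

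The main obstacle is exactly the special behaviour of $1$-strata flagged before the lemma: inserting a point on a line disconnects it, and the line algebras $A_x$ are in general noncommutative. The care required lies in checking that the localisation of $\chirel(S)$ to the endpoints is compatible with the ordering of insertions along each line (so that the split factors recombine correctly), and that $g_0$'s absorption of one weight per incident line into a single invariant state is well defined independently of these choices. This is what renders boundary-ending lines invisible and interior-ending lines absorbable, and is the structural reason the truncation to dimensions $\geq 2$ loses no information.
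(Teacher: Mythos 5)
Your proof is correct and follows essentially the same route as the paper: one direction is the tautological morphism $(k,\id)$, and for the other you construct the same map of defect data (identity in dimensions $\geq 2$, forgetting $\phi$ in dimension $1$, absorbing the incident line weights into the point label via evaluation of the decorated defect ball and $D_0$-completeness), then verify $\zz^{\euc} = \zz^{\euctwo}\circ g_*$ with identity natural isomorphism by the same mechanism of relocating the $1$-stratum Euler insertions to interior endpoints and evaluating small balls around inner $0$-strata first. Your endpoint-localisation of $\chirel(S)$ is just a compact restatement of the paper's case distinction $\chirel(S)\in\{0,1,2\}$, so the two arguments coincide.
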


\begin{proof}
We already have that by definition $\zz^{\euctwo}=\zz^{\euc} \circ k_*$. Below we construct a map of defect data $l\colon \D^{\euc} \to \D^{\euctwo}$ such that $\zz^{\euc}=\zz^{\euctwo} \circ l_{*}$. This establishes the equivalence (with identity natural isomorphisms).
	 
The maps $l_{i}$ for $i \geq 2$ are again taken to be the identity. 
For $i=1$ we define $l_1$ to map $(a, \phi, \Psi) \in \D_{1}^{\euc}$ to $(a, \Psi) \in D_{1}^{\euctwo}$, i.\,e.~it forgets the Euler weights on line defects. 
Those will be absorbed in the point defects $D_{0}$ by the map $l_{0}$: 

Using the natural decomposition of $\mathrm{Strat}(M)$ according to the dimension of the strata, we can write a  point defect as $(a, \Psi)=(a,((\psi_{M_{1}^{\alpha_{1}}})_{\alpha_{1}}, (\psi_{M_{2}^{\alpha_{2}}})_{\alpha_{2}}, \ldots)) \in \D_{0}^{\euc}$, where in particular $(\psi_{M_{1}^{\alpha_{1}}})_{\alpha_{1}}$ indicates the tuple of Euler-weights on the adjacent line defects. 
We define
\be
l_{0}\big((a, \Psi)\big) 
=
\mathcal{Y}^{-1}_{\Sigma} \circ \zz \big(\overline{B}_{a}\big((\psi_{M_{1}^{\alpha_{1}}})_{\alpha_{1}} \big) \big) \, ,
\ee
where $\overline{B}_{a}$ is the defect ball for $a$ and in $\overline{B}_{a}((\psi_{M_{1}^{\alpha_{1}}})_{\alpha_{1}})$ we understand that for each $\alpha_1$, the Euler weight $\psi_{M_{1}^{\alpha_{1}}}$ is inserted on the 1-stratum $M_{1}^{\alpha_{1}}$. 

We claim that  $\zz^{\euc}$ is equal to $\zz^{\euctwo} \circ l_{*}$. 
To this end we consider $\zz^{\euc}(M)=\zz(M\big((p_S),(\phi_S^{\chirel(S)})\big))$ for a bordism $M \in \Borddefn{n}(\D^{\euc})$.
For 1-stratum~$S$ in~$M$ the symmetric Euler characteristic can either be 
$\chirel(S)=0$, if~$S$ is a closed circle, or if~$S$ has two boundary points on $\partial M$; 
or we can have $\chirel(S)=1$, if~$S$ has one endpoint on an inner 0-stratum of~$M$ and one endpoint in $\partial M$; 
finally, $\chirel(S)=2$ if both endpoints of~$S$ are an inner 0-stratum of~$M$ (which could be the same for both ends). 
In this last case we replace the single point defect with Euler weight $\phi_{S}^{2}$ by two point defects on $S$, each labelled with $\phi_S$. This produces a new bordism~$M'$ but does not affect the value of the TQFT: $\zz^{\euc}(M)=\zz(M')$. 
We can now choose small balls around all inner 0-strata of~$M'$ which contain exactly one 0-stratum labelled with an Euler weight for each adjacent 1-stratum. 
Using functoriality of~$\zz$ we can first evaluate these small balls and then the remaining bordism to obtain $\zz^{\euc}(M)$. 
But doing so is by definition of $l_{*}$ the same as evaluating $\zz^{\euctwo}(l_{*}(M))$.
This concludes the proof. 
\end{proof}

\begin{proposition} 
If $\zz \colon \Borddefn{n}(\D) \to \Vectk$ is a $D_{0}$-complete defect TQFT then $\zz^{\euc\euc}$ and $\zz^\euc$ are equivalent, $\zz^{\euc\euc}\sim  \zz^\euc$. 
\end{proposition}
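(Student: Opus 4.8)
The plan is to show $\zz^{\euc\euc}\sim\zz^\euc$ by a two-step reduction. Since $\zz^\euc$ is $D_0$-complete, Lemma~\ref{lem:euctwo-equ} applied to it gives $\zz^{\euc\euc}\sim(\zz^\euc)^{\euctwo}$; as $\sim$ is transitive it then suffices to establish $(\zz^\euc)^{\euctwo}\sim\zz^\euc$. Passing to $\euctwo$ is what makes the problem tractable: it removes all second-round Euler weights from line defects, so that the only strata carrying two superposed weight-layers have dimension $\geq2$, where the point-defect algebras $A_x$ are commutative. As a consistency check one direction is in any case free, since \eqref{eq:z-factors-zeuc} applied to $\zz^\euc$ reads $\zz^{\euc\euc}\circ\ieuc_*=\zz^\euc$, so that $(\ieuc,\id)$ is already a morphism $\zz^\euc\to\zz^{\euc\euc}$.

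The core of the argument is the construction of a weight-merging map of defect data $l\colon(\D^\euc)^{\euctwo}\to\D^\euc$. On an $i$-stratum label with $i\geq2$, of the form $\big((x,\phi,\Psi),\phi',\Psi'\big)$ with $x\in D_i$, $\phi\in A_x^\times$ and $\phi'\in A_{(x,\phi,\Psi)}^\times$, I would set
\be
l\big((x,\phi,\Psi),\phi',\Psi'\big):=\big(x,\;\phi\cdot\sigma_{(x,\phi,\Psi)}(\phi'),\;\Psi''\big)\,,
\ee
where $\sigma_{(x,\phi,\Psi)}\colon A_{(x,\phi,\Psi)}\to A_x$ is the algebra isomorphism of Lemma~\ref{lem:sigma-alg-iso} and $\Psi''$ is obtained by applying the corresponding isomorphisms $\sigma$ stratum-by-stratum to combine the adjacent weights recorded in $\Psi$ and $\Psi'$. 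On $1$- and $0$-strata, where $\euctwo$ leaves no second weight on the stratum itself, the map $l$ only merges the adjacent-weight data $\Psi,\Psi'$ in the same way, forgetting the now-redundant outer layer. One then checks that $l$ commutes with the adjacency maps $f_i^\euc$, i.e.\ that it is a genuine morphism in $\mathcal D^n$; the opposite morphism $\zz^\euc\to(\zz^\euc)^{\euctwo}$ is the routine inclusion of trivial second-round weights, so the two together witness $(\zz^\euc)^{\euctwo}\sim\zz^\euc$.

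Finally I would verify $\zz^\euc\circ l_*=(\zz^\euc)^{\euctwo}$ with the identity natural transformation. Unwinding both sides through $\zz^\euc=\zz\circ W$ (and the analogous double unwinding on the left), each $i$-stratum $S$ of dimension $\geq2$ receives, on the left, two point insertions: $\phi^{\chirel(S)}$ from the first completion and the image under $\sigma$ of $(\phi')^{\chirel(S)}$ from the second. By Lemma~\ref{lem:move-point-insertions} these may be fused, and since $\sigma$ is an algebra homomorphism and $A_x$ is commutative for $i\geq2$, their product equals $\big(\phi\cdot\sigma_{(x,\phi,\Psi)}(\phi')\big)^{\chirel(S)}$, which is exactly the single insertion produced by $W$ from $l_*$. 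Additivity of $\chirel$ under gluing (as in the proof that $\zz\circ W$ is a functor) guarantees this matching is compatible with composition. I expect the main obstacle to be the bookkeeping of the second paragraph: checking that the stratum-wise isomorphisms $\sigma$ assemble into a map of defect data, i.e.\ that the merged adjacent weights $\Psi''$ are consistent across all the spheres $f_i^\euc(x,\phi,\Psi)$ and compatible with the adjacency-map constraints. This is where the identification $A_{(x,\phi,\Psi)}\cong A_x$ must be applied coherently rather than merely pointwise, and it is precisely the reason the $\euctwo$ reduction—hence the restriction to commutative $A_x$—is needed to avoid ordering ambiguities on line defects.
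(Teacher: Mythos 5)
Your proposal is correct, and its core computation is the same as the paper's: your merging map $l$ is precisely the paper's map $t$ (same formula $\phi\cdot\sigma_{(x,\phi,\Psi)}(\phi')$, same inductive construction of $\Psi''$), verified with the same ingredients (Lemma~\ref{lem:sigma-alg-iso}, Lemma~\ref{lem:move-point-insertions}, commutativity of $A_x$ for $i\geq 2$, additivity of $\chirel$). Where you genuinely differ is the reduction scaffolding: you truncate only the \emph{outer} completion, getting $\zz^{\euc\euc}\sim(\zz^\euc)^{\euctwo}$ from Lemma~\ref{lem:euctwo-equ} (legitimate, since $\zz^\euc$ is $D_0$-complete) and then proving $(\zz^\euc)^{\euctwo}\sim\zz^\euc$ directly, whereas the paper truncates \emph{both} layers and runs the longer chain $\zz^{\euc\euc}\sim(\zz^{\euctwo})^{\euc}\sim(\zz^{\euctwo})^{\euctwo}\sim\zz^{\euctwo}\sim\zz^\euc$, proving the core statement for $(\zz^{\euctwo})^{\euctwo}$. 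Your route is shorter and only invokes $D_0$-completeness of $\zz^\euc$, which the paper proves; the paper's chain additionally applies Lemma~\ref{lemma:euler-compl-defect} to the pair $(\zz^\euc,\zz^{\euctwo})$ and Lemma~\ref{lem:euctwo-equ} to $\zz^{\euctwo}$, both of which implicitly require $D_0$-completeness of $\zz^{\euctwo}$ --- a fact the paper never establishes. The price you pay is that in your setting 1-strata still carry the inner weight $\phi$, and this is the one place your verification is thin: your unwinding paragraph treats only strata of dimension $\geq 2$.

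The 1-stratum step does work, but it needs an argument you have not supplied. Evaluation of $(\zz^\euc)^{\euctwo}$ factors through $(\zz^\euc)^{\euc}$ by inserting trivial second-round weights on lines, and the outer weight-insertion map then places a point labelled $1_{A_{(x,\phi,\Psi)}}$ on every 1-stratum; this point \emph{subdivides} the line into two strata and shifts the total symmetric Euler characteristic by $2E_1=2$, so the naive inner unwinding produces a total insertion $\phi^{\chirel(S)+2}$ rather than the $\phi^{\chirel(S)}$ appearing in $\zz^\euc(l_*(M))$. The fix is unitality of the point algebras of the $D_0$-complete theory $\zz^\euc$ (part~(i) of Proposition~\ref{prop:point-defect-algebras}): the unit insertions are invisible to $\zz^\euc$ and may be erased before unwinding. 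Equivalently, the $D_0$-component of $1_{A_{(x,\phi,\Psi)}}$ is not $1_{A_x}$ but (up to adjacent weight data) $\phi^{-2}$, which exactly cancels the shift; and since everything sitting on the line is then a power of the single element $\phi$, no ordering ambiguity from noncommutativity of $A_x$ arises --- which is the real reason your one-sided truncation suffices. A smaller imprecision of the same kind occurs in your dimension-$\geq 2$ unwinding: the first-round insertion on the punctured stratum is $\phi^{\chirel(S)+2E_i}$, not $\phi^{\chirel(S)}$, and $\sigma$ enters precisely by absorbing the correction $\phi^{2E_i}$; this is the content of Lemma~\ref{lem:sigma-alg-iso}, which you cite, so the conclusion stands.
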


\begin{proof}
We show that $(\zz^{\euctwo})^{\euctwo} \sim \zz^{\euctwo}$, the statement then follows using Lemmas~\ref{lem:euctwo-equ}
and~\ref{lemma:euler-compl-defect} in the series of equivalences $\zz^{\euc\euc} \sim  (\zz^{\euctwo})^{\euc} \sim (\zz^{\euctwo})^{\euctwo} \sim \zz^{\euctwo} \sim \zz^\euc$. 

{}From \eqref{eq:z-factors-zeuc} we already have a map $h^{\euc\euc} \colon \D^\euc \to \D^{\euc\euc}$ such that $\zz^{\euc\euc} \circ h^{\euc\euc}_* =  \zz^\euc$. 
By restriction we obtain 
a map $h^{\euctwo\euctwo} \colon \D^{\euctwo} \to \D^{\euctwo\euctwo}$ with $\zz^{\euctwo\euctwo} \circ h^{\euctwo\euctwo}_* =  \zz^{\euctwo}$. 
Below we provide a degreewise surjective map of defect data $t \colon \D^{\euctwo \euctwo} \rightarrow \D^{\euctwo}$ such that  $\zz^{\euctwo \euctwo} (M)=\zz^{\euctwo}(t_*(M))$ for all bordisms~$M$ in $\Bord_n^{\text{def}}(\D^{\euctwo \euctwo})$.
Since both defect TQFTs agree on objects by definition, this will imply $\zz^{\euctwo\euctwo} = \zz^{\euctwo} \circ t_*$, and hence prove the proposition.

The component maps of $t$ for $i>0$ are
\be
\label{eq:varphiGen}
t_i \colon D_i^{\euctwo \euctwo} \lra D_i^{\euctwo} 
\, , \quad
\big((x, \phi, \Psi),\phi',\Psi' \big) \lmt \big(x,\phi \cdot \sigma_{(x, \phi,\Psi)}(\phi') , \Psi'' \big)
\ee
where $\sigma_{(x, \phi, \Psi)}$ is the algebra isomorphism \eqref{eq:3}, and $\Psi''$ is obtained from $\Psi'$
by applying the map \eqref{eq:varphiGen} in each dimension. More precisely, starting from $i=n$, where $\Psi'$ is not present in $D_n^{\euctwo \euctwo}$, the tuple $\Psi''$ is determined by applying $t_{i}$ inductively, passing from~$i$ to $i-1$. 
Finally, for $i=0$, $t_0 \colon D_0^{\euctwo \euctwo} \to D_0^{\euctwo}$ maps $((x, \Psi),\Psi')$ to $(x,\Psi'')$.
	
We have to show that for every bordism~$M$ in $\Bord_n^{\text{def}}(\D^{\euctwo \euctwo})$, which we schematically write $(M,((x,\phi,\Psi),\phi',\Phi'))$ to indicate its decoration, we have 
\be
  \label{eq:ZppZp}
  \zz^{\euctwo \euctwo} \big(M, \big((x,\phi,\Psi),\phi',\Phi'\big) \big) 
  = 
  \zz^{\euctwo} \big(M, t\big((x,\phi,\Psi),\phi',\Phi'\big)\big)\, .
\ee
To prove \eqref{eq:ZppZp}, we evaluate its left-hand side in two steps. Consider a stratum $M_{i}$, $i \geq 2$, that is decorated by $((x,\phi,\Psi),\phi',\Phi') \in \D_{i}^{\euctwo \euctwo}$. 
Below, by abuse of notation we write $\zz^{\euctwo \euctwo}(M_{i})$ instead of $\zz^{\euctwo \euctwo}(M)$ to emphasise the changes in the strata of $M$.
By definition, $\zz^{\euctwo \euctwo}(M_{i})= \zz^{\euctwo}(M_{i}((\phi')^{\chirel(M_{i})_{\euc}}))$, where the notation $(\phi')^{\chirel(M_{i})_{\euc}}$ indicates that the power is computed in the algebra $A_{(x,\phi,\Psi)}$. 
Computing further we obtain  
\begin{align}
  \zz^{\euctwo \euctwo}(M_{i}) & = \zz \big(M_{i}\big((\phi')^{\chirel(M_{i})_{\euc}}, \phi^{\chirel(M_{i})+2E_{i}} \big)\big)  
  \nonumber
  \\
 & = \zz \big(M_{i}((\phi')^{\chirel(M_{i})_{\euc}}, \phi^{\chirel(M_{i})}, \phi^{2E_{i}}) \big) \, .
\end{align}
By Lemma~\ref{lem:sigma-alg-iso} this last expression is equal to 
\begin{align}
  \zz\big(M_{i} \big( \sigma_{(x,\phi,\Psi)}((\phi')^{\chirel(M_{i})_{\euc}}), \phi^{\chirel(M_{i})}) \big) &=
\zz\big(M_{i} \big( (\sigma_{(x,\phi,\Psi)}(\phi'))^{\chirel(M_{i})}, \phi^{\chirel(M_{i})}) \big)
\nonumber \\
&=\zz\big(M_{i} \big( (\sigma_{(x,\phi,\Psi)}(\phi')\cdot \phi)^{\chirel(M_{i})}) \big) \, , 
\end{align}
where we used in the second step that $\sigma_{(x,\phi,\Psi)}$ is an algebra isomorphism. Now we are done, since the last expression is by definition $\zz^{\euctwo} (M, t((x,\phi,\Psi),\phi',\Phi'))$ and this argument holds for all strata $M_{i}$.
\end{proof}

We remark that the argument in the proof relies on commutativity of $A_x$ and does not work directly for $i=1$. This is the reason for the detour via $ \zz^{\euctwo}$.

Finally, let us come back to point (iii) mentioned in the introductory paragraph of the present subsection.

\begin{lemma}
$\zz^\euc \otimes \zz^{\text{eu}}_\Psi \sim \zz^\euc$ for any defect TQFT $\zz \colon \Borddefn{n}(\D) \to \Vectk$ and all $\Psi = (\psi_1,\dots,\psi_n) \in (\Bbbk^{\times})^n$ as in \eqref{eq:Euler-defect-TQFT}.
 \end{lemma}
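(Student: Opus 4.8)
The plan is to upgrade the asserted equivalence to an honest \emph{isomorphism} in $\Deftqft_n$, which suffices by the remark following Definition~\ref{def:ZZsequiv}. (Throughout I assume $\zz$ to be $D_0$-complete, as this is what makes $\zz^\euc$ defined; in particular each algebra $A_x$ contains the central scalars $\Bbbk\cdot 1_{A_x}$, and $\psi_j\cdot 1_{A_x}\in A_x^\times$ because $\psi_j\in\Bbbk^\times$.) First I would record that the defect data $\D^{\text{eu}}$ of the Euler theory is the monoidal unit $\D^{\id}$ of $\mathcal D^n$, cf.~\eqref{eq:Euler-defect-TQFT}, so that the unitor of Lemma~\ref{lem:Dn-sym-mon-cat} identifies $\D^\euc\otimes\D^{\text{eu}}\cong\D^\euc$. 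Under this identification the projection $P$ of Definition~\ref{definition:tensor-prod-defect-TQFT} sends a $\D^\euc$-decorated bordism $M$ to the pair $(M,\underline M)$, with $\underline M$ its underlying unlabelled stratified bordism, so that by \eqref{eq:def-n-Euler}
\be
\label{eq:plan-tensorval}
(\zz^\euc \otimes \zz^{\text{eu}}_\Psi)(M)
= \zz^\euc(M)\cdot\!\!\prod_{S\in\mathrm{Strat}_{>0}(M)}\!\!\psi_{\dim S}^{\,\chirel(S)}\,.
\ee

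Next I would construct the map of defect data $h\colon\D^\euc\to\D^\euc$ that absorbs exactly these scalar weights. It acts as the identity on the underlying labels and adjacency spheres, but rescales every internal Euler weight sitting on a $j$-dimensional stratum by the scalar $\psi_j$: for $i\geq 1$ it sends $(x,\phi,\Psi)\mapsto(x,\psi_i\phi,\Psi')$, where the component of $\Psi$ on a $j$-stratum of $f_i(x)$ is multiplied by $\psi_j$, and for $i=0$ it sends $(x,\Psi)\mapsto(x,\Psi')$ with the same rescaling. Since rescaling by $\psi_j^{-1}\in\Bbbk^\times$ provides an inverse, $h$ is invertible and $h_*$ is an isomorphism of categories. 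With this $h$ and $\varphi=\id$, the pair $(h,\id)$ will be the desired isomorphism once I check $\zz^\euc\circ h_*=\zz^\euc\otimes\zz^{\text{eu}}_\Psi$; note that on objects both functors reduce to $\zz(\Sigma)$ (weights are discarded), so it is only the morphism values that need comparing.

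For that comparison I would evaluate $\zz^\euc$ on $h_*(M)$ via the definition of the Euler completion: applying $W$ inserts on each $S\in\mathrm{Strat}_{>0}(M)$ the point defect $(\psi_{\dim S}\phi_S)^{\chirel(S)}=\psi_{\dim S}^{\,\chirel(S)}\,\phi_S^{\chirel(S)}$, where the splitting of the power is legitimate because $\psi_{\dim S}$ is central. As the point-defect label lies in the vector space $A_{x_S}$ and $\zz$ depends linearly on it (cf.~the ansatz \eqref{eq:Z-bullet-morph}), each scalar $\psi_{\dim S}^{\,\chirel(S)}$ pulls out of the functor, and collecting all of them reproduces exactly the product in \eqref{eq:plan-tensorval}. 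Hence $\zz^\euc(h_*(M))=(\zz^\euc\otimes\zz^{\text{eu}}_\Psi)(M)$ on the nose, the two functors are literally equal, and $(h,\id)$ is an isomorphism, giving the equivalence.

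The step I expect to be the main obstacle is verifying that $h$ is genuinely a map of defect data in the sense of Definition~\ref{definition:map-def-data}, i.e.\ that it commutes with all the adjacency maps $f_i^\euc$ including the $\tilde\Psi$-bookkeeping of Definition~\ref{def:EulercompleteD}; the point that makes this work is that the rescaling factor depends only on the dimension of a stratum and not on its label, so it is preserved under passing to adjacent-sphere decorations. A secondary point requiring care is the line-defect case $i=1$, where inserting an Euler weight can disconnect a stratum; one checks, exactly as in the proof that $\zz\circ W$ is functorial, that the multiplicativity $(\psi_1\phi)^{\chirel}=\psi_1^{\chirel}\phi^{\chirel}$ together with Lemma~\ref{lem:move-point-insertions} still lets the scalar factor out correctly.
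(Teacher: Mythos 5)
Your proposal is correct and matches the paper's own proof essentially verbatim: the paper likewise upgrades the equivalence to an isomorphism by composing the unitor $\D^\euc\otimes\D^{\id}\to\D^\euc$ with the rescaling isomorphism $(x,\phi,\Phi)\mapsto(x,\psi_j\phi,\Phi')$ that multiplies all Euler weights by the scalar of the corresponding dimension, absorbing the factor \eqref{eq:def-n-Euler} into $\zz^\euc=\zz\circ W$. Your additional remarks on centrality of the scalars, the $i=1$ case, and compatibility with adjacency maps only flesh out details the paper leaves implicit.
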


\begin{proof}
Recall that $\zz^{\text{eu}}_\Psi$ has defect data $\D^{\mathrm{id}}$, but that it differs from the identity defect TQFT by its action on bordisms as given in \eqref{eq:def-n-Euler}. 
	We will show that $\zz^\euc \otimes \zz^{\text{eu}}_\Psi$ and $\zz^\euc$ are in fact isomorphic, not just equivalent.
    
First we give an isomorphism of defect data $f \colon \D^\euc \otimes \D^{\mathrm{id}} \to \D^\euc$ (which is not the standard unitor) such that $\zz^\euc \otimes \zz^{\text{eu}}_\Psi = \zz^\euc \circ f_*$. 
We set~$f$ to be a composition of the unitor $\D^\euc \otimes \D^{\mathrm{id}} \to \D^\euc$ from Lemma~\ref{lem:Dn-sym-mon-cat} with an isomorphism $\tilde f\colon \D^\euc \to \D^\euc$. 
The latter simply maps a defect label $(x,\phi,\Phi) \in D_j^\euc$ to $(x,\psi_j \phi,\Phi')$, using the linear structure of $A_{x}$, where in $\Phi'$ all weights are multiplied by the scalar $\psi_k$ of the corresponding dimension.
This modification absorbs the factor \eqref{eq:def-n-Euler} into the evaluation of $\zz^\euc = \zz \circ W$ on $f_*(M)$, for any object or bordism~$M$. 
We conclude that $\zz^\euc \otimes \zz^{\text{eu}}_\Psi = (\zz^\euc \circ \tilde f_*) \otimes \unit_{\D^{\mathrm{id}}}$.
\end{proof}

We note that in Remark~\ref{rem:orbifolddataforanyZ} below we will spell out details of Euler weights for 3-dimensional defect TQFT in relation to the orbifold construction, to which we now turn.

\section{Orbifolds}
\label{sec:orbifolds}

In this section we construct orbifolds for defect TQFTs. 
We start with some preliminaries on oriented triangulations and Pachner moves in Section~\ref{subsec:oritriaPachmo}. 
Then we introduce the concept of an ``orbifold datum''~$\A$ for an $n$-dimensional defect TQFT $\zz \colon \Bordd[n] \rightarrow \Vectk$ for arbitrary $n \in \Z_+$ in Section~\ref{subsec:ndimorbis}, and we explain how to obtain the associated orbifold theory $\zz_\A \colon \Bord_n \to \Vectk$. 
The case $n=2$ had already been treated in the literature, and in Section~\ref{subsec:2dimorbis} we briefly discuss how it fits into our broader picture. 
Our main example is the 3-dimensional case which we study in detail in Section~\ref{subsec:3dimorbis}.

\subsection{Oriented triangulations and Pachner moves}
\label{subsec:oritriaPachmo}

Orbifolds are constructed by decorating the Poincar\'e dual of oriented triangulations of bordisms. 
As preparation for that, below we recall basic facts about triangulations of $n$-dimensional manifolds, Pachner moves between them, and the oriented versions thereof which we will need. 
For illustration and later use, we spell out the cases $n=2$ and $n=3$ is some detail. 

\medskip

Let $n\in\N$. 
By an \textsl{$n$-simplex}~$K$ we mean the convex hull of $n+1$ points $p_1, p_2, \ldots, p_{n+1} \in \R^{N}$ for some integer $N \geqslant n$, 
such that $\{ p_2-p_1,\, p_3-p_1, \ldots,\, p_{n+1}-p_1 \}$ is linearly independent. 
Hence
\be\label{eq:n-simplex-RN}
K = \left\{ \sum_{i=1}^{n+1} t_i p_i \;\Big|\; t_i \in \R_{\geqslant 0} \;\text{ and }\; \sum_{i=1}^{n+1} t_i = 1 \right\} 
\, . 
\ee
The \textsl{standard $n$-simplex $\Delta^n$} is the special case where $\{ p_1, \ldots,  p_{n+1}\}$ is the standard basis of $\R^{n+1}$. 
By a \textsl{simplicial complex} we mean a finite collection~$C$ of simplices such that (1) all faces of all simplices in~$C$ are elements of~$C$ as well, and (2) if $\sigma, \sigma' \in C$, then $\sigma \cap \sigma'$ is either empty or a face of both~$\sigma$ and~$\sigma'$. 

A \textsl{triangulation} of a topological manifold~$M$ is a simplicial complex~$C$ together with a 
homeomorphism 
$|C| \to M$, where $|C|$ is the geometric realisation (or polyhedron) of~$C$. 
A manifold together with a choice of triangulation is called a \textsl{triangulated manifold}.
 
A \textsl{smooth triangulation} of a smooth manifold $M$ is a triangulation $f \colon |C| \to M$ where for each simplex $S \subset C$, the restriction $f|_S$ is smooth and satisfies an extension property and a non-degeneracy condition on differentials, see \cite[Def.\,8.3]{Munkres} for details. 
	To compare 
different triangulations we need that given two smooth triangulations $f \colon |C| \to M$ and $f' \colon |C'| \to M$, there exists a $\delta$-approximation~$\widetilde f \colon |C'| \to M$ to~$f'$ such that $f^{-1} \circ \widetilde f \colon |C'| \to |C|$ is piecewise-linear \cite[Cor.\,10.13]{Munkres}. 

An \textsl{oriented $n$-simplex} is an $n$-simplex together with an equivalence class of total orders of its vertices. Two total orders are equivalent if they are related by an even permutation of vertices. 
Given an $n$-simplex $K$ as in \eqref{eq:n-simplex-RN} together with a total order $(p_1,p_2,\dots,p_{n+1})$ of its vertices $p_i$, 
one obtains an orientation of the $n$-dimensional tangent space (seen as a linear subspace of $\R^N$) at each point of~$K$ from the oriented basis $(p_2-p_1,\, p_3-p_1, \ldots,\, p_{n+1}-p_1)$.

By a \textsl{triangulation with total order} we mean a triangulation $|C|\to M$ of an $n$-manifold $M$ with a total order of the vertices of $C$. 
In a triangulation with total order, every $k$-simplex with $k \in \{ 1,2,\ldots,n\}$ is oriented by restricting the total order. 

\medskip

We will often work in the Poincar\'e dual picture for oriented $n$-manifolds. 
For $k \in \{0,1,\dots,n-1\}$, the orientation of a $k$-stratum in the dual of a triangulation is induced by that of the corresponding $(n-k)$-simplex by the rule that together (first the $k$-stratum, then the $(n-k)$-simplex) they produce the orientation of the underlying manifold~$M$. 
We take the orientation of an $n$-stratum to be that of the underlying manifold~$M$.
As a consequence a vertex in the Poincar\'e dual stratification is oriented positively if the orientation of the  corresponding $n$-simplex agrees with the orientation of the manifold, and negatively otherwise. 

It is convenient to describe the total order on the vertices by a ``height function'', that is, an injective function $h \colon C_0 \to \R$ from the set of vertices of the simplicial complex $C$ to $\R$. 
Let us illustrate the orientation of a complex and its dual in the cases $n=2$ and $n=3$. 

\begin{example}
\label{ex:oritria}
\begin{enumerate}
\item
For $n=2$, an $n$-simplex obtains a total order by assigning numbers $h(p_i)$ to the vertices $p_1,p_2,p_3$. 
Then the edges are oriented towards the ``higher'' vertices, and the face is oriented such that the induced boundary orientation agrees with two of the edge orientations. 
For example
\be\label{eq:2simplex}
\tikzzbox{\begin{tikzpicture}[thick,scale=2.021,color=gray!60!blue, baseline=0.75cm, >=stealth]
\coordinate (v1) at (0,0);
\coordinate (v2) at (1,0);
\coordinate (v3) at (0,1);
\fill [blue!30,opacity=0.545] (v1) -- (v2) -- (v3);
\fill[color=gray!60!blue] (v1) circle (0.9pt) node[above] (0up) {};
\fill[color=gray!60!blue] (v2) circle (0.9pt) node[above] (0up) {};
\fill[color=gray!60!blue] (v3) circle (0.9pt) node[above] (0up) {};
\draw[string=gray!60!blue, very thick] (v1) -- (v2);
\draw[string=gray!60!blue, very thick] (v1) -- (v3);
\draw[string=gray!60!blue, very thick] (v2) -- (v3);
\fill[color=gray!60!blue] (0.35,0.35) circle (0pt) node (0up) {$\circlearrowleft$};
\fill[color=gray!60!blue] (v1) circle (0.9pt) node[below] (0up) {{\scriptsize$1$}};
\fill[color=gray!60!blue] (v2) circle (0.9pt) node[below] (0up) {{\scriptsize$2$}};
\fill[color=gray!60!blue] (v3) circle (0.9pt) node[above] (0up) {{\scriptsize$3$}};
\end{tikzpicture}}
\ee
is a 2-simplex with total order from a height function which takes values $1$, $2$, and $3$, respectively.

If we take the standard orientation of $M=\R^2$, i.\,e.~the ordered basis $((\begin{smallmatrix} 1 \\ 0 \end{smallmatrix}), (\begin{smallmatrix} 0 \\ 1 \end{smallmatrix}))$, then according to our rule the oriented Poincar\'e dual of \eqref{eq:2simplex} is 
\be
\tikzzbox{\begin{tikzpicture}[thick,scale=1.5,color=gray!60!blue, baseline=0.5cm, >=stealth]
\coordinate (a1) at (0.35,0.35);
\coordinate (e1) at (-0.25,0.35);
\coordinate (e2) at (0.35,-0.25);
\coordinate (e3) at (1.25,1.25);
\fill [red!15,opacity=0.6] (-0.25,-0.25) -- (1.25,-0.25) -- (1.25, 1.25) -- (-0.25,1.25);
\draw[string=green!60!black, very thick] (e1) -- (a1);
\draw[string=green!60!black, very thick] (a1) -- (e2);
\draw[string=green!60!black, very thick] (a1) -- (e3);
\fill[color=green!60!black] (a1) circle (1.6pt) node[right] (0up) {$+$};
\fill[color=black, opacity=0.5] (0.1,0.1) circle (0pt) node (0up) {$\circlearrowleft$};
\fill[color=black, opacity=0.5] (0.85,0.15) circle (0pt) node (0up) {$\circlearrowleft$};
\fill[color=black, opacity=0.5] (0.15,0.85) circle (0pt) node (0up) {$\circlearrowleft$};
\fill[color=gray!60!blue] (0,-0.1) circle (0pt) node[left] (0up) {{\tiny$1$}};
\fill[color=gray!60!blue] (1.25,-0.1) circle (0pt) node[left] (0up) {{\tiny$2$}};
\fill[color=gray!60!blue] (0,1.1) circle (0pt) node[left] (0up) {{\tiny$3$}};
\end{tikzpicture}}
\ee
where we also show the heights of the vertices corresponding to the 2-strata. 
The 0-stratum is oriented by $+$ since the vertex order in \eqref{eq:2simplex} induces the same orientation as that of~$M$.
\item
For $n=3$, an example of a 3-simplex (or tetrahedron) with total order is
\be\label{eq:3simplex}
\tikzzbox{\begin{tikzpicture}[thick,scale=2.321,color=gray!60!blue, baseline=-0.3cm, >=stealth, 
				style={x={(-0.6cm,-0.4cm)},y={(1cm,-0.2cm)},z={(0cm,0.9cm)}}]
\coordinate (v1) at (1,0,0);
\coordinate (v2) at (1,1,0);
\coordinate (v3) at (0,0,0);
\coordinate (v4) at (0.25,0.1,0.75);
\fill [blue!20,opacity=0.545] (v1) -- (v2) -- (v3);
\fill [blue!20,opacity=0.545] (v4) -- (v2) -- (v3);
\fill [blue!20,opacity=0.545] (v1) -- (v4) -- (v3);
\fill[color=gray!60!blue] (v3) circle (0.9pt) node[left] (0up) {{\scriptsize$3$}};
\fill[color=gray!60!blue] (v1) circle (0.9pt) node[below] (0up) {{\scriptsize$1$}};
\fill[color=gray!60!blue] (v2) circle (0.9pt) node[below] (0up) {{\scriptsize$2$}};
\fill[color=gray!60!blue] (v4) circle (0.9pt) node[above] (0up) {{\scriptsize$4$}};
\draw[string=gray!60!blue, very thick] (v1) -- (v3);
\draw[string=gray!60!blue, very thick] (v2) -- (v3);
\draw[string=gray!60!blue, very thick] (v3) -- (v4);
\fill [blue!20,opacity=0.545] (v1) -- (v2) -- (v4);
\draw[string=gray!60!blue, very thick] (v2) -- (v4);
\draw[string=gray!60!blue, very thick] (v1) -- (v4);
\draw[string=gray!60!blue, very thick] (v1) -- (v2);
\end{tikzpicture}}
\ee
where the orientation of the interior agrees with that induced from the standard orientation of $\R^3$, and the orientations of the edges and faces are induced as in the 2-dimensional case. 
Thus for this orientation of $\R^3$ the Poincar\'e dual of \eqref{eq:3simplex} is 
\be
\tikzzbox{\begin{tikzpicture}[thick,scale=2.321,color=blue!50!black, baseline=0.0cm, >=stealth, 
				style={x={(-0.6cm,-0.4cm)},y={(1cm,-0.2cm)},z={(0cm,0.9cm)}}]
	\pgfmathsetmacro{\yy}{0.2}
\coordinate (P) at (0.5, \yy, 0);
\coordinate (R) at (0.625, 0.5 + \yy/2, 0);
\coordinate (L) at (0.5, 0, 0);
\coordinate (R1) at (0.25, 1, 0);
\coordinate (R2) at (0.5, 1, 0);
\coordinate (R3) at (0.75, 1, 0);
\coordinate (Pt) at (0.5, \yy, 1);
\coordinate (Rt) at (0.375, 0.5 + \yy/2, 1);
\coordinate (Lt) at (0.5, 0, 1);
\coordinate (R1t) at (0.25, 1, 1);
\coordinate (R2t) at (0.5, 1, 1);
\coordinate (R3t) at (0.75, 1, 1);
\coordinate (alpha) at (0.5, 0.5, 0.5);
%
\draw[string=green!60!black, very thick] (alpha) -- (Rt);
\fill [red!50,opacity=0.545] (L) -- (P) -- (alpha) -- (Pt) -- (Lt);
\fill [red!50,opacity=0.545] (Pt) -- (Rt) -- (alpha);
\fill [red!50,opacity=0.545] (Rt) -- (R1t) -- (R1) -- (P) -- (alpha);
\fill [red!50,opacity=0.545] (Rt) -- (R2t) -- (R2) -- (R) -- (alpha);
\draw[string=green!60!black, very thick] (alpha) -- (Rt);
\fill[color=blue!60!black] (0.5,0.59,0.94) circle (0pt) node[left] (0up) { {\scriptsize$\circlearrowright$} };
\fill[color=green!60!black] (alpha) circle (1.2pt) node[right] (0up) {{\scriptsize$+$} };
\fill [red!50,opacity=0.545] (Pt) -- (R3t) -- (R3) -- (R) -- (alpha);
\fill [red!50,opacity=0.545] (P) -- (R) -- (alpha);
%
\draw[string=green!60!black, very thick] (P) -- (alpha);
\draw[string=green!60!black, very thick] (R) -- (alpha);
\draw[string=green!60!black, very thick] (alpha) -- (Pt);
\fill[color=gray!60!blue] (0.65,0.48,0) circle (0pt) node[left] (0up) {{\tiny$1$}};
\fill[color=gray!60!blue] (0.58,1.05,0) circle (0pt) node[left] (0up) {{\tiny$2$}};
\fill[color=gray!60!blue] (0.33,1.05,0) circle (0pt) node[left] (0up) {{\tiny$3$}};
\fill[color=gray!60!blue] (0.25,0.48,1) circle (0pt) node[left] (0up) {{\tiny$4$}};
%
%
\fill[color=blue!60!black] (0.5,0.2,0.15) circle (0pt) node[left] (0up) { {\scriptsize$\circlearrowright$} };
\fill[color=blue!60!black] (0.5,0.5,0.09) circle (0pt) node[left] (0up) { {\scriptsize$\circlearrowright$} };
\fill[color=blue!60!black] (0.5,0.89,0.0) circle (0pt) node[left] (0up) { {\scriptsize$\circlearrowright$} };
\fill[color=blue!60!black] (0.5,1.04,0.13) circle (0pt) node[left] (0up) { {\scriptsize$\circlearrowright$} };
\fill[color=blue!60!black] (0.5,1.19,0.28) circle (0pt) node[left] (0up) { {\scriptsize$\circlearrowright$} };
%
\draw [black,opacity=1, very thin] (Pt) -- (Lt) -- (L) -- (P);
\draw [black,opacity=1, very thin] (Pt) -- (Rt);
\draw [black,opacity=1, very thin] (Rt) -- (R1t) -- (R1) -- (P);
\draw [black,opacity=1, very thin] (Rt) -- (R2t) -- (R2) -- (R);
\draw [black,opacity=1, very thin] (Pt) -- (R3t) -- (R3) -- (R);
\draw [black,opacity=1, very thin] (P) -- (R);
\end{tikzpicture}}
\, . 
\ee
The orientation of the 2- and 1-strata in the dual are deduced with the right-hand rule: 
\be
	\!\!\!
\tikzzbox{\begin{tikzpicture}[thick,scale=2.321,color=blue!50!black, baseline=0.0cm, >=stealth, 
				style={x={(-0.6cm,-0.4cm)},y={(1cm,-0.2cm)},z={(0cm,0.9cm)}}]
	\pgfmathsetmacro{\yy}{0.2}
\coordinate (v1) at (1, 0.5, 0);
\coordinate (v2) at (0, 0.5, 0);
\coordinate (v3) at (0, 0.5, 1);
\coordinate (v4) at (1, 0.5, 1);
\fill[color=gray!60!blue] (0.5,1,0.5) circle (0.6pt) node[above] (0up) {{\tiny$1$}};
\fill[color=gray!60!blue] (0.5,0,0.5) circle (0.6pt) node[above] (0up) {{\tiny$2$}};
\draw[color=gray!60!blue, densely dotted] (0.5,0,0.5) -- (0.5,0.5,0.5);
\fill [red!50,opacity=0.545] (v1) -- (v2) -- (v3) -- (v4);
\fill[color=gray!60!blue] (0.2,0.5,0.7) circle (0pt) node[above] (0up) {$\circlearrowright$};
\draw[color=gray!60!blue, densely dotted, postaction={decorate}, decoration={markings,mark=at position .7 with {\arrow[draw=gray!60!blue]{>}}}] (0.5,1,0.5) -- (0.5,0.5,0.5);
\end{tikzpicture}}
\, , \qquad
\tikzzbox{\begin{tikzpicture}[thick,scale=2.321,color=blue!50!black, baseline=0.0cm, >=stealth, 
				style={x={(-0.6cm,-0.4cm)},y={(1cm,-0.2cm)},z={(0cm,0.9cm)}}]
	\pgfmathsetmacro{\yy}{0.2}
\coordinate (T) at (0.5, 0.4, 0);
\coordinate (L) at (0.5, 0, 0);
\coordinate (R1) at (0.3, 1, 0);
\coordinate (R2) at (0.7, 1, 0);
\coordinate (1T) at (0.5, 0.4, 1);
\coordinate (1L) at (0.5, 0, 1);
\coordinate (1R1) at (0.3, 1, );
\coordinate (1R2) at (0.7, 1, );
%
\coordinate (p3) at (0.1, 0.1, 0.5);
\coordinate (p2) at (0.5, 0.95, 0.5);
\coordinate (p1) at (0.9, 0.1, 0.5);
\draw[color=gray!60!blue, densely dotted, postaction={decorate}, decoration={markings,mark=at position .6 with {\arrow[draw=gray!60!blue]{>}}}] (p1) -- (p3);
\draw[color=gray!60!blue, densely dotted, postaction={decorate}, decoration={markings,mark=at position .5 with {\arrow[draw=gray!60!blue]{>}}}] (p2) -- (p3);
\draw[color=gray!60!blue, densely dotted, postaction={decorate}, decoration={markings,mark=at position .7 with {\arrow[draw=gray!60!blue]{>}}}] (p1) -- (p2);
\fill[color=gray!60!blue] (p1) circle (0.6pt) node[above] (0up) {{\tiny$1$}};
\fill[color=gray!60!blue] (p3) circle (0.6pt) node[above] (0up) {{\tiny$3$}};
%
\fill[color=blue!60!black] (0.5,0.25,0.15) circle (0pt) node[left] (0up) { {\scriptsize$\circlearrowright$} };
\fill [red!50,opacity=0.545] (L) -- (T) -- (1T) -- (1L);
\fill [red!50,opacity=0.545] (R1) -- (T) -- (1T) -- (1R1);
\fill [red!50,opacity=0.545] (R2) -- (T) -- (1T) -- (1R2);
\fill[color=blue!60!black] (0.5,0.25,0.15) circle (0pt) node[left] (0up) { {\scriptsize$\circlearrowright$} };
\fill[color=blue!60!black] (0.15,0.95,0.07) circle (0pt) node[left] (0up) { {\scriptsize$\circlearrowright$} };
\fill[color=blue!60!black] (0.55,0.95,0.1) circle (0pt) node[left] (0up) { {\scriptsize$\circlearrowright$} };
%
\draw[string=green!60!black, very thick] (T) -- (1T);
\fill[color=gray!60!blue] (p2) circle (0.6pt) node[above] (0up) {{\tiny$2$}};
%
\draw [black,opacity=1, very thin] (1T) -- (1L) -- (L) -- (T);
\draw [black,opacity=1, very thin] (1T) -- (1R1) -- (R1) -- (T);
\draw [black,opacity=1, very thin] (1T) -- (1R2) -- (R2) -- (T);
\end{tikzpicture}}
\begin{tikzpicture}[
			     baseline=(current bounding box.base), 
			     descr/.style={fill=white,inner sep=3.5pt}, 
			     normal line/.style={->}
			     ] 
\matrix (m) [matrix of math nodes, row sep=3.5em, column sep=3.3em, text height=1.1ex, text depth=0.1ex] {%
{}
&
{}
\\
};
\path[font=\footnotesize] (m-1-1) edge[<->] node[auto] { {\scriptsize rotate}} (m-1-2);
\end{tikzpicture}
\tikzzbox{\begin{tikzpicture}[thick,scale=2.321,color=blue!50!black, baseline=0.0cm, >=stealth, 
				style={x={(-0.6cm,-0.4cm)},y={(1cm,-0.2cm)},z={(0cm,0.9cm)}}]
	\pgfmathsetmacro{\yy}{0.2}
\coordinate (T) at (0.5, 0.4, 0);
\coordinate (L) at (0.5, 0, 0);
\coordinate (R1) at (0.3, 1, 0);
\coordinate (R2) at (0.7, 1, 0);
\coordinate (1T) at (0.5, 0.4, 1);
\coordinate (1L) at (0.5, 0, 1);
\coordinate (1R1) at (0.3, 1, );
\coordinate (1R2) at (0.7, 1, );
%
\coordinate (p3) at (0.1, 0.1, 0.5);
\coordinate (p2) at (0.5, 0.95, 0.5);
\coordinate (p1) at (0.9, 0.1, 0.5);
\draw[color=gray!60!blue, densely dotted, postaction={decorate}, decoration={markings,mark=at position .6 with {\arrow[draw=gray!60!blue]{<}}}] (p1) -- (p3);
\draw[color=gray!60!blue, densely dotted, postaction={decorate}, decoration={markings,mark=at position .5 with {\arrow[draw=gray!60!blue]{<}}}] (p2) -- (p3);
\draw[color=gray!60!blue, densely dotted, postaction={decorate}, decoration={markings,mark=at position .7 with {\arrow[draw=gray!60!blue]{<}}}] (p1) -- (p2);
\fill[color=gray!60!blue] (p1) circle (0.6pt) node[above] (0up) {{\tiny$3$}};
\fill[color=gray!60!blue] (p3) circle (0.6pt) node[above] (0up) {{\tiny$1$}};
%
\fill[color=blue!60!black] (0.5,0.25,0.15) circle (0pt) node[left] (0up) { {\scriptsize$\circlearrowleft$} };
\fill [pattern=north west lines, opacity=0.4] (L) -- (T) -- (1T) -- (1L);
\fill [pattern=north west lines, opacity=0.4] (R1) -- (T) -- (1T) -- (1R1);
\fill [pattern=north west lines, opacity=0.4] (R2) -- (T) -- (1T) -- (1R2);
\fill [red!50,opacity=0.545] (L) -- (T) -- (1T) -- (1L);
\fill [red!50,opacity=0.545] (R1) -- (T) -- (1T) -- (1R1);
\fill [red!50,opacity=0.545] (R2) -- (T) -- (1T) -- (1R2);
\fill[color=blue!60!black] (0.5,0.25,0.15) circle (0pt) node[left] (0up) { {\scriptsize$\circlearrowleft$} };
\fill[color=blue!60!black] (0.15,0.95,0.07) circle (0pt) node[left] (0up) { {\scriptsize$\circlearrowleft$} };
\fill[color=blue!60!black] (0.55,0.95,0.1) circle (0pt) node[left] (0up) { {\scriptsize$\circlearrowleft$} };
%
\draw[string=green!60!black, very thick] (1T) -- (T);
\fill[color=gray!60!blue] (p2) circle (0.6pt) node[above] (0up) {{\tiny$2$}};
%
%
\draw [black,opacity=1, very thin] (1T) -- (1L) -- (L) -- (T);
\draw [black,opacity=1, very thin] (1T) -- (1R1) -- (R1) -- (T);
\draw [black,opacity=1, very thin] (1T) -- (1R2) -- (R2) -- (T);
\end{tikzpicture}}
\, . 
\ee
Here and below we indicate opposite orientations of 2-strata by a stripy pattern. 
\end{enumerate}
\end{example}

We note that the oriented  2- and 3-simplices with total order 
\be
\tikzzbox{\begin{tikzpicture}[thick,scale=2.021,color=gray!60!blue, baseline=0.75cm, >=stealth]
\coordinate (v3) at (0,0);
\coordinate (v2) at (1,0);
\coordinate (v1) at (0,1);
\fill [pattern=north west lines, opacity=0.4] (v1) -- (v2) -- (v3);
\fill [blue!30,opacity=0.545] (v1) -- (v2) -- (v3);
\fill[color=gray!60!blue] (v3) circle (0.9pt) node[above] (0up) {};
\fill[color=gray!60!blue] (v2) circle (0.9pt) node[above] (0up) {};
\fill[color=gray!60!blue] (v1) circle (0.9pt) node[above] (0up) {};
\draw[string=gray!60!blue, very thick] (v1) -- (v2);
\draw[string=gray!60!blue, very thick] (v1) -- (v3);
\draw[string=gray!60!blue, very thick] (v2) -- (v3);
\fill[color=gray!60!blue] (0.35,0.35) circle (0pt) node (0up) {$\circlearrowright$};
\fill[color=gray!60!blue] (v1) circle (0.9pt) node[above] (0up) {{\scriptsize$1$}};
\fill[color=gray!60!blue] (v2) circle (0.9pt) node[below] (0up) {{\scriptsize$2$}};
\fill[color=gray!60!blue] (v3) circle (0.9pt) node[below] (0up) {{\scriptsize$3$}};
\end{tikzpicture}}
\, , 
\qquad
\tikzzbox{\begin{tikzpicture}[thick,scale=2.321,color=gray!60!blue, baseline=-0.3cm, >=stealth, 
				style={x={(-0.6cm,-0.4cm)},y={(1cm,-0.2cm)},z={(0cm,0.9cm)}}]
\coordinate (v1) at (1,0,0);
\coordinate (v2) at (1,1,0);
\coordinate (v3) at (0,0,0);
\coordinate (v4) at (0.25,0.1,0.75);
\fill [pattern=north west lines, opacity=0.4] (v1) -- (v2) -- (v3);
\fill [blue!20,opacity=0.545] (v1) -- (v2) -- (v3);
\fill [blue!20,opacity=0.545] (v4) -- (v2) -- (v3);
\fill [blue!20,opacity=0.545] (v1) -- (v4) -- (v3);
\fill[color=gray!60!blue] (v3) circle (0.9pt) node[left] (0up) {{\scriptsize$2$}};
\fill[color=gray!60!blue] (v1) circle (0.9pt) node[below] (0up) {{\scriptsize$1$}};
\fill[color=gray!60!blue] (v2) circle (0.9pt) node[below] (0up) {{\scriptsize$3$}};
\fill[color=gray!60!blue] (v4) circle (0.9pt) node[above] (0up) {{\scriptsize$4$}};
\draw[string=gray!60!blue, very thick] (v1) -- (v3);
\draw[string=gray!60!blue, very thick] (v3) -- (v2);
\draw[string=gray!60!blue, very thick] (v3) -- (v4);
\fill [blue!20,opacity=0.545] (v1) -- (v2) -- (v4);
\draw[string=gray!60!blue, very thick] (v2) -- (v4);
\draw[string=gray!60!blue, very thick] (v1) -- (v4);
\draw[string=gray!60!blue, very thick] (v1) -- (v2);
\end{tikzpicture}}
\ee
are not equivalent to \eqref{eq:2simplex} and \eqref{eq:3simplex} for $n=2$ and $n=3$, respectively. 
This is a special case of part (ii) of the following lemma, which will be important in determining the amount of defect data necessary as an input for the orbifold construction.

\begin{lemma}
\label{lem:rotasimp}
Let $K$ be an $n$-simplex in $\R^N$ as in \eqref{eq:n-simplex-RN}, with $1 \leqslant n \leqslant N$. 
\begin{enumerate}
\item
If $n \leqslant N-1$, any two total vertex orders of $K$ are related by an orientation preserving affine linear automorphism of $\R^N$ which maps $K$ to $K$. 
\item
If $n=N$, two total orders are related by an automorphism as in (i) iff the two total orders induce the same orientation on $K$. 
\item
In both cases, the restriction of the automorphism to $K$ is unique.
\end{enumerate}
\end{lemma}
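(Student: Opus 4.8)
The plan is to reduce everything to a statement about permutations of the vertices and the determinants of the associated linear maps. Label the vertices so that one total order is $(p_1,\dots,p_{n+1})$ and the other is $(p_{\sigma(1)},\dots,p_{\sigma(n+1)})$ for a permutation $\sigma\in S_{n+1}$; saying the two orders are related by an affine automorphism $\Phi$ of $\R^N$ with $\Phi(K)=K$ means precisely that $\Phi(p_i)=p_{\sigma(i)}$ for all $i$. Since the vertices are exactly the extreme points of the polytope $K$, any affine automorphism preserving $K$ permutes them, so this permutation picture captures all the relevant maps.

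First I would settle uniqueness (iii) and set up the canonical candidate. The points $p_1,\dots,p_{n+1}$ are affinely independent and affinely span the $n$-dimensional affine subspace $A:=\mathrm{aff}(K)\subset\R^N$. Hence there is a unique affine map $\phi\colon A\to A$ with $\phi(p_i)=p_{\sigma(i)}$, given on affine combinations by $\phi(\sum_i t_i p_i)=\sum_i t_i p_{\sigma(i)}$ (where $\sum_i t_i=1$); it is bijective, with inverse built from $\sigma^{-1}$, and it maps $K$ to $K$ since it permutes vertices and preserves convex combinations. Any automorphism $\Phi$ as in the statement must restrict to $\phi$ on $A$, because an affine map is determined by its values on an affine basis; in particular $\Phi|_K=\phi|_K$ is forced. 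This proves (iii) in both cases and reduces existence to extending $\phi$ off $A$ with the correct orientation.

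Next I would track the orientation. Writing $V:=A-A$ for the $n$-dimensional direction space, the linear part $L\colon V\to V$ of $\phi$ satisfies $L(p_{j}-p_1)=\phi(p_j)-\phi(p_1)=p_{\sigma(j)}-p_{\sigma(1)}$, so $L$ carries the ordered basis $(p_2-p_1,\dots,p_{n+1}-p_1)$ to $(p_{\sigma(2)}-p_{\sigma(1)},\dots,p_{\sigma(n+1)}-p_{\sigma(1)})$. By the definition of the orientation a total order induces on $K$, these two bases represent exactly the orientations coming from the two orders, so the orders induce the same orientation on $K$ iff $\det L>0$. For (ii), where $n=N$, we have $A=\R^N$ and the only candidate is $\Phi=\phi$, whose linear part is $L$; thus an orientation-preserving such $\Phi$ exists iff $\det L>0$, i.e.\ iff the two orders induce the same orientation, which is (ii). For (i), where $n\le N-1$, choose a complement $\R^N=V\oplus W$ with $\dim W=N-n\ge 1$, extend $L$ by any invertible $M$ on $W$, and set $\Phi(x)=p_{\sigma(1)}+\tilde L(x-p_1)$ with $\tilde L=L\oplus M$, which indeed satisfies $\Phi(p_i)=p_{\sigma(i)}$. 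Then $\det\tilde L=\det L\cdot\det M$, and since $\dim W\ge 1$ we may take $M$ to be the identity or a single reflection, making $\det M$ either sign; hence we can always force $\det\tilde L>0$ regardless of $\sigma$, producing the desired orientation-preserving $\Phi$ and proving (i).

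The main obstacle, and really the only step needing care, is the orientation bookkeeping: one must check that the sign of $\det L$ computed in the basis $(p_{j}-p_1)_j$ matches the notion of ``orders inducing the same orientation on $K$'' as defined through the tangent-space bases, and, in case (i), that the freedom in the $W$-block genuinely suffices to flip that sign. The remaining points — bijectivity of $\phi$, that $\phi(K)=K$, and that an affine map's orientation behavior is governed entirely by its linear part — are routine.
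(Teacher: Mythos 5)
Your proof is correct and takes essentially the same route as the paper's: the map is forced on $\mathrm{aff}(K)$ by the vertex images (which gives (iii)), and in case (i) the positive codimension is used to absorb the sign of the determinant. The only notable differences are executional: your orientation bookkeeping via $\det L$ of the linear part matches the definition of the induced orientation directly, letting you bypass the paper's combinatorial footnote (which shows the unique affine map is orientation preserving iff $\sigma$ is even, via a fixed-point decomposition of $\sigma$), and in (i) you build a block-diagonal extension $L\oplus M$ where the paper instead composes with an explicit vertex transposition followed by a reflection in a hyperplane containing $K$.
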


\begin{proof}
For part (ii), let $(p_1,p_2,\dots,p_{n+1})$ be the ordered set of vertices of $K$ with respect to the first order, and let $(p_{\sigma(1)},\dots,p_{\sigma(n+1)})$ for $\sigma \in S_{n+1}$ be the second order. 
	There is a unique affine linear automorphism $F$ of $\R^N$ such that $F(p_i) = p_{\sigma(i)}$, for $i \in \{1,\dots,n+1\}$. 
	Then~$F$ maps~$K$ to~$K$ by definition, and it is orientation preserving iff $\sigma$ is even.\footnote{To see this, first assume that~$\sigma$ has a fixed point, say $\sigma(i_0)=i_0$. Write $F(x) = A(x-p_{i_0}) + p_{i_0}$ for a linear map $A$. Abbreviating $v_i := p_i-p_{i_0}$ we see that $A(v_i)=v_{\sigma(i)}$, showing that $\det(A)>0$ iff $\sigma$ is even.
	For $n \geq 2$, if $\sigma$ has no fixed point it can be written as the composition of two permutations, each of which has a fixed point.
	For $n=1$ the statement is clear.}
This proves part (ii). 

To get part (i), it is enough to add to the above argument one orientation preserving affine automorphism of $\R^N$ which reverses the orientation of $K$. 
To do so, first extend $(p_1,p_2,\dots,p_{n+1})$ to an affine basis of $\R^N$, that is, pick $p_{n+2},\dots,p_{N+1}$ such that $(p_2-p_{1},p_3-p_{1},\dots,p_{N+1}-p_{1})$ is a basis of $\R^N$.
The affine linear automorphism which exchanges $p_1$ and $p_2$ and keeps $p_3,\dots,p_{N+1}$ fixed maps $K$ to $K$ and is orientation reversing on $K$ and on $\R^N$. Compose this automorphism with an affine linear reflection along a hyperplane containing $K$. The resulting affine automorphism maps $K$ to $K$, reverses the orientation of $K$, but is orientation preserving on $\R^N$.

Part (iii) is clear.
\end{proof}

\medskip

We now turn to Pachner moves. 
For an $n$-dimensional triangulated manifold~$M$ we consider a subcomplex $K \subset M$ that is isomorphic to a collection of $n$-simplices $F \subset \partial \Delta^{n+1}$ of the standard $(n+1)$-simplex; we write $\varphi\colon K \to F$ for the isomorphism. 
Then a \textsl{Pachner move} by definition is the replacement 
\be
M \lmt \big(M\setminus K\big) \cup_{\varphi|_{\partial K}} \big(\partial \Delta^{n+1} \setminus 
\overset{\circ}{F} 
\big)
\, .
\ee
Put differently, `it glues the other side of $\Delta^{n+1}$ into~$M$ instead of~$K$'. 
Since $\Delta^{n+1}$ has only finitely many (namely $n+2$) faces, there are only finitely many such moves. 

Pachner's theorem \cite{Pachpaper} states that if two triangulated PL manifolds are PL isomorphic, then there exists a finite sequence of Pachner moves from one triangulation to the other. 
For fixed~$n$ and $k  \in \{ 1,2,\dots,n+1 \}$, we will refer to the Pachner move which replaces~$k $ faces $F \subset \partial \Delta^{n+1}$ by the other $(n+2-k)$ faces in $\partial \Delta^{n+1}$ as a \textsl{$k $-$(n+2-k )$ Pachner move}.  

Since any two smooth triangulations of a smooth $n$-manifold are PL isomorphic 
(up to an arbitrarily small $\delta$-approximation), 
Pachner's result directly holds in our setting -- if orientations are discarded. 
To relate triangulations with total order, we have to consider Pachner moves for all possible orders on the vertices of simplices. 
We will refer to Pachner moves between triangulations with total order as \textsl{oriented Pachner moves}. 

\begin{proposition}
For $n\in \Z_+$ and any two finite smooth triangulations with total order of a smooth $n$-manifold, there is a finite sequence of oriented Pachner moves
	which takes both to a common refinement with total order, up to an arbitrarily small $\delta$-approximation.
\end{proposition}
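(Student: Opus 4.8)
The unoriented part of this statement is essentially Pachner's theorem; the genuinely new ingredient is the bookkeeping of total orders, and the decisive point is to arrange that \emph{both} triangulations flow to the \emph{same} ordered common refinement. I would proceed in three stages: reduce to the piecewise-linear category, recall the unordered result in its triangulated-bordism (``movie'') form, and then thread a single height function through that bordism so that the two prescribed orders are respected simultaneously. Write $T_1,T_2$ for the two given smooth triangulations with total orders $o_1,o_2$.

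First I would pass to the PL category. By Munkres' comparison theorem \cite[Cor.\,10.13]{Munkres}, for any $\delta>0$ there is a $\delta$-approximation after which $T_1$ and $T_2$ become PL triangulations of the PL manifold underlying $M$; in particular they admit a common PL subdivision $T_3$. Since every construction below is PL, the conclusion holds ``up to an arbitrarily small $\delta$-approximation'', exactly as claimed, and from now on Pachner's theorem \cite{Pachpaper} applies verbatim. Rather than use it only as a black box producing a linear sequence from $T_1$ to $T_2$, I would use the interpolating-bordism form: there is a PL triangulation $W$ of $M\times[0,2]$ whose slices at levels $0,1,2$ are $T_1,T_3,T_2$, with $T_3$ a common subdivision of $T_1$ and $T_2$. (The existence of such a $W$ is the constructive content behind Pachner's theorem together with Alexander's subdivision theory, since stellar subdivisions are realised by Pachner moves.) Choosing a generic injective height function on the vertices of $W$ and sweeping through its regular and critical levels decomposes the passage between consecutive slices into elementary Pachner moves: crossing a single vertex of $W$ produces a $1$-$(n+1)$ or $(n+1)$-$1$ move, and crossing the interior of an edge produces a $k$-$(n+2-k)$ move.

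The key step is to make these moves \emph{oriented} while respecting the orders at both ends. I would choose the height function $H$ on the vertices of $W$ so that on the level-$0$ vertices it realises $o_1$, on the level-$2$ vertices it realises $o_2$, and on the level-$1$ and strictly interior vertices it takes arbitrary distinct remaining values. This is possible precisely because these three vertex sets are pairwise disjoint in $W$, so \emph{no} compatibility between $o_1$ and $o_2$ is required. By Lemma~\ref{lem:rotasimp} the induced orientation of any simplex in a local Pachner configuration depends only on the restriction of $H$ to that simplex, so carrying $H$ along the sweepout turns each elementary move into an oriented Pachner move, every newly created vertex automatically receiving a well-defined height. Sweeping $M\times[0,1]$ yields oriented Pachner moves from $(T_1,o_1)$ to $(T_3,o_3)$, and sweeping $M\times[1,2]$ yields oriented Pachner moves from $(T_2,o_2)$ to the \emph{same} $(T_3,o_3)$, where $o_3:=H|_{T_3}$; this proves the proposition.

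The step I would take most care over, and the main obstacle, is exactly this order-respecting decomposition together with the disjointness device. The naive alternative of subdividing $T_1$ and $T_2$ separately up to a common subdivision fails: a vertex shared by $T_1$ and $T_2$ would have to receive a single height compatible with both $o_1$ and $o_2$, which is impossible in general. Separating the two boundary triangulations onto distinct levels of $W$ is what removes this obstruction, at the price of having to verify that a triangulated bordism equipped with a Morse/height function really does disassemble into a sequence of Pachner moves -- the PL fact I am entitled to quote from \cite{Pachpaper}.
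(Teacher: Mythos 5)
Your reduction to the PL category and your diagnosis of why the naive approach fails are both sound, but the engine of your argument --- that ``choosing a generic injective height function on the vertices of $W$ and sweeping through its regular and critical levels decomposes the passage between consecutive slices into elementary Pachner moves'' --- is a genuine gap, and it is not a fact you may quote from \cite{Pachpaper}. What Pachner proves is that PL homeomorphic manifolds are related by \emph{some} sequence of bistellar moves, equivalently that certain carefully \emph{constructed} cobordisms admit elementary shellings; he does not prove, and it is false, that an arbitrary triangulated cobordism swept by an arbitrary generic vertex ordering disassembles into Pachner moves. Your claim amounts to asserting that every triangulation of $M\times[0,1]$ is shellable with respect to every generic ordering of its vertices, whereas non-shellable triangulations even of the $3$-ball exist (Rudin), and shellable complexes admit orderings that are not shellings. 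There is also an internal tension in using a single function $H$ both to drive the sweep and to encode the orders: a sweep producing moves from $T_1$ towards $T_3$ must be adapted to the cobordism direction, while the restrictions of $H$ to the two ends are prescribed by $o_1$ and $o_2$ and bear no relation to that direction; and generic level sets of the product coordinate cut the simplices of $W$ into polytopal, not simplicial, pieces, so the intermediate ``slices'' are not triangulations unless $W$ was built move-by-move in the first place.

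This gap interacts fatally with your disjointness device. The cobordisms that genuinely carry a sweep structure are the traces of Pachner sequences, and in those any vertex of $T_1$ never involved in a move persists to the top boundary, so the bottom and top vertex sets are \emph{not} disjoint; conversely, a product-like $W$ whose slices at levels $0,1,2$ are honest disjoint subcomplexes $T_1,T_3,T_2$ does exist, but it is exactly the kind of cobordism for which sweepability is unavailable. So the obstruction you correctly identify for the naive approach (a shared vertex cannot carry two heights) has not been removed, only hidden. For comparison, the paper's proof avoids cobordisms entirely: it first observes that, given the unoriented Pachner theorem, it suffices to change the height of a single vertex $v$ of a fixed ordered triangulation; it then applies a diffeomorphism supported on the star of $v$ to obtain a nearby, PL-compatible triangulation $C'$ in which the point $f(v)$ is no longer a vertex, and runs one unoriented Pachner sequence from $C$ to $C'$ subject to the rule that any vertex re-created at the position of an old vertex inherits the old height (new vertices get arbitrary heights). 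Since $v$ is absent from $C'$, the identical sequence of oriented moves works whether one starts from $h$ or from the modified $h'$, and both reach the same ordered $C'$. To close your route you would need either an argument of comparable strength or a proof of sweepability for a specific constructed $W$ with disjoint boundary vertex sets --- neither of which is supplied.
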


\begin{proof}
Given Pachner's theorem, it is sufficient to show that the total order of a given triangulation can be changed to any other total order with finitely many oriented Pachner moves. 
For this it is enough to see how the height of any given vertex in the triangulation can be changed. 

Let $f \colon |C| \to M$ be a triangulation with total order, and let $h \colon C_0 \to \R$ be its height function. 
For a vertex $v \in C_0$, we want to change the value $h(v) =: \gamma$ to~$\gamma'$ for any $\gamma' \in \R \setminus \{ h(\nu) \,|\, \nu \in C_0 \}$. 
Denote by $h' \colon C_0 \to \R$ the height function which only differs from $h$ in $v$, where it takes the value $\gamma'$.

Let $S_v \subset M$ be the image under $f$ of the star of $v$ and let $p = f(v)$. 
Choose a diffeomorphism $\phi$ of $M$ which is the identity on the complement of $S_v$ and which maps $p$ to $\tilde p \neq p$ (by construction of $\phi$, we must have $\tilde p \in S_v$). Let the triangulation $\widetilde f \colon |C| \to M$ be given by $\widetilde f = \phi \circ f$. Note that $f(v) \neq \widetilde f(v)$, and that in fact $f(v) \notin \widetilde f(C_0)$. We can find \cite[Cor.\,10.13]{Munkres} a $\delta$-approximation $f' \colon |C'| \to M$ to~$\widetilde f$ such that $f'^{-1} \circ f \colon |C| \to |C'|$ is a PL isomorphism. For $\delta$ small enough we maintain the property $f(v) \notin f'(C'_0)$.

Thanks to Pachner's theorem there is a finite sequence of Pachner moves from the triangulation $f\colon |C| \to M$ to $f'\colon |C'| \to M$. 
We turn this into a sequence of oriented Pachner moves by applying the rule that whenever a new vertex is created at the position of one of the vertices in $C_0$, it must have the corresponding height from~$C$ 
(while the choice of heights at new vertices is arbitrary). 
Running the same argument (with the same choices for new heights, etc.) but starting from~$h'$ instead of~$h$, 
we arrive again at $C'$, equipped with the same height function as before. Indeed, the only difference between $h$ and $h'$ was in the vertex $v$ which is no longer present in $C'$.
\end{proof}

\begin{example}
We illustrate oriented Pachner moves and their Poincar\'{e} dual stratifications for $n=2$ and $n=3$. 
\begin{enumerate}
\item
For $n=2$, the $(n+1)$-simplex has four faces, and accordingly there are two types of Pachner moves: 
those replacing two triangles sharing a single edge by the `other' two faces of the tetrahedron $\Delta^3$, and those replacing one triangle by the `other' three faces of $\Delta^3$ meeting at a single vertex -- as well as the inverse operations. 
These are the \textsl{2-2} and \textsl{1-3 moves}, respectively, and locally they look as follows: 
\be\label{eq:P2213}
\!
\tikzzbox{
}
\, , 
\label{eq:23dual}
\ee
with the induced orientations as explained in Example~\ref{ex:oritria}(ii). 
We will not have need in this article to work with the move Poincar\'{e} dual to~\eqref{eq:P14} directly. 
\end{enumerate}
\end{example}

\subsection[Orbifolds of $n$-dimensional TQFTs]{Orbifolds of $\boldsymbol{n}$-dimensional TQFTs}
\label{subsec:ndimorbis}

For any $n\in \Z_+$, we fix an $n$-dimensional defect TQFT
\be
\zz\colon  \Bordd[n] \lra \Vectk \, . 
\ee
We want to use~$\zz$ to construct a new closed $n$-dimensional TQFT, the ``orbifold theory''
\be
\zz_\orb \colon \Bord_n \lra \Vectk
\ee
which depends on a choice of ``orbifold datum''~$\orb$. 
In this section we will first define what an orbifold datum~$\orb$ for a given defect TQFT~$\zz$ is, and then construct the associated orbifold theory $\zz_\orb$.\footnote{%
It turns out that it is often useful to take $\zz$ in the orbifold construction described in this section to be the Euler completion of some other TQFT $\zz'$, that is $\zz = (\zz')^\euc$, cf.~Section~\ref{subsec:completewrtpointinsertions} and Section~\ref{subsec:3dimorbis}.
Indeed, we will see in \cite{CRS3} that certain examples of orbifold data only become available after this completion.
}

\medskip

Recall from Lemma~\ref{lem:rotasimp} that for $k  < n$ any two oriented $k $-simplices can be rotated into one another, while there are precisely two oriented $n$-simplices up to rotation. 
Hence from the perspective of a \textsl{topological} QFT, there are only two oriented $n$-simplices and only one oriented $k $-simplex for every $k  \in \{1,\dots,n-1\}$. 
Thus with $j:=n-k$, the $j$-strata of the Poincar\'{e} dual of a triangulation in~$n$ dimensions can be decorated with the following data: 

\begin{definition}
\label{def:orbidatan}
An \textsl{orbifold datum}~$\orb$ for a defect TQFT $\zz\colon  \Bordd[n] \to \Vectk$ is a choice of 
\begin{itemize}
\item
an element $\orb_j \in D_j$ for all $j\in\{1,2,\dots,n\}$, 
\item 
two elements $\orb_0^+, \orb_0^- \in D_0$, 
\end{itemize}
subject to the following constraints (using the notation of Section~\ref{subsec:defectbords}): 
\begin{enumerate}
\item
\textsl{Compatibility: }
For $j\in \{1,2,\dots,n-1\}$, the representatives of the class $f_j(\orb_j) \in [\text{Sphere}_{n-j-1}^{\text{def}}(\partial^{j+1} \D)]$ are homeomorphic (as stratified topological manifolds) to the Poincar\'e dual (in $n-j-1$ dimensions) of the boundary of an $(n-j)$-simplex. 
The $i$-strata in $f_j(\orb_j)$ are decorated by $\A_{i+j+1}$ for all $i \in \{ 0,\dots,n-j-1\}$. 
Similarly, the classes $f_0(\orb_0^+), f_0(\orb_0^-) \in [\text{Sphere}_{n-1}^{\text{def}}(\partial \D)]$ have representatives which are homeomorphic to the $(n-1)$-dimensional Poincar\'e dual of the boundary of the two inequivalent oriented $n$-simplices, 
respectively, and both $f_0(\orb_0^+), f_0(\orb_0^-)$ have their $i$-strata decorated by $\A_{i+1}$ for all $i \in \{ 0,\dots,n-1\}$. 
\item
\textsl{Invariance: } 
Let $j\in \{1,\dots,n+1\}$. 
For any oriented $j$-$(n+2-j)$ Pachner move, consider its Poincar\'{e} dual move between $n$-balls~$B$ and~$B'$, viewed as stratified bordisms in $\Bord_n^{\text{strat}}$ with~$\emptyset$ as their common source. 
$B,B'$ also have a common target, and there is a unique way to make them into defect bordisms by decorating their $k$-strata with $\orb_k$ for all $k\in\{0,1,\dots,n\}$. 
Then we demand the equality of vectors 
\be\label{eq:def-orb-ii-Z=Z}
\zz(B) = \zz(B') \, . 
\ee
\end{enumerate}
\end{definition}

In Sections~\ref{subsec:2dimorbis} and~\ref{subsec:3dimorbis} we will spell out both conditions for $n=2$ and $n=3$ in detail. 
For now we note that condition (i) of Definition~\ref{def:orbidatan} ensures that, as a stratified manifold, the Poincar\'{e} dual of a triangulation with total vertex order has a unique decoration by the data in~$\orb$, and thus
that condition (ii) can be consistently stated. 
In particular, 
\be
\big( 
\{\orb_n\}, \{\orb_{n-1}\}, \dots, \{\orb_1\}, \{ \orb_0^+, \orb_0^-\}; \, f_{n-1}, f_{n-2}, \dots, f_0 
\big)
\ee
is a set of defect data in the sense of Definition~\ref{def:d-dim-defect-data}. 
Condition (ii) itself implies that for any bordism~$M$ in $\Bord_n$, if we decorate the Poincar\'{e} dual of any oriented triangulation~$t$ of~$M$ with the defect labels from the orbifold datum~$\orb$, then the evaluation on the resulting defect bordism in $\Bordd[n]$ does not depend 
on the choice of~$t$ (after a limit construction to obtain source and target).
This makes the orbifold theory $\zz_\orb$ well-defined, which is the content of Constructions~\ref{constr:ZAM}--\ref{constr:ZAMonmorphisms} and Definition and Theorem~\ref{thmdef:orbifoldtheory}. 

\begin{remark}
We denote orbifold data by~$\orb$ as they are to be thought of as certain types of algebras: 
The element $\orb_{n-1}$, which labels $(n-1)$-strata that are Poincar\'{e} dual to edges, appears as the underlying ``space'' of the algebra; 
its ``multiplication'' is provided by the element $\orb_{n-2}$, which decorates $(n-2)$-strata that are Poincar\'{e} dual to a 2-simplex whose three edges (two ``ingoing'' and one ``outgoing'' for $n>2$, and both options for $n=2$) correspond to the $(n-1)$-strata associated with $\orb_{n-1}$. 
This will be made precise for $n\in \{2,3\}$ in the categorical formulation of Section~\ref{sec:highercatfor}. 
For example, for $n=2$, $\orb_1$ is literally an algebra 
(and coalgebra) with multiplication~$\orb_0^+$ (and comultiplication~$\orb_0^-$), while for $n=3$, $\orb_2$ is a monoidal category -- i.\,e.~an algebra in a higher category -- with tensor product~$\orb_1$; see also Sections~\ref{subsec:2dimorbis} and~\ref{subsec:3dimorbis}. 
\end{remark}

We begin by defining $\zz_\orb(M)$ for a closed oriented $n$-manifold~$M$, i.\,e.~a morphism $\emptyset \to \emptyset$ in $\Bord_n$. 
This is a special case of Construction~\ref{constr:ZAMonmorphisms} below, but it serves as a good warm-up and it highlights the core part of the full construction: 

\begin{construction}[Evaluation of $\zz_\orb$ on closed manifolds~$M$]
\label{constr:ZAM}
{}
\hspace{2cm}\phantom{dsdsd}
Let $\orb$ be an orbifold datum for a defect TQFT $\zz \colon \Bordd[n] \to \Vectk$. 
The number $\zz_\orb(M)$ is constructed as follows: 
\begin{enumerate}
\item
Choose a triangulation~$t$ with total order of~$M$, and denote the Poincar\'{e} dual stratification by $M^t$.%
\footnote{%
As for any Poincar\'{e} dual of a triangulation, 
the stratification $M^t$ of $M$ is only unique up to isotopy, but 
this ambiguity will be rendered inconsequential in point (iii) below.}
\item 
Decorate $M^t$ with the orbifold datum~$\orb$ 
to obtain a bordism $M^{t,\orb}$ in $\Bordd[n]$. 
More precisely, decorate 
	\begin{itemize}
	\item
	every $j$-stratum with $\orb_j$ for all $j \in \{1,2,\dots,n\}$, 
	\item
	every 0-stratum with either $\orb_0^+$ or $\orb_0^-$, as dictated by the orientation of the 0-stratum. 
	\end{itemize}
\item
Apply~$\zz$: 
\be
\zz_\orb(M) \, \stackrel{\text{def}}{=} \, \zz \big( M^{t, \orb} \big) \, . 
\ee 
\end{enumerate}
\end{construction}

By Theorem and Definition~\ref{thmdef:orbifoldtheory} below, $\zz_\orb(M)$ is independent of the choice of  triangulation with total order.

\medskip

After the special bordisms $M\colon \emptyset \to \emptyset$, we will now define the functor $\zz_\orb$ on all of $\Bord_n$. 
We will obtain $\zz_\orb$ as a limit construction, which is well-established in the literature on state sum constructions (see e.\,g.~\cite{TurVir, BarWes,LawrenceIntro, bk1004.1533}). 
In fact in \cite{CRS3} we will show that for $n=3$ models of Turaev-Viro type are the special case of $\zz_\orb$ where~$\zz$ is taken to be ``the trivial defect TQFT'', and the orbifold datum~$\orb$ is extracted from a spherical fusion category. 

\begin{construction}[Evaluation of $\zz_\orb$ on objects]
\label{constr:ZAMonobjects}
{}
\hspace{2cm}\phantom{dsdsdsdsdsdsdsds}
Let $\orb$ be an orbifold datum for a defect TQFT $\zz$, 
and let $\Sigma \in \Bord_n$. 
We define $\zz_\orb(\Sigma) \in \Vectk$ as follows: 
\begin{enumerate}
\item
For every triangulation~$\tau$ with total order of~$\Sigma$, denote the Poincar\'{e} dual stratification by $\Sigma^{\tau}$. 
Decorate $\Sigma^{\tau}$ with the orbifold datum~$\orb$. 
More precisely, decorate every $j$-stratum of $\Sigma^{\tau}$ with $\orb_{j+1}$. 
This makes~$\Sigma$ into an object $\Sigma^{\tau, \orb} \in \Bordd[n]$, for every triangulation $\tau$.\footnote{Here again the stratification $\Sigma^\tau$ of $\Sigma$ is only unique up to isotopy. However, as opposed to Construction~\ref{constr:ZAM}, evaluating $\zz$ on the object $\Sigma^{\tau,\A}$ now may depend on the choice of stratification in the isotopy class. But since different choices lead to isomorphic spaces $\zz(\Sigma^{\tau,\A})$, the ambiguity disappears in the limit construction. We will use this to just speak of ``the Poincar\'e dual'' rather than of ``a choice of Poincar\'e dual''.}	
\item 
Let $\tau,\tau'$ be triangulations as in (i).
Consider the cylinder $C_\Sigma = \Sigma \times [0,1]$ viewed as a bordism $\Sigma \to \Sigma$ in $\Bord_n$. 
Choose an oriented triangulation~$t$ of $C_\Sigma$ extending the triangulations $\tau$ and $\tau'$ on the ingoing and outgoing boundaries, respectively. 
Decorate the Poincar\'{e} dual 
$C_\Sigma^t$ with the orbifold datum~$\orb$ (analogously to Construction~\ref{constr:ZAM}) to obtain a morphism 
\be\label{eq:constr-obj-Ct}
C_\Sigma^{t,\orb}\colon \Sigma^{\tau, \orb} \lra \Sigma^{\tau', \orb}
\ee
in $\Bordd[n]$. 
\item 
Note that $\zz(C_\Sigma^{t,\orb})$ is independent of the choice of $t$. We define $\zz_\orb(\Sigma)$ to be the limit of $\zz$ applied to \eqref{eq:constr-obj-Ct} over all $\tau$, i.\,e.~$\zz_\orb(\Sigma)$ is the universal cone 
\be\label{eq:Zorb-state-space-cone}
\begin{tikzpicture}[
			     baseline=(current bounding box.base), 
			     descr/.style={fill=white,inner sep=3.5pt}, 
			     normal line/.style={->}
			     ] 
\matrix (m) [matrix of math nodes, row sep=3.5em, column sep=2.0em, text height=1.5ex, text depth=0.1ex] {%
& \zz_\orb(\Sigma) & 
\\
\zz\big(\Sigma^{\tau,\orb}\big) && \zz\big(\Sigma^{\tau',\orb}\big)
\\
};
\path[font=\footnotesize] (m-1-2) edge[->] node[below] {} (m-2-1);
\path[font=\footnotesize] (m-1-2) edge[->] node[below] {} (m-2-3);
\path[font=\footnotesize] (m-2-1) edge[->] node[below] {$ \zz\big(C_\Sigma^{t,\orb}\big) $} (m-2-3);
\end{tikzpicture}
\ee
for all triangulations $\tau, \tau'$.

To compute $\zz_\orb(\Sigma)$ explicitly, note that for  $\tau = \tau'$ the linear map $\zz(C_\Sigma^{t,\orb})$ is an idempotent. One may take $\zz_\orb(\Sigma) = \textrm{im}\zz(C_\Sigma^{\tilde t,\orb})$ for a fixed choice of triangulation $\tilde\tau$ and $C_\Sigma^{\tilde t,\orb} \colon \Sigma^{\tilde \tau,\A} \to \Sigma^{\tilde \tau,\A}$.
The diagonal arrows in \eqref{eq:Zorb-state-space-cone} are then
given by evaluating $\zz$ on $C_\Sigma^{t,\orb}$ for appropriate $t$.
\end{enumerate}
\end{construction}

\begin{construction}[Action of $\zz_\orb$ on morphisms]
\label{constr:ZAMonmorphisms}
{}
\hspace{2cm}\phantom{dsdsdsdsdsdsdsds}
Let $\orb$ be an orbifold datum for a defect TQFT $\zz$, and let $M \colon \Sigma_1 \to \Sigma_2$ be a morphism in $\Bord_n$. 
We define $M_\cup$ to be~$M$ viewed as a bordism $\emptyset \to \Sigma  := \Sigma_1^{\text{rev}} \sqcup \Sigma_2$.
\begin{enumerate}
\item
For any fixed oriented triangulation~$\tau'$ of~$\Sigma$, choose an oriented triangulation~$t'$ of $M_\cup$ 
extending the triangulation $\tau'$ on the boundary. 
\item 
Decorate the Poincar\'{e} dual stratification $M_\cup^{t'}$ with the orbifold datum~$\orb$ 
 to produce a morphism $M_\cup^{t',\orb}\colon \emptyset \to \Sigma^{\tau',\orb}$ in $\Bordd[n]$. 
\item 
Repeat steps (i) and (ii) for every triangulation $\tau''$ of~$\Sigma$ to produce a morphism $M_\cup^{t'',\orb}\colon \emptyset \to \Sigma^{\tau'',\orb}$ in $\Bordd[n]$. 
\item 
Note that the defining properties of~$\orb$ (ensuring triangulation invariance) imply that the diagrams
\be
\begin{tikzpicture}[
			     baseline=(current bounding box.base), 
			     descr/.style={fill=white,inner sep=3.5pt}, 
			     normal line/.style={->}
			     ] 
\matrix (m) [matrix of math nodes, row sep=3.5em, column sep=2.0em, text height=1.5ex, text depth=0.1ex] {%
& \zz(\emptyset) & 
\\
\zz\big(\Sigma^{\tau',\orb}\big) && \zz\big(\Sigma^{\tau'',\orb}\big)
\\
};
\path[font=\footnotesize] (m-1-2) edge[->] node[above, sloped] {$ \zz(M_{\cup}^{t',\orb}) $} (m-2-1);
\path[font=\footnotesize] (m-1-2) edge[->] node[above, sloped] {$ \zz(M_{\cup}^{t'',\orb}) $} (m-2-3);
\path[font=\footnotesize] (m-2-1) edge[->] node[below] {$ \zz\big(C_\Sigma^{t,\orb}\big) $} (m-2-3);
\end{tikzpicture}
\ee
commute, where the cylinders $C_\Sigma^{t,\A}$ are as in Construction~\ref{constr:ZAMonobjects}. 
\item
Since $\zz(\emptyset) = \Bbbk$ and because $\zz_\orb(\Sigma)$ was defined to be the universal cone, we have that every face in the associated diagram
\be\label{eq:def-on-morph-via-arrow-into-cone}
\begin{tikzpicture}[
			     baseline=(current bounding box.base), 
			     descr/.style={fill=white,inner sep=3.5pt}, 
			     normal line/.style={->}
			     ] 
\matrix (m) [matrix of math nodes, row sep=3.5em, column sep=2.0em, text height=1.5ex, text depth=0.1ex] {%
& \Bbbk & 
\\
& \zz_\orb(\Sigma) & 
\\
\zz\big(\Sigma^{\tau',\A}\big) && \zz\big(\Sigma^{\tau'',\A}\big)
\\
};
\path[font=\footnotesize] (m-1-2) edge[->,dotted] node[right] {$ \exists! $} (m-2-2);
\path[font=\footnotesize] (m-1-2) edge[->, out = -155, in=90] node[above, sloped] {$ \zz(M_{\cup}^{t',\A}) $} (m-3-1);
\path[font=\footnotesize] (m-1-2) edge[->, out = -25, in=90] node[above, sloped] {$ \zz(M_{\cup}^{t'',\A}) $} (m-3-3);
\path[font=\footnotesize] (m-2-2) edge[->] node[below] {} (m-3-1);
\path[font=\footnotesize] (m-2-2) edge[->] node[below] {} (m-3-3);
\path[font=\footnotesize] (m-3-1) edge[->] node[below] {$ \zz\big(C_\Sigma^{t,\A}\big) $} (m-3-3);
\end{tikzpicture}
\ee
commutes. 
Let us write $v_M \in \zz_\orb(\Sigma)$ for the image of $1\in \Bbbk$ under the above unique map $\Bbbk \to \zz_\orb(\Sigma)$. 
Then we define the linear map
\be
\zz_\orb(M) \colon \zz_\orb(\Sigma_1) \lra \zz_\orb(\Sigma_2)
\ee
to be the image of $v_M$ under the canonical isomorphism $\Hom_\Bbbk(\Bbbk, \zz_\orb(\Sigma)) = \Hom_\Bbbk(\Bbbk, \zz_\orb(\Sigma_1^{\text{rev}}) \otimes_\Bbbk \zz_\orb(\Sigma_2)) \cong \Hom_\Bbbk(\zz_\orb(\Sigma_1), \zz_\orb(\Sigma_2))$.

To compute $v_M$ (and hence $\zz_\orb(M)$) explicitly, use the definition of $\zz_\orb(\Sigma)$ in part (iii) of Construction~\ref{constr:ZAMonobjects} as the image of $C_\Sigma^{\tilde t,\orb} \colon \Sigma^{\tilde \tau,\A} \to \Sigma^{\tilde \tau,\A}$ under~$\zz$. 
Let $\hat t$ be an extension of $\tilde \tau$ to $M_\cup$.
The universal arrow in \eqref{eq:def-on-morph-via-arrow-into-cone} is then given by
$\zz(M_{\cup}^{\hat t,\A})$. Thus for this choice of $\zz_\orb(\Sigma)$ we find
\be
	v_M = \zz \big(M_{\cup}^{\hat t,\A} \big) \, .
\ee	
\end{enumerate}
\end{construction}

\medskip

In summary, we have established the following: 

\begin{theoremdefinition}
\label{thmdef:orbifoldtheory}
Let $\zz\colon  \Bordd[n] \to \Vectk$ be a defect TQFT and let~$\orb$ be an orbifold datum for~$\zz$. 
Then the output of Constructions~\ref{constr:ZAMonobjects} and~\ref{constr:ZAMonmorphisms} is a closed TQFT $\zz_\orb \colon \Bord_n \to \Vectk$, which we call the \textsl{$\orb$-orbifold theory}.
\end{theoremdefinition}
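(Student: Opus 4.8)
The plan is to verify, in turn, that the assignments of Constructions~\ref{constr:ZAMonobjects} and~\ref{constr:ZAMonmorphisms} are well-defined, functorial, and symmetric monoidal, with everything resting on a single key input: \emph{triangulation invariance}. Concretely, I would first isolate and prove that evaluating $\zz$ on an $\orb$-decorated Poincaré dual depends only on the underlying stratified bordism, and not on the chosen triangulation with total order; once this is available, the remaining work is the (largely formal) bookkeeping of the limit construction. The invariance is exactly where the two defining properties of an orbifold datum enter: compatibility (i) guarantees that the decoration of the dual by the data in~$\orb$ is \emph{uniquely} determined, so that the invariance condition (ii) can even be stated, and (ii) itself supplies the local equalities of values.

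For triangulation invariance I would invoke the Proposition on oriented Pachner moves: any two finite smooth triangulations with total order of a fixed manifold are connected, up to an arbitrarily small $\delta$-approximation, by a finite sequence of oriented Pachner moves through a common refinement. It therefore suffices to check that a single oriented $j$-$(n+2-j)$ move leaves $\zz$ of the decorated dual unchanged. Such a move is supported in an $n$-ball: writing the decorated bordism as a gluing along the common boundary sphere of this ball $B$ (resp.\ $B'$) with its complementary piece $R$, functoriality of $\zz$ gives $\zz(R \circ B) = \zz(R)\circ\zz(B)$ and likewise for $B'$, while condition (ii) of Definition~\ref{def:orbidatan} asserts precisely $\zz(B) = \zz(B')$; hence the values agree. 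For objects one needs the relative version, where triangulations of the cylinder $C_\Sigma$ restricting to fixed $\tau,\tau'$ on the boundary are connected by Pachner moves in the interior, yielding independence of $\zz(C_\Sigma^{t,\orb})$ from the interior triangulation $t$.

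Granting invariance, the limit of Construction~\ref{constr:ZAMonobjects} is handled formally. Writing $p_{\tau'\tau} := \zz(C_\Sigma^{t,\orb}) \colon \zz(\Sigma^{\tau,\orb}) \to \zz(\Sigma^{\tau',\orb})$, the gluing of two cylinders to a third together with invariance yields the relation $p_{\tau''\tau'}\circ p_{\tau'\tau} = p_{\tau''\tau}$; in particular each $p_{\tau\tau}$ is idempotent, and each $p_{\tau'\tau}$ restricts to mutually inverse isomorphisms between $\im(p_{\tau\tau})$ and $\im(p_{\tau'\tau'})$. The diagram thus has a limit, canonically identified with the image of any single $p_{\tilde\tau\tilde\tau}$, as stated in Construction~\ref{constr:ZAMonobjects}(iii). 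For morphisms, the states $\zz(M_\cup^{t',\orb})$ for varying boundary triangulations form a cone over this diagram (commutativity of the triangles is invariance applied to $M_\cup$), so the universal property produces the unique vector $v_M$, which the canonical $\Hom$-isomorphism turns into $\zz_\orb(M)$; independence of auxiliary choices is built into the cone being canonical.

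It remains to check that $\zz_\orb$ is a symmetric monoidal functor, and this is where the real work lies. For identities I would evaluate $(\id_\Sigma)_\cup$, the cylinder bent into a map $\emptyset \to \Sigma^{\mathrm{rev}}\sqcup\Sigma$, and recognise its value as the idempotent defining $\zz_\orb(\Sigma)$, corresponding to $\id_{\zz_\orb(\Sigma)}$ under the $\Hom$-isomorphism. \textbf{The main obstacle is compatibility with composition:} for $M\colon\Sigma_1\to\Sigma_2$ and $N\colon\Sigma_2\to\Sigma_3$, gluing triangulated representatives along a common boundary triangulation $\tau_2$ of $\Sigma_2$ produces a triangulation of $N\circ M$, whereas $\zz_\orb(N)\circ\zz_\orb(M)$ carries an extra interface idempotent $p_{\tau_2\tau_2}$ at $\Sigma_2$. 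One must show this projector is absorbed, i.e.\ that the glued states already lie in its image, which follows from the idempotent relation above together with functoriality of $\zz$, but requires care in matching the two limit presentations of $\zz_\orb(\Sigma_2)$. Monoidality and symmetry are then routine: disjoint unions of triangulations decorate to disjoint unions of duals, $\zz$ is symmetric monoidal, and the defining limits commute with the finite tensor products involved (each being the image of a split idempotent in $\Vectk$), giving a natural isomorphism $\zz_\orb(\Sigma\sqcup\Sigma')\cong\zz_\orb(\Sigma)\otimes_\Bbbk\zz_\orb(\Sigma')$ compatible with the braiding.
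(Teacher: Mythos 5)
Your proposal is correct and takes essentially the same route as the paper: there the ``proof'' consists of Constructions~\ref{constr:ZAM}--\ref{constr:ZAMonmorphisms} themselves, whose key input is precisely the triangulation invariance you isolate (oriented Pachner moves localised in an $n$-ball, combined with condition~(ii) of Definition~\ref{def:orbidatan} and functoriality of~$\zz$), followed by the same idempotent/limit bookkeeping and the universal-cone argument for morphisms. The only difference is that you additionally spell out the verification of identities, composition (absorption of the interface idempotent $p_{\tau_2\tau_2}$) and the symmetric monoidal structure, which the paper leaves implicit in its ``In summary, we have established the following''; these checks are correct as you sketch them.
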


In Sections~\ref{subsec:2dimorbis} and~\ref{subsec:3dimorbis} we will discuss the cases $n=2$ and especially $n=3$ in detail. 

\begin{remark}
\label{rem:symmontarget}
For any symmetric 
monoidal category~$\mathcal C$, 
an \textsl{$n$-dimensional defect TQFT valued in~$\mathcal C$} is a symmetric monoidal functor $\zz\colon  \Bordd[n] \to \mathcal C$. 
Apart from the existence of limits, the above orbifold construction does not depend on special properties of the case $\mathcal C = \Vectk$. 
\end{remark}

\subsection[Orbifolds of $2$-dimensional TQFTs]{Orbifolds of $\boldsymbol{2}$-dimensional TQFTs}
\label{subsec:2dimorbis}

Let us briefly consider the case of a 2-dimensional defect TQFT $\zz\colon  \Bordd[2] \to \Vectk$ (see also \cite{ffrs0909.5013,cr1210.6363} and Section~\ref{subsec:specorbdat2} below).
According to Definition~\ref{def:orbidatan}, an orbifold datum~$\orb$ for~$\zz$ is a list of elements $\orb_j \in D_j$ for $j \in \{ 1,2 \}$ as well as $\orb_0^+,\orb_0^- \in D_0$, such that in particular
\be
\label{eq:A2A1A0}
\begin{tikzpicture}[thick,scale=1.52,color=black, baseline=0.8cm]
\fill [color=orange!12, draw=white] 
(-1,0) -- (1,0) -- (1,1) -- (-1,1);
\draw[line width=1] (0,0.5) node[line width=0pt] (sch) {{\scriptsize $\mathcal A_2$}};
\end{tikzpicture}
\, , \quad
\begin{tikzpicture}[thick,scale=1.52,color=black, baseline=0.8cm]
\fill [color=orange!12, draw=white] 
(-1,0) -- (1,0) -- (1,1) -- (-1,1);
%
\draw[
	color=green!50!black, 
	very thick,
	 >=stealth, 
	 opacity=1.0, 
	postaction={decorate}, decoration={markings,mark=at position .5 with {\arrow[draw]{>}}}
	] 
	(0,0) -- (0,1);
%
\draw[line width=1] (-0.5,0.5) node[line width=0pt] (sch) {{\scriptsize $\mathcal A_2$}};
\draw[line width=1] (0.5,0.5) node[line width=0pt] (sch) {{\scriptsize $\mathcal A_2$}};
\draw[line width=1] (0.18,0.15) node[line width=0pt] (sch) {{\scriptsize $\mathcal A_1$}};
\end{tikzpicture}
\, , \quad
\begin{tikzpicture}[very thick,scale=1.52,color=black, baseline=0.8cm]
\fill [color=orange!12, draw=white] 
(-1,0) -- (1,0) -- (1,1) -- (-1,1);
%
\draw[
	color=green!50!black, 
	 >=stealth, 
	postaction={decorate}, decoration={markings,mark=at position .5 with {\fill circle (2pt);}}, 
	decoration={markings,mark=at position .25 with {\arrow[draw]{>}}},
	decoration={markings,mark=at position .792 with {\arrow[draw]{<}}}
	] 
	(-0.35,0) .. controls +(0,0.7) and +(0,0.7) .. (0.35,0);
\draw[
	color=green!50!black, 
	 >=stealth, 
	postaction={decorate}, 
	decoration={markings,mark=at position .7 with {\arrow[draw]{>}}}
	] 
	(0,0.5) -- (0,1); 
%
\draw[line width=1] (-0.75,0.75) node[line width=0pt] (sch) {{\scriptsize $\mathcal A_2$}};
\draw[line width=1] (0.75,0.75) node[line width=0pt] (sch) {{\scriptsize $\mathcal A_2$}};
\draw[line width=1] (0,0.18) node[line width=0pt] (sch) {{\scriptsize $\mathcal A_2$}};
\draw[line width=1] (-0.49,0.15) node[line width=0pt] (sch) {{\scriptsize $\mathcal A_1$}};
\draw[line width=1] (0.5,0.15) node[line width=0pt] (sch) {{\scriptsize $\mathcal A_1$}};
\draw[line width=1] (0.17,0.85) node[line width=0pt] (sch) {{\scriptsize $\mathcal A_1$}};
\draw[line width=1] (-0.16,0.64) node[line width=0pt] (sch) {{\scriptsize $\mathcal A_0^+$}};
\draw[line width=1] (0,0.39) node[line width=0pt] (sch) {{\scriptsize $+$}};
\end{tikzpicture}
\, , \quad
\begin{tikzpicture}[very thick,scale=1.52,color=black, baseline=0.8cm]
\fill [color=orange!12, draw=white] 
(-1,0) -- (1,0) -- (1,1) -- (-1,1);
%
\draw[
	color=green!50!black, 
	 >=stealth, 
	postaction={decorate}, decoration={markings,mark=at position .5 with {\fill circle (2pt);}},
	decoration={markings,mark=at position .25 with {\arrow[draw]{<}}},
	decoration={markings,mark=at position .792 with {\arrow[draw]{>}}}
	] 
	(-0.35,1) .. controls +(0,-0.7) and +(0,-0.7) .. (0.35,1);
\draw[
	color=green!50!black, 
	 >=stealth, 
	postaction={decorate}, 
	decoration={markings,mark=at position .7 with {\arrow[draw]{<}}}
	] 
	(0,0.5) -- (0,0); 
%
\draw[line width=1] (-0.75,0.25) node[line width=0pt] (sch) {{\scriptsize $\mathcal A_2$}};
\draw[line width=1] (0.75,0.25) node[line width=0pt] (sch) {{\scriptsize $\mathcal A_2$}};
\draw[line width=1] (0,0.78) node[line width=0pt] (sch) {{\scriptsize $\mathcal A_2$}};
\draw[line width=1] (-0.48,0.85) node[line width=0pt] (sch) {{\scriptsize $\mathcal A_1$}};
\draw[line width=1] (0.49,0.85) node[line width=0pt] (sch) {{\scriptsize $\mathcal A_1$}};
\draw[line width=1] (0.17,0.15) node[line width=0pt] (sch) {{\scriptsize $\mathcal A_1$}};
\draw[line width=1] (-0.15,0.36) node[line width=0pt] (sch) {{\scriptsize $\orb_0^-$}};
\draw[line width=1] (0,0.56) node[line width=0pt] (sch) {{\scriptsize $-$}};
\end{tikzpicture}
\ee
are local patches of bordisms in $\Bordd[2]$. 
For any bordism~$M$ in $\Bord_2$ together with a choice of triangulation~$t$ with total order, we can decorate the Poincar\'{e} dual with~$\orb$ to obtain a bordism $M^{t,\orb}$ in $\Bordd[2]$.
Each inner point of $M^{t,\orb}$ now has a neighbourhood 
	isomorphic 
to one of the patches shown in \eqref{eq:A2A1A0}.
The constraints on~$\orb$ imply that evaluation of~$\zz$ on any $\A$-decorated bordism is invariant under the Poincar\'e dual oriented Pachner moves~\eqref{eq:Poincaredual2213}. 

A ``special'' type of solution to the constraints on the data in~\eqref{eq:A2A1A0} to be an orbifold datum for~$\zz$ has a well-studied purely algebraic description. 
Indeed, there is a natural monoidal category $\B_\zz(a,a)$ associated to every element $a \in D_2$ (see \cite{dkr1107.0495}),
and we may rewrite~\eqref{eq:A2A1A0} as
\begin{align}
& * := \orb_2 
\, , \quad
A := \orb_1 \in \B_\zz(*,*) 
\, , \quad
\nonumber
\\
&
\mu := \zz\Bigg(
\begin{tikzpicture}[very thick, scale=0.45,color=green!50!black, baseline, >=stealth]
\fill [color=orange!12, draw=white]  (0,0) circle (2);
\draw[very thick, color=red!80!black] (0,0) circle (2);
\fill (-45:2) circle (4.0pt) node[left] {};
\fill (225:2) circle (4.0pt) node[left] {};
\fill (90:2) circle (4.0pt) node[left] {};
\draw[
	color=green!50!black, 
	 >=stealth, 
	postaction={decorate}, decoration={markings,mark=at position .5 with {\fill circle (2pt);}}, 
	decoration={markings,mark=at position .21 with {\arrow[draw]{>}}},
	decoration={markings,mark=at position .85 with {\arrow[draw]{<}}}
	] 
	(-45:2) .. controls +(0,2) and +(0,2) .. (225:2);
\draw[
	color=green!50!black, 
	 >=stealth, 
	postaction={decorate}, 
	decoration={markings,mark=at position .7 with {\arrow[draw]{>}}}
	] 
	(0,0) -- (90:2); 
\draw[line width=1, color=black] (0,-0.45) node[line width=0pt] (sch) {{\scriptsize $\mathcal A_0^+$}};
\end{tikzpicture}
\Bigg)
\colon A \otimes A \lra A
\, ,
\nonumber
\\
&
\Delta := \zz\Bigg(
\begin{tikzpicture}[very thick, scale=0.45,color=green!50!black, baseline, >=stealth, rotate=180]
\fill [color=orange!12, draw=white]  (0,0) circle (2);
\draw[very thick, color=red!80!black] (0,0) circle (2);
\fill (-45:2) circle (4.0pt) node[left] {};
\fill (225:2) circle (4.0pt) node[left] {};
\fill (90:2) circle (4.0pt) node[left] {};
\draw[
	color=green!50!black, 
	 >=stealth, 
	postaction={decorate}, decoration={markings,mark=at position .5 with {\fill circle (2pt);}}, 
	decoration={markings,mark=at position .21 with {\arrow[draw]{<}}},
	decoration={markings,mark=at position .85 with {\arrow[draw]{>}}}
	] 
	(-45:2) .. controls +(0,2) and +(0,2) .. (225:2);
\draw[
	color=green!50!black, 
	 >=stealth, 
	postaction={decorate}, 
	decoration={markings,mark=at position .7 with {\arrow[draw]{<}}}
	] 
	(0,0) -- (90:2); 
\draw[line width=1, color=black] (0,-0.45) node[line width=0pt] (sch) {{\scriptsize $\orb_0^-$}};
\end{tikzpicture}
\Bigg)
\colon A \lra A \otimes A 
\, . 
\label{eq:starAmuDelta}
\end{align}
It was shown in \cite[Prop.\,3.4]{cr1210.6363} that a sufficient condition for~\eqref{eq:A2A1A0} to form an orbifold datum is that~\eqref{eq:starAmuDelta} together with 
\be
\eps := \zz\Bigg(
\begin{tikzpicture}[very thick, scale=0.45,color=green!50!black, baseline, >=stealth]
\fill [color=orange!12, draw=white]  (0,0) circle (2);
\draw[very thick, color=red!80!black] (0,0) circle (2);
\fill (90:2) circle (4.0pt) node[left] {};
\draw[postaction={decorate}, decoration={markings,mark=at position .51 with {\arrow[draw=green!50!black]{>}}}] (0,0.8) -- (90:2);
\draw[-dot-] (0,0.8) .. controls +(0,-0.5) and +(0,-0.5) .. (-0.75,0.8);
\draw[color=green!50!black] (-0.75,0.8) .. controls +(0,0.5) and +(0,0.5) .. (-1.5,0.8);
\draw[color=green!50!black] (-0.375,-0.4) .. controls +(0,-0.75) and +(0,-0.75) .. (-1.5,-0.4);
\draw[postaction={decorate}, decoration={markings,mark=at position .51 with {\arrow[draw=green!50!black]{<}}}] (-1.5,-0.4) -- (-1.5,0.8);
\draw (-0.375,-0.4) -- (-0.375,0.4);
\fill[black] (0.25,0.25) circle (0pt) node {{\scriptsize$\orb_0^-$}};
\end{tikzpicture}
\Bigg)
\colon \one \lra A
\, , \quad
\eta := \zz\Bigg(
\begin{tikzpicture}[very thick, scale=0.45,color=green!50!black, baseline, rotate=180, >=stealth]
\fill [color=orange!12, draw=white]  (0,0) circle (2);
\draw[very thick, color=red!80!black] (0,0) circle (2);
\fill (90:2) circle (4.0pt) node[left] {};
\draw[postaction={decorate}, decoration={markings,mark=at position .51 with {\arrow[draw=green!50!black]{<}}}]  (0,0.8) -- (90:2);
\draw[-dot-] (0,0.8) .. controls +(0,-0.5) and +(0,-0.5) .. (0.75,0.8);
\draw[color=green!50!black] (0.75,0.8) .. controls +(0,0.5) and +(0,0.5) .. (1.5,0.8);
\draw[color=green!50!black] (0.375,-0.4) .. controls +(0,-0.75) and +(0,-0.75) .. (1.5,-0.4);
\draw[postaction={decorate}, decoration={markings,mark=at position .51 with {\arrow[draw=green!50!black]{>}}}] (1.5,-0.4) -- (1.5,0.8);
\draw (0.375,-0.4) -- (0.375,0.4);
\fill[black] (-0.25,0.15) circle (0pt) node {{\scriptsize${\mathcal A}_0$}};
\end{tikzpicture}
\Bigg)
\colon A \lra \one 
\ee
form a $\Delta$-separable symmetric Frobenius algebra in $\B_\zz(*,*)$ (whose definition we will recall in Section~\ref{subsec:specorbdat2}).
The condition of $\Delta$-separability means that $\mu \circ \Delta = \id$; in terms of line defects, this amounts to leaving out a bubble (cf.~the first identity below in~\eqref{eq:DeltasepsymFrob}). 
While not itself Poincar\'e dual to a triangulation (instead the dual cell complex has triangles glued to each other along two edges), this ``bubble-move'' implies the 3-1 Pachner move. 

Quite generally, if invariance under the $(n{+}1)$-$1$-Pachner move -- the only move that changes the number of simplices -- in Definition~\ref{def:orbidatan} is replaced by invariance under appropriate ``bubble moves'' we refer to such an orbifold datum as ``special''.
Hence we define a \textsl{2-dimensional special orbifold datum} for~$\zz$ to be a $\Delta$-separable symmetric Frobenius algebra in~$\B_\zz$. 
(Note that $\Delta$-separable Frobenius algebras are closely related to special Frobenius algebras \cite[Def.\,2.3]{fs0106050}, which is another reason for the term ``special orbifold datum''.)

\subsection[Orbifolds of $3$-dimensional TQFTs]{Orbifolds of $\boldsymbol{3}$-dimensional TQFTs}
\label{subsec:3dimorbis}

Being able to treat 3-dimensional orbifolds was our main motivation for developing the general formalism in Section~\ref{subsec:ndimorbis}.
Important earlier work on the significance of defects in 3-dimensional TQFT includes studies of examples in
	Rozansky-Witten theory \cite{KRS, KR0909.3643}, 
	Dijkgraaf-Witten theory \cite{fpsv1404.6646}, 
   	Chern-Simons theory \cite{ks1012.0911}, 
    	and a general analysis of defects in Reshitikhin-Turaev TQFTs \cite{fsv1203.4568}.
There is a close connection between 3-dimensional TQFTs and $(2{+}1)$-dimensional topological phases of matter.  
In the latter context, defects, group symmetries and orbifolding have been investigated e.\,g.\ in~\cite{KK1104.5047, BJQ, fs1310.1329,BBCW1410.4540, CGPW}. 
A general 3-categorical algebraic framework to accommodate the structure of defects in 3-dimensional TQFTs is developed in \cite{BMS, CMS}, and in Section~\ref{subsec:specorbdat3} we will place the present approach to orbifolds in this framework.

\medskip

Fix a 3-dimensional defect TQFT $\zz\colon  \Bordd[3] \to \Vectk$. 
According to Definition~\ref{def:orbidatan}, an orbifold datum~$\orb$ for~$\zz$ is a list of elements $\orb_j \in D_j$ for $j \in \{ 1,2,3 \}$ as well as $\orb_0^+,\orb_0^- \in D_0$, such that (with all $\orb_2$-decorated 2-strata oriented
	by
the blackboard framing)
\be
\!\!\!\!\!\!
\tikzzbox{\begin{tikzpicture}[thick,scale=2.321,color=blue!50!black, baseline=0.0cm, >=stealth, 
				style={x={(-0.6cm,-0.4cm)},y={(1cm,-0.2cm)},z={(0cm,0.9cm)}}]
	\pgfmathsetmacro{\yy}{0.2}
\coordinate (v1) at (1, 0.5, 0);
\coordinate (v2) at (0, 0.5, 0);
\coordinate (v3) at (0, 0.5, 1);
\coordinate (v4) at (1, 0.5, 1);
\fill [red!50,opacity=0.545] (v1) -- (v2) -- (v3) -- (v4);
\fill[color=blue!60!black] (0.5,0.46,0) circle (0pt) node[left] (0up) { {\scriptsize$\mathcal A_2$} };
%
\fill[color=black] (0.45,0.75,0.01) circle (0pt) node[left] (0up) { {\scriptsize$\mathcal A_3$} };
\fill[color=black] (0.5,0.45,0.99) circle (0pt) node[left] (0up) { {\scriptsize$\mathcal A_3$} };
%
%
\end{tikzpicture}}
\, , \qquad
\tikzzbox{\begin{tikzpicture}[thick,scale=2.321,color=blue!50!black, baseline=0.0cm, >=stealth, 
				style={x={(-0.6cm,-0.4cm)},y={(1cm,-0.2cm)},z={(0cm,0.9cm)}}]
	\pgfmathsetmacro{\yy}{0.2}
\coordinate (T) at (0.5, 0.4, 0);
\coordinate (L) at (0.5, 0, 0);
\coordinate (R1) at (0.3, 1, 0);
\coordinate (R2) at (0.7, 1, 0);
\coordinate (1T) at (0.5, 0.4, 1);
\coordinate (1L) at (0.5, 0, 1);
\coordinate (1R1) at (0.3, 1, );
\coordinate (1R2) at (0.7, 1, );
%
\coordinate (p3) at (0.1, 0.1, 0.5);
\coordinate (p2) at (0.5, 0.95, 0.5);
\coordinate (p1) at (0.9, 0.1, 0.5);
%
\fill[color=blue!60!black] (0.5,0.25,0.15) circle (0pt) node[left] (0up) { {\scriptsize$\mathcal A_2$} };
\fill [red!50,opacity=0.545] (L) -- (T) -- (1T) -- (1L);
\fill [red!50,opacity=0.545] (R1) -- (T) -- (1T) -- (1R1);
\fill [red!50,opacity=0.545] (R2) -- (T) -- (1T) -- (1R2);
\fill[color=blue!60!black] (0.5,0.25,0.15) circle (0pt) node[left] (0up) { {\scriptsize$\mathcal A_2$} };
\fill[color=blue!60!black] (0.15,0.95,0.07) circle (0pt) node[left] (0up) { {\scriptsize$\mathcal A_2$} };
\fill[color=blue!60!black] (0.55,0.95,0.1) circle (0pt) node[left] (0up) { {\scriptsize$\mathcal A_2$} };
%
\draw[string=green!60!black, very thick] (T) -- (1T);
\fill[color=green!60!black] (0.5,0.43,0.5) circle (0pt) node[left] (0up) { {\scriptsize$\mathcal A_1$} };
%
%
\fill[color=black] (0.5,1.14,0.04) circle (0pt) node[left] (0up) { {\scriptsize$\mathcal A_3$} };
\fill[color=black] (0.7,0.5,0.05) circle (0pt) node[left] (0up) { {\scriptsize$\mathcal A_3$} };
\fill[color=black] (0.3,0.5,1.02) circle (0pt) node[left] (0up) { {\scriptsize$\mathcal A_3$} };
%
\draw [black,opacity=1, very thin] (1T) -- (1L) -- (L) -- (T);
\draw [black,opacity=1, very thin] (1T) -- (1R1) -- (R1) -- (T);
\draw [black,opacity=1, very thin] (1T) -- (1R2) -- (R2) -- (T);
\end{tikzpicture}}
\, , \qquad
\tikzzbox{\begin{tikzpicture}[thick,scale=2.321,color=blue!50!black, baseline=0.0cm, >=stealth, 
				style={x={(-0.6cm,-0.4cm)},y={(1cm,-0.2cm)},z={(0cm,0.9cm)}}]
	\pgfmathsetmacro{\yy}{0.2}
\coordinate (P) at (0.5, \yy, 0);
\coordinate (R) at (0.625, 0.5 + \yy/2, 0);
\coordinate (L) at (0.5, 0, 0);
\coordinate (R1) at (0.25, 1, 0);
\coordinate (R2) at (0.5, 1, 0);
\coordinate (R3) at (0.75, 1, 0);
\coordinate (Pt) at (0.5, \yy, 1);
\coordinate (Rt) at (0.375, 0.5 + \yy/2, 1);
\coordinate (Lt) at (0.5, 0, 1);
\coordinate (R1t) at (0.25, 1, 1);
\coordinate (R2t) at (0.5, 1, 1);
\coordinate (R3t) at (0.75, 1, 1);
\coordinate (alpha) at (0.5, 0.5, 0.5);
%
\draw[string=green!60!black, very thick] (alpha) -- (Rt);
\fill [red!50,opacity=0.545] (L) -- (P) -- (alpha) -- (Pt) -- (Lt);
\fill [red!50,opacity=0.545] (Pt) -- (Rt) -- (alpha);
\fill [red!50,opacity=0.545] (Rt) -- (R1t) -- (R1) -- (P) -- (alpha);
\fill [red!50,opacity=0.545] (Rt) -- (R2t) -- (R2) -- (R) -- (alpha);
\draw[string=green!60!black, very thick] (alpha) -- (Rt);
\fill[color=blue!60!black] (0.5,0.59,0.94) circle (0pt) node[left] (0up) { {\scriptsize$\mathcal A_2$} };
\fill[color=green!60!black] (0.5,0.77,0.77) circle (0pt) node[left] (0up) { {\scriptsize$\mathcal A_1$} };
\fill [red!50,opacity=0.545] (Pt) -- (R3t) -- (R3) -- (R) -- (alpha);
\fill [red!50,opacity=0.545] (P) -- (R) -- (alpha);
%
\draw[string=green!60!black, very thick] (P) -- (alpha);
\draw[string=green!60!black, very thick] (R) -- (alpha);
\draw[string=green!60!black, very thick] (alpha) -- (Pt);
%
\fill[color=green!60!black] (alpha) circle (1.2pt) node[left] (0up) { {\scriptsize$\mathcal A_0^+$} };
\fill[color=green!60!black] (alpha) circle (0pt) node[right] (0up) { {\scriptsize$+$} };
\fill[color=blue!60!black] (0.5,0.25,0.15) circle (0pt) node[left] (0up) { {\scriptsize$\mathcal A_2$} };
\fill[color=blue!60!black] (0.5,0.5,0.09) circle (0pt) node[left] (0up) { {\scriptsize$\mathcal A_2$} };
\fill[color=blue!60!black] (0.5,0.89,0.0) circle (0pt) node[left] (0up) { {\scriptsize$\mathcal A_2$} };
\fill[color=blue!60!black] (0.5,1.04,0.13) circle (0pt) node[left] (0up) { {\scriptsize$\mathcal A_2$} };
\fill[color=blue!60!black] (0.5,1.19,0.28) circle (0pt) node[left] (0up) { {\scriptsize$\mathcal A_2$} };
\fill[color=green!60!black] (0.5,0.35,0.24) circle (0pt) node[left] (0up) { {\scriptsize$T$} };
\fill[color=green!60!black] (0.5,0.72,0.21) circle (0pt) node[left] (0up) { {\scriptsize$T$} };
\fill[color=green!60!black] (0.5,0.4,0.71) circle (0pt) node[left] (0up) { {\scriptsize$T$} };
%
\draw [black,opacity=1, very thin] (Pt) -- (Lt) -- (L) -- (P);
\draw [black,opacity=1, very thin] (Pt) -- (Rt);
\draw [black,opacity=1, very thin] (Rt) -- (R1t) -- (R1) -- (P);
\draw [black,opacity=1, very thin] (Rt) -- (R2t) -- (R2) -- (R);
\draw [black,opacity=1, very thin] (Pt) -- (R3t) -- (R3) -- (R);
\draw [black,opacity=1, very thin] (P) -- (R);
\fill[color=black] (0.5,1.12,-0.04) circle (0pt) node[left] (0up) { {\scriptsize$\mathcal A_3$} };
\fill[color=black] (0.5,1.26,0.1) circle (0pt) node[left] (0up) { {\scriptsize$\mathcal A_3$} };
\fill[color=black] (0.7,0.5,0.05) circle (0pt) node[left] (0up) { {\scriptsize$\mathcal A_3$} };
\fill[color=black] (0.3,0.5,1.05) circle (0pt) node[left] (0up) { {\scriptsize$\mathcal A_3$} };
\end{tikzpicture}}
\, , \qquad
\tikzzbox{\begin{tikzpicture}[thick,scale=2.321,color=blue!50!black, baseline=0.0cm, >=stealth, 
				style={x={(-0.6cm,-0.4cm)},y={(1cm,-0.2cm)},z={(0cm,0.9cm)}}]
	\pgfmathsetmacro{\yy}{0.2}
\coordinate (P) at (0.5, \yy, 0);
\coordinate (R) at (0.375, 0.5 + \yy/2, 0);
\coordinate (L) at (0.5, 0, 0);
\coordinate (R1) at (0.25, 1, 0);
\coordinate (R2) at (0.5, 1, 0);
\coordinate (R3) at (0.75, 1, 0);
\coordinate (Pt) at (0.5, \yy, 1);
\coordinate (Rt) at (0.625, 0.5 + \yy/2, 1);
\coordinate (Lt) at (0.5, 0, 1);
\coordinate (R1t) at (0.25, 1, 1);
\coordinate (R2t) at (0.5, 1, 1);
\coordinate (R3t) at (0.75, 1, 1);
\coordinate (alpha) at (0.5, 0.5, 0.5);
%
\draw[string=green!60!black, very thick] (alpha) -- (Rt);
\fill [red!50,opacity=0.545] (L) -- (P) -- (alpha) -- (Pt) -- (Lt);
\fill [red!50,opacity=0.545] (Pt) -- (R1t) -- (R1) -- (R) -- (alpha);
\fill [red!50,opacity=0.545] (Rt) -- (R2t) -- (R2) -- (R) -- (alpha);
\fill [red!50,opacity=0.545] (Pt) -- (Rt) -- (alpha);
\fill [red!50,opacity=0.545] (P) -- (R) -- (alpha);
\fill[color=green!60!black] (0.5,0.82,0.3) circle (0pt) node[left] (0up) { {\scriptsize$\mathcal A_1$} };
\fill[color=blue!60!black] (0.5,0.55,0.19) circle (0pt) node[left] (0up) { {\scriptsize$\mathcal A_2$} };
\draw[string=green!60!black, very thick] (R) -- (alpha);
\fill [red!50,opacity=0.545] (Rt) -- (R3t) -- (R3) -- (P) -- (alpha);
\draw[string=green!60!black, very thick] (alpha) -- (Rt);
%
\draw[string=green!60!black, very thick] (P) -- (alpha);
\draw[string=green!60!black, very thick] (alpha) -- (Pt);
%
\fill[color=green!60!black] (alpha) circle (1.2pt) node[left] (0up) { {\scriptsize$\orb_0^-$} };
\fill[color=green!60!black] (alpha) circle (0pt) node[right] (0up) { {\scriptsize$-$} };
\fill[color=blue!60!black] (0.5,0.25,0.15) circle (0pt) node[left] (0up) { {\scriptsize$\mathcal A_2$} };
\fill[color=blue!60!black] (0.5,0.89,0.0) circle (0pt) node[left] (0up) { {\scriptsize$\mathcal A_2$} };
\fill[color=blue!60!black] (0.5,1.04,0.13) circle (0pt) node[left] (0up) { {\scriptsize$\mathcal A_2$} };
\fill[color=blue!60!black] (0.5,1.19,0.28) circle (0pt) node[left] (0up) { {\scriptsize$\mathcal A_2$} };
\fill[color=blue!60!black] (0.5,0.55,0.86) circle (0pt) node[left] (0up) { {\scriptsize$\mathcal A_2$} };
\fill[color=green!60!black] (0.5,0.35,0.24) circle (0pt) node[left] (0up) { {\scriptsize$\mathcal A_1$} };
\fill[color=green!60!black] (0.5,0.4,0.71) circle (0pt) node[left] (0up) { {\scriptsize$\mathcal A_1$} };
\fill[color=green!60!black] (0.5,0.73,0.72) circle (0pt) node[left] (0up) { {\scriptsize$\mathcal A_1$} };
%
\draw [black,opacity=1, very thin] (Pt) -- (Lt) -- (L) -- (P) ;
\draw [black,opacity=1, very thin] (Pt) -- (R1t) -- (R1) -- (R);
\draw [black,opacity=1, very thin] (Rt) -- (R2t) -- (R2) -- (R);
\draw [black,opacity=1, very thin] (Pt) -- (Rt);
\draw [black,opacity=1, very thin] (P) -- (R);
\draw [black,opacity=1, very thin] (Rt) -- (R3t) -- (R3) -- (P);
\fill[color=black] (0.5,1.12,-0.04) circle (0pt) node[left] (0up) { {\scriptsize$\mathcal A_3$} };
\fill[color=black] (0.5,1.26,0.1) circle (0pt) node[left] (0up) { {\scriptsize$\mathcal A_3$} };
\fill[color=black] (0.7,0.5,0.05) circle (0pt) node[left] (0up) { {\scriptsize$\mathcal A_3$} };
\fill[color=black] (0.3,0.5,1.05) circle (0pt) node[left] (0up) { {\scriptsize$\mathcal A_3$} };
\end{tikzpicture}}
\ee
are local patches of bordisms in $\Bordd[3]$. 
	For
any bordism~$M$ in $\Bord_3$ together with a choice of triangulation~$t$, we can decorate the Poincar\'{e} dual with~$\orb$ to obtain a bordism $M^{t,\orb}$ in $\Bordd[3]$.
By the constraints on~$\orb$, $\zz$ is invariant under the Poincar\'e duals of the oriented Pachner moves~\eqref{eq:P23} and~\eqref{eq:P14}. 
In particular, $\zz(M^{t,\orb})$ is independent of the choice of triangulation~$t$. 

Up to rotations there are precisely 20 inequivalent oriented 2-3 moves and ten inequivalent 1-4 moves: 
\be
\tikzzbox{\begin{tikzpicture}[thick,scale=2.321,color=gray!60!blue, baseline=-0.3cm, >=stealth, 
				style={x={(-0.6cm,-0.4cm)},y={(1cm,-0.2cm)},z={(0cm,0.9cm)}}]
\coordinate (v1) at (1,0,0);
\coordinate (v2) at (1,1,0);
\coordinate (v3) at (0,0,0);
\coordinate (v4) at (0.25,0.1,0.75);
\coordinate (v0) at (0.25,0.1,-0.75);
\fill [blue!20,opacity=0.545] (v1) -- (v2) -- (v3);
\fill [blue!20,opacity=0.545] (v4) -- (v2) -- (v3);
\fill [blue!20,opacity=0.545] (v1) -- (v4) -- (v3);
\fill[color=gray!60!blue] (v3) circle (0.9pt) node[right] (0up) {{\scriptsize$c$}};
\fill[color=gray!60!blue] (v1) circle (0.9pt) node[below] (0up) {{\scriptsize$a$}};
\fill[color=gray!60!blue] (v2) circle (0.9pt) node[below] (0up) {{\scriptsize$b$}};
\fill[color=gray!60!blue] (v4) circle (0.9pt) node[above] (0up) {{\scriptsize$d$}};
\fill[color=gray!60!blue] (v0) circle (0.9pt) node[below] (0up) {{\scriptsize$e$}};
\fill[color=gray!60!blue] (v3) circle (0.9pt) node[left] (0up) {};
\fill [blue!20,opacity=0.545] (v0) -- (v1) -- (v3);
\fill [blue!20,opacity=0.545] (v0) -- (v2) -- (v3);
\draw[color=gray!60!blue, very thick] (v1) -- (v3);
\draw[color=gray!60!blue, very thick] (v2) -- (v3);
\draw[color=gray!60!blue, very thick] (v3) -- (v4);
%
%
\draw[color=gray!60!blue, very thick] (v0) -- (v3);
\fill [blue!20,opacity=0.545] (v0) -- (v1) -- (v2);
\draw[color=gray!60!blue, very thick] (v0) -- (v1);
\draw[color=gray!60!blue, very thick] (v0) -- (v2);
\fill [blue!20,opacity=0.545] (v1) -- (v2) -- (v4);
\draw[color=gray!60!blue, very thick] (v2) -- (v4);
\draw[color=gray!60!blue, very thick] (v1) -- (v4);
\draw[color=gray!60!blue, very thick] (v1) -- (v2);
\fill[color=gray!60!blue] (v0) circle (0.9pt) node[below] (0up) {};
\fill[color=gray!60!blue] (v1) circle (0.9pt) node[below] (0up) {};
\fill[color=gray!60!blue] (v2) circle (0.9pt) node[below] (0up) {};
\fill[color=gray!60!blue] (v4) circle (0.9pt) node[above] (0up) {};
\end{tikzpicture}}
%
\!\!\!\!
\begin{tikzpicture}[
			     baseline=(current bounding box.base), 
			     descr/.style={fill=white,inner sep=3.5pt}, 
			     normal line/.style={->}
			     ] 
\matrix (m) [matrix of math nodes, row sep=3.5em, column sep=2.0em, text height=1.1ex, text depth=0.1ex] {%
{}
&
{}
\\
};
\path[font=\footnotesize] (m-1-1) edge[<->] node[auto] { {\scriptsize 2-3}} (m-1-2);
\end{tikzpicture}
%
\!\!\!\!
\tikzzbox{\begin{tikzpicture}[thick,scale=2.321,color=gray!60!blue, baseline=-0.3cm, >=stealth, 
				style={x={(-0.6cm,-0.4cm)},y={(1cm,-0.2cm)},z={(0cm,0.9cm)}}]
\coordinate (v1) at (1,0,0);
\coordinate (v2) at (1,1,0);
\coordinate (v3) at (0,0,0);
\coordinate (v4) at (0.25,0.1,0.75);
\coordinate (v0) at (0.25,0.1,-0.75);
\fill [blue!20,opacity=0.545] (v0) -- (v1) -- (v4);
\fill [blue!20,opacity=0.545] (v0) -- (v2) -- (v4);
\fill [blue!20,opacity=0.545] (v0) -- (v3) -- (v4);
\fill[color=gray!60!blue] (v3) circle (0.9pt) node[right] (0up) {{\scriptsize$c$}};
\fill[color=gray!60!blue] (v1) circle (0.9pt) node[below] (0up) {{\scriptsize$a$}};
\fill[color=gray!60!blue] (v2) circle (0.9pt) node[below] (0up) {{\scriptsize$b$}};
\fill[color=gray!60!blue] (v4) circle (0.9pt) node[above] (0up) {{\scriptsize$d$}};
\fill[color=gray!60!blue] (v0) circle (0.9pt) node[below] (0up) {{\scriptsize$e$}};
\fill [blue!20,opacity=0.545] (v1) -- (v2) -- (v3);
\fill [blue!20,opacity=0.545] (v4) -- (v2) -- (v3);
\fill [blue!20,opacity=0.545] (v1) -- (v4) -- (v3);
\fill[color=gray!60!blue] (v3) circle (0.9pt) node[left] (0up) {};
\fill [blue!20,opacity=0.545] (v0) -- (v1) -- (v3);
\fill [blue!20,opacity=0.545] (v0) -- (v2) -- (v3);
\draw[color=gray!60!blue, very thick] (v1) -- (v3);
\draw[color=gray!60!blue, very thick] (v2) -- (v3);
\draw[color=gray!60!blue, very thick] (v3) -- (v4);
\draw[color=gray!60!blue, very thick] (v0) -- (v3);
\draw[color=gray!60!blue, very thick] (v0) -- (v4);
\fill [blue!20,opacity=0.545] (v0) -- (v1) -- (v2);
\draw[color=gray!60!blue, very thick] (v0) -- (v1);
\draw[color=gray!60!blue, very thick] (v0) -- (v2);
\fill [blue!20,opacity=0.545] (v1) -- (v2) -- (v4);
\draw[color=gray!60!blue, very thick] (v2) -- (v4);
\draw[color=gray!60!blue, very thick] (v1) -- (v4);
\draw[color=gray!60!blue, very thick] (v1) -- (v2);
\fill[color=gray!60!blue] (v0) circle (0.9pt) node[below] (0up) {};
\fill[color=gray!60!blue] (v1) circle (0.9pt) node[below] (0up) {};
\fill[color=gray!60!blue] (v2) circle (0.9pt) node[below] (0up) {};
\fill[color=gray!60!blue] (v4) circle (0.9pt) node[above] (0up) {};
\end{tikzpicture}}
\, , \qquad
\tikzzbox{\begin{tikzpicture}[thick,scale=2.321,color=gray!60!blue, baseline=-0.3cm, >=stealth, 
				style={x={(-0.6cm,-0.4cm)},y={(1cm,-0.2cm)},z={(0cm,0.9cm)}}]
\coordinate (v1) at (1,0,0);
\coordinate (v2) at (1,1,0);
\coordinate (v3) at (0,0,0);
\coordinate (v4) at (0.25,0.1,0.75);
%
\fill [blue!20,opacity=0.545] (v1) -- (v2) -- (v3);
\fill [blue!20,opacity=0.545] (v4) -- (v2) -- (v3);
\fill [blue!20,opacity=0.545] (v1) -- (v4) -- (v3);
\fill[color=gray!60!blue] (v3) circle (0.9pt) node[right] (0up) {{\scriptsize$c$}};
\fill[color=gray!60!blue] (v1) circle (0.9pt) node[below] (0up) {{\scriptsize$a$}};
\fill[color=gray!60!blue] (v2) circle (0.9pt) node[below] (0up) {{\scriptsize$b$}};
\fill[color=gray!60!blue] (v4) circle (0.9pt) node[above] (0up) {{\scriptsize$d$}};
\fill[color=gray!60!blue] (v3) circle (0.9pt) node[left] (0up) {};
\draw[color=gray!60!blue, very thick] (v1) -- (v3);
\draw[color=gray!60!blue, very thick] (v2) -- (v3);
\draw[color=gray!60!blue, very thick] (v3) -- (v4);
\fill [blue!20,opacity=0.545] (v1) -- (v2) -- (v4);
\draw[color=gray!60!blue, very thick] (v2) -- (v4);
\draw[color=gray!60!blue, very thick] (v1) -- (v4);
\draw[color=gray!60!blue, very thick] (v1) -- (v2);
\fill[color=gray!60!blue] (v1) circle (0.9pt) node[below] (0up) {};
\fill[color=gray!60!blue] (v2) circle (0.9pt) node[below] (0up) {};
\fill[color=gray!60!blue] (v4) circle (0.9pt) node[above] (0up) {};
\end{tikzpicture}}
%
\!\!\!\!
\begin{tikzpicture}[
			     baseline=(current bounding box.base), 
			     descr/.style={fill=white,inner sep=3.5pt}, 
			     normal line/.style={->}
			     ] 
\matrix (m) [matrix of math nodes, row sep=3.5em, column sep=2.0em, text height=1.1ex, text depth=0.1ex] {%
{} 
&
{}
\\
};
\path[font=\footnotesize] (m-1-1) edge[<->] node[auto] { {\scriptsize 1-4}} (m-1-2);
\end{tikzpicture}
\!\!\!\!
%
\tikzzbox{\begin{tikzpicture}[thick,scale=2.321,color=gray!60!blue, baseline=-0.3cm, >=stealth, 
				style={x={(-0.6cm,-0.4cm)},y={(1cm,-0.2cm)},z={(0cm,0.9cm)}}]
\coordinate (v1) at (1,0,0);
\coordinate (v2) at (1,1,0);
\coordinate (v3) at (0,0,0);
\coordinate (v4) at (0.25,0.1,0.75);
\coordinate (v5) at (0.7,0.3,0.175);
\fill [blue!20,opacity=0.545] (v1) -- (v2) -- (v3);
\fill [blue!20,opacity=0.545] (v4) -- (v2) -- (v3);
\fill [blue!20,opacity=0.545] (v1) -- (v4) -- (v3);
\fill[color=gray!60!blue] (v3) circle (0.9pt) node[left] (0up) {};
\fill[color=gray!60!blue] (v3) circle (0.9pt) node[right] (0up) {{\scriptsize$c$}};
\fill[color=gray!60!blue] (v1) circle (0.9pt) node[below] (0up) {{\scriptsize$a$}};
\fill[color=gray!60!blue] (v2) circle (0.9pt) node[below] (0up) {{\scriptsize$b$}};
\fill[color=gray!60!blue] (v4) circle (0.9pt) node[above] (0up) {{\scriptsize$d$}};
\fill[color=gray!60!blue] (v5) circle (0.9pt) node[below] (0up) {{\scriptsize$e$}};
\draw[color=gray!60!blue, very thick] (v1) -- (v5);
\draw[color=gray!60!blue, very thick] (v2) -- (v5);
\draw[color=gray!60!blue, very thick] (v3) -- (v5);
\draw[color=gray!60!blue, very thick] (v4) -- (v5);
\fill[color=gray!60!blue] (v5) circle (0.9pt) node[below] (0up) {};
\draw[color=gray!60!blue, very thick] (v1) -- (v3);
\draw[color=gray!60!blue, very thick] (v2) -- (v3);
\draw[color=gray!60!blue, very thick] (v3) -- (v4);
\fill [blue!20,opacity=0.545] (v1) -- (v2) -- (v4);
\draw[color=gray!60!blue, very thick] (v2) -- (v4);
\draw[color=gray!60!blue, very thick] (v1) -- (v4);
\draw[color=gray!60!blue, very thick] (v1) -- (v2);
\fill[color=gray!60!blue] (v1) circle (0.9pt) node[below] (0up) {};
\fill[color=gray!60!blue] (v2) circle (0.9pt) node[below] (0up) {};
\fill[color=gray!60!blue] (v4) circle (0.9pt) node[above] (0up) {};
\end{tikzpicture}}
\ee
Indeed, for each of the ten inequivalent ways to assign heights $d,e$ to the top and bottom vertices in the 2-3 move, there are up to rotations two ways to assign the remaining heights $a,b,c$ (``clockwise'' and ``counterclockwise''). 
Similarly, for each of the two inequivalent oriented tetrahedra on the left-hand side of the 1-4 move (recall Lemma~\ref{lem:rotasimp}(ii)), there are five possibilities to assign a height~$e$ to the new vertex on the right-hand side.

\subsubsection{Special orbifold data}

In this section we will describe a class of special orbifold data for which we demand only ten constraints, which fall into three classes.
We will show that these ten constraints imply the above 30 constraints. 
To reflect the fact that we are dealing with special orbifold data, we will use special notation:  
we shall from now on write
\begin{align*}
 * & \text{ \; for } \orb_3 
\, , \quad \\
\A & \text{ \; (as in \textsl{a}lgebra) for } \orb_2 
\, , \quad \\
T & \text{ \; (as in \textsl{t}ensor) for } \orb_1
\, , \quad\\ 
\al \in \zz \big( {S}_{\A,T}^2 \big), \; \alb \in \zz \big( ({S}_{\A,T}^2)^{\text{rev}} \big) & \text{ \; (as in \textsl{a}ssociator) instead of } \orb_0^+, \orb_0^-
\, ,
\end{align*}
where ${S}_{\A,T}^2, ({S}_{\A,T}^2)^{\text{rev}} \in \Bordd[3]$ are
\vspace{-0.5cm}
\be\label{eq:ATspheres}
{S}_{\A,T}^2 \; := 
\tikzzbox{\begin{tikzpicture}[very thick,scale=1.2,color=green!60!black=-0.1cm, >=stealth, baseline=0]
\fill[ball color=white!95!blue] (0,0) circle (0.95 cm);
\coordinate (v1) at (-0.4,-0.6);
\coordinate (v2) at (0.4,-0.6);
\coordinate (v3) at (0.4,0.6);
\coordinate (v4) at (-0.4,0.6);
\draw[color=red!80!black, very thick, rounded corners=0.5mm, postaction={decorate}, decoration={markings,mark=at position .5 with {\arrow[draw=red!80!black]{>}}}] 
	(v2) .. controls +(0,-0.25) and +(0,-0.25) .. (v1);
\draw[color=red!80!black, very thick, rounded corners=0.5mm, postaction={decorate}, decoration={markings,mark=at position .62 with {\arrow[draw=red!80!black]{>}}}] 
	(v4) .. controls +(0,0.15) and +(0,0.15) .. (v3);
\draw[color=red!80!black, very thick, rounded corners=0.5mm, postaction={decorate}, decoration={markings,mark=at position .5 with {\arrow[draw=red!80!black]{>}}}] 
	(v4) .. controls +(0.05,-0.5) and +(-0.05,0.5) .. (v2);
\draw[color=red!80!black, very thick, rounded corners=0.5mm, postaction={decorate}, decoration={markings,mark=at position .58 with {\arrow[draw=red!80!black]{>}}}] 
	(v3) .. controls +(-0.9,0.99) and +(-0.75,0.4) .. (v1);
\draw[color=red!80!black, very thick, rounded corners=0.5mm, postaction={decorate}, decoration={markings,mark=at position .5 with {\arrow[draw=red!80!black]{>}}}] 
	(v1) .. controls +(-0.15,0.5) and +(-0.15,-0.5) .. (v4);
\draw[color=red!80!black, very thick, rounded corners=0.5mm, postaction={decorate}, decoration={markings,mark=at position .5 with {\arrow[draw=red!80!black]{>}}}] 
	(v3) .. controls +(0.25,-0.5) and +(0.25,0.5) .. (v2);
\fill (v1) circle (1.6pt) node[black, opacity=0.6, right, font=\tiny] {${T}$};
\fill (v2) circle (1.6pt) node[black, opacity=0.6, right, font=\tiny] {${T}$};
\fill (v3) circle (1.6pt) node[black, opacity=0.6, right, font=\tiny] {${T}$};
\fill (v4) circle (1.6pt) node[black, opacity=0.6, right, font=\tiny] {${T}$};
\fill (-0.8,0.37) circle (0pt) node[color=red!80!black, font=\tiny] {${\mathcal A}$};
\fill (-0.4,0.07) circle (0pt) node[color=red!80!black, font=\tiny] {${\mathcal A}$};
\fill (-0.08,-0.1) circle (0pt) node[color=red!80!black, font=\tiny] {${\mathcal A}$};
\fill (0.72,0.07) circle (0pt) node[color=red!80!black, font=\tiny] {${\mathcal A}$};
\fill (0.08,-0.62) circle (0pt) node[color=red!80!black, font=\tiny] {${\mathcal A}$};
\fill (0.06,0.59) circle (0pt) node[color=red!80!black, font=\tiny] {${\mathcal A}$};
\fill (0.14,0.28) circle (0pt) node[color=gray, font=\tiny] {${*}$};
\fill (-0.14,-0.34) circle (0pt) node[color=gray, font=\tiny] {${*}$};
\fill (-0.58,0.3) circle (0pt) node[color=gray, font=\tiny] {${*}$};
\fill (0.76,-0.18) circle (0pt) node[color=gray, font=\tiny] {${*}$};
\end{tikzpicture}}
\; , \quad ({S}_{\A,T}^2)^{\text{rev}} 
\;
= 
\;\;
\tikzzbox{\begin{tikzpicture}[very thick,scale=1.2,color=green!60!black=-0.1cm, >=stealth, baseline=0]
\fill[ball color=white!95!blue] (0,0) circle (0.95 cm);
\coordinate (v1) at (-0.4,-0.6);
\coordinate (v2) at (0.4,-0.6);
\coordinate (v3) at (0.4,0.6);
\coordinate (v4) at (-0.4,0.6);
\draw[color=red!80!black, very thick, rounded corners=0.5mm, postaction={decorate}, decoration={markings,mark=at position .5 with {\arrow[draw=red!80!black]{>}}}] 
	(v2) .. controls +(0,-0.25) and +(0,-0.25) .. (v1);
\draw[color=red!80!black, very thick, rounded corners=0.5mm, postaction={decorate}, decoration={markings,mark=at position .62 with {\arrow[draw=red!80!black]{>}}}] 
	(v4) .. controls +(0,0.15) and +(0,0.15) .. (v3);
\draw[color=red!80!black, very thick, rounded corners=0.5mm, postaction={decorate}, decoration={markings,mark=at position .58 with {\arrow[draw=red!80!black]{>}}}] 
	(v3) .. controls +(0.05,-0.5) and +(-0.05,0.5) .. (v1);
\draw[color=red!80!black, very thick, rounded corners=0.5mm, postaction={decorate}, decoration={markings,mark=at position .56 with {\arrow[draw=red!80!black]{>}}}] 
	(v1) .. controls +(-0.25,0.5) and +(-0.25,-0.5) .. (v4);
\draw[color=red!80!black, very thick, rounded corners=0.5mm, postaction={decorate}, decoration={markings,mark=at position .5 with {\arrow[draw=red!80!black]{>}}}] 
	(v3) .. controls +(0.15,-0.5) and +(0.15,0.5) .. (v2);
\draw[color=red!80!black, very thick, rounded corners=0.5mm, postaction={decorate}, decoration={markings,mark=at position .68 with {\arrow[draw=red!80!black]{>}}}] 
	(v4) .. controls +(0.9,0.99) and +(0.75,0.4) .. (v2);
\fill (v1) circle (1.6pt) node[black, opacity=0.6, left, font=\tiny] {${T}$};
\fill (v2) circle (1.6pt) node[black, opacity=0.6, left, font=\tiny] {${T}$};
\fill (v3) circle (1.6pt) node[black, opacity=0.6, left, font=\tiny] {${T}$};
\fill (v4) circle (1.6pt) node[black, opacity=0.6, left, font=\tiny] {${T}$};
\fill (0.85,0.16) circle (0pt) node[color=red!80!black, font=\tiny] {${\mathcal A}$};
\fill (0.39,0.07) circle (0pt) node[color=red!80!black, font=\tiny] {${\mathcal A}$};
\fill (-0.06,0.13) circle (0pt) node[color=red!80!black, font=\tiny] {${\mathcal A}$};
\fill (-0.72,0.07) circle (0pt) node[color=red!80!black, font=\tiny] {${\mathcal A}$};
\fill (0.01,-0.64) circle (0pt) node[color=red!80!black, font=\tiny] {${\mathcal A}$};
\fill (0.01,0.59) circle (0pt) node[color=red!80!black, font=\tiny] {${\mathcal A}$};
\fill (0.14,-0.28) circle (0pt) node[color=gray, font=\tiny] {${*}$};
\fill (-0.14,0.34) circle (0pt) node[color=gray, font=\tiny] {${*}$};
\fill (0.58,0.3) circle (0pt) node[color=gray, font=\tiny] {${*}$};
\fill (-0.76,-0.18) circle (0pt) node[color=gray, font=\tiny] {${*}$};
\end{tikzpicture}}
\!\! . 
\ee

\medskip

We will now discuss and motivate the ten constraints imposed on the special orbifold datum $\A \equiv (*,\A,T,\al,\alb)$. 
Then in Definition~\ref{def:orbidata} we shall present the concise characterisation. 

The first constraint on~$\A$ is a single 2-3 move, namely the one involving only vertices of type~$\al$ (and none of type~$\alb$): 
\be\label{eq:constraint1}
\text{under } \zz\colon  
\quad
\tikzzbox{\begin{tikzpicture}[thick,scale=1.77,color=blue!50!black, baseline=0.8cm, >=stealth,
				style={x={(-0.55cm,-0.4cm)},y={(1cm,-0.2cm)},z={(0cm,0.9cm)}}]
	\pgfmathsetmacro{\yy}{0.2}
\coordinate (P) at (0.5, \yy, 0);
\coordinate (R) at (0.625, 0.5 + \yy/2, 0);
\coordinate (L) at (0.5, 0, 0);
\coordinate (R1) at (0.25, 1, 0);
\coordinate (R2) at (0.5, 1, 0);
\coordinate (R3) at (0.75, 1, 0);
\coordinate (N1) at ($(R1) + (R2) - (R)$);
\coordinate (N2) at ($(R) + 2*(R2) - 2*(R)$);
\coordinate (N3) at ($(R3) + (R2) - (R)$);
\coordinate (N4) at ($2*(R3) - (R)$);
\coordinate (Lo) at (0.5, 0, 2);
\coordinate (Po) at (0.5, \yy, 2);
\coordinate (O1) at ($(N1) + (0,0,2)$);
\coordinate (O2) at ($(N2) + (0,0,2)$);
\coordinate (O3) at ($(N3) + (0,0,2)$);
\coordinate (O4) at ($(N4) + (0,0,2)$);
\coordinate (A2) at ($(O2) - (R3) + (R)$);
\coordinate (A3) at ($(O3) - 2*(R3) + 2*(R)$);
\coordinate (A4) at ($(O4) - 3*(R3) + 3*(R)$);
\coordinate (Pt) at (0.5, \yy, 1);
\coordinate (Rt) at ($(A2) + (0,0,-1)$);
\coordinate (Lt) at (0.5, 0, 1);
\coordinate (M1) at ($(N1) + (0,0,1)$);
\coordinate (M2) at ($(N2) + (0,0,1)$);
\coordinate (M3) at ($(N3) + (0,0,1)$);
\coordinate (M4) at ($(N4) + (0,0,1)$);
%
\coordinate (alphabottom) at (0.5, 0.5, 0.5);
\coordinate (alphatop) at (0.5, 0.5, 1.5);
%
\fill [red!50,opacity=0.445] (P) -- (alphabottom) -- (R);
\fill [red!60,opacity=0.445] (R) -- (alphabottom) -- ($(R3) + (0,0,1)$) -- (R3);
\fill [red!80!orange,opacity=0.345] (P) -- (alphabottom) -- (Rt) -- (A2) -- (O1) -- (N1);
\fill [red!50,opacity=0.345] (alphabottom) -- (Rt) -- (A2) -- (A3) -- (alphatop) -- (Pt);
\fill [red!80,opacity=0.345] (alphatop) -- (A4) -- (A3);
\fill[color=blue!60!black] (0.3, 0.79, 1.89) circle (0pt) node (0up) { {\scriptsize$\mathcal A$} };
\fill [red!50!yellow,opacity=0.445] (R) -- (alphabottom) -- (Rt) -- (A2) -- (O2) -- (N2);
\draw[color=green!60!black, very thick, rounded corners=0.5mm, postaction={decorate}, decoration={markings,mark=at position .17 with {\arrow[draw=green!60!black]{>}}}] (alphabottom) -- (Rt) -- (A2);
\fill[color=blue!60!black] (0.6, 0.5, 2) circle (0pt) node (0up) { {\scriptsize$\mathcal A$} };
\fill [orange!90!magenta,opacity=0.545] (R) -- (alphabottom) -- (Pt) -- (alphatop) -- ($(R3) + (0,0,1)$)-- (R3);
\fill [red!80!orange,opacity=0.345] (R3) -- ($(R3) + (0,0,1)$) -- (alphatop) -- (A3) -- (O3) -- (N3);
\draw[string=green!60!black, very thick] (alphatop) -- (A3); 	
\fill [red!70!yellow,opacity=0.345] (R3) -- ($(R3) + (0,0,1)$) -- (alphatop) -- (A4) -- (O4) -- (N4);
\fill [red!80!magenta,opacity=0.345] (L) -- (P) -- (alphabottom) -- (Pt) -- (alphatop) -- (Po) -- (Lo);
%
\draw[string=green!60!black, very thick] (P) -- (alphabottom);
\draw[string=green!60!black, very thick] (R) -- (alphabottom);
\draw[string=green!60!black, very thick] (R3) -- ($(R3) + (0,0,1)$);
\draw[string=green!60!black, very thick] ($(R3) + (0,0,1)$) -- (alphatop);
\draw[string=green!60!black, very thick] (alphatop) -- (A4);
\draw[color=green!60!black, very thick, rounded corners=0.5mm, postaction={decorate}, decoration={markings,mark=at position .33 with {\arrow[draw=green!60!black]{>}}}] (alphabottom) -- (Pt) -- (alphatop);
\draw[color=green!60!black, very thick, rounded corners=0.5mm] (R3) --  ($(R3) + (0,0,1)$) -- (alphatop) ;
%
\fill[color=green!60!black] (alphabottom) circle (1.2pt) node[left] (0up) { {\scriptsize$\alpha$} };
\fill[color=green!60!black] (alphatop) circle (1.2pt) node[left] (0up) { {\scriptsize$\alpha$} };
%
\fill[color=blue!60!black] (0.5,0.25,0.15) circle (0pt) node[left] (0up) { {\scriptsize$\mathcal A$} };
\fill[color=blue!60!black] (0.5,0.55,0.11) circle (0pt) node[left] (0up) { {\scriptsize$\mathcal A$} };
\fill[color=blue!60!black] (0.5,0.85,0.04) circle (0pt) node[left] (0up) { {\scriptsize$\mathcal A$} };
\fill[color=blue!60!black] (0.57,1.29,-0.02) circle (0pt) node[left] (0up) { {\scriptsize$\mathcal A$} };
\fill[color=blue!60!black] (0.57,1.44,0.08) circle (0pt) node[left] (0up) { {\scriptsize$\mathcal A$} };
\fill[color=blue!60!black] (0.57,1.59,0.22) circle (0pt) node[left] (0up) { {\scriptsize$\mathcal A$} };
\fill[color=blue!60!black] (0.57,1.74,0.365) circle (0pt) node[left] (0up) { {\scriptsize$\mathcal A$} };
%
%
\draw [black,opacity=0.4141, densely dotted, semithick] (Lt) -- (Pt) -- (0.75, 1, 1) -- (M4);
\draw [black,opacity=0.4141, densely dotted, semithick] (0.75, 1, 1) -- (M3);
\draw [black,opacity=0.4141, densely dotted, semithick] (Pt) -- (Rt) -- (M2);
\draw [black,opacity=0.4141, densely dotted, semithick] (Rt) -- (M1);
%
\draw [black!80!white,opacity=1, thin] (R) -- (P);
\draw [black,opacity=1, very thin] (A2) -- (A3);
\draw [black,opacity=1, very thin] (A4) -- (A3);
\draw [black,opacity=1, very thin] (A2) -- (O2) -- (N2) -- (R);
\draw [black,opacity=1, very thin] (A3) -- (O3) -- (N3) -- (R3);
\draw [black,opacity=1, very thin] (A4) -- (O4) -- (N4) -- (R3);
\draw [black,opacity=1, very thin] (R3) -- (R);
\draw [black,opacity=1, very thin] (L) -- (P);
\draw [black,opacity=1, very thin] (Po) -- (Lo) -- (L);
\draw [black,opacity=1, very thin] (R3) -- (R);
\draw [black,opacity=1, very thin] (A2) -- (O1) -- (N1) -- (P);
\end{tikzpicture}}
%
=
%
\tikzzbox{\begin{tikzpicture}[thick,scale=1.77,color=green!60!black, baseline=1.7cm, >=stealth, 
				style={x={(-0.6cm,-0.4cm)},y={(1cm,-0.2cm)},z={(0cm,0.9cm)}}]
	\pgfmathsetmacro{\yy}{0.2}
\coordinate (P) at (0.5, \yy, 0);
\coordinate (R) at (0.625, 0.5 + \yy/2, 0);
\coordinate (L) at (0.5, 0, 0);
\coordinate (R1) at (0.25, 1, 0);
\coordinate (R2) at (0.5, 1, 0);
\coordinate (R3) at (0.75, 1, 0);
\coordinate (N1) at ($(R1) + (R2) - (R)$);
\coordinate (N2) at ($(R) + 2*(R2) - 2*(R)$);
\coordinate (N3) at ($(R3) + (R2) - (R)$);
\coordinate (N4) at ($2*(R3) - (R)$);
\coordinate (1P) at ($(P) + (0,0,1)$);
\coordinate (1L) at (0.5, 0, 1);
\coordinate (1N1) at ($(N1) + (0,0,1)$);
\coordinate (1N2) at ($(N2) + (0,0,1)$);
\coordinate (1N3) at ($(N3) + (0,0,1)$);
\coordinate (1N4) at ($(N4) + (0,0,1)$);
\coordinate (1R) at ($(R) + (0,0,1)$);
\coordinate (1RR) at ($(R3) + (-0.25,0,1)$);
\coordinate (2P) at ($(P) + (0,0,2)$);
\coordinate (2L) at (0.5, 0, 2);
\coordinate (2N1) at ($(N1) + (0,0,2)$);
\coordinate (2N2) at ($(N2) + (0,0,2)$);
\coordinate (2N3) at ($(N3) + (0,0,2)$);
\coordinate (2N4) at ($(N4) + (0,0,2)$);
\coordinate (2RR) at ($(1RR) + (0,0,1)$);
\coordinate (2R) at ($(2N3) - 2*(2N3) + 2*(2RR)$);
\coordinate (3P) at ($(P) + (0,0,3)$);
\coordinate (3L) at (0.5, 0, 3);
\coordinate (3N1) at ($(N1) + (0,0,3)$);
\coordinate (3N2) at ($(N2) + (0,0,3)$);
\coordinate (3N3) at ($(N3) + (0,0,3)$);
\coordinate (3N4) at ($(N4) + (0,0,3)$);
\coordinate (3RR) at ($(2RR) + (-0.25,0,1)$);
\coordinate (3R) at ($(2R) + (0,0,1)$);
%
\coordinate (alpha1) at (0.5, 0.75, 0.5);
\coordinate (alpha2) at (0.5, 0.5, 1.5);
\coordinate (alpha3) at (0.25, 0.75, 2.5);
%
\coordinate (psi) at (0.5, 0.75, 1.5);
%
\fill [red!80!magenta,opacity=0.345] (P) -- (1P) -- (alpha2) -- (2P) -- (3P) -- (3L) -- (L);
\fill [red!80,opacity=0.345] (P) -- (1P) -- (alpha2) -- (2R) -- (alpha3) -- (3RR) -- (3N1) -- (N1);
\fill [red!80!orange,opacity=0.345] (alpha3) -- (3RR) -- (3R);
\fill [red!20!orange,opacity=0.345] (R) -- (alpha1) -- (1RR) -- (2RR) -- (alpha3) -- (3RR) -- (3N2) -- (N2);
\fill[color=blue!60!black] (0.3, 0.79, 2.89) circle (0pt) node (0up) { {\scriptsize$\mathcal A$} };
\draw[string=green!60!black, very thick] (alpha3) -- (3RR); 	
\fill [red!80,opacity=0.345] (R3) -- (alpha1) -- (1RR) -- (2RR) -- (alpha3) -- (3R) -- (3N3) -- (N3);
\fill [red!50!yellow,opacity=0.345] (R3) -- (alpha1) -- (R);
\fill [orange!60!blue,opacity=0.345] (alpha1) -- (1R) -- (alpha2) -- (2R) -- (alpha3) -- (2RR) -- (1RR);
\draw[color=green!60!black, very thick, rounded corners=0.5mm, postaction={decorate}, decoration={markings,mark=at position .34 with {\arrow[draw=green!60!black]{>}}}] (R3) -- (alpha1) -- (1RR) -- (2RR) -- (alpha3);
\fill [red!20!orange,opacity=0.345] (alpha2) -- (2P) -- (3P) -- (3R) -- (alpha3) -- (2R);
\fill[color=blue!60!black] (0.6, 0.5, 3) circle (0pt) node (0up) { {\scriptsize$\mathcal A$} };
\draw[string=green!60!black, very thick] (alpha3) -- (3R); 		
\fill[color=green!60!black] (alpha3) circle (1.2pt) node[left] (0up) { {\scriptsize$\alpha$} }; 	
\draw[color=green!60!black, very thick, rounded corners=0.5mm, postaction={decorate}, decoration={markings,mark=at position .3 with {\arrow[draw=green!60!black]{>}}}] (alpha2) -- (2R) -- (alpha3);
\fill[color=blue!60!black] (psi) circle (0pt) node (0up) { {\scriptsize$\mathcal A$} };
%
\fill [red!80,opacity=0.345] (R3) -- (alpha1) -- (1R) -- (alpha2) -- (2P) -- (3P) -- (3N4) -- (N4);
\fill [red!70!yellow,opacity=0.345] (P) -- (1P) -- (alpha2) -- (1R) -- (alpha1) -- (R);
\draw[color=green!60!black, very thick, rounded corners=0.5mm, postaction={decorate}, decoration={markings,mark=at position .3 with {\arrow[draw=green!60!black]{>}}}] (P) -- (1P) -- (alpha2);
\draw[color=green!60!black, very thick, rounded corners=0.5mm, postaction={decorate}, decoration={markings,mark=at position .3 with {\arrow[draw=green!60!black]{>}}}] (alpha1) -- (1R) -- (alpha2);
\draw[color=green!60!black, very thick, rounded corners=0.5mm, postaction={decorate}, decoration={markings,mark=at position .3 with {\arrow[draw=green!60!black]{>}}}] (alpha2) -- (2P) -- (3P);
\draw[string=green!60!black, very thick] (R) -- (alpha1); 		
\draw[string=green!60!black, very thick] (R3) -- (alpha1); 		
%
\fill[color=green!60!black] (alpha1) circle (1.2pt) node[left] (0up) { {\scriptsize$\alpha$} };
\fill[color=green!60!black] (alpha2) circle (1.2pt) node[left] (0up) { {\scriptsize$\alpha$} };
%
\fill[color=blue!60!black] (0.5,0.25,0.15) circle (0pt) node[left] (0up) { {\scriptsize$\mathcal A$} };
\fill[color=blue!60!black] (0.5,0.55,0.11) circle (0pt) node[left] (0up) { {\scriptsize$\mathcal A$} };
\fill[color=blue!60!black] (0.5,0.85,0.04) circle (0pt) node[left] (0up) { {\scriptsize$\mathcal A$} };
\fill[color=blue!60!black] (0.57,1.29,-0.02) circle (0pt) node[left] (0up) { {\scriptsize$\mathcal A$} };
\fill[color=blue!60!black] (0.57,1.44,0.08) circle (0pt) node[left] (0up) { {\scriptsize$\mathcal A$} };
\fill[color=blue!60!black] (0.57,1.59,0.22) circle (0pt) node[left] (0up) { {\scriptsize$\mathcal A$} };
\fill[color=blue!60!black] (0.57,1.74,0.365) circle (0pt) node[left] (0up) { {\scriptsize$\mathcal A$} };
%
%
\draw [black,opacity=0.4141, densely dotted, semithick] (2L) -- (2P) -- (2N4);
\draw [black,opacity=0.4141, densely dotted, semithick] (2P) -- (2N1);
\draw [black,opacity=0.4141, densely dotted, semithick] (2R) -- (2RR) -- (2N2);
\draw [black,opacity=0.4141, densely dotted, semithick] (2RR) -- (2N3);
\draw [black,opacity=0.4141, densely dotted, semithick] (1L) -- (1P) -- (1N4);
\draw [black,opacity=0.4141, densely dotted, semithick] (1R) -- (1RR) -- (1N3);
\draw [black,opacity=0.4141, densely dotted, semithick] (1RR) -- (1N2);
\draw [black,opacity=0.4141, densely dotted, semithick] (1P) -- (1N1);
%
\draw [black,opacity=1, very thin] (3P) -- (3L) -- (L) -- (P); 
\draw [black,opacity=1, very thin] (N1) -- (P); 
\draw [black,opacity=1, very thin] (3RR) -- (3N1) -- (N1) -- (3N1); 
\draw [black,opacity=1, very thin] (3RR) -- (3R); 
\draw [black,opacity=1, very thin] (3RR) -- (3N2) -- (N2) -- (R); 
\draw [black,opacity=1, very thin] (3R) -- (3N3) -- (N3) -- (R3); 
\draw [black,opacity=1, very thin] (R3) -- (R); 
\draw [black,opacity=1, very thin] (3P) -- (3RR);
\draw [black,opacity=1, very thin] (3P) -- (3N4) -- (N4) -- (R); 
\draw [black,opacity=1, very thin] (P) -- (R); 
\end{tikzpicture}}
\, . 
\ee
Put differently, the two stratified 3-balls containing the two sides of~\eqref{eq:constraint1} (viewed as bordisms from the empty set to their boundary spheres) evaluate identically under~$\zz$. 
In examples, this amounts to a pentagon condition on~$\al$. 

\medskip

The next type of constraint on the data $(*,\A, T, \al, \alb)$ demands that ``if two tetrahedra in the cell decomposition Poincar\'e dual to the stratification by defects share two faces, the tetrahedra may be replaced by just the other two faces''. 
We note immediately that in a triangulation any two tetrahedra can share at most one face, so applying this constraint leaves the realm of triangulations to one of more general stratifications. 
In Section~\ref{subsubsec:sodareod} we will show how transiently leaving the domain of triangulations in this way will guarantee invariance under all 20 oriented 2-3 moves. 

We want to make the above constraint precise. 
Let $a,b,c,d \in \R$ be pairwise distinct numbers, and consider the two tetrahedra 
\vspace{-0.5cm}
\be
\!\!
\tikzzbox{\begin{tikzpicture}[thick,scale=2.234,color=gray!60!blue, baseline=-0.3cm, >=stealth, 
				style={x={(-0.6cm,-0.4cm)},y={(1cm,-0.2cm)},z={(0cm,0.9cm)}}]
\coordinate (v1) at (0,0,0);
\coordinate (v2) at (0,1,0);
\coordinate (v3) at (0,0.35,0.5);
\coordinate (v4) at (0,0.65,-0.5);
\fill [blue!15,opacity=1] (v1) -- (v3) -- (v4);
\fill [blue!15,opacity=1] (v2) -- (v3) -- (v4);
\fill [blue!15,opacity=1] (v1) -- (v2) -- (v2) .. controls +(0,0,0.95) and +(0,0,0.95) .. (v1);
\fill[color=gray!60!blue] (v1) circle (0.9pt) node[above] (0up) {};
\fill[color=gray!60!blue] (v2) circle (0.9pt) node[above] (0up) {};
\fill[color=gray!60!blue] (v3) circle (0.9pt) node[above] (0up) {};
\fill[color=gray!60!blue] (v4) circle (0.9pt) node[above] (0up) {};
\draw[color=gray!60!blue, very thick] (v3) -- (v4);
\draw[color=gray!60!blue, very thick] (v1) -- (v3);
\draw[color=gray!60!blue, very thick] (v2) -- (v3);
\draw[color=gray!60!blue, very thick] (v2) -- (v4);
\draw[color=gray!60!blue, very thick] (v1) -- (v4);
%
\fill[color=gray!60!blue] (v1) circle (0.9pt) node[left] (0up) {{\scriptsize$d$}};
\fill[color=gray!60!blue] (v2) circle (0.9pt) node[right] (0up) {{\scriptsize$b$}};
\fill[color=gray!60!blue] (v3) circle (0.9pt) node[above] (0up) {{\scriptsize$c$}};
\fill[color=gray!60!blue] (v4) circle (0.9pt) node[below] (0up) {{\scriptsize$a$}};
%
\draw[color=gray!60!blue, very thick] (v1) .. controls +(0,0,0.95) and +(0,0,0.95) .. (v2);
\end{tikzpicture}}
\, , \quad 
\tikzzbox{\begin{tikzpicture}[thick,scale=2.234,color=gray!60!blue, baseline=-0.3cm, >=stealth, 
				style={x={(-0.6cm,-0.4cm)},y={(1cm,-0.2cm)},z={(0cm,0.9cm)}}]
\coordinate (v1) at (0,0,0);
\coordinate (v2) at (0,1,0);
\coordinate (v3) at (0,0.35,0.5);
\coordinate (v4) at (0,0.65,-0.5);
\fill [blue!15,opacity=1] (v1) -- (v3) -- (v4);
\fill [blue!15,opacity=1] (v2) -- (v3) -- (v4);
\fill [blue!15,opacity=1] (v1) -- (v2) -- (v2) .. controls +(0,0,-0.95) and +(0,0,-0.95) .. (v1);
%
\fill[color=gray!60!blue] (v1) circle (0.9pt) node[above] (0up) {};
\fill[color=gray!60!blue] (v2) circle (0.9pt) node[above] (0up) {};
\fill[color=gray!60!blue] (v3) circle (0.9pt) node[above] (0up) {};
\fill[color=gray!60!blue] (v4) circle (0.9pt) node[above] (0up) {};
\draw[color=gray!60!blue, very thick] (v3) -- (v4);
\draw[color=gray!60!blue, very thick] (v1) -- (v3);
\draw[color=gray!60!blue, very thick] (v2) -- (v3);
\draw[color=gray!60!blue, very thick] (v2) -- (v4);
\draw[color=gray!60!blue, very thick] (v1) -- (v4);
%
\fill[color=gray!60!blue] (v1) circle (0.9pt) node[left] (0up) {{\scriptsize$d$}};
\fill[color=gray!60!blue] (v2) circle (0.9pt) node[right] (0up) {{\scriptsize$b$}};
\fill[color=gray!60!blue] (v3) circle (0.9pt) node[above] (0up) {{\scriptsize$c$}};
\fill[color=gray!60!blue] (v4) circle (0.9pt) node[below] (0up) {{\scriptsize$a$}};
\draw[color=gray!60!blue, very thick] (v1) .. controls +(0,0,-0.95) and +(0,0,-0.95) .. (v2);
%
\end{tikzpicture}}
\, . 
\ee
\vspace{-0.7cm}
\\
In these pictures we took a tetrahedron as in \eqref{eq:3simplex} and moved all of its vertices into the same 2-dimensional plane; in doing so we turned one of its edges into an arc to keep the tetrahedron from becoming degenerate.
We may either glue the two tetrahedra
along the two faces $(abc)$ and $(acd)$, or along $(abd)$ and $(bcd)$.
(The second gluing is easier to visualise if one moves the corresponding faces into the same plane, thereby turning the opposite straight edge into an arc.)
In either case we consider the move which replaces the two tetrahedra with the two non-shared faces (note that these moves decrease the number of 2- and 3-strata): 
\vspace{-0.7cm}
\be
\tikzzbox{
}
\, . 
\vspace{-0.5cm}
\ee
This amounts to the first and second column in the above list of moves.

Consider the left choice in~\eqref{eq:twotetrahedra}.
Up to rotation leaving the vertex with label~4 fixed there are two choices on which vertex to place the label~1.
One of these choices leaves two inequivalent options how to distribute the remaining labels~2 and~3, the other leaves only one choice. 
This leaves us with the moves in the left column of~\eqref{eq:aas}--\eqref{eq:aas3}. 

The right column follows analogously from the right stratification in~\eqref{eq:twotetrahedra}. 
\end{proof}

We now translate the moves of Lemma~\ref{lem:aa} into constraints on the data $(*, \A, T, \al, \alb)$. 
For this we pass to the stratifications Poincar\'{e} dual to the stratifications in \eqref{eq:aas}--\eqref{eq:aas3}, and demand that evaluation by~$\zz$ is invariant under these moves. 
Hence the constraints corresponding to these moves are that under~$\zz$: 
\begin{align}
& \!\!\!
\label{eq:alpha1}
\tikzzbox{
}
\ee
where in the first three pictures two $\mathcal A$-labelled hemispheres are glued together along a $T$-line. 
In the first picture, all 2-strata have the same orientation 
	as the paper plane,
in the second picture the rear hemisphere has opposite orientation, while in the third picture it is the front hemisphere. 

\medskip

Now we collect all of the above in a single notion. 
As in Example~\ref{ex:defectdatan12} we shall work with the \textsl{source} and \textsl{target maps} $s,t \colon D_2 \to D_3$, defined via $f_2(x) = (s(x), t(x))$, and the \textsl{folding map} $f := f_1$,
	where $f_j$ are the adjacency maps of the defect data~$\D$. 

\begin{definition}
\label{def:orbidata}
Given a 3-dimensional defect TQFT $\zz\colon  \Bordd[3] \to \Vectk$, a  \textsl{special orbifold datum} $\A \equiv (*, \A, T, \al, \alb)$ for~$\zz$ is a choice of 
\begin{itemize}
\item $* \in D_3$,
\item
$\mathcal A \in D_2$ with $s(\A) = t(\A) = *$, 
\item 
$T \in D_1$ with $f(T) = (\A,+) \times (\A,+) \times (\A,-)$, 
\item 
$\al \in \zz(S_{\A,T}^2)$ and $\alb \in \zz(\bar S_{\A,T}^2)$ as in~\eqref{eq:ATspheres}
\end{itemize}
such that the constraints~\eqref{eq:constraint1}, \eqref{eq:alpha1}--\eqref{eq:alpha3}, \eqref{eq:bubble2} are satisfied. 
\end{definition}

\begin{remark}
\label{rem:orbifolddataforanyZ}
Here we spell out what a special orbifold datum for the Euler completion $\zz^\euc \colon \Bord_n^{\text{def}}(\D^\euc) \to \Vectk$ of a TQFT $\zz\colon \Bordd[n] \to \Vectk$ means in terms of~$\zz$ and~$\D$ directly. 
Recall Definition~\ref{def:EulercompleteD} for the Euler-completed defect data~$\D^\euc$, and Definition~\ref{def:Eulercompletion} for~$\zz^\euc$. 
\begin{enumerate}
\item
By definition, to specify a special orbifold datum~$\A^\euc$ for~$\zz^\euc$ we first have to provide one element in each of $D_3^\euc$, $D_2^\euc$, $D_1^\euc$ as well as two elements $\al, \alb \in D_0^\euc$. 
Recall that for $i>0$, elements in $D_i^\euc$ are tuples of the form $(x,\phi,\Psi)$, where $x\in D_i$, $\phi$ is an invertible element in the algebra of point insertions on~$x$, i.\,e\ $\phi \in A_x^\times$ (cf.~\eqref{eq:Ax-defect-alg-def} and Proposition~\ref{prop:point-defect-algebras}), and~$\Psi$ is a tuple $(\psi_S)_{S \in \mathrm{Strat}(f_i(x))}$ with $\psi_S \in A_y^\times$ and~$y$ the label of the stratum~$S$ of $f_i(x)$. 
In the top dimension $i=3$ the tuple~$\Psi$ is empty, so the 3-dimensional label of a special orbifold datum~$\A^\euc$ for~$\zz^\euc$ is of the form 
\be
\label{eq:D3eucsod}
(*, \phi) \in D^\euc_3  
: \quad 
* \in D_3
\, , \quad 
\phi \in \zz \Big( 
\tikzzbox{\begin{tikzpicture}[very thick,scale=0.8,color=red!80!black, baseline=-0.1cm]
\fill[ball color=gray!40!white] (0,0) circle (0.95 cm);
\fill (0.05,0.05) circle (0pt) node[red!80!black] {};
\fill (0,0) circle (0pt) node[gray!50!black] {{\small$*$}};
\end{tikzpicture}}
 \Big)^{\!\times} . 
\ee
The 2-dimensional label 
	for~$\A^\euc$ 
hence must be of the form 
\be
\label{eq:D2eucsod}
\big( \A, \psi, (\phi,\phi) \big) \in D^\euc_2
: \quad 
\A \in D_2
\, , \quad
s(\A) = t(\A) = *
\, , \quad 
\psi \in \zz \Big( 
\tikzzbox{\begin{tikzpicture}[very thick,scale=0.8,color=red!80!black, baseline=-0.1cm]
\fill[ball color=gray!40!white] (0,0) circle (0.95 cm);
\fill (0.05,0.05) circle (0pt) node[red!80!black] {};
\draw[
	color=red!80!black, 
	very thick,
	opacity=1.0, 
	 >=stealth, 
	postaction={decorate}, decoration={markings,mark=at position .6 with {\arrow[draw]{>}}}
	] 
	(0,-0.95) .. controls +(-0.2,0.2) and +(-0.2,-0.2) .. (0,0.95);
\draw[color=red!80!black, opacity=0.12]	(0,-0.95) .. controls +(0.2,0.2) and +(0.2,-0.2) .. (0,0.95);%
\fill (-0.55,0) circle (0pt) node[gray!50!black] {{\small$*$}};
\fill (0.55,0) circle (0pt) node[gray!50!black] {{\small$*$}};
\fill (0.15,0.1) circle (0pt) node {{\small$\mathcal A$}};
\end{tikzpicture}}
\Big)^{\!\times}
\ee
as the Euler weights on both sides of an $(\A,\psi)$-labelled 2-stratum must be~$\phi$. 

Thanks to Lemma~\ref{lem:euctwo-equ} ($\zz^{\euctwo} \sim \zz^\euc$), point insertions on line defects need not to be considered. 
Hence a 1-dimensional label for our~$\A^\euc$ can always be taken to be $(T,\Psi)$, where~$T$ is as in Definition~\ref{def:orbidata}, and the tuple $\Psi = (\phi^{\times 3}, \psi^{\times 3})$ keeps track of the Euler weights inserted on the three 3-strata and the three 2-strata adjacent to the $T$-labelled 1-stratum: 
\be
\label{eq:D1eucsod}
\big( T, (\phi^{\times 3}, \psi^{\times 3}) \big) \in D^\euc_1
: \quad 
T \in D_1
\, , \quad 
f(T) = (\A,+) \times (\A,+) \times (\A,-)
\, . 
\ee

Finally, the 0-dimensional labels in the special orbifold datum~$\A^\euc$ for~$\zz^\euc$ features elements
\vspace{-0.5cm}
\be
\al 
\in \zz \Bigg( 
\!\!\!\!
\tikzzbox{
}
\end{align}
where in~\eqref{eq:phipsi} there is an insertion of $\psi^2$ on each
	of the two hemispheres
ending on a $T$-line, and the enclosed 3-strata feature one $\phi$-insertion. 
Note that all of the above are identities between elements in vector spaces $\zz(B)$ for suitable decorated stratified 3-balls~$B$. 
\item
A datum $(*, \A, T, \al, \alb)$ for~$\zz$ that satisfies the constraints of Definition~\ref{def:orbidata} only ``up to normalisations'' can sometimes be adjusted to form a special orbifold datum for the completion $\zz^\euc$.  
This amounts to finding invertible elements $\phi, \psi$ as in~\eqref{eq:D3eucsod}, \eqref{eq:D2eucsod} which satisfy the constraints \eqref{eq:32psi}--\eqref{eq:phipsi}. If~$\al$ is invertible, such field insertions $\phi, \psi$ are ``unique for practical purposes'' in the sense that
their contribution to the orbifold theory $(\zz^\euc)_\A$ can be determined from the data $*,\A,T,\al$ alone as follows. 

By pre- and post-composing with~$\alpha^{-1}$ in~\eqref{eq:32psi} we find that~$\zz$ evaluated on
\be\label{eq:psicylinder}
\tikzzbox{\begin{tikzpicture}[thick,scale=2.321,color=blue!50!black, baseline=0.2cm, >=stealth, 
				style={x={(-0.6cm,-0.4cm)},y={(1cm,-0.2cm)},z={(0cm,0.9cm)}}]
	\pgfmathsetmacro{\yy}{0.2}
\coordinate (P) at (0.5, \yy, 0);
\coordinate (R) at (0.375, 0.5 + \yy/2, 0);
\coordinate (L) at (0.5, 0, 0);
\coordinate (R1) at (0.25, 1, 0);
\coordinate (R2) at (0.5, 1, 0);
\coordinate (R3) at (0.75, 1, 0);
\coordinate (Pt) at (0.5, \yy, 0.5);
\coordinate (Rt) at (0.625, 0.5 + \yy/2, 0.5);
\coordinate (Lt) at (0.5, 0, 0.5);
\coordinate (R1t) at (0.25, 1, 0.5);
\coordinate (R2t) at (0.5, 1, 0.5);
\coordinate (R3t) at (0.75, 1, 0.5);
\coordinate (aP) at (0.5, \yy, 0.5);
\coordinate (aR) at (0.625, 0.5 + \yy/2, 0.5);
\coordinate (aL) at (0.5, 0, 0.5);
\coordinate (aR1) at (0.25, 1, 0.5);
\coordinate (aR2) at (0.5, 1, 0.5);
\coordinate (aR3) at (0.75, 1, 0.5);
\coordinate (aPt) at (0.5, \yy, 1);
\coordinate (aRt) at (0.375, 0.5 + \yy/2, 1);
\coordinate (aLt) at (0.5, 0, 1);
\coordinate (aR1t) at (0.25, 1, 1);
\coordinate (aR2t) at (0.5, 1, 1);
\coordinate (aR3t) at (0.75, 1, 1);
\fill [red!50,opacity=0.545] (P) -- (aPt) -- (aLt) -- (L);5
\fill [red!50,opacity=0.545] (R) -- (aRt) -- (aR1t) -- (R1);
\fill [yellow!80!red,opacity=0.745] (R) -- (aRt) -- (aPt) -- (P);
\fill [orange!80!red,opacity=0.345] (R) -- (aRt) -- (aPt) -- (P);
\coordinate (psi) at (0.5, 0.4, 0.5);
\fill[color=green!20!black] (psi) circle (0.9pt) node[above] (0up) { {\scriptsize$\psi^2$} }; 	
\fill [red!50,opacity=0.545] (R) -- (aRt) -- (aR2t) -- (R2);
\fill[color=green!60!black] (0.375, 0.5 + \yy/2, 0.5) circle (0pt) node[left] (0up) { {\scriptsize$T$} };
\draw[color=green!60!black, very thick, rounded corners=0.5mm, postaction={decorate}, decoration={markings,mark=at position .51 with {\arrow[draw=green!60!black]{>}}}] (R) -- (aRt);
\fill[color=blue!60!black] (0.5,0.56,0.2) circle (0pt) node[left] (0up) { {\scriptsize$\mathcal A$} };
\fill [red!50,opacity=0.545] (P) -- (aPt) -- (aR3t) -- (R3);
\draw[string=green!60!black, very thick] (P) -- (aPt);
\fill[color=green!60!black] (0.5, \yy, 0.5) circle (0pt) node[left] (0up) { {\scriptsize$T$} };
\fill[color=blue!60!black] (0.5,0.2,0.15) circle (0pt) node[left] (0up) { {\scriptsize$\mathcal A$} };
\fill[color=blue!60!black] (0.5,0.89,0.0) circle (0pt) node[left] (0up) { {\scriptsize$\mathcal A$} };
\fill[color=blue!60!black] (0.5,1.04,0.13) circle (0pt) node[left] (0up) { {\scriptsize$\mathcal A$} };
\fill[color=blue!60!black] (0.5,1.19,0.28) circle (0pt) node[left] (0up) { {\scriptsize$\mathcal A$} };
\draw [black,opacity=1, very thin] (L) -- (aLt) -- (aPt) -- (aR3t) -- (R3) -- (P) -- cycle;
\draw [black,opacity=1, very thin] (aPt) -- (aR1t) -- (R1) -- (P);
\draw [black,opacity=1, very thin] (R) -- (R2) -- (aR2t) -- (aRt);
\end{tikzpicture}}
\ee
is given by~$\zz$ evaluated on a 3-ball containing only the known data $*,\A,T,\al,\al^{-1}$. 
Furthermore, in the construction of $(\zz^\euc)_\A$, the only local neighbourhood~$\psi$ will ever appear in is that of~\eqref{eq:psicylinder}. 
Hence knowing the action of~$\zz$ on the 3-ball containing it is sufficient. 

The insertion $\phi$ compensates the $T$-rimmed bubble in~\eqref{eq:phipsi}. 
In the examples we are considering in \cite{CRS3}, this condition amounts to the computation of a quantum dimension and determines~$\phi$ uniquely. 
\end{enumerate}
\end{remark}

\subsubsection{Special orbifold data are orbifold data}
\label{subsubsec:sodareod}

Let $\A \equiv (*, \A, T, \al, \alb)$ be a special orbifold datum for a defect TQFT $\zz\colon  \Bordd[3] \to \Vectk$. 
In a series of lemmas we will now show that~$\A$ really is an orbifold datum in the sense of Definition~\ref{def:orbidatan}. 
Hence the orbifold theory $\zz_\A$ of Theorem~\ref{thmdef:orbifoldtheory} is well-defined. 
We start with checking the remaining 19 oriented 2-3 moves: 

\begin{lemma}
\label{lem:ZAM23}
The invariance condition (ii) in Definition~\ref{def:orbidatan} holds for all oriented 2-3 Pachner moves.
\end{lemma}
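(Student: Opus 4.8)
The plan is to derive each of the remaining $19$ oriented $2$-$3$ moves from the single confirmed move \eqref{eq:constraint1} together with the six ``aa'' moves \eqref{eq:alpha1}--\eqref{eq:alpha3}, all evaluated under $\zz$. First I would fix the bookkeeping. Every oriented $2$-$3$ move is determined by the heights of its five vertices, and by the rule recalled in Example~\ref{ex:oritria}(ii) these heights fix the orientations of the Poincar\'{e}-dual $0$-strata --- that is, which of them carry $\al$ and which carry $\alb$ --- as well as the orientations of all adjacent $2$-strata. In this language \eqref{eq:constraint1} is precisely the ``all-$\al$'' instance, in which both dual vertices on the source side and all three on the target side are positively oriented. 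Read in the dual picture, each ``aa'' move \eqref{eq:alpha1}--\eqref{eq:alpha3} cancels a neighbouring $\al$-$\alb$ pair joined by $T$-lines; these are the zig-zag identities exhibiting $\alb$ as a two-sided inverse of $\al$ in the sense made precise in Section~\ref{sec:highercatfor}.

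With these two ingredients in hand, the argument is the three-dimensional counterpart of the familiar fact that the coherence of a monoidal category follows from a single instance of the pentagon together with invertibility of the associator. Concretely, to establish a move whose orientation pattern differs from that of \eqref{eq:constraint1}, I would connect its source and target configurations by a finite zig-zag of elementary steps, each step being an application of \eqref{eq:constraint1} or of one of \eqref{eq:alpha1}--\eqref{eq:alpha3}. The basic manoeuvre is to glue an $\al$-$\alb$ bigon onto a configuration (the reverse of an ``aa'' move, which leaves the realm of triangulations --- this is the ``transient'' step alluded to in the discussion preceding Lemma~\ref{lem:aa}), to slide the newly produced $\al$-vertex into place by \eqref{eq:constraint1}, and then to cancel the residual bigon by a further ``aa'' move. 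Trading each $\alb$-vertex for an $\al$-vertex in this way, one reduces by induction on the number of $\alb$-labelled $0$-strata to the base case \eqref{eq:constraint1}. Since $\zz$ is invariant under each elementary step by the defining constraints of the special orbifold datum, functoriality and symmetric monoidality of $\zz$ yield the required equality of vectors \eqref{eq:def-orb-ii-Z=Z}.

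The bulk of the work is then organizing the $19$ cases. A useful streamlining is to first derive the orientation-reversed, ``all-$\alb$'' counterpart of \eqref{eq:constraint1} from \eqref{eq:constraint1} and the ``aa'' moves; after verifying that the six ``aa'' constraints are themselves closed under reversal of the ambient orientation, the operation $(-)^{\text{rev}}$ transports any established derivation to its orientation-reversed move, pairing up the remaining cases. The rotational symmetries of the triangular bipyramid have already been quotiented out in arriving at the count of $20$, so no further reduction is available from them, and one is left with a finite list of representatives for which I would record explicit elementary-move sequences.

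The main obstacle will be geometric bookkeeping rather than any conceptual difficulty. One must check that a valid chain of elementary moves genuinely exists for every orientation pattern and --- most delicately --- that the orientations of all $2$-strata (the stripy patterns appearing in \eqref{eq:alpha2}) match consistently at each intermediate stage, since the ``aa'' moves involving $\alb$ reverse $2$-strata orientations. One must also verify that every transient configuration, although not itself Poincar\'{e} dual to a triangulation, is a bona fide morphism in $\Bordd[3]$ whose labels are compatible with the adjacency maps $f$, $s$, $t$, so that $\zz$ may legitimately be applied at every step.
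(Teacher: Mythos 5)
Your proposal is correct and is essentially the paper's own proof: the single all-$\al$ move \eqref{eq:constraint1} is propagated to the remaining 19 oriented 2-3 moves by gluing a tetrahedron onto both sides of an already-established move and cancelling pairs of tetrahedra sharing two faces via Lemma~\ref{lem:aa} (i.e.\ the constraints \eqref{eq:alpha1}--\eqref{eq:alpha3}), thereby transiently leaving the class of triangulations, exactly as in the text. The only difference is how the finite case check is organized: in place of your (unverified) induction on the number of $\alb$-labelled $0$-strata, the paper records an explicit derivation tree in the shorthand ${a \choose b}_{\circlearrowleft}$, ${a \choose b}_{\circlearrowright}$, obtaining each new oriented move from an already-established one by adjoining one of the three possible new edges, and checks that this procedure generates all 20 inequivalent oriented 2-3 moves.
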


\begin{proof}
	We need to show \eqref{eq:def-orb-ii-Z=Z}, i.\,e.\ that the Poincar\'e dual of the oriented 2-3 Pachner moves holds inside 3-balls after evaluating with $\zz$.
	
One of the defining conditions for special orbifold data~$\A$, namely the constraint~\eqref{eq:constraint1}, is precisely \eqref{eq:def-orb-ii-Z=Z} under the oriented 2-3 move
\be
\label{eq:ori23}
\tikzzbox{
}
\ee
respectively, then we may abbreviate the above derivation of~\eqref{eq:new23} from~\eqref{eq:ori23} as ${4 \choose 0}_\circlearrowleft \to {1 \choose 2}_\circlearrowright$. 
By applying the moves of Lemma~\ref{lem:aa} to the other two shared edges in~\eqref{eq:ori23}, we obtain ${2 \choose 3}_\circlearrowright$ and ${1 \choose 3}_\circlearrowleft$ from ${4 \choose 0}_\circlearrowleft$. 
We summarise this as 
\be
\textstyle
{4 \choose 0}_\circlearrowleft \lra {1 \choose 2}_\circlearrowright, \, {2 \choose 3}_\circlearrowright, \, {1 \choose 3}_\circlearrowleft \, . 
\ee
Similarly, one finds 
\begin{align}
& \textstyle 
{1 \choose 2}_\circlearrowright \lra {3 \choose 4}_\circlearrowleft , \, {3 \choose 0}_\circlearrowleft
\, , \quad 
{2 \choose 3}_\circlearrowright \lra {1 \choose 0}_\circlearrowright , \, {1 \choose 4}_\circlearrowleft
\, , \quad 
{1 \choose 3}_\circlearrowleft \lra {2 \choose 0}_\circlearrowleft , \, {2 \choose 4}_\circlearrowright \, ,
\nonumber
\\
& \textstyle 
{1 \choose 0}_\circlearrowright \lra {2 \choose 4}_\circlearrowleft , \, {3 \choose 4}_\circlearrowright
\, , \quad 
{1 \choose 4}_\circlearrowleft \lra {3 \choose 0}_\circlearrowleft \, , 
\nonumber
\\
& \textstyle 
{2 \choose 0}_\circlearrowleft \lra {1 \choose 4}_\circlearrowright 
\, , \quad
{2 \choose 4}_\circlearrowright \lra {1 \choose 0}_\circlearrowleft \, , 
\nonumber
\\
& \textstyle 
{2 \choose 4}_\circlearrowleft \lra {1 \choose 3}_\circlearrowright 
\, , \quad 
{3 \choose 4}_\circlearrowright \lra {1 \choose 2}_\circlearrowleft \lra {4 \choose 0}_\circlearrowright
\, , \quad 
{1 \choose 4}_\circlearrowright \lra {2 \choose 3}_\circlearrowleft , \, {2 \choose 0}_\circlearrowright \, , 
\end{align}
establishing all 20 oriented 2-3 moves. 
\end{proof}

To verify invariance under all oriented 1-4 moves, we use an auxiliary lemma which we learned from \cite{Baezlecturenotes}. 
It features the \textsl{bubble moves}, which are operations on oriented stratified 3-dimensional manifolds which locally act by replacing the two stratifications~\eqref{eq:3ballstrat} and~\eqref{eq:triastrat}, for all possible orientations, 
\be\label{eq:bubble}
\tikzzbox{\begin{tikzpicture}[very thick,scale=1.2,color=gray!60!blue, baseline=-0.1cm]
\fill[ball color=blue!10!white] (0,0) circle (0.95 cm);
\coordinate (v1) at (0.5,0.32);
\coordinate (v2) at (-0.5,0.32);
\coordinate (v3) at (0,-0.375);
\coordinate (v4) at (0,0);
\fill (v4) circle (1.6pt) node[gray!60!blue, opacity=0.6] {};
\fill (v1) circle (1.6pt) node[gray!60!blue, opacity=0.1] {};
\fill (v2) circle (1.6pt) node[gray!60!blue, opacity=0.1] {};
\fill (v3) circle (1.6pt) node[gray!60!blue, opacity=0.1] {};
\draw[color=gray!60!blue, opacity=0.32]	(-0.93,0) .. controls +(0,0.5) and +(0,0.5) .. (0.93,0);
\draw[color=gray!60!blue]	(-0.93,0) .. controls +(0,-0.5) and +(0,-0.5) .. (0.93,0);
\draw[color=gray!60!blue, very thick] (v1) -- (v4);
\draw[color=gray!60!blue, very thick] (v2) -- (v4);
\draw[color=gray!60!blue, very thick] (v3) -- (v4);
\end{tikzpicture}}
\begin{tikzpicture}[
			     baseline=(current bounding box.base), 
			     descr/.style={fill=white,inner sep=3.5pt}, 
			     normal line/.style={->}
			     ] 
\matrix (m) [matrix of math nodes, row sep=3.5em, column sep=2.5em, text height=1.1ex, text depth=0.1ex] {%
{}
&
{}
\\
};
\path[font=\footnotesize] (m-1-1) edge[<->] node[above] { {\scriptsize bubble}} (m-1-2);
\end{tikzpicture}
\begin{tikzpicture}[very thick,scale=1.2,color=gray!60!blue, baseline=-0.1cm]
\coordinate (v1) at (0.5,0.32);
\coordinate (v2) at (-0.5,0.32);
\coordinate (v3) at (0,-0.375);
\fill [blue!15,opacity=1] (v1) -- (v2) -- (v3);
\fill (v1) circle (1.6pt) node[gray!60!blue, opacity=0.1] {};
\fill (v2) circle (1.6pt) node[gray!60!blue, opacity=0.1] {};
\fill (v3) circle (1.6pt) node[gray!60!blue, opacity=0.1] {};
\draw[color=gray!60!blue, very thick] (v1) -- (v2);
\draw[color=gray!60!blue, very thick] (v2) -- (v3);
\draw[color=gray!60!blue, very thick] (v1) -- (v3);
\end{tikzpicture}
\, .
\ee
These are Poincar\'{e} dual to 
\be\label{eq:bubble}
\tikzzbox{\begin{tikzpicture}[thick,scale=2.021,color=blue!50!black, baseline=0.0cm, >=stealth, 
				style={x={(-0.6cm,-0.4cm)},y={(1cm,-0.2cm)},z={(0cm,0.9cm)}}]
	\pgfmathsetmacro{\yy}{0.2}
\coordinate (T) at (0.5, 0.4, 0);
\coordinate (L) at (0.5, 0, 0);
\coordinate (R1) at (0.3, 1, 0);
\coordinate (R2) at (0.7, 1, 0);
\coordinate (1T) at (0.5, 0.4, 1);
\coordinate (1L) at (0.5, 0, 1);
\coordinate (1R1) at (0.3, 1, 1);
\coordinate (1R2) at (0.7, 1, 1);
\coordinate (a) at (0.5, 0.4, 0.3);
\coordinate (1a) at (0.5, 0.4, 0.7);
%
\fill [red!50,opacity=0.545] (L) -- (T) -- (1T) -- (1L);
\fill [red!50,opacity=0.545] (R1) -- (T) -- (1T) -- (1R1);
\fill [red!50,opacity=0.545] (R2) -- (T) -- (1T) -- (1R2);
%
\draw[color=green!60!black, very thick] (T) -- (a);
\draw[color=green!60!black, very thick] (1a) -- (1T);
%
\fill[inner color=red!30!white,outer color=red!55!white, opacity=0.9, very thick] (1a) .. controls +(-0.1,0.28,0) and +(0,0.28,0) .. (a);
\draw[color=green!60!black, opacity=0.4,  very thick] (1a) .. controls +(-0.1,0.28,0) and +(0,0.28,0) .. (a);
\fill[inner color=red!30!white,outer color=red!55!white, very thick] (a) .. controls +(0,-0.26,0) and +(0,-0.26,0) .. (1a) -- (1a) .. controls +(0,0.2,0) and +(0,0.2,0) .. (a);
\draw[color=green!60!black, very thick] (a) .. controls +(0,-0.26,0) and +(0,-0.26,0) .. (1a);
\draw[color=green!60!black, very thick] (1a) .. controls +(0,0.2,0) and +(0,0.2,0) .. (a);
\fill[color=green!60!black] (a) circle (1.2pt) node[color=green!60!black, opacity=1, left, font=\footnotesize] {};
\fill[color=green!60!black] (1a) circle (1.2pt) node[color=green!60!black, opacity=1, left, font=\footnotesize] {};
%
%
\draw [black,opacity=1, very thin] (1T) -- (1L) -- (L) -- (T);
\draw [black,opacity=1, very thin] (1T) -- (1R1) -- (R1) -- (T);
\draw [black,opacity=1, very thin] (1T) -- (1R2) -- (R2) -- (T);
\end{tikzpicture}}
\begin{tikzpicture}[
			     baseline=(current bounding box.base), 
			     descr/.style={fill=white,inner sep=3.5pt}, 
			     normal line/.style={->}
			     ] 
\matrix (m) [matrix of math nodes, row sep=3.5em, column sep=2.5em, text height=1.1ex, text depth=0.1ex] {%
{}
&
{}
\\
};
\path[font=\footnotesize] (m-1-1) edge[<->] node[below] { {\scriptsize bubble}} (m-1-2);
\end{tikzpicture}
\tikzzbox{\begin{tikzpicture}[thick,scale=2.021,color=blue!50!black, baseline=0.0cm, >=stealth, 
				style={x={(-0.6cm,-0.4cm)},y={(1cm,-0.2cm)},z={(0cm,0.9cm)}}]
	\pgfmathsetmacro{\yy}{0.2}
\coordinate (T) at (0.5, 0.4, 0);
\coordinate (L) at (0.5, 0, 0);
\coordinate (R1) at (0.3, 1, 0);
\coordinate (R2) at (0.7, 1, 0);
\coordinate (1T) at (0.5, 0.4, 1);
\coordinate (1L) at (0.5, 0, 1);
\coordinate (1R1) at (0.3, 1, );
\coordinate (1R2) at (0.7, 1, );
%
\draw [black,opacity=1, very thin] (L) -- (T) -- (1T) -- (1L) -- cycle;
\draw [black,opacity=1, very thin] (R1) -- (T) -- (1T) -- (1R1) -- cycle;
\draw [black,opacity=1, very thin] (R2) -- (T) -- (1T) -- (1R2) -- cycle;
%
\fill [red!50,opacity=0.545] (L) -- (T) -- (1T) -- (1L);
\fill [red!50,opacity=0.545] (R1) -- (T) -- (1T) -- (1R1);
\fill [red!50,opacity=0.545] (R2) -- (T) -- (1T) -- (1R2);
%
\draw[color=green!60!black, very thick] (T) -- (1T);
\end{tikzpicture}}
\, ,
\ee
which we consider for all possible orientations that are consistent with $\A$-decorations. 

\begin{lemma}
\label{lem:bubbel14}
A 1-4 move is a concatenation of a bubble move and a 2-3 move. 
\end{lemma}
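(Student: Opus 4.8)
The plan is to prove the statement by exhibiting an explicit geometric decomposition of the 1-4 move of~\eqref{eq:P14} into a bubble move of~\eqref{eq:bubble} followed by a 2-3 move of~\eqref{eq:P23}. The assertion is purely combinatorial -- it concerns moves on (generalised) stratified manifolds and does not involve the functor~$\zz$ -- so nothing beyond manipulating cell complexes is required. Label the vertices of the single tetrahedron on the left-hand side of the 1-4 move by $a,b,c,d$, and let $e$ denote the new interior vertex, so that the right-hand side consists of the four tetrahedra $eabc$, $eabd$, $eacd$, $ebcd$ obtained by coning the faces of $abcd$ from~$e$.

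First I would apply a bubble move to the single boundary face $(bcd)$ of $abcd$. By definition (the Poincar\'{e} dual of~\eqref{eq:bubble}), this replaces the triangle $(bcd)$ by a ``pillow'', i.\,e.\ two tetrahedra that share three of their four faces and between which the new vertex~$e$ is trapped: concretely both carry the vertex set $\{b,c,d,e\}$, are glued along $(bce),(bde),(cde)$, and their two remaining faces are copies of $(bcd)$, bounding a ball with~$e$ in its interior. Calling these tetrahedra $T'$ and $T''$, the complex after this step consists of $abcd$, $T'$ and $T''$, with $abcd$ and $T'$ meeting along $(bcd)$ and $T''$ carrying the new outer boundary face. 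Next I would perform a 2-3 move on the pair $\{abcd, T'\}$: these share exactly the face $(bcd)$, so their union is the bipyramid over the triangle $(bcd)$ with apexes $a$ and $e$, and the 2-3 move replaces it by the three tetrahedra $eabc$, $eabd$, $eacd$ around the newly inserted edge $ae$. Together with the untouched $T'' = ebcd$ this yields exactly the four tetrahedra $eabc$, $eabd$, $eacd$, $ebcd$ of the 1-4 move, completing the decomposition. A consistency check confirms the bookkeeping is forced: the 2-3 move changes the number of tetrahedra by $+1$ and leaves the vertex count fixed, while the 1-4 move adds three tetrahedra and one vertex, so the bubble move must contribute $+2$ tetrahedra and $+1$ vertex -- precisely what the pillow supplies.

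The main obstacle, and the only point requiring genuine care, is the oriented refinement. The moves relevant in Definition~\ref{def:orbidatan}(ii) and in the application of this lemma are oriented, the vertices carrying a total order via a height function, so one must check that the height assigned to~$e$ and the orientation of the intermediate (non-simplicial) pillow can be chosen compatibly, for each of the ten inequivalent oriented 1-4 moves of Section~\ref{subsec:3dimorbis}, so that the composite oriented move is the prescribed one. This amounts to verifying that the heights inherited on $(bcd)$ from $a,b,c,d$ together with the choice of height for~$e$ fix the orientations of $T',T''$ and of the two bipyramid triangulations consistently; since~$e$ may be placed at any of the five positions relative to $a,b,c,d$ and the tetrahedron $abcd$ carries either of its two orientations (recall Lemma~\ref{lem:rotasimp}(ii)), one recovers exactly the ten oriented cases, each underlain by the single un-oriented decomposition above.
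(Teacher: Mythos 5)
Your proposal is correct and follows essentially the same route as the paper: apply a bubble move to one face of the tetrahedron, producing a pillow of two cells sharing three faces, then apply a 2-3 move to the pair (original tetrahedron and the adjacent pillow half) sharing the single face, yielding the four tetrahedra around the new vertex. Your explicit vertex bookkeeping and the remarks on the oriented refinement are consistent with how the paper then handles orientations separately (in the proof of Lemma~\ref{lem:ZAM14}).
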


\begin{proof}
Given a tetrahedron, we pick one of its faces and apply the bubble move \eqref{eq:bubble} to it. 
The result are three tetrahedra, one pair of which shares three faces, and another pair shares a single face. 
To the latter pair we apply the 2-3 move \eqref{eq:P23}, producing a total of four tetrahedra meeting at a single vertex: 
\vspace{1cm}
$$
\begin{tikzpicture}[
			     baseline=(current bounding box.base), 
			     descr/.style={fill=white,inner sep=3.5pt}, 
			     normal line/.style={->}
			     ] 
\matrix (m) [matrix of math nodes, row sep=3.5em, column sep=2.5em, text height=1.1ex, text depth=0.1ex] {%
\!\!\!\!\!
\tikzzbox{\begin{tikzpicture}[thick,scale=1.821,color=gray!60!blue, baseline=-0.3cm, >=stealth, 
				style={x={(-0.6cm,-0.4cm)},y={(1cm,-0.2cm)},z={(0cm,0.9cm)}}]
\coordinate (v1) at (1,0,0);
\coordinate (v2) at (1,1,0);
\coordinate (v3) at (0,0,0);
\coordinate (v4) at (0.25,0.1,0.75);
%
\fill [blue!20,opacity=0.545] (v1) -- (v2) -- (v3);
\fill [blue!20,opacity=0.545] (v4) -- (v2) -- (v3);
\fill [blue!20,opacity=0.545] (v1) -- (v4) -- (v3);
\fill[color=gray!60!blue] (v3) circle (0.9pt) node[left] (0up) {};
\draw[color=gray!60!blue, very thick] (v1) -- (v3);
\draw[color=gray!60!blue, very thick] (v2) -- (v3);
\draw[color=gray!60!blue, very thick] (v3) -- (v4);
\fill [blue!20,opacity=0.545] (v1) -- (v2) -- (v4);
\draw[color=gray!60!blue, very thick] (v2) -- (v4);
\draw[color=gray!60!blue, very thick] (v1) -- (v4);
\draw[color=gray!60!blue, very thick] (v1) -- (v2);
\fill[color=gray!60!blue] (v1) circle (0.9pt) node[below] (0up) {};
\fill[color=gray!60!blue] (v2) circle (0.9pt) node[below] (0up) {};
\fill[color=gray!60!blue] (v4) circle (0.9pt) node[above] (0up) {};

\end{tikzpicture}}
\!\!
&  
\!\!
\tikzzbox{\begin{tikzpicture}[thick,scale=1.821,color=gray!60!blue, baseline=-0.3cm, >=stealth, 
				style={x={(-0.6cm,-0.4cm)},y={(1cm,-0.2cm)},z={(0cm,0.9cm)}}]
\coordinate (v1) at (1,0,0);
\coordinate (v2) at (1,1,0);
\coordinate (v3) at (0,0,0);
\coordinate (v4) at (0.25,0.1,0.75);
%
\coordinate (m1) at (0.5,0.25,0);
\draw[color=magenta!60!black, very thick] (m1) -- (v1);
\draw[color=magenta!60!black, very thick] (m1) -- (v2);
\draw[color=magenta!60!black, very thick] (m1) -- (v3);
\fill[color=magenta!60!black] (m1) circle (0.9pt) node[left] (0up) {};
%
%
%
\fill [blue!20,opacity=0.545] (v1) -- (v2) -- (v3);
%
\coordinate (m1) at (0.5,0.25,0);

\fill[color=magenta!60!black, opacity=0.15, semithick] (v1) .. controls +(0,0.2,-0.25) and +(0,-0.2,-0.25) .. (v2);
\draw[color=magenta!60!black, semithick] (v1) .. controls +(0,0.2,-0.25) and +(0,-0.2,-0.25) .. (v2);
\fill[color=magenta!60!black, opacity=0.2, semithick] (v1) .. controls +(0,0.45,0.95) and +(0,-0.35,0.95) .. (v2);
\draw[color=magenta!60!black, semithick] (v1) .. controls +(0,0.45,0.95) and +(0,-0.35,0.95) .. (v2);
\draw[color=magenta!60!black, very thick] (m1) -- (v1);
\draw[color=magenta!60!black, very thick] (m1) -- (v2);
\draw[color=magenta!60!black, very thick] (m1) -- (v3);
\fill[color=magenta!60!black] (m1) circle (0.9pt) node[left] (0up) {};
%
\fill [blue!20,opacity=0.545] (v4) -- (v2) -- (v3);
\fill [blue!20,opacity=0.545] (v1) -- (v4) -- (v3);
\fill[color=gray!60!blue] (v3) circle (0.9pt) node[left] (0up) {};
\draw[color=gray!60!blue, very thick] (v1) -- (v3);
\draw[color=gray!60!blue, very thick] (v2) -- (v3);
\draw[color=gray!60!blue, very thick] (v3) -- (v4);
\fill [blue!20,opacity=0.545] (v1) -- (v2) -- (v4);
\draw[color=gray!60!blue, very thick] (v2) -- (v4);
\draw[color=gray!60!blue, very thick] (v1) -- (v4);
\draw[color=gray!60!blue, very thick] (v1) -- (v2);
\fill[color=gray!60!blue] (v1) circle (0.9pt) node[below] (0up) {};
\fill[color=gray!60!blue] (v2) circle (0.9pt) node[below] (0up) {};
\fill[color=gray!60!blue] (v4) circle (0.9pt) node[above] (0up) {};
\end{tikzpicture}}
\!\!
&  
\!\!
\tikzzbox{\begin{tikzpicture}[thick,scale=1.821,color=gray!60!blue, baseline=-0.3cm, >=stealth, 
				style={x={(-0.6cm,-0.4cm)},y={(1cm,-0.2cm)},z={(0cm,0.9cm)}}]
\coordinate (v1) at (1,0,0);
\coordinate (v2) at (1,1,0);
\coordinate (v3) at (0,0,0);
\coordinate (v4) at (0.25,0.1,0.75);
\coordinate (v0) at (0.25,0.1,-0.75);
\fill [magenta!60!black,opacity=0.245] (v1) -- (v2) -- (v3);
\fill [blue!20,opacity=0.545] (v4) -- (v2) -- (v3);
\fill [blue!20,opacity=0.545] (v1) -- (v4) -- (v3);
\draw[color=magenta!60!black, very thick] (v0) -- (v3);
\fill[color=gray!60!blue] (v3) circle (0.9pt) node[left] (0up) {};
%
\fill[color=magenta!60!black, opacity=0.15, semithick] (v1) .. controls +(0,0.2,-0.55) and +(0,-0.2,-0.6) .. (v2);
\draw[color=magenta!60!black, semithick] (v1) .. controls +(0,0.2,-0.55) and +(0,-0.2,-0.6) .. (v2);
%
%
%
\fill [magenta!60!black,opacity=0.245] (v0) -- (v1) -- (v3);
\fill [magenta!60!black,opacity=0.245] (v0) -- (v2) -- (v3);
\draw[color=gray!60!blue, very thick] (v1) -- (v3);
\draw[color=gray!60!blue, very thick] (v2) -- (v3);
\draw[color=gray!60!blue, very thick] (v3) -- (v4);
%
%
\fill [magenta!60!black,opacity=0.245] (v0) -- (v1) -- (v2);
\draw[color=magenta!60!black, very thick] (v0) -- (v1);
\draw[color=magenta!60!black, very thick] (v0) -- (v2);
\fill [blue!20,opacity=0.545] (v1) -- (v2) -- (v4);
\draw[color=gray!60!blue, very thick] (v2) -- (v4);
\draw[color=gray!60!blue, very thick] (v1) -- (v4);
\draw[color=gray!60!blue, very thick] (v1) -- (v2);
\fill[color=magenta!60!black] (v0) circle (0.9pt) node[below] (0up) {};
\fill[color=gray!60!blue] (v1) circle (0.9pt) node[below] (0up) {};
\fill[color=gray!60!blue] (v2) circle (0.9pt) node[below] (0up) {};
\fill[color=gray!60!blue] (v4) circle (0.9pt) node[above] (0up) {};
\end{tikzpicture}}
\!\!
&  
\!\!
\tikzzbox{\begin{tikzpicture}[thick,scale=1.821,color=gray!60!blue, baseline=-0.3cm, >=stealth, 
				style={x={(-0.6cm,-0.4cm)},y={(1cm,-0.2cm)},z={(0cm,0.9cm)}}]
\coordinate (v1) at (1,0,0);
\coordinate (v2) at (1,1,0);
\coordinate (v3) at (0,0,0);
\coordinate (v4) at (0.25,0.1,0.75);
\coordinate (v0) at (0.25,0.1,-0.75);
%
\fill [blue!20,opacity=0.545] (v4) -- (v2) -- (v3);
\fill [blue!20,opacity=0.545] (v1) -- (v4) -- (v3);
\draw[color=magenta!60!black, very thick] (v0) -- (v3);
\fill[color=gray!60!blue] (v3) circle (0.9pt) node[left] (0up) {};
%
\fill[color=magenta!60!black, opacity=0.15, semithick] (v1) .. controls +(0,0.2,-0.55) and +(0,-0.2,-0.6) .. (v2);
\draw[color=magenta!60!black, semithick] (v1) .. controls +(0,0.2,-0.55) and +(0,-0.2,-0.6) .. (v2);
%
%
%
\fill [magenta!60!black,opacity=0.245] (v0) -- (v1) -- (v3);
\fill [magenta!60!black,opacity=0.245] (v0) -- (v2) -- (v3);
\draw[color=gray!60!blue, very thick] (v1) -- (v3);
\draw[color=gray!60!blue, very thick] (v2) -- (v3);
\draw[color=gray!60!blue, very thick] (v3) -- (v4);
\draw[color=gray!60!blue, very thick] (v0) -- (v4);
%
%
\fill [magenta!60!black,opacity=0.245] (v0) -- (v1) -- (v2);
\draw[color=magenta!60!black, very thick] (v0) -- (v1);
\draw[color=magenta!60!black, very thick] (v0) -- (v2);
\fill [blue!20,opacity=0.545] (v1) -- (v2) -- (v4);
\draw[color=gray!60!blue, very thick] (v2) -- (v4);
\draw[color=gray!60!blue, very thick] (v1) -- (v4);
\draw[color=gray!60!blue, very thick] (v1) -- (v2);
\fill[color=magenta!60!black] (v0) circle (0.9pt) node[below] (0up) {};
\fill[color=gray!60!blue] (v1) circle (0.9pt) node[below] (0up) {};
\fill[color=gray!60!blue] (v2) circle (0.9pt) node[below] (0up) {};
\fill[color=gray!60!blue] (v4) circle (0.9pt) node[above] (0up) {};
\end{tikzpicture}}
\!\!
&  
\!\!
\tikzzbox{\begin{tikzpicture}[thick,scale=1.821,color=gray!60!blue, baseline=-0.3cm, >=stealth, 
				style={x={(-0.6cm,-0.4cm)},y={(1cm,-0.2cm)},z={(0cm,0.9cm)}}]
\coordinate (v1) at (1,0,0);
\coordinate (v2) at (1,1,0);
\coordinate (v3) at (0,0,0);
\coordinate (v4) at (0.25,0.1,0.75);
\coordinate (v5) at (0.7,0.3,0.175);
\fill [magenta!60!black,opacity=0.245] (v1) -- (v2) -- (v3);
\fill [blue!20,opacity=0.545] (v4) -- (v2) -- (v3);
\fill [blue!20,opacity=0.545] (v1) -- (v4) -- (v3);
\fill[color=gray!60!blue] (v3) circle (0.9pt) node[left] (0up) {};
\draw[color=magenta!60!black, very thick] (v1) -- (v5);
\draw[color=magenta!60!black, very thick] (v2) -- (v5);
\draw[color=magenta!60!black, very thick] (v3) -- (v5);
\draw[color=gray!60!blue, very thick] (v4) -- (v5);
\fill[color=magenta!60!black] (v5) circle (0.9pt) node[below] (0up) {};
\fill [magenta!60!black,opacity=0.245] (v1) -- (v2) -- (v5);
\fill [magenta!60!black,opacity=0.245] (v1) -- (v3) -- (v5);
\fill [magenta!60!black,opacity=0.245] (v3) -- (v2) -- (v5);
\draw[color=gray!60!blue, very thick] (v1) -- (v3);
\draw[color=gray!60!blue, very thick] (v2) -- (v3);
\draw[color=gray!60!blue, very thick] (v3) -- (v4);
\fill [blue!20,opacity=0.545] (v1) -- (v2) -- (v4);
\draw[color=gray!60!blue, very thick] (v2) -- (v4);
\draw[color=gray!60!blue, very thick] (v1) -- (v4);
\draw[color=gray!60!blue, very thick] (v1) -- (v2);
\fill[color=gray!60!blue] (v1) circle (0.9pt) node[below] (0up) {};
\fill[color=gray!60!blue] (v2) circle (0.9pt) node[below] (0up) {};
\fill[color=gray!60!blue] (v4) circle (0.9pt) node[above] (0up) {};
\end{tikzpicture}}
\\
};
\path[font=\footnotesize] (m-1-1) edge[<->] node[above] { {\scriptsize bubble}} (m-1-2);
\path[font=\footnotesize] (m-1-2) edge[<->] node[above] { {\scriptsize $\simeq$}} (m-1-3);
\path[font=\footnotesize] (m-1-3) edge[<->] node[above] { {\scriptsize 2-3}} (m-1-4);
\path[font=\footnotesize] (m-1-4) edge[<->] node[above] { {\scriptsize $\simeq$}} (m-1-5);
\end{tikzpicture}
\vspace{1cm}
$$
\end{proof}

As a corollary, the above lemma shows that all oriented 1-4 moves are implied by the oriented bubble and 2-3 moves.

\begin{lemma}
\label{lem:ZAM14}
The invariance condition (ii) in Definition~\ref{def:orbidatan} holds for all oriented 1-4 Pachner moves.
\end{lemma}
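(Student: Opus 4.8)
The plan is to reduce invariance under oriented 1-4 moves to two facts already at our disposal: invariance under oriented bubble moves, which is built into the defining constraint~\eqref{eq:bubble2} of a special orbifold datum, and invariance under oriented 2-3 moves, established in Lemma~\ref{lem:ZAM23}. The geometric bridge is Lemma~\ref{lem:bubbel14}, which exhibits every 1-4 move as the concatenation of a bubble move and a 2-3 move.

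Concretely, I would fix an oriented 1-4 Pachner move as in~\eqref{eq:P14} and pass to its Poincar\'e dual between stratified 3-balls $B$ and $B'$ with common boundary sphere, decorated by $\A \equiv (*,\A,T,\al,\alb)$ in the unique way allowed by condition~(i) of Definition~\ref{def:orbidatan}. By Lemma~\ref{lem:bubbel14} there is an intermediate decorated stratification obtained from $B$ by a single bubble move on one chosen face of the dual tetrahedron, and $B'$ is reached from this intermediate stratification by an oriented 2-3 move. Applying $\zz$, the bubble step leaves the value unchanged by~\eqref{eq:bubble2} (read as the statement that an $\A$-rimmed bubble glued into an $\A$-sheet evaluates to the bare $\A$-sheet), while the 2-3 step leaves it unchanged by Lemma~\ref{lem:ZAM23}. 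Composing these two equalities of vectors in the relevant spaces $\zz(\text{3-ball})$ yields $\zz(B)=\zz(B')$, which is exactly~\eqref{eq:def-orb-ii-Z=Z} for the given 1-4 move. Note that, because~\eqref{eq:bubble2} is imposed on the nose as a constraint of the special orbifold datum, no separate Euler-weight bookkeeping is needed at this stage and one may work with a general $\zz$ rather than its Euler completion.

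The only genuine work, and what I expect to be the main obstacle, is the orientation bookkeeping required to cover all ten inequivalent oriented 1-4 moves at once, namely the two rotation classes of oriented tetrahedra from Lemma~\ref{lem:rotasimp}(ii) paired with the five admissible heights $e$ of the new dual vertex. For the 2-3 step this is automatic, since Lemma~\ref{lem:ZAM23} already supplies all twenty oriented 2-3 moves. For the bubble step I would verify that each oriented bubble move arising in the decomposition is, up to an isotopy and a relabelling of heights that $\zz$ does not detect, one of the three relative hemisphere orientations displayed in~\eqref{eq:bubble2}. Here I would exploit the freedom in Lemma~\ref{lem:bubbel14} to choose which face of the tetrahedron to bubble, so that the intermediate stratification always presents the bubble in a covered orientation, the remaining cases following from those of~\eqref{eq:bubble2} together with functoriality and isotopy invariance of $\zz$. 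Once this matching is checked for a set of representatives of the ten orientation classes, the lemma follows.

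\begin{proof}
By Lemma~\ref{lem:bubbel14}, every oriented 1-4 move is a concatenation of an oriented bubble move and an oriented 2-3 move. Under $\zz$, the bubble move is an equality by the defining constraint~\eqref{eq:bubble2}, and the 2-3 move is an equality by Lemma~\ref{lem:ZAM23}. Hence $\zz$ is invariant under the composite, which is the oriented 1-4 move; that is, \eqref{eq:def-orb-ii-Z=Z} holds for all oriented 1-4 Pachner moves.
\end{proof}
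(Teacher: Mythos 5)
Your overall reduction is the same as the paper's: use Lemma~\ref{lem:bubbel14} to write each oriented 1-4 move as a bubble move followed by a 2-3 move, handle the 2-3 step by Lemma~\ref{lem:ZAM23}, and then deal with the bubble step. The gap is in the bubble step. The bubble move produced by Lemma~\ref{lem:bubbel14} is the Poincar\'e dual move~\eqref{eq:bubble}, i.\,e.\ the dual of exchanging~\eqref{eq:3ballstrat} and~\eqref{eq:triastrat}: on the dual side one inserts into a $T$-line a sphere carrying \emph{two new trivalent 0-strata} (dual to the two half-ball 3-strata, hence decorated by $\al$ and $\alb$) joined by three $T$-arcs. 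The constraint~\eqref{eq:bubble2}, by contrast, is a statement about a \emph{vertex-free} $T$-circle bounding two $\A$-hemispheres attached to a single $\A$-sheet. These two local configurations have different numbers of 0-strata, so your claim that the former is, ``up to an isotopy and a relabelling of heights that $\zz$ does not detect,'' one of the configurations in~\eqref{eq:bubble2} is false: no isotopy removes the $\al$- and $\alb$-labelled vertices, and $\zz$ certainly distinguishes stratifications with different 0-strata.

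What is missing is precisely the algebraic step that cancels this pair of vertices. In the paper's proof one first isotopes the inserted sphere, then applies one of the lens-type constraints~\eqref{eq:alpha1} (the moves of Lemma~\ref{lem:aa}, ``two tetrahedra sharing two faces'') to annihilate the $\al$--$\alb$ pair and detach the bubble from the $T$-line, and only \emph{then} removes the resulting free bubble using~\eqref{eq:bubble2}. So the bubble step costs an application of~\eqref{eq:alpha1} (or one of its variants, depending on the orientations among the finitely many decorated versions of~\eqref{eq:bubble}) in addition to~\eqref{eq:bubble2}; citing~\eqref{eq:bubble2} alone does not close the argument. With that step inserted, your proof becomes the paper's proof.
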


\begin{proof}
Thanks to Lemmas~\ref{lem:ZAM23} and~\ref{lem:bubbel14} it suffices to show invariance under the bubble moves~\eqref{eq:bubble}. 
For example, let us consider 
\be
\tikzzbox{\begin{tikzpicture}[thick,scale=2.321,color=blue!50!black, baseline=0.0cm, >=stealth, 
				style={x={(-0.6cm,-0.4cm)},y={(1cm,-0.2cm)},z={(0cm,0.9cm)}}]
	\pgfmathsetmacro{\yy}{0.2}
\coordinate (T) at (0.5, 0.4, 0);
\coordinate (L) at (0.5, 0, 0);
\coordinate (R1) at (0.3, 1, 0);
\coordinate (R2) at (0.7, 1, 0);
\coordinate (1T) at (0.5, 0.4, 1);
\coordinate (1L) at (0.5, 0, 1);
\coordinate (1R1) at (0.3, 1, 1);
\coordinate (1R2) at (0.7, 1, 1);
\coordinate (a) at (0.5, 0.4, 0.3);
\coordinate (1a) at (0.5, 0.4, 0.7);
%
\fill [red!50,opacity=0.545] (L) -- (T) -- (1T) -- (1L);
\fill [red!50,opacity=0.545] (R1) -- (T) -- (1T) -- (1R1);
\fill [red!50,opacity=0.545] (R2) -- (T) -- (1T) -- (1R2);
%
\draw[string=green!60!black, very thick] (T) -- (a);
\draw[string=green!60!black, very thick] (1a) -- (1T);
%
\fill[inner color=red!30!white,outer color=red!55!white, opacity=0.9, very thick] (1a) .. controls +(-0.1,0.28,0) and +(0,0.28,0) .. (a);
\draw[color=green!60!black, opacity=0.4,  very thick, postaction={decorate}, decoration={markings,mark=at position .6 with {\arrow[draw=green!60!black]{<}}}] (1a) .. controls +(-0.1,0.28,0) and +(0,0.28,0) .. (a);
\fill[inner color=red!30!white,outer color=red!55!white, very thick] (a) .. controls +(0,-0.26,0) and +(0,-0.26,0) .. (1a) -- (1a) .. controls +(0,0.2,0) and +(0,0.2,0) .. (a);
\draw[color=green!60!black, very thick, postaction={decorate}, decoration={markings,mark=at position .54 with {\arrow[draw=green!60!black]{>}}}] (a) .. controls +(0,-0.26,0) and +(0,-0.26,0) .. (1a);
\draw[color=green!60!black, very thick, postaction={decorate}, decoration={markings,mark=at position .54 with {\arrow[draw=green!60!black]{>}}}] (1a) .. controls +(0,0.2,0) and +(0,0.2,0) .. (a);
\fill[color=green!60!black] (a) circle (1.2pt) node[color=green!60!black, opacity=1, left, font=\footnotesize] {};
\fill[color=green!60!black] (1a) circle (1.2pt) node[color=green!60!black, opacity=1, left, font=\footnotesize] {};
%
%
\draw [black,opacity=1, very thin] (1T) -- (1L) -- (L) -- (T);
\draw [black,opacity=1, very thin] (1T) -- (1R1) -- (R1) -- (T);
\draw [black,opacity=1, very thin] (1T) -- (1R2) -- (R2) -- (T);
\end{tikzpicture}}
\ee
where here and below to avoid clutter, we do not show the labels $\A, T, \al, \alb$. 
Using isotopy invariance of~$\zz$ as well as the defining properties of orbifold data, we find that under~$\zz$: 
\be
\quad
\tikzzbox{\begin{tikzpicture}[thick,scale=2.321,color=blue!50!black, baseline=0.0cm, >=stealth, 
				style={x={(-0.6cm,-0.4cm)},y={(1cm,-0.2cm)},z={(0cm,0.9cm)}}]
	\pgfmathsetmacro{\yy}{0.2}
\coordinate (T) at (0.5, 0.4, 0);
\coordinate (L) at (0.5, 0, 0);
\coordinate (R1) at (0.3, 1, 0);
\coordinate (R2) at (0.7, 1, 0);
\coordinate (1T) at (0.5, 0.4, 1);
\coordinate (1L) at (0.5, 0, 1);
\coordinate (1R1) at (0.3, 1, 1);
\coordinate (1R2) at (0.7, 1, 1);
\coordinate (a) at (0.5, 0.4, 0.3);
\coordinate (1a) at (0.5, 0.4, 0.7);
%
\fill [red!50,opacity=0.545] (L) -- (T) -- (1T) -- (1L);
\fill [red!50,opacity=0.545] (R1) -- (T) -- (1T) -- (1R1);
\fill [red!50,opacity=0.545] (R2) -- (T) -- (1T) -- (1R2);
%
\draw[string=green!60!black, very thick] (T) -- (a);
\draw[string=green!60!black, very thick] (1a) -- (1T);
%
\fill[inner color=red!30!white,outer color=red!55!white, opacity=0.9, very thick] (1a) .. controls +(-0.1,0.28,0) and +(0,0.28,0) .. (a);
\draw[color=green!60!black, opacity=0.4,  very thick, postaction={decorate}, decoration={markings,mark=at position .6 with {\arrow[draw=green!60!black]{<}}}] (1a) .. controls +(-0.1,0.28,0) and +(0,0.28,0) .. (a);
\fill[inner color=red!30!white,outer color=red!55!white, very thick] (a) .. controls +(0,-0.26,0) and +(0,-0.26,0) .. (1a) -- (1a) .. controls +(0,0.2,0) and +(0,0.2,0) .. (a);
\draw[color=green!60!black, very thick, postaction={decorate}, decoration={markings,mark=at position .54 with {\arrow[draw=green!60!black]{>}}}] (a) .. controls +(0,-0.26,0) and +(0,-0.26,0) .. (1a);
\draw[color=green!60!black, very thick, postaction={decorate}, decoration={markings,mark=at position .54 with {\arrow[draw=green!60!black]{>}}}] (1a) .. controls +(0,0.2,0) and +(0,0.2,0) .. (a);
\fill[color=green!60!black] (a) circle (1.2pt) node[color=green!60!black, opacity=1, left, font=\footnotesize] {};
\fill[color=green!60!black] (1a) circle (1.2pt) node[color=green!60!black, opacity=1, left, font=\footnotesize] {};
%
%
\draw [black,opacity=1, very thin] (1T) -- (1L) -- (L) -- (T);
\draw [black,opacity=1, very thin] (1T) -- (1R1) -- (R1) -- (T);
\draw [black,opacity=1, very thin] (1T) -- (1R2) -- (R2) -- (T);
\end{tikzpicture}}
= 
\tikzzbox{\begin{tikzpicture}[thick,scale=2.321,color=blue!50!black, baseline=0.0cm, >=stealth, 
				style={x={(-0.6cm,-0.4cm)},y={(1cm,-0.2cm)},z={(0cm,0.9cm)}}]
	\pgfmathsetmacro{\yy}{0.2}
\coordinate (T) at (0.5, 0.4, 0);
\coordinate (L) at (0.5, 0, 0);
\coordinate (R1) at (0.3, 1, 0);
\coordinate (R2) at (0.7, 1, 0);
\coordinate (1T) at (0.5, 0.4, 1);
\coordinate (1L) at (0.5, 0, 1);
\coordinate (1R1) at (0.3, 1, 1);
\coordinate (1R2) at (0.7, 1, 1);
\coordinate (a) at (0.5, 0.4, 0.3);
\coordinate (1a) at (0.5, 0.4, 0.7);
%
\fill [red!50,opacity=0.545] (L) -- (T) -- (1T) -- (1L);
\fill [red!50,opacity=0.545] (R1) -- (T) -- (1T) -- (1R1);
%
%
\fill[inner color=red!30!white,outer color=red!55!white, opacity=0.9, very thick] (1a) .. controls +(-0.1,0.28,0) and +(0,0.28,0) .. (a);
\fill [red!50,opacity=0.545] (R2) -- (T) -- (1T) -- (1R2);
\draw[string=green!60!black, very thick] (T) -- (a);
\draw[string=green!60!black, very thick] (1a) -- (1T);
\fill[inner color=red!30!white,outer color=red!55!white, opacity=1, very thick] (a) .. controls +(0,-0.26,0) and +(0,-0.26,0) .. (1a) -- (1a) .. controls +(0,0.22,0.45) and +(0,0.2,-0.4) .. (a);
\draw[color=green!60!black, very thick, postaction={decorate}, decoration={markings,mark=at position .54 with {\arrow[draw=green!60!black]{>}}}] (a) .. controls +(0,-0.26,0) and +(0,-0.26,0) .. (1a);
\draw[color=green!60!black, very thick, postaction={decorate}, decoration={markings,mark=at position .54 with %
{\arrow[draw=green!60!black]{>}}}] (1a) .. controls +(0,0.22,0.45) and +(0,0.2,-0.4) .. (a);
\fill[color=green!60!black] (a) circle (1.2pt) node[color=green!60!black, opacity=1, left, font=\footnotesize] {};
\fill[color=green!60!black] (1a) circle (1.2pt) node[color=green!60!black, opacity=1, left, font=\footnotesize] {};
\draw[color=green!60!black, opacity=0.2,  very thick, postaction={decorate}, decoration={markings,mark=at position .6 with {\arrow[draw=green!60!black]{<}}}] (1a) .. controls +(-0.1,0.28,0) and +(0,0.28,0) .. (a);
%
%
\draw [black,opacity=1, very thin] (1T) -- (1L) -- (L) -- (T);
\draw [black,opacity=1, very thin] (1T) -- (1R1) -- (R1) -- (T);
\draw [black,opacity=1, very thin] (1T) -- (1R2) -- (R2) -- (T);
\end{tikzpicture}}
\stackrel{\eqref{eq:alpha1}}{=}
\tikzzbox{\begin{tikzpicture}[thick,scale=2.321,color=blue!50!black, baseline=0.0cm, >=stealth, 
				style={x={(-0.6cm,-0.4cm)},y={(1cm,-0.2cm)},z={(0cm,0.9cm)}}]
	\pgfmathsetmacro{\yy}{0.2}
\coordinate (T) at (0.5, 0.4, 0);
\coordinate (L) at (0.5, 0, 0);
\coordinate (R1) at (0.3, 1, 0);
\coordinate (R2) at (0.7, 1, 0);
\coordinate (1T) at (0.5, 0.4, 1);
\coordinate (1L) at (0.5, 0, 1);
\coordinate (1R1) at (0.3, 1, );
\coordinate (1R2) at (0.7, 1, );
\coordinate (d) at (0.55, 0.55, 0.5);
\coordinate (b) at (0.65, 0.85, 0.5);
%
\fill [red!50,opacity=0.545] (L) -- (T) -- (1T) -- (1L);
\fill [red!50,opacity=0.545] (R1) -- (T) -- (1T) -- (1R1);
\fill [red!50,opacity=0.545] (R2) -- (T) -- (1T) -- (1R2);
%
\draw[string=green!60!black, very thick] (T) -- (1T);
%
%
\draw [black,opacity=1, very thin] (1T) -- (1L) -- (L) -- (T);
\draw [black,opacity=1, very thin] (1T) -- (1R1) -- (R1) -- (T);
\draw [black,opacity=1, very thin] (1T) -- (1R2) -- (R2) -- (T);
\fill[inner color=red!30!white,outer color=red!55!white, very thick, rounded corners=0.5mm] (d) .. controls +(0,0,0.4) and +(0,0,0.4) .. (b) -- (b) .. controls +(0,0,-0.4) and +(0,0,-0.4) .. (d);
\draw[color=green!60!black, very thick, rounded corners=0.5mm, postaction={decorate}, decoration={markings,mark=at position .04 with {\arrow[draw=green!60!black]{>}}}] (d) .. controls +(0,0,0.4) and +(0,0,0.4) .. (b);
\draw[color=green!60!black, very thick, rounded corners=0.5mm, postaction={decorate}, decoration={markings,mark=at position .99 with {\arrow[draw=green!60!black]{<}}}] (d) .. controls +(0,0,-0.4) and +(0,0,-0.4) .. (b);
\end{tikzpicture}}
\stackrel{\eqref{eq:bubble2}}{=}
\tikzzbox{\begin{tikzpicture}[thick,scale=2.321,color=blue!50!black, baseline=0.0cm, >=stealth, 
				style={x={(-0.6cm,-0.4cm)},y={(1cm,-0.2cm)},z={(0cm,0.9cm)}}]
	\pgfmathsetmacro{\yy}{0.2}
\coordinate (T) at (0.5, 0.4, 0);
\coordinate (L) at (0.5, 0, 0);
\coordinate (R1) at (0.3, 1, 0);
\coordinate (R2) at (0.7, 1, 0);
\coordinate (1T) at (0.5, 0.4, 1);
\coordinate (1L) at (0.5, 0, 1);
\coordinate (1R1) at (0.3, 1, );
\coordinate (1R2) at (0.7, 1, );
%
\fill [red!50,opacity=0.545] (L) -- (T) -- (1T) -- (1L);
\fill [red!50,opacity=0.545] (R1) -- (T) -- (1T) -- (1R1);
\fill [red!50,opacity=0.545] (R2) -- (T) -- (1T) -- (1R2);
%
\draw[string=green!60!black, very thick] (T) -- (1T);
%
%
\draw [black,opacity=1, very thin] (1T) -- (1L) -- (L) -- (T);
\draw [black,opacity=1, very thin] (1T) -- (1R1) -- (R1) -- (T);
\draw [black,opacity=1, very thin] (1T) -- (1R2) -- (R2) -- (T);
\end{tikzpicture}}
\, . 
\ee
This establishes invariance under one of the
	oriented bubble moves
and the proof of the other ones works analogously. 
\end{proof}

\medskip

Thanks to Lemmas~\ref{lem:ZAM23} and~\ref{lem:ZAM14}, we can now conclude:

\begin{proposition}
A special orbifold datum for a TQFT $\zz\colon  \Bordd[3] \to \Vectk$ in the sense of Definition~\ref{def:orbidata} is an orbifold datum in the sense of Definition~\ref{def:orbidatan}. 
\end{proposition}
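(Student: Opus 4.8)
The plan is to verify the two defining conditions of Definition~\ref{def:orbidatan} for the data $\orb \equiv (*, \A, T, \al, \alb)$, under the identifications $\orb_3 = *$, $\orb_2 = \A$, $\orb_1 = T$, with $\orb_0^+, \orb_0^-$ taken to be the point defects whose associated states on the surrounding spheres $S^2_{\A,T}$ and $(S^2_{\A,T})^{\text{rev}}$ are $\al$ and $\alb$ as in~\eqref{eq:ATspheres}. The constraints of Definition~\ref{def:orbidata} are phrased directly as equalities of such states, so no further choices enter. Since Lemmas~\ref{lem:ZAM23} and~\ref{lem:ZAM14} have already carried out the substantial work, the remaining task is the bookkeeping of the compatibility condition and the observation that inverse Pachner moves come for free.

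First I would check the \emph{Compatibility} condition~(i). By Lemma~\ref{lem:rotasimp} there are, up to rotation, a single oriented $k$-simplex for $k<3$ and exactly two oriented $3$-simplices, so the Poincar\'e-dual decoration of a triangulation with total order is uniquely determined by $\orb$ once the adjacency maps take the prescribed form; this is what makes condition~(ii) well-posed. Explicitly: for $j=2$ the $0$-dimensional Poincar\'e dual of $\partial\Delta^1$ is two points labelled $\A_3 = *$, matching $f_2(\A) = (s(\A), t(\A)) = (*,*)$; for $j=1$ the $1$-dimensional dual of $\partial\Delta^2$ is a circle with three $0$-strata (dual to the edges, labelled $\A_2 = \A$) separated by three arcs (dual to the vertices, labelled $\A_3 = *$), matching $f(T) = (\A,+)\times(\A,+)\times(\A,-)$ together with $s(\A)=t(\A)=*$; and for $j=0$ the $2$-dimensional duals of the boundaries of the two oriented tetrahedra are precisely the stratified spheres $S^2_{\A,T}$ and $(S^2_{\A,T})^{\text{rev}}$ of~\eqref{eq:ATspheres}, whose $0$-, $1$- and $2$-strata carry $\A_1 = T$, $\A_2 = \A$ and $\A_3 = *$. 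Hence condition~(i) follows entirely from the type constraints built into Definition~\ref{def:orbidata}.

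Next I would verify the \emph{Invariance} condition~(ii), namely $\zz(B) = \zz(B')$ for every oriented $j$-$(5-j)$ Pachner move with $j \in \{1,2,3,4\}$. The oriented $2$-$3$ moves are exactly Lemma~\ref{lem:ZAM23}, and the oriented $3$-$2$ moves follow by reading~\eqref{eq:def-orb-ii-Z=Z} with the roles of $B$ and $B'$ exchanged. Likewise the oriented $1$-$4$ moves are Lemma~\ref{lem:ZAM14}, and the oriented $4$-$1$ moves are their inverses. This exhausts all moves for $n=3$, so condition~(ii) holds and the Proposition follows.

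The genuine difficulty lies not in this assembly but inside the cited lemmas, and I would expect the same if arguing from scratch. The crux is Lemma~\ref{lem:ZAM23}: showing that the ten constraints~\eqref{eq:constraint1} and~\eqref{eq:alpha1}--\eqref{eq:alpha3} force invariance under all $20$ oriented $2$-$3$ moves. The mechanism---gluing an auxiliary tetrahedron along two faces, applying one of the moves of Lemma~\ref{lem:aa} and an isotopy, thereby propagating invariance from one oriented $2$-$3$ move to its neighbours---is combinatorially delicate, and one must trace the chain of implications to confirm that all $20$ rotation classes are reached. The second ingredient, Lemma~\ref{lem:bubbel14}, rewrites a $1$-$4$ move as a bubble move followed by a $2$-$3$ move, so that invariance under~\eqref{eq:bubble2} together with the already-established $2$-$3$ invariance suffices for the $1$-$4$ case. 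Once these are in hand the Proposition is immediate.
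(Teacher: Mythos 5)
Your proof is correct and takes essentially the same approach as the paper, whose entire argument is that the proposition follows directly from Lemmas~\ref{lem:ZAM23} and~\ref{lem:ZAM14} (the latter resting on Lemma~\ref{lem:bubbel14}). Your explicit check of the compatibility condition~(i) against Lemma~\ref{lem:rotasimp} and your observation that the inverse ($3$-$2$ and $4$-$1$) moves come for free from the symmetry of the equality in~\eqref{eq:def-orb-ii-Z=Z} are left implicit in the paper, but they are exactly the right bookkeeping.
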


In particular, by Theorem~\ref{thmdef:orbifoldtheory}, a special orbifold datum $\A$ defines a closed TQFT $\zz_\orb \colon \Bord_3 \to \Vectk$. 

\begin{remark}
\label{rem:examples}
In \cite{CRS3} we will construct several examples of special orbifold data~$\A$ and the associated orbifold theories. 
These include the following: 
\begin{enumerate}
\item
$\A$ is extracted from the data of a spherical fusion category (which in turn is precisely the input data for Turaev-Viro models \cite{TurVir, BarWes}). 
This is a 3-dimensional analogue of viewing 2-dimensional state sum models \cite{bp9205031,FHK} as orbifolds via $\Delta$-separable symmetric $\Bbbk$-algebras \cite{dkr1107.0495,cr1210.6363}. 
More generally, one can think of special orbifold data for a 3-dimensional defect TQFT~$\zz$ as 
``spherical fusion categories internal to a Gray category with duals'' (see Section~\ref{subsec:specorbdat3}). 
\item 
$\A$ is extracted from a certain type of special symmetric Frobenius algebra internal to a modular tensor category, using an extension of the Reshetikhin-Turaev construction. 
\item
$\A$ is extracted from a ``surface defect with invertible bubble''. 
This is an analogue of the Barr-Beck-type construction with invertible quantum dimensions of \cite{cr1210.6363}, which in fact generalises to arbitrary dimension~$n$. 
\end{enumerate}
\end{remark}

\section{Higher categorical formulation}
\label{sec:highercatfor}

It is expected that the sets of defect labels in an $n$-dimensional defect TQFT arrange themselves into a ``fairly strict $n$-category with duals'' for any $n\in \Z_+$.
This has been made precise for $n=2$ and $n=3$:
\begin{itemize}
\item
It was shown in \cite{dkr1107.0495} that one can naturally extract a strictly pivotal 2-category from a 2-dimensional defect TQFT, and every bicategory with ambidextrous duals for 1-morphisms is biequivalent to a strictly pivotal 2-category, as follows from \cite{bwTV2, NSHFS}. 
Hence for $n=2$ ``fairly strict'' means strict, which in this case is the same as ``as strict as achievable by biequivalence''. 
\item
For $n=3$, it was shown in \cite{CMS} that one naturally obtains a Gray category with duals from every 3-dimensional defect TQFT. 
This is generically the strictest form of 3-categorical structure (where the only non-identity coherence 3-morphism that is allowed is in the interchange law for 2-morphisms) 
with ambidextrous duals for all 1- and 2-morphisms, and every tricategory with ambidextrous duals is triequivalent to a Gray category with duals, cf.\,\cite{Gurskibook, BMS, GregorDiss}. 
In this sense for $n=3$ ``fairly strict'' means ``as strict as achievable by triequivalence''. 
\end{itemize}

For $n>3$ we may expect a similar state of affairs. 
However, to extract a specific notion of $n$-category from a given defect TQFT is no simple combinatorial task: 
Any flavour of $n$-category comes with a prescribed shape that is used to define a notion of source and target for all $k$-morphisms, $k \leqslant n$. 
An element $ x \in D_{k}$ should correspond to an $(n-k)$-morphism, but it does so only after picking a certain decomposition of a sphere in $f_{k}(x)$ into the prescribed shape. 
The dualities in the category should then allow one to relate the morphisms defined by different decompositions. 
Instead of formalising the involved combinatorics, the notion of \textsl{disc-like $n$-category} was invented in \cite{blobcomplex}, where now a $(k-1)$-sphere serves as the combined source and target for a $k$-morphism,
	see Remark~\ref{rem:disclikes} for a more detailed discussion.

\medskip

In Sections~\ref{subsec:specorbdat2} and~\ref{subsec:specorbdat3} below we will reformulate the orbifold constructions of Sections~\ref{subsec:2dimorbis} and~\ref{subsec:3dimorbis} in 2- and 3-categorical language, respectively. 
In fact we formalise the notion of special orbifold data internal to arbitrary pivotal bicategories and Gray categories with duals (which need not be associated with defect TQFTs).

\subsection{Special orbifold data in pivotal bicategories}
\label{subsec:specorbdat2}

Recall from \cite{dkr1107.0495} that to every defect TQFT $\zz\colon  \Bordd[2] \to \Vectk$ one can naturally associate a 2-category $\B_\zz$. 
The objects of $\B_\zz$ are elements of $D_2$, and are to be thought of as $D_2$-decorated planes; 1-morphisms are lists of elements in $D_1 \times \{ \pm \}$ (the signs encode the orientation of 1-strata), which we picture as parallel $D_1$-decorated lines. 
The 2-morphisms are $\Bbbk$-vector spaces which~$\zz$ assigns to certain decorated circles. 
In fact $\B_\zz$ has identical left and right adjoints for all 1-morphisms (corresponding to orientation reversal of 1-strata) and is 
	in fact
a pivotal 2-category. 
This construction is reviewed in detail in \cite{2ddTQFTreview}, which also discusses examples of pivotal bicategories from algebraic and symplectic geometry, differential graded algebras, and categorified quantum groups. 

\medskip
	
Now let $(*,A,\mu,\eta,\Delta,\varepsilon)$ be a special orbifold datum for~$\zz$ as in Section~\ref{subsec:2dimorbis}.
In terms of $\mathcal B_{\zz}$, this means that~$A$ is a 1-endomorphism of the object $* \in \mathcal B_{\zz}$, and we have 2-morphisms $\mu\colon A \otimes A \rightarrow A$, $\eta\colon 1_* \rightarrow A$, $\Delta\colon A \rightarrow A \otimes A$ and $\varepsilon\colon A \rightarrow 1_*$. 
The constraints on these data, written in standard graphical calculus
	(with diagrams read from bottom to top), 
are as follows: 
\vspace{-0.2cm}
\begin{align}
& 
\tikzzbox{\begin{tikzpicture}[very thick,scale=0.53,color=green!50!black, baseline=0.59cm]
\draw[-dot-] (3,0) .. controls +(0,1) and +(0,1) .. (2,0);
\draw[-dot-] (2.5,0.75) .. controls +(0,1) and +(0,1) .. (3.5,0.75);
\draw (3.5,0.75) -- (3.5,0); 
\draw (3,1.5) -- (3,2.25); 
\end{tikzpicture}}
=
\tikzzbox{\begin{tikzpicture}[very thick,scale=0.53,color=green!50!black, baseline=0.59cm]
\draw[-dot-] (3,0) .. controls +(0,1) and +(0,1) .. (2,0);
\draw[-dot-] (2.5,0.75) .. controls +(0,1) and +(0,1) .. (1.5,0.75);
\draw (1.5,0.75) -- (1.5,0); 
\draw (2,1.5) -- (2,2.25); 
\end{tikzpicture}}
\, , 
\qquad
\tikzzbox{\begin{tikzpicture}[very thick,scale=0.4,color=green!50!black, baseline]
\draw (-0.5,-0.5) node[Odot] (unit) {}; 
\fill (0,0.6) circle (5.0pt) node (meet) {};
\draw (unit) .. controls +(0,0.5) and +(-0.5,-0.5) .. (0,0.6);
\draw (0,-1.5) -- (0,1.5); 
\end{tikzpicture}}
=
\tikzzbox{\begin{tikzpicture}[very thick,scale=0.4,color=green!50!black, baseline]
\draw (0,-1.5) -- (0,1.5); 
\end{tikzpicture}}
=
\tikzzbox{\begin{tikzpicture}[very thick,scale=0.4,color=green!50!black, baseline]
\draw (0.5,-0.5) node[Odot] (unit) {}; 
\fill (0,0.6) circle (5.0pt) node (meet) {};
\draw (unit) .. controls +(0,0.5) and +(0.5,-0.5) .. (0,0.6);
\draw (0,-1.5) -- (0,1.5); 
\end{tikzpicture}}
\, , \qquad
\tikzzbox{\begin{tikzpicture}[very thick,scale=0.53,color=green!50!black, baseline=-0.59cm, rotate=180]
\draw[-dot-] (3,0) .. controls +(0,1) and +(0,1) .. (2,0);
\draw[-dot-] (2.5,0.75) .. controls +(0,1) and +(0,1) .. (1.5,0.75);
\draw (1.5,0.75) -- (1.5,0); 
\draw (2,1.5) -- (2,2.25); 
\end{tikzpicture}}
=
\tikzzbox{\begin{tikzpicture}[very thick,scale=0.53,color=green!50!black, baseline=-0.59cm, rotate=180]
\draw[-dot-] (3,0) .. controls +(0,1) and +(0,1) .. (2,0);
\draw[-dot-] (2.5,0.75) .. controls +(0,1) and +(0,1) .. (3.5,0.75);
\draw (3.5,0.75) -- (3.5,0); 
\draw (3,1.5) -- (3,2.25); 
\end{tikzpicture}}
\, , \qquad
\tikzzbox{\begin{tikzpicture}[very thick,scale=0.4,color=green!50!black, baseline=0, rotate=180]
\draw (0.5,-0.5) node[Odot] (unit) {}; 
\fill (0,0.6) circle (5.0pt) node (meet) {};
\draw (unit) .. controls +(0,0.5) and +(0.5,-0.5) .. (0,0.6);
\draw (0,-1.5) -- (0,1.5); 
\end{tikzpicture}}
=
\tikzzbox{\begin{tikzpicture}[very thick,scale=0.4,color=green!50!black, baseline=0, rotate=180]
\draw (0,-1.5) -- (0,1.5); 
\end{tikzpicture}}
=
\tikzzbox{\begin{tikzpicture}[very thick,scale=0.4,color=green!50!black, baseline=0cm, rotate=180]
\draw (-0.5,-0.5) node[Odot] (unit) {}; 
\fill (0,0.6) circle (5.0pt) node (meet) {};
\draw (unit) .. controls +(0,0.5) and +(-0.5,-0.5) .. (0,0.6);
\draw (0,-1.5) -- (0,1.5); 
\end{tikzpicture}}
\ , 
\\
& 
\tikzzbox{\begin{tikzpicture}[very thick,scale=0.4,color=green!50!black, baseline=0cm]
\draw[-dot-] (0,0) .. controls +(0,-1) and +(0,-1) .. (1,0);
\draw[-dot-] (0,0) .. controls +(0,1) and +(0,1) .. (1,0);
\draw (0.5,-0.8) -- (0.5,-1.5); 
\draw (0.5,0.8) -- (0.5,1.5); 
\end{tikzpicture}}
\, = \, 
\tikzzbox{\begin{tikzpicture}[very thick,scale=0.4,color=green!50!black, baseline=0cm]
\draw (0.5,-1.5) -- (0.5,1.5); 
\end{tikzpicture}}
\, , \qquad
\tikzzbox{\begin{tikzpicture}[very thick,scale=0.4,color=green!50!black, baseline=0cm]
\draw[-dot-] (0,0) .. controls +(0,-1) and +(0,-1) .. (-1,0);
\draw[-dot-] (1,0) .. controls +(0,1) and +(0,1) .. (0,0);
\draw (-1,0) -- (-1,1.5); 
\draw (1,0) -- (1,-1.5); 
\draw (0.5,0.8) -- (0.5,1.5); 
\draw (-0.5,-0.8) -- (-0.5,-1.5); 
\end{tikzpicture}}
=
\tikzzbox{\begin{tikzpicture}[very thick,scale=0.4,color=green!50!black, baseline=0cm]
\draw[-dot-] (0,0) .. controls +(0,1) and +(0,1) .. (-1,0);
\draw[-dot-] (1,0) .. controls +(0,-1) and +(0,-1) .. (0,0);
\draw (-1,0) -- (-1,-1.5); 
\draw (1,0) -- (1,1.5); 
\draw (0.5,-0.8) -- (0.5,-1.5); 
\draw (-0.5,0.8) -- (-0.5,1.5); 
\end{tikzpicture}}
\, , \qquad 
\tikzzbox{\begin{tikzpicture}[very thick,scale=0.4,color=green!50!black, baseline=0cm]
\draw[-dot-] (0,0) .. controls +(0,1) and +(0,1) .. (-1,0);
\draw[directedgreen, color=green!50!black] (1,0) .. controls +(0,-1) and +(0,-1) .. (0,0);
\draw (-1,0) -- (-1,-1.5); 
\draw (1,0) -- (1,1.5); 
\draw (-0.5,1.2) node[Odot] (end) {}; 
\draw (-0.5,0.8) -- (end); 
\end{tikzpicture}}
= 
\tikzzbox{\begin{tikzpicture}[very thick,scale=0.4,color=green!50!black, baseline=0cm]
\draw[redirectedgreen, color=green!50!black] (0,0) .. controls +(0,-1) and +(0,-1) .. (-1,0);
\draw[-dot-] (1,0) .. controls +(0,1) and +(0,1) .. (0,0);
\draw (-1,0) -- (-1,1.5); 
\draw (1,0) -- (1,-1.5); 
\draw (0.5,1.2) node[Odot] (end) {}; 
\draw (0.5,0.8) -- (end); 
\end{tikzpicture}}
\label{eq:DeltasepsymFrob}
\, . 
\end{align}
In other words, a special orbifold datum $A$ for~$\zz$ 
	-- the data needed to define the orbifold TQFT $\zz_A \colon \Bord_2 \to \Vectk$ --
is a 
	$\Delta$-separable symmetric Frobenius algebra
in $\mathcal B_{\zz}$. 

\begin{remark}
\label{rem:2dorbicomp}
The above realisation, originally due to \cite{ffrs0909.5013} in the context of conformal field theory, is the starting point of the ``orbifold completion'' construction of \cite{cr1210.6363}: 
The defect TQFT~$\zz$ restricted to $\Bord_2$, viewed as a non-full subcategory of $\Bordd[2]$ where all objects and morphisms are exclusively decorated by $* \in D_2$, is a closed TQFT. 
One may ask whether there is a natural defect TQFT $\zz^{\text{orb}}$ which analogously restricts to the closed orbifold TQFT $\zz_A$. 
Indeed, as explained in \cite{cr1210.6363}, the orbifold construction naturally lifts to produce the ``complete'' orbifold defect TQFT $\zz^{\text{orb}}\colon \Bord_2^{\text{def}}(\D^{\text{orb}}) \to \Vectk$, where $D_2^{\text{orb}}$ and $D_1^{\text{orb}}$ are given by $\Delta$-separable symmetric Frobenius algebras in $\mathcal B_\zz$ and their bimodules, respectively. 
Algebraically, for every pivotal bicategory~$\mathcal{P}$ with idempotent complete morphism categories
this construction motivates the definition of the orbifold completion $\mathcal{P}_\text{orb}$ as the bicategory of $\Delta$-separable symmetric Frobenius algebras, bimodules and bimodule maps internal to~$\mathcal{P}$ \cite[Sec.\,5.1]{cr1210.6363}.
Then one has $\B_{(\zz^{\text{orb}})} \cong (\B_\zz)_{\text{orb}}$ and $(\mathcal{P}_\text{orb})_{\text{orb}} \cong \mathcal{P}_\text{orb}$. 
\end{remark}

\subsection{Special orbifold data in Gray categories with duals}
\label{subsec:specorbdat3}

In analogy to the 2-dimensional case of Section~\ref{subsec:specorbdat2}, to a defect TQFT $\zz\colon  \Bordd[3] \to \Vectk$ one can naturally associate a 3-categorical structure $\tz$, as explained in \cite{CMS}. 
Indeed, it was shown in loc.~cit.~that $\tz$ has the structure of a Gray category with duals. 
We refer to \cite{CMS} for the detailed construction of $\tz$. 
Here we only recall some of the basic structure so that we can formulate the notion of special orbifold data for~$\zz$ internal to $\tz$. 

Roughly, the objects of $\tz$ are elements of $D_3$, which we imagine as $D_3$-decorated patches of $\R^3$, say 3-cubes. 
In this picture, 1-morphisms are stacks of $D_2$-decorated planes, with the spaces in between them decorated compatibly with the maps~$s$ and~$t$. 
Similarly, 2-morphisms can be represented as $D_1$-decorated lines with $D_2$-decorated planes ending on them as allowed by the adjacency map~$f_{1}$. 
Hence we may depict objects~$u$, 1-morphisms $\alpha \colon u \to v$, and 2-morphisms $X \colon \beta \to \gamma$ in $\tz$ as decorated cubes such as
$$
\tikzzbox{
}
\right) .
$$

\medskip

It is natural to express the constraints on special orbifold data for~$\zz$ in terms of $\tz$, or for any Gray category with duals: 

\begin{definition}
\label{def:orbidataGray}
Let~$\mathcal G$ be a Gray category with duals. 
A set of \textsl{special orbifold datum in $\mathcal G$} is 
\begin{itemize}
\item
an object $* \in \mathcal G$, 
\item
a 1-morphism $\A \in \mathcal G(*,*)$, 
\item 
a 2-morphism $T\colon \A \sta \A \to \A$, 
\item
two 3-isomorphisms $\al\colon T \fus (1_\A \sta T) \rightleftarrows T\otimes (T\sta 1_\A): \alb$, 
\end{itemize}
\be
\tikzzbox{
}
$$
where~$\sigma$ and~$\tau$ are the tensorator and triangulator of~$\mathcal G$, respectively, using the conventions of \cite[Def.\,3.4\,\&\,3.8]{CMS}.
There are similar expressions for $\alb', \al'', \alb''$. 
\end{definition}

Note that in the special case $\psi = 1_{1_{\A}}$ and $\phi = \delta \cdot 1_{1_{1_*}}$ for some $\delta \in \Bbbk$, the last condition~(v) says that the quantum dimension of~$T$ is invertible. 

\medskip

By construction, in three dimensions a special orbifold datum~$\A$ for a defect TQFT~$\zz$ may be identified with a special orbifold datum~$\A$ in~$\tz$. 
As we will explain in \cite{CRS3}, the latter can be thought of as a spherical fusion category internal to the Gray category with duals~$\tz$, just as special orbifold data in two dimensions are $\Delta$-separable symmetric Frobenius algebras.

\begin{remark}
Paralleling the constructions for the 2-dimensional case summarised in Remark~\ref{rem:2dorbicomp}, we expect there to be a natural defect TQFT $\zz^{\text{orb}}$ which for $\A$-decorated bordisms restricts to the 
closed orbifold theory~$\zz_\A$: 
the surface defect labels for $\zz^{\text{orb}}$ are $\A'$-$\A$-bimodule categories internal to~$\tz$, while line defect labels are compatible bimodule functors. 
Furthermore, we expect there to be an associated notion of orbifold completion $\mathcal G_{\text{orb}}$ for any Gray category with duals~$\mathcal G$
which is idempotent complete in an appropriate sense, 
such that $(\tz)_{\text{orb}} \cong \mathcal T_{\zz^{\text{orb}}}$ for $\mathcal G = \tz$. 
\end{remark}

\medskip

Next we address the relation between the Euler completion (see Definition~\ref{def:Eulercompletion}) of a defect TQFT $\zz\colon \Bordd[3] \to \Vectk$ and its Gray category $\tz$. 
We will show that different point insertions on surface defects for~$\zz$ correspond to different pivotal structures on~$\tz$. 

To see this, we fix a collection 
\be
\label{eq:psialphacoll}
\psi_\alpha \in \text{Aut}_{\tz}(1_\alpha) 
\quad
\text{for all } \alpha \in D_2 
\ee
for the remainder of this section. 
Demanding $\psi_{\alpha \sta \beta} = \psi_\alpha \sta \psi_\beta$, we in fact have one 3-isomorphism for every 2-morphism~$\alpha$ in~$\tz$. 
Restricting $\D^\euc$ to point insertions only of type~$\psi_\alpha$ gives us a set of defect data $\D^\psi$ with $D^\psi_2 = \{ (\alpha, \psi_\alpha) \,|\, \alpha \in D_2 \}$ and $D^\psi_j = D_j$ for $j\neq 2$, and we write 
\be
\zz^\psi \colon \Bord_3^{\text{def}}(\D^\psi) \lra \Vectk
\ee
for the restriction of~$\zz^\euc$. 

What is the relation between $\mathcal T_{\zz^\psi}$ and $\tz$? 
The answer involves the following natural notion: 
Given any Gray category with duals~$\mathcal G$ together with a collection $c = \{ c_\alpha \in \text{Aut}_{\mathcal G}(1_\alpha) \,|\, \alpha \text{ is a 1-morphism in } \mathcal G \}$ satisfying $c_{\alpha \sta \beta} = c_\alpha \sta c_\beta$, the \textsl{$c$-twist} $\mathcal G^c$ is the Gray category with duals whose underlying tricategory and $\#$-duals are those of~$\mathcal G$, while the pivotal structures of $\mathcal G^c$ are those with the adjunction 3-morphisms
\begin{align}
\ev_X^{\mathcal G^c} & = c_\alpha^{-1} \circ \ev_X^{\mathcal G} \circ \big( 1_{X^\dagger} \otimes c_\beta \otimes 1_X \big) 
\colon X^\dagger \otimes X \lra 1_\alpha \, , \nonumber
\\
\coev_X^{\mathcal G^c} & =\big( 1_{X} \otimes c_\alpha \otimes 1_{X^\dagger} \big) \circ \coev_X^{\mathcal G} \circ \, c_{\beta}^{-1} 
\colon 1_\beta  \lra X \otimes X^\dagger \, , \nonumber
\\
\tev_X^{\mathcal G^c} & = c_\beta^{-1} \circ \tev_X^{\mathcal G} \circ \big( 1_{X} \otimes c_\alpha \otimes 1_{X^\dagger} \big) 
\colon X \otimes X^\dagger \lra 1_\beta \, , \nonumber
\\
\tcoev_X^{\mathcal G^c} & =\big( 1_{X^\dagger} \otimes c_\beta \otimes 1_{X} \big) \circ \tcoev_X^{\mathcal G} \circ \, c_{\alpha}^{-1} 
\colon 1_\alpha  \lra X^\dagger  \otimes X
\label{eq:twistedcoevs}
\end{align}
for every 2-morphism $X\colon \alpha \to \beta$ in~$\mathcal G$. 
Put differently, $\mathcal G^c$ is the same as~$\mathcal G$, except that the pivotal structures are are ``twisted'' by the maps~$c_\alpha$. 

\begin{proposition}
For a defect TQFT $\zz \colon \Bordd[3] \to \Vectk$ and a collection of invertible 3-morphisms~$\psi_\alpha$ as in~\eqref{eq:psialphacoll}, there is an equivalence
\be
\mathcal T_{\zz^\psi} \cong (\tz)^\psi
\ee
of $\Bbbk$-linear Gray categories with duals. 
\end{proposition}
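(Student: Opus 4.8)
The statement asserts an equivalence of $\Bbbk$-linear Gray categories with duals $\mathcal T_{\zz^\psi} \cong (\tz)^\psi$. The natural strategy is to construct an explicit functor that is the identity on the underlying tricategory and on $\#$-duals, and to verify that it intertwines the $\dagger$-dualities and pivotal structures on both sides. First I would recall from \cite{CMS} precisely how $\tz$ is built from $\zz$: objects, $1$- and $2$-morphisms are defect-decorated cubes as reviewed in Section~\ref{subsec:specorbdat3}, while the Hom-spaces between parallel $2$-morphisms are the vector spaces $\zz$ assigns to the surrounding defect spheres, and the adjunction $3$-morphisms $\ev$, $\coev$, $\tev$, $\tcoev$ are given by $\zz$ evaluated on the corresponding decorated $3$-balls. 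The key observation is that $\zz^\psi$ differs from $\zz$ \emph{only} in that every $2$-stratum labelled $\alpha$ now carries the additional weight $\psi_\alpha^{\chirel(\,\cdot\,)}$ under the map $W$ of Definition~\ref{def:Eulercompletion}, since $\zz^\psi = \zz \circ W$ restricted to $\D^\psi$.

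Next I would identify how these Euler weights enter the $3$-morphisms that realise the dualities in $\mathcal T_{\zz^\psi}$. The decorated $3$-balls defining $\ev^{\mathcal T_{\zz^\psi}}_X$, $\coev^{\mathcal T_{\zz^\psi}}_X$, etc., contain surface strata whose symmetric Euler characteristics $\chirel$ are nonzero precisely at the ``cap'' where the $2$-morphism $X$ bends around, and by the additivity of $\chirel$ under gluing (used repeatedly in Section~\ref{subsec:completewrtpointinsertions}) the accumulated weights localise on the strata labelled $\alpha$ and $\beta$ adjacent to $X\colon\alpha\to\beta$. Using that $W$ inserts $\psi_\alpha^{\chirel}$ as a point defect and that point insertions on surface defects multiply via the algebra structure of Proposition~\ref{prop:point-defect-algebras}, a direct bookkeeping of $\chirel$ on each relevant stratum should show that $\zz^\psi$ evaluated on the $\ev$-ball equals $\zz$ evaluated on the same ball with extra $\psi_\alpha^{-1}$ and $\psi_\beta$ insertions (and analogously for the other three adjunction morphisms). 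Comparing with the defining formulas~\eqref{eq:twistedcoevs} for the $c$-twist, with $c_\alpha = \psi_\alpha$, this is exactly the prescription for $(\tz)^\psi$. The compatibility condition $\psi_{\alpha\sta\beta} = \psi_\alpha \sta \psi_\beta$ ensures that these insertions respect the monoidal (i.e.\ $\sta$-) composition of $1$-morphisms, so that the collection $\{\psi_\alpha\}$ is a legitimate twisting datum in the sense preceding the proposition.

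Concretely I would define the functor $F\colon \mathcal T_{\zz^\psi} \to (\tz)^\psi$ to be the identity on objects, $1$-morphisms and $2$-morphisms (these are the same decorated cubes, since $D^\psi_j = D_j$ for $j\neq 2$ and the extra weight $\psi_\alpha$ on a surface label is precisely the pivotal-twist bookkeeping, not new geometric data), and on $3$-morphism spaces the canonical identification $\zz^\psi(S) = \zz(S)$ of the vector spaces $\zz$ assigns to decorated spheres $S$ (the weights contribute only trivial scalars on the spheres themselves, since $\chirel$ vanishes on the closed surfaces surrounding a potential meeting point). Functoriality with respect to vertical, $\sta$- and $\fus$-composition of $3$-morphisms is then immediate because $\zz^\psi = \zz\circ W$ and all compositions are cut-and-paste of $3$-balls where the additivity of $\chirel$ matches the multiplicativity of the weight insertions. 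Finally I would check that $F$ sends $\ev^{\mathcal T_{\zz^\psi}}$ to $\ev^{(\tz)^\psi}$ and likewise for $\coev, \tev, \tcoev$, which is the computation sketched in the previous paragraph, and that $F$ manifestly preserves $\#$-duals since those involve only orientation reversal of $1$-morphisms and are untouched by the point weights.

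\textbf{The main obstacle.} The crux is the careful Euler-characteristic accounting: I must verify that the cumulative weight $\psi_\alpha^{\chirel}$ produced by $W$ on each surface stratum inside the adjunction $3$-balls reproduces \emph{exactly} the four asymmetric placements of $c_\alpha^{\pm1}$ and $c_\beta^{\pm1}$ in~\eqref{eq:twistedcoevs}. This requires computing $\chirel$ for the (non-closed) surface pieces in each of the $\ev$-, $\coev$-, $\tev$-, $\tcoev$-balls and confirming, via Lemma~\ref{lem:move-point-insertions} and the defect-point algebra, that the weights land on the correct boundary strata with the correct exponents and inverses. I expect the sign and inverse bookkeeping here -- matching $\chirel = \pm 1$ contributions to the four distinct twisted adjunctions -- to be the delicate part; everything else (functoriality, preservation of $\#$-duals, well-definedness on Hom-spaces) follows formally from $\zz^\psi = \zz \circ W$ and the additivity of $\chirel$.
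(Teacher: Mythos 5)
There is a genuine gap, and it sits exactly where you declare things ``immediate''. Your functor $F$ is the identity on $3$-morphism spaces, and you claim functoriality follows because ``the additivity of $\chirel$ matches the multiplicativity of the weight insertions''. But the vertical composition in $\mathcal T_{\zz^\psi}$ is computed by evaluating $\zz^\psi$ on the two-holed ball $B_{g,f}$ of \eqref{eq:Bgf}, and there the $\alpha$- and $\beta$-labelled $2$-strata are \emph{not} weight-free: each is a disk meeting $\partial B_{g,f}$ in three arcs, so $\chirel = 2\cdot 1 - 3 = -1$, and $W$ inserts $\psi_\alpha^{-1}$ and $\psi_\beta^{-1}$. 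Hence $g\circ_\psi f = \big(\psi_\beta^{-1}\otimes 1_Z\otimes\psi_\alpha^{-1}\big)\circ g\circ f$, and similarly the cylinder computing the unit has $\chirel=+1$ on both surface strata, so $1_X^{\mathcal T_{\zz^\psi}} = \psi_\beta\otimes 1_X\otimes\psi_\alpha$. The identity map on Hom-spaces therefore preserves neither composition nor identities (note that $(\tz)^\psi$ has the same underlying tricategory as $\tz$; only the pivotal data are twisted), so your $F$ is not a functor. Moreover your anticipated outcome of the Euler bookkeeping on the duality balls has the wrong sign: in the $\ev_X$-ball both the $\alpha$- and the $\beta$-stratum are disks meeting the boundary sphere in a single arc, so $\chirel=+1$ for each, giving $\ev_X^{\mathcal T_{\zz^\psi}} = \psi_\alpha\circ\ev_X^{\tz}\circ\big(1_{X^\dagger}\otimes\psi_\beta\otimes 1_X\big)$, whereas \eqref{eq:twistedcoevs} demands $\psi_\alpha^{-1}\circ\ev_X^{\tz}\circ\big(1_{X^\dagger}\otimes\psi_\beta\otimes 1_X\big)$; these differ by $\psi_\alpha^{2}$, so no amount of $\chirel$-accounting alone will produce the asymmetric placement of inverses in the $c$-twist.

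The paper repairs both defects with a single device that is absent from your proposal: the identification on $3$-morphisms is not the identity but the twisted map $\Gamma(f) = \big(\psi_\beta^{-1}\otimes 1_Y\otimes\psi_\alpha^{-1}\big)\circ f$ for $f\colon X\to Y$ with $X,Y\colon\alpha\to\beta$. The $\psi^{-1}$-factors in $\Gamma$ exactly cancel the $\psi^{-1}$-factors appearing in $\circ_\psi$ and absorb the $\psi^{+1}$-factors in the units, making $\Gamma$ a genuine equivalence of Gray categories; and applied to $\ev_X^{\mathcal T_{\zz^\psi}}$ (whose target is $1_\alpha$) it post-composes with $\psi_\alpha^{-2}$, turning the computed $\psi_\alpha^{+1}$ into the required $\psi_\alpha^{-1}$ of \eqref{eq:twistedcoevs}. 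So the structure of your argument (identify everything, then track Euler weights) is the right spirit, but without the corrective twist $\Gamma$ the proof does not go through, and the ``delicate sign bookkeeping'' you flag as the main obstacle is in fact irresolvable for the identity identification.
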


\begin{proof}
We will first express vertical composition in $\mathcal T_{\zz^\psi}$ in terms of~$\tz$. 
Then we will find a triequivalence $\Gamma \colon \mathcal T_{\zz^\psi} \to (\tz)^\psi$ which maps the adjunction 3-morphisms in $\mathcal T_{\zz^\psi}$ to those in $(\tz)^\psi$. 

Let $f\colon X\to Y$ and $g\colon Y \to Z$ be 3-morphisms in $\mathcal T_{\zz^\psi}$. 
We denote the common source of $X,Y,Z$ by $(\alpha,\psi)$, and the common target by $(\beta,\psi)$. 
By definition (cf.~\cite[Sect.\,3.3]{CMS}), the vertical composition $g\circ_\psi f$ in $\mathcal T_{\zz^\psi}$ is $\zz^\psi(B_{g,f}) \in \Hom_{\mathcal T_{\zz^\psi}}(X,Z)$, where $B_{g,f}$ is the following defect bordism: 
$B_{g,f}$ is the solid 3-ball with two smaller 3-balls $B_f, B_g$ removed whose boundaries are ingoing for $B_{g,f}$ and decorated such that $f \in \zz^\psi(\partial B_f)$ and $g \in \zz^\psi(\partial B_g)$, while the remaining boundary component of $B_{g,f}$ is outgoing and its stratification is locally a cylinder over the boundary stratifications. 
Schematically, 
\be
\label{eq:Bgf}
B_{g,f} = 
\tikzzbox{\begin{tikzpicture}[very thick,scale=1.75,color=green!60!black=-0.1cm, >=stealth, baseline=0]
\fill[ball color=red!50!white, opacity=0.9] (0,0) circle (0.95 cm);
\fill[ball color=red!50!white, opacity=0.07] (0,-0.4) circle (0.27);
\fill[color=white, opacity=0.25] (0,-0.4) circle (0.27);
\fill[ball color=red!50!white, opacity=0.07] (0,0.4) circle (0.27);
\fill[color=white, opacity=0.25] (0,0.4) circle (0.27);
\draw[color=red!80!black, very thick, >=stealth] (0,0) circle (0.95);
\draw[color=red!80!black,  thick, >=stealth] (0,-0.4) circle (0.27);
\draw[color=red!80!black,  thick, >=stealth] (0,0.4) circle (0.27);
\fill[color=green!60!black] (0,0.13) circle (1pt) node[color=red!80!black, font=\tiny] {};
\fill[color=green!60!black] (0,-0.13) circle (1pt) node[color=red!80!black, font=\tiny] {};
\fill[color=green!60!black] (0,0.67) circle (1pt) node[color=red!80!black, font=\tiny] {};
\fill[color=green!60!black] (0,-0.67) circle (1pt) node[color=red!80!black, font=\tiny] {};
\fill[color=green!60!black] (0,0.95) circle (1pt) node[color=red!80!black, font=\tiny] {};
\fill[color=green!60!black] (0,-0.95) circle (1pt) node[color=red!80!black, font=\tiny] {};
\draw[opacity=0.6] (0,-0.95) -- (0,-0.67);
\draw[opacity=0.6] (0,0.13) -- (0,-0.13);
\draw[opacity=0.6] (0,0.95) -- (0,0.67);
\fill (0.65,0) circle (0pt) node[color=black, font=\tiny] {$(\alpha,\psi)$};
\fill (-0.65,0) circle (0pt) node[color=black, font=\tiny] {$(\beta,\psi)$};
\fill (0.1,0.8) circle (0pt) node[color=black, font=\tiny] {$Z$};
\fill (0.1,0) circle (0pt) node[color=black, font=\tiny] {$Y$};
\fill (0.1,-0.8) circle (0pt) node[color=black, font=\tiny] {$X$};
\fill (0,-0.4) circle (0pt) node[color=black, font=\tiny] {$f$};
\fill (0,0.4) circle (0pt) node[color=black, font=\tiny, opacity=1] {$g$};
\end{tikzpicture}}
\ee
where the line-of-sight on the 3-ball is orthogonal to the two red 2-strata. 

To evaluate $\zz^\psi(B_{g,f})$ we note that the two 2-strata in \eqref{eq:Bgf} decorated with $(\alpha,\psi)$ and $(\beta,\psi)$ have two incoming and one outgoing boundary components.
Thus according to our convention in Example~\ref{example:Eulerx} they have symmetric Euler characteristic~$-1$, and it follows that 
\be
g \circ_\psi f = \big( \psi_\beta^{-1} \otimes 1_Z \otimes \psi_\alpha^{-1} \big) \circ g \circ f \, , 
\ee
where $g \circ f$ is the composition in~$\tz$. 
Similarly, we find that the identity 3-morphisms $1_X^{\mathcal T_{\zz^\psi}}$ in $\mathcal T_{\zz^\psi}$ are
\be
1_X^{\mathcal T_{\zz^\psi}}
= 
\zz^\psi 
\Bigg( 
\begin{tikzpicture}[scale=0.55,color=red!80!black, baseline]
\fill[ball color=red!50!white, opacity=0.4] (0,0) circle (1.95 cm);
\draw[ultra thick] (0,0) circle (1.95);
\coordinate (z2) at (90:1.95);
\fill[color=green!60!black] (z2) circle (4.4pt) node[above] {};
\coordinate (x2) at (-90:1.95);
\fill[color=green!60!black] (x2) circle (4.4pt) node[below] {};
%
\draw[color=green!60!black, very thick] 
	(x2) -- (z2);
\fill (0.3,0.7) circle (0pt) node[color=black, font=\tiny] {$X$};
\fill (1.15,0) circle (0pt) node[color=black, font=\tiny] {$(\alpha,\psi)$};
\fill (-1.15,0) circle (0pt) node[color=black, font=\tiny] {$(\beta,\psi)$};
\end{tikzpicture} 
\Bigg)
= 
\psi_\beta \otimes 1_X \otimes \psi_\alpha
\ee
as the $(\alpha,\psi)$- and $(\beta,\psi)$-decorated 2-strata now have symmetric Euler characteristic~$+1$. 

It follows that we obtain an equivalence~$\Gamma$ of Gray categories which is the identity on objects, 1- and 2-morphisms by setting $\Gamma(f) = (\psi_\beta^{-1} \otimes 1_Y \otimes \psi_\alpha^{-1}) \circ f$ 
for every 3-morphism $f\colon X \to Y$. 
The equivalence automatically respects the  $\#$-duals, and to verify that~$\Gamma$ also maps $\ev_X^{\mathcal T_{\zz^\psi}}$ to $\ev_X^{(\mathcal T_{\zz})^\psi}$, we compute 
\begin{align}
\ev_X^{\mathcal T_{\zz^\psi}} 
& 
= 
\zz^\psi
\Bigg(
\begin{tikzpicture}[scale=0.55,color=red!80!black, baseline]
\fill[ball color=red!50!white, opacity=0.4] (0,0) circle (1.95 cm);
\draw[ultra thick] (0,0) circle (1.95);
\coordinate (x2) at (-130:1.95);
\fill[color=green!60!black] (x2) circle (4.4pt) node[left] {};
\coordinate (xd2) at (-50:1.95);
\fill[color=green!60!black] (xd2) circle (4.4pt) node[right] {};
%
\draw[string=green!60!black, very thick] 
	(xd2) .. controls +(0,1.45) and +(0,1.45) .. (x2);
\fill (0.8,-0.3) circle (0pt) node[color=black, font=\tiny] {$X$};
\fill (0,1.15) circle (0pt) node[color=black, font=\tiny] {$(\alpha,\psi)$};
\fill (0,-1.15) circle (0pt) node[color=black, font=\tiny] {$(\beta,\psi)$};
\end{tikzpicture} 
\Bigg)
= 
\psi_\alpha \circ \ev_X^{\mathcal T_{\zz}} \circ \big( 1_{X^\dagger} \otimes \psi_\beta \otimes 1_X \big) 
\nonumber 
\\
& 
\stackrel{\Gamma}{\lmt} 
\psi_\alpha^{-1} \circ \ev_X^{\mathcal T_{\zz}} \circ \big( 1_{X^\dagger} \otimes \psi_\beta \otimes 1_X \big)
= 
\ev_X^{(\mathcal T_{\zz})^\psi}
\end{align}
where the last step is due to the definition in \eqref{eq:twistedcoevs}. 
The argument for the other three types of adjunction maps is analogous. 
\end{proof}

\begin{remark}
\label{rem:disclikes}
Here we give a sketch of  a construction of a disc-like $n$-category $\dz$ from a defect TQFT $\zz \colon\Bordd[n] \rightarrow \Vectk$. 

Recall that the main datum for a disc-like $n$-category is a map that assigns to a $k$-ball~$B$ the set of ``$k$-dimensional field configurations $C_k(B)$'' for every $k\in \{0,\ldots,n\}$. 
The elements of $C_k(B)$ are thought of as the $k$-morphisms of the disc-like $n$-category. 
This assignment has to be functorial with respect to diffeomorphisms of balls, giving a functor 
  \begin{equation}
    \label{eq:balls-}
    C_k\colon \Ball_k \lra \Set
  \end{equation}
from smooth $k$-balls and their diffeomorphisms to sets and bijections. 
For our defect TQFT $\zz \colon \Borddefn{n}(\D)\rightarrow \Vectk$ we define $C_k$ on a $k$-manifold~$X$, for $k\in \{0,\ldots,n-1\}$, as 
\begin{align}
C_{k}(X) &= 
\Big\{ 
X' \;\Big|\;
 \text{$X'$ a decorated stratified $k$-manifold with underlying manifold $X$} 
\nonumber \\[-.6em]
& \hspace{4.2em}
\text{ such that $[X'] \colon \emptyset \to \partial X'$ is a morphism in} \Borddefn{k}(\partial^{n-k+1}\D)
\Big\} \, .   
\end{align}
Restricting to balls, this gives the functor $C_{k}$ as in \eqref{eq:balls-}.

The combined source and target map for $k$-morphisms is specified by a natural transformation $\partial \colon C_k \Rightarrow C_{k-1}$ with components 
\begin{equation}
  \label{eq:sourctarg-sphere}
\partial_X
  \colon C_{k}(X) \lra C_{k-1}(\partial X) \, .
\end{equation}
In the case of $\dz$ we define $\partial_X(X')$ to be the decorated stratified manifold $\partial X'$ for every $X' \in C_{k}(X)$.

Disc-like $n$-categories have many compositions of morphisms modelled by the gluing of balls. 
If a $k$-ball $B=B_{1} \circ_{Y} B_{2}$ is obtained as the gluing of two $k$-balls $B_{1}$ and $B_{2}$ along a (collar over a) $(k-1)$-ball $Y$ with boundary $(k-2)$-sphere $S=\partial Y$, 
it is required that there exists a map 
\be
\label{eq:glY}
\text{gl}_{Y}\colon C(B_{1})_{S} \times_{C(Y)} C(B_{2})_{S} \lra C(B)_{S}
\ee
where the index $S$ denotes elements that are ``splittable along $S$''. 
This notion is intrinsically defined, but in our case it has a clear candidate:
In $\dz$ the spaces $C(B_{i})_{S}$ are those $k$-morphisms that intersect~$S$ transversely, and the map $\text{gl}_{Y}$ is just given by gluing the defects in $B_{1}$ and $B_{2}$ along $Y$. 
Thus gluing is strictly associative. 

The axioms related to the units of disc-like $n$-categories pose a technical issue: 
To model all compositions with various units, \cite{blobcomplex} work in the PL setting. 
In the case of $\dz$ the $k$-dimensional units should be cylinders over lower-dimensional defect balls with certain singularities on the boundary. 
In this remark however, we do not attempt to translate the unit axioms of \cite{blobcomplex} into our setting.

Up to now we have described the $k$-morphisms for $k<n$ without mentioning the functor~$\zz$. 
Indeed, if we took isotopy classes of $(n-1)$-balls we should obtain a disc-like $(n-1)$-category that is ``free over the defect data $\partial \D$'' (compare \cite[Sect.\,3]{CMS}). 
Instead we use~$\zz$ to define $n$-morphisms on an $n$-ball~$B$ as the coproduct over the state spaces associated to all $\D$-decorations of the boundary $\partial B$: 
\be
C_n(B) = 
\coprod_{[S] \in C_{n-1}(\partial B)} \zz(S) 
\, ,
\ee
where the index runs over all representatives of diffeomorphism classes of stratified decorated $(n-1)$-spheres. 

The composition~\eqref{eq:glY} of two $n$-balls $B_{1}$ and $B_{2}$ that are glued along an $(n-1)$-ball $Y$ to produce an $n$-ball $B=B_{1} \circ_{Y} B_{2}$ is given as follows. 
For each chosen decorated stratifications of~$B_1$ and~$B_2$ that are glued to a stratification of~$B$, we need to give a map $\text{gl}'_Y\colon \zz( \partial B_{1}) \times \zz( \partial B_{2}) \to \zz( \partial B)$. 
{}From the coproduct of the maps $\text{gl}'_{Y}$ 
we then obtain the gluing map  $\text{gl}_{Y}$.
To this end we cut out one slightly smaller ball from the inner of both $B_1$ and~$B_2$ inside~$B$. 
This produces a two-holed ball on which we can evaluate~$\zz$, giving the map $\text{gl}_{Y}'$. 
By diffeomorphisms invariance of $\zz$ we expect this to satisfy all axioms of a disc-like $n$-category.
\end{remark}

\end{document}